\newtheorem*{maintheorem}{Main Theorem}
\newtheorem{theorem}{Theorem}[section]
\newtheorem{claim}{Claim}[theorem]
\newtheorem{lemma}[theorem]{Lemma}
\newtheorem{proposition}[theorem]{Proposition}
\newtheorem{corollary}[theorem]{Corollary}
\newtheorem{fact}[theorem]{Fact}
\theoremstyle{definition}
\newtheorem{definition}[theorem]{Definition}
\theoremstyle{remark}
\newtheorem{remark}[theorem]{Remark}
\newtheorem{convention}[theorem]{Convention}
\DeclareMathOperator{\id}{id}
\DeclareMathOperator{\cf}{cf}
\DeclareMathOperator{\chr}{Chr}
\DeclareMathOperator{\dom}{dom}
\DeclareMathOperator{\Succ}{Succ}
\DeclareMathOperator{\im}{Im}
\newcommand{\stem}{{\Omega}}
\DeclareMathOperator{\on}{On}
\DeclareMathOperator{\tp}{top}
\DeclareMathOperator{\Lev}{Lev}
\DeclareMathOperator{\crit}{crit}
\DeclareMathOperator{\Proj}{Tr}
\DeclareMathOperator{\hgt}{ht}
\DeclareMathOperator{\Aut}{Aut}
\DeclareMathOperator{\reg}{Reg}
\DeclareMathOperator{\Ult}{Ult}
\DeclareMathOperator{\otp}{otp}
\DeclareMathOperator{\col}{Col}
\DeclareMathOperator{\rlm}{Realm}
\DeclareMathOperator{\Add}{Add}
\DeclareMathOperator{\acc}{acc}
\DeclareMathOperator{\nacc}{nacc}
\DeclareMathOperator{\drop}{drop}
\DeclareMathOperator{\rank}{rank}
\newcommand\s{\subseteq}
\newcommand\br{\blacktriangleright}
\newcommand\diagonal{\bigtriangleup}
\renewcommand\mid{\mathrel{|}\allowbreak}
\def\Mid{\mathrel{\bigm|}}
\def\sd{\framebox[3.0mm][l]{$\diamondsuit$}\hspace{0.5mm}{}}
\newcommand*\axiomfont[1]{\textsf{\textup{#1}}}
\newcommand\zfc{\axiomfont{ZFC}}
\newcommand\gch{\axiomfont{GCH}}
\newcommand\sch{\axiomfont{SCH}}
\subjclass[2010]{Primary 03E35; Secondary 05C63, 03E55}
\newcommand{\ssim}{\mathrel{
\ooalign{\raise0.2ex\hbox{\ensuremath{\subset}}
\cr\hidewidth\raise-0.8ex\hbox{\scalebox{0.9}{\ensuremath{\sim}}}\hidewidth\cr}}}
\author{Sittinon Jirattikansakul}
\address{Department of Mathematics and Computer Science, Faculty of Science,
Chulalongkorn University, Bangkok 10330 Thailand}
\email{jir.sittinon@gmail.com}
\author{Inbar Oren}
\address{Einstein Institute of Mathematics, The Hebrew University of Jerusalem, Israel}
\urladdr{https://inbarorendoesmaths.website}
\email{inbar.oren2@mail.huji.ac.il}
\author{Assaf Rinot}
\address{Department of Mathematics, Bar-Ilan University, Ramat-Gan 5290002, Israel.}
\urladdr{http://www.assafrinot.com}
\title{A model for global compactness}
\date{Preprint as of December 17, 2025. For updates, visit \textsf{http://p.assafrinot.com/67}.}
\begin{document}
\begin{abstract} In a classical paper by
Ben-David and Magidor, a model of set theory was exhibited in which $\aleph_{\omega+1}$ carries a uniform ultrafilter that is $\theta$-indecomposable for every uncountable cardinal $\theta<\aleph_\omega$.
In this paper, we give a global version of this result, as follows:

Assuming the consistency of a supercompact cardinal, we produce a model of set theory in which
for every singular cardinal $\lambda$, there exists a uniform ultrafilter on $\lambda^+$
that is $\theta$-indecomposable for every cardinal $\theta$ such that $\cf(\lambda)<\theta<\lambda$.
In our model, many instances of compactness for chromatic numbers hold, from which we infer that Hajnal's gap-$1$ counterexample to Hedetniemi's conjecture
is best possible on the grounds of $\zfc$.
\end{abstract}
\maketitle
\section{Introduction}

An ultrafilter $U$ over an uncountable cardinal $\kappa$ is \emph{uniform} iff the map $X\mapsto |X|$ is constant over $U$.
It is \emph{$\theta$-indecomposable} (for an infinite cardinal $\theta<\kappa$) iff for every function $f:\kappa\rightarrow\theta$, there exists a set $X\in U$ such that $|f[X]|<\theta$.
Note that for $\theta$ a regular cardinal, $\theta$-indecomposability coincides with the requirement that $U$ be closed under intersections of decreasing sequences of length exactly $\theta$.

The existence of uniform indecomposable ultrafilters yields various forms of compactness.
As a simple example, we mention that for every pair $\theta<\kappa$ of infinite regular cardinals,
the existence of a $\theta$-indecomposable uniform ultrafilter on $\kappa$ implies that every stationary subset of $E^\kappa_\theta:=\{\alpha<\kappa\mid\cf(\alpha)=\theta\}$ reflects.
Richer combinatorial applications of indecomposable ultrafilters may be found in \cite{MR1117029,MR3051629,MR3009737,Sh:1160,paper64}.

\smallskip

Sheard \cite{Sheard83} proved that a non-measurable inaccessible $\kappa$ may consistently carry a uniform ultrafilter that is $\theta$-indecomposable for all $\theta<\kappa$ except for $\theta=\omega$.
This was recently extended \cite{paper69} to a non-measurable $\kappa$ that is moreover weakly compact.
As for successor cardinals, assuming the consistency of a supercompact cardinal,
Ben-David and Magidor \cite{BenMag86} constructed a model of set theory in which $\aleph_{\omega+1}$ carries a uniform ultrafilter that is $\theta$-indecomposable for every uncountable cardinal $\theta<\aleph_\omega$.
This gave the first model in which the square principle $\square_{\aleph_\omega}$ fails, but its weaker sibling $\square^*_{\aleph_\omega}$ holds.
A global result in this spirit was obtained by Apter and Cummings in \cite{MR1740483}.
A variation in another direction, yielding a uniform ultrafilter over $\aleph_{\omega_1+1}$
that is $\theta$-indecomposable for all $\theta\in(\aleph_1,\aleph_{\omega_1})$ appeared in a note by Unger \cite{UngerNote}. This variation also secures homogeneity of the notion of forcing involved.

In this paper, a global version of these results is obtained. Our main theorem takes care of all successors of singulars simultaneously.
Further features of the model are explored, as follows.
\begin{maintheorem} Assuming the consistency of a supercompact cardinal, there exists a model of $\zfc+\gch$ in which,
for every singular cardinal $\lambda$:
\begin{enumerate}[label=\textup{(\arabic*)}]
\item there exists a uniform ultrafilter on $\lambda^+$ that is $\theta$-indecomposable for every cardinal $\theta$ with $\cf(\lambda)<\theta<\lambda$;
\item every family of fewer than $\cf(\lambda)$ many stationary subsets of $E^{\lambda^+}_{>\cf(\lambda)}$ reflect simultaneously;
\item $\square_{\lambda,{<\cf(\lambda)}}$ fails and $\square_{\lambda,\cf(\lambda)}$ holds;
\item for every graph $\mathcal G$ of size $\lambda^+$,
for every cardinal $\theta\in E^\lambda_{>\cf(\lambda)}$,
if every subgraph $\bar{\mathcal G}$ of $\mathcal G$ of size less than $\lambda$ satisfies $\chr(\bar{\mathcal G})\le\theta$,
then $\chr(\mathcal G)\le\theta$;
\item for every two graphs $\mathcal G,\mathcal H$, each of size at most $\lambda^+$,
if $\min\{\chr(\mathcal G),\chr(\mathcal H)\}\ge\lambda$, then $\chr(\mathcal G\times\mathcal H)\ge\lambda$;
\item if $\cf(\lambda)>\omega$,
then in the forcing extension to add $\lambda$-many Cohen reals, $\mathfrak{u}_{\kappa}<2^{\kappa}$
holds for every cardinal $\kappa<\lambda$ that is the successor of a singular of cofinality $\neq\cf(\lambda)$.
\end{enumerate}
\end{maintheorem}

Compared to the proof of Ben-David and Magidor \cite{BenMag86}, our proof replaces Magidor forcing by Radin forcing
and weak homogeneity is secured here by a variant of guiding generics that we call \emph{gurus}.
Let us now describe the breakdown of this paper.
\begin{itemize}
\item In Section~\ref{conventions}, we provide a few preliminaries, mostly around indecomposable ultrafilters and their combinatorial consequences.
\item In Section~\ref{mastersequenceandcoherentsequence}, we build a coherent sequence $\vec{U}$ of supercompact measures, together with a guru sequence $\vec{\mathbf t}$, while ensuring that the two cohere in a certain way.
\item In Section~\ref{radinforcing}, we define \emph{Radin forcing with gurus} $\mathbb R_{\vec U,\vec{\mathbf t}}$ using the objects obtained in Section~\ref{mastersequenceandcoherentsequence}.
We verify basic properties of this forcing as well as verify that it has the strong Prikry property.
\item In Section~\ref{cardinalstructureradin}, we determine the cardinal structure in the forcing extension by $\mathbb R_{\vec U,\vec{\mathbf t}}$.
\item In Section~\ref{pradinforcing}, we study a natural projection $\Pi$ of our Radin forcing $\mathbb R_{\vec U,\vec{\mathbf t}}$ to Prikry forcing $\mathbb P_{\vec U,\vec{\mathbf t}}$.
\item In Section~\ref{intermediateforcing}, we study intermediate models between the extension by $\mathbb P_{\vec U,\vec{\mathbf t}}$ and the extension by $\mathbb R_{\vec U,\vec{\mathbf t}}$.
\item In Section~\ref{weakhomogeneity}, we establish the aforementioned weak homogeneity of our forcing.
\item In Section~\ref{finalmodel}, we obtain the final model witnessing the conclusion of our Main Theorem.
\item In Section~\ref{concluding}, we outline a variation of the final model in which the $\sch$ fails everywhere.
\end{itemize}

\section{Indecomposable ultrafilters and additional preliminaries}\label{conventions}
The class of infinite regular cardinals is denoted by $\reg$, and we write $\reg(\lambda)$ for ${\reg}\cap\lambda$.
For every set of ordinals $x$, we let $\pi_x:\otp(x)\to x$ denote its inverse collapsing map.
By a \emph{supercompact measure} on $\mathcal{P}_\kappa(\lambda)$, we mean a normal fine $\kappa$-complete ultrafilter on $\mathcal{P}_\kappa(\lambda)$.

Given a filter $D$ over a cardinal $\kappa$, we write $D^*$ for its dual ideal $\{ \kappa\setminus X\mid X\in D\}$,
and $D^+$ for the collection $\mathcal P(\kappa)\setminus D^*$ of its positive sets.
A \emph{base} for $D$ is a subset $\mathcal {B}\s D$ such that for every $A \in D$, there is a $B\in\mathcal B$ such that $|B \setminus A|<\kappa$;
the least size of a base is denoted by $\chi(D):=\min\{|\mathcal{B}| \mid \mathcal{B}\text{ is a base for }D\}$.

\begin{definition}[Keisler, Prikry \cite{prikrythesis}]\label{def22}
Let $D$ be a filter over a cardinal $\kappa$, and let $\theta$ be a infinite cardinal.
$D$ is said to be \emph{$\theta$-indecomposable} iff for every $B\in D^+$ and every function $f:B\rightarrow \theta$,
there exists a $T \in [\theta]^{<\theta}$ such that $f^{-1}[T]$ is in $D^+$.
\end{definition}

We say that $D$ is \emph{$[\chi,\nu)$-indecomposable} iff it is $\theta$-indecomposable for every cardinal $\theta$ with $\chi\le\theta<\nu$.
Note that $D$ is \emph{$\nu$-complete} iff it is $[\omega,\nu)$-indecomposable.

\begin{fact}[Kunen-Prikry, {\cite[Theorem~0.2(b)]{MR0302441}}]\label{limitation1}
Suppose that $D$ is a uniform ultrafilter over a successor $\lambda^+$ of a singular cardinal $\lambda$.
If $\{\theta\in\reg(\lambda)\mid D\text{ is }\theta\text{-indecomposable}\}$
is cofinal in $\lambda$, then $D$ is not $\cf(\lambda)$-indecomposable.
\end{fact}

\begin{fact}[Usuba, {\cite[Proposition~7.3]{usuba2025}}]\label{limitation2}
Suppose that $D$ is a uniform ultrafilter over a successor $\lambda^+$ of a singular cardinal $\lambda$.
If $D$ is $\lambda$-indecomposable, then it is $\cf(\lambda)$-indecomposable.
\end{fact}

\begin{corollary}\label{atthesingulars} Suppose $\lambda$ is a singular cardinal and $\chi<\lambda$ is another cardinal.
If there exists a uniform $(\chi,\lambda)$-indecomposable ultrafilter over $\lambda^+$,
then there also exists a uniform $(\chi,\lambda)$-indecomposable ultrafilter over $\lambda$.
\end{corollary}
\begin{proof} Suppose $D$ is a uniform $(\chi,\lambda)$-indecomposable ultrafilter over $\lambda^+$.
By Fact~\ref{limitation1}, $D$ is not $\cf(\lambda)$-indecomposable. By Fact~\ref{limitation2}, then, $D$ is not $\lambda$-indecomposable either.
As $D$ is an ultrafilter, it altogether follows that we may fix a function $f:\lambda^+\rightarrow \lambda$ such that for every $X\in[\lambda]^{<\lambda}$, $f^{-1}[X]$ is not in $D$.
That is, $E:=\{ X\s\lambda\mid f^{-1}[X]\in D\}$ is a uniform filter over $\lambda$.
Clearly, it is moreover an ultrafilter.
\begin{claim} Let $\theta$ such that $D$ is $\theta$-indecomposable. Then $E$ is $\theta$-indecomposable.
\end{claim}
\begin{proof} Given a function $g:\lambda\rightarrow\theta$, consider $h:=g\circ f$ which is a function from $\lambda^+$ to $\theta$.
Find $T\in[\theta]^{<\theta}$ such that $h^{-1}[T]$ is in $D$.
In particular, $X:=f[h^{-1}[T]]$ is in $E$
and clearly $g^{-1}[T]$ covers $X$ (since $g[X]=g[f[h^{-1}[T]]]=T$).
\end{proof}
Thus, the uniform ultrafilter $E$ is indeed $(\chi,\lambda)$-indecomposable.
\end{proof}

\begin{fact}[Silver, {\cite[Lemma~2]{MR0360276}}]\label{finest}
Suppose $\chi=\cf(\chi)<2^\chi<\nu\le\kappa$ are infinite cardinals,
and $U$ is a $\chi$-incomplete $[\chi,\nu)$-indecomposable uniform ultrafilter over $\kappa$.
Then there exist a $\vartheta<\chi$ and a map $\varphi: \kappa\rightarrow \vartheta$ such that the following two hold:
\begin{itemize}
\item for every $\tau<\vartheta$, $\varphi^{-1}[\tau]\notin U$, and
\item for every function $f: \kappa\rightarrow \mu$ with $\mu<\nu$, there exists a function $g: \vartheta\rightarrow\mu$ such that $f = g\circ \varphi \pmod {U}$.
\end{itemize}
\end{fact}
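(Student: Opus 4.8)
The plan is to let $\vartheta$ be the least cardinal for which $U$ fails to be $\vartheta$-indecomposable, to take $\varphi$ to be a slightly adjusted witness to that failure, and to verify the second clause by transfinite induction on $\mu<\nu$. Since $U$ is $\chi$-incomplete it is not $[\omega,\chi)$-indecomposable, so $\vartheta$ is well defined and $\vartheta<\chi$. A routine argument gives that $\vartheta$ is regular: were it singular, then $U$ being $\cf(\vartheta)$-indecomposable and $\theta$-indecomposable for all $\theta<\vartheta$ would force it to be $\vartheta$-indecomposable. Hence $U$ is $[\omega,\vartheta)$-indecomposable, i.e.\ $\vartheta$-complete. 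Fixing any $f_0\colon\kappa\to\vartheta$ witnessing non-$\vartheta$-indecomposability and using that every ordinal $\tau<\vartheta$ lies in $[\vartheta]^{<\vartheta}$, we get $f_0^{-1}[\tau]\notin U$ for all $\tau<\vartheta$. Since $U$ is $\vartheta$-complete but not $\vartheta$-indecomposable, the critical point of $j\colon V\to\Ult(V,U)$ equals $\vartheta$, so $\vartheta<j(\vartheta)$ and we may choose $\varphi\colon\kappa\to\vartheta$ with $[\varphi]_U=\vartheta$; this $\varphi$ still has $\varphi^{-1}[\tau]\notin U$ for all $\tau<\vartheta$, giving the first clause, and it makes the pushforward $W:=\{S\s\vartheta\mid\varphi^{-1}[S]\in U\}$ — a uniform $\vartheta$-complete ultrafilter on $\vartheta$ — normal, which is convenient below.

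\smallskip

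For the second clause we prove, by induction on $\mu<\nu$, that every $f\colon\kappa\to\mu$ equals $g\circ\varphi\pmod U$ for some $g\colon\vartheta\to\mu$. If $\mu<\vartheta$, then $f$ is constant mod $U$ by $\vartheta$-completeness. If $\mu$ is a successor, say $\mu=\mu_0+1$, then either $f$ is constant $\mu_0$ mod $U$, or $f^{-1}[\mu_0]\in U$ and we restrict to it and apply the induction hypothesis. If $\mu$ is a limit and $f^{-1}[\mu']\in U$ for some $\mu'<\mu$, we again restrict and use the induction hypothesis; so suppose $f$ is unbounded mod $U$, and put $\rho:=\cf(\mu)$. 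When $\rho\neq\vartheta$ and $\rho<\mu$ (so $\mu$ is singular), we approximate $f$ by the $\rho$-valued map $\alpha\mapsto\min\{i:f(\alpha)<\mu_i\}$ for a fixed increasing cofinal $\langle\mu_i\rangle_{i<\rho}$; this map factors through $\varphi$ by $\vartheta$-completeness if $\rho<\vartheta$ and by the induction hypothesis if $\rho>\vartheta$, and since the corresponding parameter has range of size $\le\vartheta<\rho$, the leftover part of $f$ has range of size $<\mu$, to which the induction hypothesis applies once more; reassembling yields the factoring of $f$. When $\mu$ is regular (so $\rho=\mu$) and $\mu\ge\chi$, we instead use $\mu$-indecomposability (valid as $\chi\le\mu<\nu$) to bound $f$ below $\mu$, reducing to a smaller cardinal.

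\smallskip

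What remains — the technical core — is the case $\cf(\mu)=\vartheta$, together with the regular cardinals $\mu$ with $\vartheta\le\mu<\chi$; here the reductions above break down, essentially because a cofinal map $\vartheta\to\vartheta$ need not be bounded modulo the normal ultrafilter $W$, and because $\mu$-indecomposability is unavailable for $\mu$ strictly between $\vartheta$ and $\chi$. For such $\mu$, given $f\colon\kappa\to\mu$ unbounded mod $U$ and refining $\varphi$, the failure of factoring amounts to the assertion that no ``selector'' of the partition that $f$ imposes inside the $\varphi$-fibres is $U$-large, i.e.\ that every $U$-set splits some $\varphi$-fibre. The main obstacle is to derive a contradiction from this, and it is exactly here that the two so-far-unused hypotheses come in. On one hand $2^{\chi}<\nu$ bounds the combinatorial width of this refinement — the number of blocks of each $\varphi$-fibre, hence the length of the $\subseteq$-descending sequences of $U$-sets that a repeatedly failing selector produces. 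On the other hand, this width bound, together with $\vartheta$-completeness (which rules out short descending sequences leaving $U$), forces any genuine obstruction to manifest already as a failure of $\theta$-indecomposability for some regular $\theta$ with $\chi\le\theta<\nu$, contradicting $[\chi,\nu)$-indecomposability. Carrying out this last step — in particular controlling the cofinalities at which such a sequence can drop out of $U$ — is the heart of Silver's argument. Finally, $\vartheta=\omega$ is permitted (this is the case in which $U$ is not countably complete): then $\Ult(V,U)$ is ill-founded, the phrase ``critical point $\vartheta$'' should be read accordingly or everything recast in terms of factorings, and the first clause simply says that every tail of $\varphi$ lies in $U$.
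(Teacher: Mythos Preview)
The paper does not prove this statement; it is quoted as a Fact with a citation to Silver and no argument is supplied, so there is nothing in the paper to compare your attempt against.

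On your attempt itself: the setup is correct --- $\vartheta$ the least decomposability cardinal, $\vartheta<\chi$ regular, $U$ $\vartheta$-complete, $\varphi$ chosen with $[\varphi]_U=\vartheta$ --- and the induction on $\mu$ cleanly handles the routine cases. But your third paragraph is not a proof of the hard cases; it is an acknowledgment of the difficulty (``carrying out this last step \ldots\ is the heart of Silver's argument'') wrapped in suggestive language. Already the first hard instance $\mu=\vartheta$ is unresolved: normality of the derived $W$ on $\vartheta$ does not by itself imply that every $f:\kappa\to\vartheta$ satisfies $[f]_U=j(g)(\vartheta)$ for some $g:\vartheta\to\vartheta$; that is exactly the claim that the factor map $\Ult(V,W)\to\Ult(V,U)$ is onto $j(\vartheta)$, and it requires the hypotheses you have not yet deployed. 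If one tries to make your selector sketch precise --- enumerate the at most $\mu^\vartheta\le 2^\chi<\nu$ selectors, intersect their complements, and locate the first stage where the running intersection leaves $U$ --- the cofinality of that stage is only forced into $[\vartheta,\chi)$, not into $[\chi,\nu)$, so no contradiction with $[\chi,\nu)$-indecomposability results. The missing idea is that $\varphi$ cannot be fixed once and then tested against each $f$ by induction on $\mu$; it must be constructed globally as a \emph{finest} partition mod $U$ into fewer than $\chi$ pieces. One first shows (via $[\chi,\nu)$-indecomposability) that every $f:\kappa\to\mu$ with $\mu<\nu$ has, mod $U$, range of size $<\chi$, and then uses $2^\chi<\nu$ to keep iterated common refinements of such partitions inside the indecomposability window, so that a finest one exists and automatically satisfies the second clause.
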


\begin{definition}[Keisler] A filter $D$ is \emph{$(\lambda,\kappa)$-regular} iff there is a sequence $\langle A_\beta\mid\beta<\kappa\rangle$ of sets in $D$ such that
$\bigcap_{\beta\in B}A_\beta=\emptyset$ for every $B\in[\kappa]^\lambda$.
\end{definition}

Note that any uniform ultrafilter over a regular uncountable $\kappa$ is $(\kappa,\kappa)$-regular.

\begin{fact}[Kanamori, {\cite[Corollary~2.4]{MR480041}}]\label{kanreg} For every singular cardinal $\lambda$, every uniform ultrafilter over $\lambda^+$ is $(\lambda,\lambda^+)$-regular.
\end{fact}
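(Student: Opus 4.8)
The plan is to pass to the dual, combinatorial form of regularity and then exploit the singularity of $\lambda$ in order to shrink traces. Recall that a filter $D$ over a cardinal $\kappa$ is $(\vartheta,\kappa)$-regular iff there is a function $f\colon\kappa\to[\kappa]^{<\vartheta}$ with $\{\alpha<\kappa\mid\beta\in f(\alpha)\}\in D$ for every $\beta<\kappa$: from such an $f$, the sets $A_\beta:=\{\alpha\mid\beta\in f(\alpha)\}$ witness $(\vartheta,\kappa)$-regularity, since $\bigcap_{\beta\in B}A_\beta=\{\alpha\mid B\s f(\alpha)\}=\emptyset$ for every $B\in[\kappa]^{\vartheta}$ (as $|f(\alpha)|<\vartheta$ throughout), and conversely a regular family $\langle A_\beta\mid\beta<\kappa\rangle$ yields $f(\alpha):=\{\beta\mid\alpha\in A_\beta\}$. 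So, writing $\kappa:=\lambda^+$, the goal is to produce $f\colon\lambda^+\to[\lambda^+]^{<\lambda}$ with $\{\alpha\mid\beta\in f(\alpha)\}\in U$ for all $\beta<\lambda^+$.

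The weaker conclusion is for free: $f_0(\alpha):=\alpha$ witnesses $(\lambda^+,\lambda^+)$-regularity, since $\{\alpha\mid\beta\in f_0(\alpha)\}=\lambda^+\setminus(\beta+1)\in U$ by uniformity, while $|f_0(\alpha)|\le\lambda<\lambda^+$ for every $\alpha$ (alternatively, read it off an Ulam matrix on $\lambda^+$). The real task is to shrink each trace $f_0(\alpha)$ from cardinality $\le\lambda$ to cardinality $<\lambda$ while keeping all of the $A_\beta$ in $U$, and this is where singularity enters. Set $\mu:=\cf(\lambda)$ and fix an increasing sequence $\langle\lambda_i\mid i<\mu\rangle$ of regular cardinals cofinal in $\lambda$ with $\lambda_0>\mu$. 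For each $\alpha\in[\lambda,\lambda^+)$, using $|\alpha|=\lambda$, fix a $\s$-increasing sequence $\langle x^\alpha_i\mid i<\mu\rangle$ of subsets of $\alpha$ with $\bigcup_{i<\mu}x^\alpha_i=\alpha$ and $|x^\alpha_i|\le\lambda_i$ for all $i<\mu$ (pull back the initial segments $\langle\lambda_i\mid i<\mu\rangle$ along a bijection $\lambda\to\alpha$); for $\alpha<\lambda$ put $x^\alpha_i:=\alpha$. The intended witness is $f(\alpha):=x^\alpha_{g(\alpha)}$ for a function $g\colon\lambda^+\to\mu$ to be chosen; then $|f(\alpha)|\le\lambda_{g(\alpha)}<\lambda$ is automatic, so the whole matter reduces to selecting $g$ such that $A_\beta:=\{\alpha\mid\beta\in x^\alpha_{g(\alpha)}\}\in U$ for every $\beta<\lambda^+$.

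For $\beta<\lambda^+$, the function $h_\beta\colon(\beta,\lambda^+)\to\mu$ given by $h_\beta(\alpha):=\min\{i<\mu\mid\beta\in x^\alpha_i\}$ is total (as $\beta\in\alpha=\bigcup_{i<\mu}x^\alpha_i$), and $A_\beta\supseteq\{\alpha>\beta\mid g(\alpha)\ge h_\beta(\alpha)\}$; hence it would suffice that $g$ be a $\le_U$-upper bound for the family $\{h_\beta\mid\beta<\lambda^+\}$ inside the linear order ${}^{\lambda^+}\mu/U$. This bounding step is the crux, and it is the main obstacle: a priori a family of size $\lambda^+$ in ${}^{\lambda^+}\mu/U$ need not be bounded, and for $\lambda$ regular (already for $\lambda=\omega$) the corresponding assertion---that every uniform ultrafilter on $\lambda^+$ is $(\lambda,\lambda^+)$-regular---is not provable in $\zfc$. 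For $\lambda$ singular, the bound can nonetheless always be secured, and I would organize this by splitting on whether $U$ is $\mu$-indecomposable. If $U$ is $\mu$-indecomposable then, since $\mu$ is regular, each $h_\beta$ is already bounded below $\mu$ modulo $U$, so the only thing left is to make these bounds coherent in $\beta$; if $U$ is not $\mu$-indecomposable then it projects onto a uniform ultrafilter on $\mu$, and one instead uses the resulting cofinal map $\lambda^+\to\mu$ (after choosing the resolutions $\langle x^\alpha_i\rangle$ compatibly with it). In either case, the residual obstruction---the situation in which the candidate bounds refuse to cohere---is shown to entail structural features of weakly normal filters incompatible with $\mu=\cf(\lambda)<\lambda$. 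This last step is genuinely nontrivial; it is precisely Kanamori's analysis of weakly normal filters, in the spirit of Ketonen, and we refer to \cite[Corollary~2.4]{MR480041} for the full details.
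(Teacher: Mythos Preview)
The paper does not prove this statement at all: it is recorded as a \emph{Fact} with a bare citation to Kanamori \cite[Corollary~2.4]{MR480041}, so there is no in-paper argument to compare against. Your proposal is therefore not competing with an alternative proof but attempting to supply one where the authors chose simply to quote the result.

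As a proof, however, your proposal is incomplete in exactly the place you yourself flag. The reduction to the combinatorial form via $f\colon\lambda^+\to[\lambda^+]^{<\lambda}$ is correct, and the idea of writing $f(\alpha)=x^\alpha_{g(\alpha)}$ for a to-be-chosen $g\colon\lambda^+\to\mu$ is the right framework; the problem then genuinely reduces to finding a $\le_U$-upper bound for $\{h_\beta\mid\beta<\lambda^+\}$ in ${}^{\lambda^+}\mu/U$. But from that point on you do not actually carry out the argument: in the $\mu$-indecomposable case you say only that ``the only thing left is to make these bounds coherent in $\beta$'' without indicating how, and in the $\mu$-decomposable case you gesture at ``choosing the resolutions compatibly'' with a projected cofinal map, again without details. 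You then explicitly send the reader back to \cite[Corollary~2.4]{MR480041} for ``the full details'' of the weakly-normal-filter analysis. That makes the proposal circular: it ultimately cites the very result it set out to prove. If you want this to stand as a proof rather than an annotated reference, you need to actually execute the Ketonen--Kanamori step---showing that failure of the bound yields a weakly normal ultrafilter on $\lambda^+$ concentrating on points of cofinality $\mu$, and then deriving a contradiction from $\mu<\lambda$---rather than deferring it.
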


Next, we recall a Menas-type theorem for indecomposable ultrafilters on singular cardinals.
To this end, we'd need the following definition.
\begin{definition}\label{limitultrafilter}
Suppose $\kappa$ is a regular cardinal, $\langle \lambda_i \mid i<\kappa \rangle$ is a non-decreasing sequence of cardinals bigger than $\kappa$,
where each $\lambda_i$ carries a uniform ultrafilter $U_i$.
Suppose $E$ is a uniform ultrafilter on $\kappa$.
Define $\lambda:=\sup_{i<\kappa}\lambda_i$ and $$E\text{-}\lim_{i<\kappa}U_i:=\{X \subseteq \lambda \mid \{i<\kappa \mid X \cap \lambda_i \in U_i\}\in E\}.$$
\end{definition}

\begin{proposition}\label{menas}
Continuing with the notation of Definition~\ref{limitultrafilter}, we let $U:=E\text{-}\lim_{i<\kappa}U_i$.
Then:
\begin{enumerate}[label=\textup{(\arabic*)}]
\item $U$ is a uniform ultrafilter;
\item For every cardinal $\theta\in E^\lambda_{>\kappa}$ such that $\{i<\kappa\mid U_i\text{ is }\theta\text{-indecomposable}\}$ is in $E$,
$U$ is $\theta$-indecomposable;
\item For every cardinal $\theta\in(\kappa,\lambda)$ such that $\{i<\kappa\mid U_i\text{ is }\theta^+\text{-complete}\}$ is in $E$,
$U$ is $\theta$-indecomposable.
\end{enumerate}
\end{proposition}

\begin{proof}
(1) As $\lambda \cap \lambda_i=\lambda_i \in U_i$ for each $i<\kappa$, we infer that $\lambda \in U$.
To see that $U$ is closed under the taking supersets, let $A \subseteq B \subseteq \lambda$ with $A\in U$.
In particular, $Y:=\{i<\kappa\mid A\cap\lambda_i\in U_i\}$ is in $E$. For every $i\in Y$,
since $A \cap \lambda_i \s B \cap \lambda_i$, we have $B \cap \lambda_i \in U_i$. Consequently, $B \in U$.
As for closure under intersections, given $A_0,A_1 \in U$, first derive the corresponding $Y_0,Y_1 \in E$ as above.
Then $Y_0 \cap Y_1 \in E$, and for every $i \in Y_0 \cap Y_1$, $(A_0 \cap A_1) \cap \lambda_i=(A_0 \cap \lambda_i) \cap (A_1 \cap \lambda_i) \in U_i$. Consequently, $A_0 \cap A_1 \in U$.
For each $X \in U$, for cofinally many $i<\kappa$, we have that $X \cap \lambda_i \in U_i$, in particular, $|X \cap \lambda_i|=\lambda_i$. Hence $|X|=\lambda$. Thus $U$ is uniform.
Finally, to see that $U$ is an ultrafilter, let $A\s\lambda$.
Define $g: \kappa \rightarrow 2$ via $g(i):=0$ iff $A \cap \lambda_i \in i$.
Pick $Y \in E$ on which $g$ is a constant. If $g[Y]=\{0\}$, then $A \in U$; otherwise, $\lambda \setminus A \in U$.

(2) Suppose $\theta\in E^\lambda_{>\kappa}$ is a cardinal such that $X:=\{i<\kappa\mid U_i\text{ is }\theta\text{-indecomposable}\}$ is in $E$.
Now, given a function $f:B\rightarrow \theta$ with $B\in U$,
consider $Y:=\{i<\kappa\mid B\cap\lambda_i\in U_i\}$ which is a set in $U$.
For each $i\in X\cap Y$, as $B\cap\lambda_i\in U_i$,
we may pick some $T_i\in[\theta]^{<\theta}$ such that $(f\restriction\lambda_i)^{-1}[T_i]$ is in $U_i$.
As $\cf(\theta)>\kappa$, $T:=\bigcup_{i\in X\cap Y}T_i$ is in $[\theta]^{<\theta}$,
and for every $i\in X\cap Y$, it is the case that $f^{-1}[T]\cap\lambda_i\supseteq (f\restriction\lambda_i)^{-1}[T_i]\in U_i$,
so that $f^{-1}[T]\in U$, as sought.

(3) Suppose $\theta\in(\kappa,\lambda)$ is a cardinal such that $X:=\{i<\kappa\mid U_i\text{ is }\theta^+\text{-complete}\}$ is in $E$.
Now, given a function $f:B\rightarrow \theta$ with $B\in U$,
consider $Y:=\{i<\kappa\mid B\cap\lambda_i\in U_i\}$ which is a set in $U$.
For each $i\in X\cap Y$, as $B\cap\lambda_i\in U_i$,
we may pick some $\tau_i<\theta$ such that $(f\restriction\lambda_i)^{-1}[\{\tau_i\}]$ is in $U_i$.
Then $T:=\{\tau_i\mid i\in X\cap Y\}$ is an element of $[\theta]^{\le\kappa}\s[\theta]^{<\theta}$
for which $f^{-1}[T]\in U$, as sought.
\end{proof}

\begin{remark}
Every uniform ultrafilter $U$ over a singular cardinal $\lambda$ is not $\cf(\lambda)$-indecomposable.
To see it, fix a strictly increasing sequence of cardinals $\langle \lambda_i \mid i<\cf(\lambda) \rangle$ converging to $\lambda$,
and define $f: \lambda \rightarrow \cf(\lambda)$ via $f(\alpha):=\min\{i<\cf(\lambda) \mid \alpha<\lambda_i\}$.
Then for every $i<\cf(\lambda)$, $|f^{-1}[i]|=\lambda_i<\lambda$, and hence, $f^{-1}[i]$ is not in $U$.
\end{remark}

\subsection{Ultrafilter number}
The ultrafilter number of an infinite cardinal $\kappa$ is
$$\mathfrak{u}_\kappa:=\min\{\chi(U) \mid U \text{ is a uniform ultrafilter on }\kappa\}.$$
The following is an easy corollary to the work of Raghavan and Shelah \cite{Sh:1160}:
\begin{lemma}\label{l25} Suppose that:
\begin{itemize}
\item $\theta$ is a regular uncountable cardinal;
\item $S$ is a nonempty set of singular cardinals;
\item for every $\sigma\in S$, there is a $\theta$-indecomposable uniform ultrafilter over $\sigma^+$;
\item $\mu$ is a singular strong limit greater than $\sup(S)$ and of cofinality $\theta$.
\end{itemize}

Then, in the forcing extension to add $\mu$-many Cohen reals, $\mathfrak{u}_{\sigma^+}<2^{\sigma^+}$ for every $\sigma\in S$.
\end{lemma}
\begin{proof} Let $\sigma\in S$ and fix a $\theta$-indecomposable uniform ultrafilter $U$ over $\kappa:=\sigma^+$.
Set $\lambda:=\aleph_0$, so that $\lambda^{<\lambda}=\lambda$. Evidently, $\lambda<\cf(\mu)<\kappa<\mu$,
with $U$ being $\cf(\mu)$-indecomposable.
As $U$ is an ultrafilter, $U=U^+$ and then \cite[Definition~4]{Sh:1160} coincides with Definition~\ref{def22} above.
Thus, by \cite[Theorem~7]{Sh:1160}, in $V[G]$,
for $G$ an $\Add(\aleph_0,\mu)$-generic,
every uniform ultrafilter on $\sigma^+$ that extends $U$ has a basis of size no more than $\mu$.
Thus, $\mathfrak{u}_{\sigma^+}\le\mu$.
In addition, $2^{\sigma^+}\ge 2^{\aleph_0}\ge\mu$ with $\cf(2^{\sigma^+})>\sigma^+>\cf(\mu)$ and hence $2^{\sigma^+}>\mu\ge \mathfrak{u}_{\sigma^+}$.
\end{proof}
\subsection{Chromatic number of graphs}\label{compactnessgraph}
A \emph{graph} is a structure $\mathcal{G}=(G,E)$ where $E$ is an irreflexive symmetric relation on $G$.
A coloring $f:G \rightarrow\mu$ is \emph{good} iff for every $(x,y) \in E$, $f(x) \neq f(y)$.
The \emph{chromatic number of $\mathcal G$} is $\chr(\mathcal G):=\min\{\mu\in\on \mid f: G \to \mu \text{ is a good coloring} \}$.

The following lemma can be extracted from the proof of \cite[Theorem~5.2]{MR1117029}.

\begin{lemma}\label{2.12} Suppose that all of the following hold:
\begin{itemize}
\item $\lambda,\kappa,\mu,\theta$ are infinite cardinals with $\lambda\le\kappa$ and $\mu^{\theta^+}<\kappa$;
\item $U$ is a $(\lambda,\kappa)$-regular uniform ultrafilter on $\kappa$ that is $(\theta,\mu^{\theta^+}]$-indecomposable;
\item $\mathcal G$ is a graph of size no more than $\kappa$ such that all of its subgraphs of size less than $\lambda$ have chromatic number no more than $\mu$.
\end{itemize}

Then $\chr(\mathcal G)\le\mu^\theta$.
\end{lemma}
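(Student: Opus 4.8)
The plan is to exploit the $(\lambda,\kappa)$-regularity of $U$ to obtain a family of induced subgraphs of $\mathcal G$, each of size ${<}\lambda$, jointly covering every edge of $\mathcal G$ on a $U$-large set of indices, and then to form a ``compressed ultrafilter limit'' of good colorings of these subgraphs, where the compression is supplied by Silver's lemma (Fact~\ref{finest}). Since $|\mathcal G|\le\kappa$, we may assume that the vertex set $G$ is a subset of $\kappa$.

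First, fix a sequence $\langle A_\beta\mid\beta<\kappa\rangle$ witnessing the $(\lambda,\kappa)$-regularity of $U$; thus each $A_\beta\in U$, and, equivalently, $t_\alpha:=\{\beta<\kappa\mid\alpha\in A_\beta\}$ has size ${<}\lambda$ for every $\alpha<\kappa$. For $\alpha<\kappa$ set $S_\alpha:=\{v\in G\mid\alpha\in A_v\}$; then $S_\alpha\subseteq t_\alpha$, so $|S_\alpha|<\lambda$, and the induced subgraph $\mathcal G\restriction S_\alpha$ carries a good coloring $c_\alpha\colon S_\alpha\to\mu$, which we fix. Regularity is used through the complementary reading: for each $v\in G$ we have $\{\alpha<\kappa\mid v\in S_\alpha\}=A_v\in U$, whence for each edge $(u,v)$ of $\mathcal G$,
$$\{\alpha<\kappa\mid\{u,v\}\subseteq S_\alpha\}=A_u\cap A_v\in U,$$
so that $c_\alpha(u)\ne c_\alpha(v)$ for $U$-many $\alpha$.

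Second, for $v\in G$ define $f_v\colon\kappa\to\mu$ by $f_v(\alpha):=c_\alpha(v)$ if $v\in S_\alpha$ and $f_v(\alpha):=0$ otherwise. We may assume $U$ is $\theta^+$-incomplete; otherwise $U$ is complete enough that the relevant functions into $\mu$ are already constant modulo $U$, and the construction below degenerates to a good $\mu$-coloring, which suffices since $\mu\le\mu^\theta$. Now apply Fact~\ref{finest} with $\chi:=\theta^+$ and $\nu:=(\mu^{\theta^+})^+$: its arithmetic hypotheses hold because $\theta^+=\cf(\theta^+)<2^{\theta^+}\le\mu^{\theta^+}<\kappa$, and ``$(\theta,\mu^{\theta^+}]$-indecomposable'' is precisely ``$[\theta^+,\nu)$-indecomposable''. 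We obtain a $\vartheta<\theta^+$ and a map $\varphi\colon\kappa\to\vartheta$ such that each $f_v$ (which has range in $\mu<\nu$) factors as $f_v=g_v\circ\varphi\pmod U$ for some $g_v\colon\vartheta\to\mu$; fix $Y_v\in U$ with $Y_v\subseteq A_v$ and $f_v(\alpha)=g_v(\varphi(\alpha))$ for all $\alpha\in Y_v$. Define $c\colon G\to{}^{\vartheta}\mu$, where ${}^{\vartheta}\mu$ is the set of functions from $\vartheta$ to $\mu$, by $c(v):=g_v$. Since $\vartheta\le\theta$, there are at most $\mu^{\vartheta}\le\mu^\theta$ colors, so it remains only to check that $c$ is a good coloring.

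Finally, let $(u,v)$ be an edge of $\mathcal G$. Then $W:=A_u\cap A_v\cap Y_u\cap Y_v\in U$, hence $W\ne\emptyset$; choose $\alpha\in W$. From $\alpha\in A_u\cap A_v$ we get $u,v\in S_\alpha$, so $c_\alpha(u)\ne c_\alpha(v)$; from $\alpha\in Y_u$ we get $g_u(\varphi(\alpha))=f_u(\alpha)=c_\alpha(u)$, and likewise $g_v(\varphi(\alpha))=c_\alpha(v)$. Hence $g_u(\varphi(\alpha))\ne g_v(\varphi(\alpha))$, so $g_u\ne g_v$, i.e.\ $c(u)\ne c(v)$, as desired. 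I expect the main obstacle to be isolating the first step, namely recognizing that $(\lambda,\kappa)$-regularity is exactly the hypothesis that manufactures ${<}\lambda$-sized subgraphs covering each edge $U$-often; granting that, Fact~\ref{finest} carries the weight of converting a would-be $\kappa$-complete ultrafilter limit into one with fewer than $\mu^\theta$ values, and the goodness check is the one-line intersection argument above.
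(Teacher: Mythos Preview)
Your proof is correct and follows essentially the same route as the paper: use the $(\lambda,\kappa)$-regularity witness to build small subgraphs $S_\alpha$ (the paper's $G_\alpha$), color each with $c_\alpha$, form the functions $f_v(\alpha)=c_\alpha(v)$, and invoke Silver's lemma (Fact~\ref{finest}) with $\chi=\theta^+$, $\nu=(\mu^{\theta^+})^+$ to compress the color space to $\mu^\vartheta\le\mu^\theta$, handling the $\theta^+$-complete case separately by observing that $U$ is then $\mu^+$-complete. The only cosmetic difference is that the paper works with equivalence classes $[f_\beta]_\sim$ of partial functions on $A_\beta$, whereas you extend each $f_v$ to all of $\kappa$ and color by the concrete representative $g_v\in{}^{\vartheta}\mu$; both yield the same bound and the same one-line goodness check via $A_u\cap A_v\in U$.
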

\begin{proof} Without loss of generality, $\mathcal G=(\kappa,E)$.
Define a binary relation $\sim$ on the collection $\mathcal F:=\bigcup_{A\in U}{}^A\mu$,
letting $f\sim f'$ iff $\{\alpha\in\dom(f)\cap\dom(f')\mid f(\alpha)=f(\alpha')\}\in U$.
As $U$ is in particular a filter, $\sim$ is an equivalence relation.
\begin{claim}\label{claim2121} $(\mathcal F,\sim)$ has no more than $\mu^\theta$ many equivalence classes.
\end{claim}
\begin{proof} If $U$ is $\theta^+$-complete, then since it is also $(\theta,\mu]$-indecomposable,
it is $\mu^+$ complete, and then any $f\in\mathcal F$ is $\sim$-equivalent to some constant map,
and there are no more than $\mu$ many such maps.
Hereafter, assume that $U$ is not $\theta^+$-complete.
Thus, letting $\chi:=\theta^+$ and $\nu:=(\mu^{\theta^+})^+$,
it is the case that $U$ is $[\chi,\nu)$-indecomposable that is not $\chi$-complete.
It is also that case that $2^\chi=2^{\theta^+}\le\mu^{\theta^+}<\nu\le\kappa$.
By Lemma~\ref{finest}, then, we may fix a $\vartheta\le\theta$ and a map $\varphi: \kappa\rightarrow \vartheta$ such that,
for any $f: \kappa\to \mu$, there exists a function $g: \vartheta\rightarrow\mu$ such that $f = g\circ \varphi \pmod {U}$.
In particular, for every pair $f\not\sim f'$ of elements of $\mathcal F$ there are corresponding distinct elements $g,g'$ of ${}^{\vartheta}\mu$.
Therefore, the number of equivalence classes is no more than $\mu^{\vartheta}\le\mu^\theta$, as sought.
\end{proof}

Let $\langle A_\beta\mid\beta<\kappa\rangle$ be a witness that $U$ is $(\lambda,\kappa)$-regular.
Consequently, for every $\alpha<\kappa$, $G_\alpha:=\{\beta<\kappa\mid \alpha\in A_\beta\}$
has size less than $\lambda$, and hence we may pick a good coloring $c_\alpha:G_\alpha\rightarrow\mu$.
For every $\beta<\kappa$, for every $\alpha\in A_\beta$, it is the case that $\beta\in G_\alpha$,
so we may define a function $f_\beta:A_\beta\rightarrow\mu$ via $f_\beta(\alpha):=c_\alpha(\beta)$.
\begin{claim} $\beta\mapsto [f_\beta]_\sim$ is a good coloring of $\mathcal G$.
\end{claim}
\begin{proof} Fix an arbitrary pair $\beta\mathrel{E}\beta'$.
The set $A_\beta\cap A_{\beta'}$ is in $U$,
and for every $\alpha\in A_\beta\cap A_{\beta'}$, we have that $\beta,\beta'\in G_\alpha$ with $c_\alpha(\beta)\neq c_\alpha(\beta')$ (since $c_\alpha$ is a good coloring).
So $f_\beta\not\sim f_{\beta'}$.
\end{proof}
By the last two claims, $\chr(\mathcal G)\le\mu^\theta$.
\end{proof}

\begin{corollary}\label{cor211} Suppose that $\lambda$ is a singular strong limit cardinal,
and there exists a $(\cf(\lambda),\lambda)$-indecomposable uniform ultrafilter over $\lambda^+$.

For every graph $\mathcal G$ of size $\lambda^+$, for every cardinal $\mu$,
if $\chr(\mathcal H)\le \mu$ for every subgraph $\mathcal H$ of $\mathcal G$ of size less than $\lambda$,
then $\chr(\mathcal G)\le \mu^{\cf(\lambda)}$.
\end{corollary}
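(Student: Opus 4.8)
The plan is to read this off the preceding lemma, using the ultrafilter hypothesis of the corollary as its input and the strong‑limit assumption on $\lambda$ to verify the cardinal‑arithmetic side conditions.

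First I would dispose of a degenerate case. If $\mu\ge\lambda$, then by K\"onig's theorem $\mu^{\cf(\lambda)}\ge\lambda^{\cf(\lambda)}>\lambda$, so $\chr(\mathcal G)\le\lambda^+\le\mu^{\cf(\lambda)}$ and there is nothing to prove. Assume henceforth $\mu<\lambda$.

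Now I would apply the preceding lemma with the substitution: its $\lambda$, $\kappa$, $\mu$, $\theta$ become our $\lambda$, $\lambda^+$, $\mu$, $\cf(\lambda)$ respectively, and its $U$ is the given $(\cf(\lambda),\lambda)$-indecomposable uniform ultrafilter on $\lambda^+$. The three hypotheses are checked as follows. \emph{Arithmetic:} since $\lambda$ is a singular strong limit and hence a limit cardinal, both $\mu$ and $\cf(\lambda)^+$ lie below $\lambda$, so $\mu^{\cf(\lambda)^+}\le 2^{\max\{\mu,\cf(\lambda)^+\}}<\lambda<\lambda^+$; in particular $\lambda\le\lambda^+$ and $\mu^{\cf(\lambda)^+}<\lambda^+$. \emph{Ultrafilter:} $U$ is uniform on $\lambda^+$; it is $(\lambda,\lambda^+)$-regular by \cite[Corollary~2.4]{MR480041}; and since $\mu^{\cf(\lambda)^+}<\lambda$, the band $(\cf(\lambda),\mu^{\cf(\lambda)^+}]$ is contained in $(\cf(\lambda),\lambda)$, so the assumed $(\cf(\lambda),\lambda)$-indecomposability of $U$ gives in particular that $U$ is $(\cf(\lambda),\mu^{\cf(\lambda)^+}]$-indecomposable. \emph{Graph:} $\mathcal G$ has size $\lambda^+$, and by hypothesis every subgraph of $\mathcal G$ of size less than $\lambda$ has chromatic number at most $\mu$. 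The lemma therefore yields $\chr(\mathcal G)\le\mu^{\cf(\lambda)}$, as desired.

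The only non-routine point is matching the indecomposability band of $U$ to the one the lemma demands, and this is exactly where the strong-limit hypothesis on $\lambda$ is used: it forces $\mu^{\cf(\lambda)^+}$ strictly below $\lambda$, so that the interval $(\cf(\lambda),\lambda)$ on which $U$ is assumed indecomposable already subsumes the interval $(\cf(\lambda),\mu^{\cf(\lambda)^+}]$ that the lemma requires. I do not anticipate any further obstacle.
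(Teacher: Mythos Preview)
Your proposal is correct and follows exactly the intended approach: the paper marks this corollary with a bare \qed as an immediate consequence of the preceding lemma together with Kanamori's regularity fact, and your verification of the hypotheses (including the necessary separate handling of the degenerate case $\mu\ge\lambda$, where the lemma's condition $\mu^{\theta^+}<\kappa$ would fail) is accurate.
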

\begin{proof} The case $\mu \geq \lambda$ is trivial. The case $\mu<\lambda$ follows from
Lemma~\ref{2.12} using $\kappa:=\lambda^+$ and $\theta:=\cf(\lambda)$, bearing Fact~\ref{kanreg} in mind.
\end{proof}

\begin{definition}[Product graph]
Given two graphs $\mathcal G_0=(G_0,E_0)$ and $\mathcal G_1=(G_1,E_1)$, the product graph $\mathcal G_0\times\mathcal G_1$ is defined as follows:
\begin{itemize}
\item $V(\mathcal G_0\times\mathcal G_1)$ is $G_0\times G_1:=\{ (g_0,g_1)\mid g\in G_0, g_1\in G_1\}$;
\item $E(\mathcal G_0\times\mathcal G_1)$ is $E_0*E_1:=\{ \{(g_0,g_1), (g_0',g_1')\}\mid (g_0,g_0')\in E_0\ \&\ (g_1,g_1')\in E_1\}$.
\end{itemize}
\end{definition}

Motivated by Hedetniemi's conjecture \cite{MR2615860},
Hajnal \cite{MR815579} proved that for every infinite cardinal $\lambda$,
there are graphs $\mathcal G_0,\mathcal G_1$ of chromatic number $\lambda^+$ whose product has chromatic number $\lambda$.
Then, Soukup \cite{MR937544} gave a consistent example of a 2-cardinal gap by forcing to add graphs $\mathcal G_0,\mathcal G_1$ of size and chromatic number $\aleph_2$
whose product is countably chromatic.
Finally, in \cite{paper16}, an arbitrary gap was shown to be consistently feasible,
where for every infinite cardinal $\lambda$, the axiom $\sd_\lambda$ yields
two graphs $\mathcal G_0,\mathcal G_1$ of size and chromatic number $\lambda^+$ whose product is countably chromatic.
We now show that a $1$-cardinal gap is best possible on the grounds of $\zfc$ alone.

\begin{corollary}\label{c215} Suppose that $\lambda$ is a singular strong limit cardinal,
and there exists a $(\cf(\lambda),\lambda)$-indecomposable uniform ultrafilter over $\lambda^+$.
Then for every two graphs $\mathcal G_0,\mathcal G_1$ of size $\lambda^+$, if $\min\{\chr(\mathcal G_0),\chr(\mathcal G_1)\}\ge\lambda$,
then $\chr(\mathcal G_0\times\mathcal G_1)\ge\lambda$.
\end{corollary}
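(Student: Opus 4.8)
The plan is to argue by contradiction: shrink $\mathcal G_0$ to a subgraph of size $<\lambda$ using the compactness statement of Corollary~\ref{cor211}, and then apply an elementary pigeonhole fact saying that a product one of whose factors has very large chromatic number satisfies the ``easy half'' of Hedetniemi's conjecture.

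\emph{Step 1 (an auxiliary fact).} First I would record the following. \textit{If $\mathcal G$ is an infinite graph, $\mathcal H$ is an arbitrary graph, and $\chr(\mathcal H)>2^{|\mathcal G|}$, then $\chr(\mathcal G\times\mathcal H)\ge\chr(\mathcal G)$.} To see this, suppose not, and fix a cardinal $\nu<\chr(\mathcal G)$ together with a good coloring $c\colon V(\mathcal G)\times V(\mathcal H)\to\nu$. For each $h\in V(\mathcal H)$ set $c_h:=c(\cdot,h)\colon V(\mathcal G)\to\nu$. Since $\nu<\chr(\mathcal G)\le|\mathcal G|$, there are at most $\nu^{|\mathcal G|}\le|\mathcal G|^{|\mathcal G|}=2^{|\mathcal G|}<\chr(\mathcal H)$ maps of the form $c_h$, so $h\mapsto c_h$ is a coloring of $\mathcal H$ using fewer than $\chr(\mathcal H)$ colors, hence is not good; fix an edge $(h,h')$ of $\mathcal H$ with $c_h=c_{h'}=:d$. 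Then for every edge $(g,g')$ of $\mathcal G$, the pair $\{(g,h),(g',h')\}$ is an edge of $\mathcal G\times\mathcal H$, so $d(g)=c(g,h)\ne c(g',h')=d(g')$; thus $d$ is a good coloring of $\mathcal G$ with $\le\nu$ colors, contradicting $\nu<\chr(\mathcal G)$.

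\emph{Step 2 (the reduction).} Now assume toward a contradiction that $\chr(\mathcal G_0\times\mathcal G_1)<\lambda$, and fix an infinite cardinal $\nu<\lambda$ with $\chr(\mathcal G_0\times\mathcal G_1)\le\nu$. Since $\lambda$ is a strong limit, $\nu^{\cf(\lambda)}<\lambda\le\chr(\mathcal G_0)$, so the contrapositive of Corollary~\ref{cor211}, applied to $\mathcal G_0$ with $\mu:=\nu$, produces a subgraph $\mathcal G_0^*$ of $\mathcal G_0$ with $|\mathcal G_0^*|<\lambda$ and $\chr(\mathcal G_0^*)>\nu$; in particular $\mathcal G_0^*$ is infinite, as its chromatic number exceeds $\nu\ge\aleph_0$. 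Again since $\lambda$ is a strong limit, $2^{|\mathcal G_0^*|}<\lambda\le\chr(\mathcal G_1)$, so Step~1 with $\mathcal G:=\mathcal G_0^*$ and $\mathcal H:=\mathcal G_1$ gives $\chr(\mathcal G_0^*\times\mathcal G_1)\ge\chr(\mathcal G_0^*)>\nu$. But $\mathcal G_0^*\times\mathcal G_1$ is a subgraph of $\mathcal G_0\times\mathcal G_1$, so $\chr(\mathcal G_0^*\times\mathcal G_1)\le\chr(\mathcal G_0\times\mathcal G_1)\le\nu$, a contradiction. Hence $\chr(\mathcal G_0\times\mathcal G_1)\ge\lambda$.

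There is no serious obstacle here; the only real ingredient is the pigeonhole of Step~1, and the role of the hypotheses is precisely to make it applicable: Corollary~\ref{cor211} (which is where $|\mathcal G_0|=\lambda^+$ and the $(\cf(\lambda),\lambda)$-indecomposable uniform ultrafilter on $\lambda^+$ are used) lets us pass to a subgraph of $\mathcal G_0$ of size $<\lambda$, after which strong-limitness of $\lambda$ keeps $2^{|\mathcal G_0^*|}$ below $\lambda$, hence below $\chr(\mathcal G_1)$. The only routine points to check along the way are that chromatic number is monotone under taking subgraphs and that $\mathcal G_0^*\times\mathcal G_1$ is a subgraph of $\mathcal G_0\times\mathcal G_1$, both immediate from the definitions.
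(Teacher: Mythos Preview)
Your proof is correct and follows essentially the same strategy as the paper: both argue by contradiction, combine Corollary~\ref{cor211} with a Hajnal-type lemma about product graphs, and use the strong-limit hypothesis to close the cardinal-arithmetic gap.

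The one noteworthy difference is in the auxiliary lemma. The paper cites Hajnal's result from \cite{pims2004} as a black box: every subgraph of $\mathcal G$ of size $<\chr(\mathcal H)$ has chromatic number $\le\chr(\mathcal G\times\mathcal H)$. Your Step~1 proves a weaker version directly (requiring $2^{|\mathcal G|}<\chr(\mathcal H)$ rather than $|\mathcal G|<\chr(\mathcal H)$), which suffices here precisely because $\lambda$ is a strong limit. The paper then applies Corollary~\ref{cor211} in its direct form (all small subgraphs have small chromatic number $\Rightarrow$ the graph does), while you run the contrapositive to extract one small high-chromatic subgraph and plug it into Step~1. These are logically equivalent routes; your version has the advantage of being self-contained, at the cost of leaning slightly more on the strong-limit assumption.
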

\begin{proof} Hajnal \cite{pims2004} proved that for every two infinitely chromatic graphs $\mathcal G,\mathcal H$,
every subgraph of $\mathcal G$ of size less than $\chr(\mathcal H)$ has chromatic number $\le\chr(\mathcal G\times\mathcal H)$.
Now, towards a contradiction, suppose that $\mathcal G_0,\mathcal G_1$ are graphs of size $\lambda^+$ such that $\lambda\le\chr(\mathcal G_1)\le\chr(\mathcal G_0)$,
and yet $\mu:=\chr(\mathcal G_0\times\mathcal G_1)$ is strictly smaller than $\lambda$.
By Hajnal's lemma, then, every subgraph of $\mathcal G_0$ of size less than $\lambda$ has chromatic number $\le\mu$.
But, then, by Corollary~\ref{cor211}, $\chr(\mathcal G_0)\le\mu^{\cf(\lambda)}<\lambda$ since $\lambda$ is a strong limit. This is a contradiction.
\end{proof}

\subsection{Stationary reflection}
A family $\mathcal S$ of stationary subsets of a regular uncountable cardinal $\kappa$
is said to \emph{reflect simultaneously} iff there exists an ordinal $\beta\in E^\kappa_{>\omega}$ such that $S\cap\beta$ is stationary for every $S\in\mathcal S$.

\begin{fact}[\cite{paper59}]\label{statreflection} Suppose that $D$ is a $\nu$-complete uniform filter over a regular uncountable cardinal $\kappa$,
and there is a family of fewer than $\nu$ many stationary subsets of $\{\alpha<\kappa\mid D\text{ is }\cf(\alpha)\text{-indecomposable}\}$
that do not reflect simultaneously. Then any element of $D^+$ may be decomposed into $\kappa$-many $D^+$-sets, in particular, $D$ is not an ultrafilter.
\end{fact}

\subsection{Squares and the $C$-sequence number}
A \emph{$C$-sequence} over a regular uncountable cardinal $\kappa$
is a sequence $\vec C=\langle C_\beta\mid\beta<\kappa\rangle$
such that for every $\beta<\kappa$, $C_\beta$ is a closed subset of $\beta$ with $\sup(C_\beta)=\sup(\beta)$.
Square principles have to do with analogous sequences, but possibly wider, as follows.

\begin{definition}[{\cite[Definition~1.16]{MR3914943}}] $\square_\xi(\kappa,{<}\nu,{\sqsubseteq_\chi})$ asserts the existence of a sequence $\langle \mathcal C_\beta\mid\beta<\kappa\rangle$ such that:
\begin{itemize}
\item for every $\beta<\kappa$, $\mathcal C_\beta$ is a nonempty collection of fewer than $\nu$ many closed subsets $C$ of $\beta$ with $\otp(C)\le\xi$ and $\sup(C)=\sup(\beta)$;
\item for every $\beta<\kappa$, for every $C\in\mathcal C_\beta$ with $\otp(C)\ge\chi$,
for every $\alpha\in\acc(C)$, $C\cap\alpha\in\mathcal C_\alpha$;\footnote{Here, $\acc(C):=\{\alpha\in C\mid \sup(C\cap\alpha)=\alpha>0\}$.}
\item for every club $D\s\kappa$, there exists some $\beta\in\acc(D)$ such that $D\cap\beta\notin\mathcal C_\beta$.
\end{itemize}
\end{definition}
\begin{remark}
\begin{enumerate}
\item $\square(\kappa,{<}\nu)$ stands for $\square_\kappa(\kappa,{<}\nu,{\sqsubseteq_\omega})$;
\item $\square_{\lambda,{<}\nu}$ stands for $\square_\lambda(\lambda^+,{<}\nu,{\sqsubseteq_\omega})$ and $\square_{\lambda,\nu}$ stands for $\square_{\lambda,{<}\nu^+}$;
\item $\square_\lambda$ stands for $\square_\lambda(\lambda^+,{<}2,{\sqsubseteq_\omega})$;
\item $\square_\lambda^B$ stands for $\square_\lambda(\lambda^+,{<}2,{\sqsubseteq_\lambda})$.
\end{enumerate}
\end{remark}

\begin{fact}[\cite{paper59}]\label{squares}
Suppose that $\square(\kappa,{<}\nu)$ holds for a given pair $\nu<\kappa$ of infinite regular cardinals,
and that $D$ is a $\nu$-complete uniform filter over $\kappa$.

If there exists a cardinal $\theta\in\reg(\kappa)$ such that $D$ is $\theta$-indecomposable,
then any element of $D^+$ may be decomposed into $\kappa$-many $D^+$-sets, in particular, $D$ is not an ultrafilter.
\end{fact}

\begin{proposition}\label{529}
Suppose that $\square_{\lambda,{<\cf}(\lambda)}$ holds for a given singular cardinal $\lambda$,
and that $D$ is a uniform filter over $\lambda^+$.

If there exists a cardinal $\theta\in\reg(\lambda)$ such that $D$ is $\theta$-indecomposable,
then any element of $D^+$ may be decomposed into $\lambda^+$-many $D^+$-sets, in particular, $D$ is not an ultrafilter.
\end{proposition}
\begin{proof} Assuming the existence of $\theta\in\reg(\lambda)$ such that $D$ is $\theta$-indecomposable,
it is the case that $S:=\{\alpha<\lambda^+\mid D\text{ is }\cf(\alpha)\text{-indecomposable}\}$ is stationary.
By Fact~\ref{statreflection}, then, it suffices to prove that $S$ admits a stationary subset that does not reflect.
Now, by \cite[Theorem~2.3]{MR1838355}, $\square_{\lambda,{<\cf}(\lambda)}$ moreover implies that every stationary subset of $\lambda^+$ admits a stationary subset that does not reflect.
\end{proof}

\begin{proposition}[Apter-Cummings-Schimmerling]\label{552} Suppose:
\begin{itemize}
\item $V$ is an inner model of $W$;
\item in $V$, $\lambda$ is an inaccessible and $\square_\lambda^B$ holds;
\item in $W$, $\lambda$ is a singular cardinal;
\item $(\lambda^+)^V=(\lambda^+)^W$.
\end{itemize}

Then $W\models \square_{\lambda,\cf(\lambda)}$.
\end{proposition}
\begin{proof} This is well-known and follows from a minor tweak of the arguments of \cite[Lemma~2.1]{MR1740483} and \cite[Theorem~2.0]{MR1360144}, but we provide a proof.

Work in $V$.
Let $S:=E^{\lambda^+}_{<\lambda}$. As $\lambda$ is inaccessible, by \cite[Lemma~4.4]{MR1942302},
we may fix a matrix $\langle C_{\beta,i}\mid \beta\in S,~i\in D_\beta\cap E^{\lambda}_{>\omega}\rangle$ satisfying the following:
\begin{enumerate}
\item For every $\beta\in S$, $D_\beta$ is a club in $\lambda$;
\item For all $\beta\in S$ and $i\in D_\beta\cap E^{\lambda^+}_{>\omega}$, $C_{\beta,i}$ is a club in $\beta$ of size less than $\lambda$;\footnote{In particular, $\acc(C_{\beta,i})\s S$.}
\item For all $\beta\in S$, $i\in D_\beta\cap E^{\lambda^+}_{>\omega}$ and $\alpha\in\acc(C_{\beta,i})$,
$i\in D_\alpha$ and $C_{\alpha,i}=C_{\beta,i}\cap\alpha$.
\end{enumerate}

Still in $V$, as $\square^B_\lambda$ holds, we may fix a $C$-sequence $\langle C_\beta\mid \beta<\lambda^+\rangle$
such that for every $\beta\in E^{\lambda^+}_\lambda$:
\begin{enumerate}[label=(\roman*)]
\item $\otp(C_\beta)=\lambda$ and
\item for every $\alpha\in\acc(C_\beta)$, $C_\alpha=C_\beta\cap\alpha$.
\end{enumerate}
Put $B:=E^{\lambda^+}_\lambda\cup\{\acc(C_\beta)\mid \beta\in E^{\lambda^+}_\lambda\}$,
noting that
\begin{enumerate}[label=(\roman*), resume]
\item for every $\beta\in B$, for every $\alpha\in\acc(C_\beta)$, $\alpha\in S\cap B$ and $C_\alpha=C_\beta\cap\alpha$.
\end{enumerate}

Appealing to \cite[Corollary~4.2]{MR3856157} with $\theta:=\cf^W(\lambda)$, $\kappa:=\lambda$
and the sequence $\langle D_\beta\mid\beta<\lambda^+\rangle$,
we obtain in $W$ a subset $D\s\lambda$ of order-type $\theta$ such that for every $\beta<\lambda^+$,
there is a $\delta\in D\cap D_\beta$ with $\cf^V(\delta)>\omega$.

Working in $W$, we define a sequence $\langle\mathcal C_\beta\mid\beta<\lambda^+\rangle$ as follows:
\begin{itemize}
\item If $\beta=0$, then $\mathcal C_\beta:=\{\emptyset\}$;
\item If $\beta=\alpha+1$, then $\mathcal C_\beta:=\{\{\alpha\}\}$;
\item If $\beta\in B\setminus S$, then $\mathcal C_\beta:=\{C_\beta\}$;
\item If $\beta\in S\setminus B$, then $\mathcal C_\beta:=\{C_{\beta,i}\mid i\in D\cap D_\beta, \cf^V(\beta)>\omega\}$;
\item If $\beta\in S\cap B$, then $\mathcal C_\beta:=\{C_\beta,C_{\beta,i}\mid i\in D\cap D_\beta, \cf^V(\beta)>\omega\}$.
\end{itemize}

It is clear that for every $\beta<\lambda^+$, $\mathcal C_\beta$ is a nonempty collection of no more than $\cf(\lambda)$ many closed subsets $C$ of $\beta$
with $\sup(C)=\sup(\beta)$ and $\otp(C)\le\lambda$.
Thus, to verify that our sequence witnesses that $\square_{\lambda,\cf(\lambda)}$ holds, it suffices to show that for every $\beta<\lambda^+$, every $C\in\mathcal C_\beta$ and every $\alpha\in\acc(C)$,
it is the case that $C\cap\alpha\in\mathcal C_\alpha$.
To this end, let such $\beta$, $C$ and $\alpha$ be given. In particular, $\beta\in\acc(\lambda^+)$.
\begin{itemize}[label=$\blacktriangleright$]
\item If $\beta\in B$ and $C=C_\beta$, then, by Clause~(iii),
$\alpha\in S\cap B$ and $C\cap\alpha=C_\alpha\in\mathcal C_\alpha$.
\item Otherwise, $\beta\in S$ and $C=C_{\beta,i}$ for some $i\in D\cap D_\beta$ with $\cf^V(\beta)>\omega$.
In this case, by Clause~(2), $\alpha\in S$ and by Clause~(3), $i\in D_\alpha$ and $C_{\alpha,i}=C_{\beta,i}\cap\alpha$.
Therefore $C\cap\alpha=C_{\alpha,i}\in\mathcal C_\alpha$.\qedhere
\end{itemize}
\end{proof}

\begin{definition}[The $C$-sequence number of $\kappa$, \cite{MR4194561}]\label{defcnm}
If $\kappa$ is weakly compact, then let $\chi(\kappa):=0$. Otherwise, let
$\chi(\kappa)$ denote the least cardinal $\chi\le\kappa$
such that, for every $C$-sequence $\langle C_\beta\mid\beta<\kappa\rangle$,
there exist $\Delta\in[\kappa]^\kappa$ and $b:\kappa\rightarrow[\kappa]^{\chi}$
with $\Delta\cap\alpha\s\bigcup_{\beta\in b(\alpha)}C_\beta$
for every $\alpha<\kappa$.
\end{definition}

An inspection of the proof of \cite[Theorem~4.5]{paper64} makes it clear that the following holds.
\begin{fact}\label{csequencenumber} Suppose that $\lambda$ is a singular strong limit and
$\lambda^+$ carries a $(\cf(\lambda),\lambda)$-indecomposable uniform ultrafilter.
Then $\chi(\vec C)\le\cf(\lambda)$
for every transversal $\vec C$ for $\square(\lambda^+,{<}\lambda)$.
\end{fact}

\subsection{Trees and ascending paths}
A \emph{$\mu$-ascending path} through a $\kappa$-tree $\mathbf T=(T,\le_T)$ is
a sequence $\langle \psi_\beta\mid \beta\in B\rangle$ such that:
\begin{itemize}
\item $B$ is a cofinal subset of $\kappa$;
\item for every $\beta\in B$, $\psi_\beta$ is a function from $\mu$ to the $\beta^{\text{th}}$-level of $T$;
\item for every pair $\alpha<\beta$ of points in $B$, there are $i,j<\mu$ such that $\psi_\alpha(i)\le_T \psi_\beta(j)$.
\end{itemize}
\begin{remark}
A simple interpolation argument shows that the existence of a $\mu$-ascending path
is equivalent to the existence of one indexed by $B:=\kappa$.
\end{remark}

\begin{fact}[implicit in \cite{MR0434818}]\label{fact222} For every pair $\nu<\kappa$ of infinite regular cardinals,
if there exists a $\nu$-complete uniform ultrafilter over $\kappa$,
then no $\kappa$-Aronszajn tree admits a $\theta$-ascending path with $\theta<\nu$.
\end{fact}

\begin{lemma}\label{lemma223} Suppose that:
\begin{itemize}
\item $\lambda$ is a singular strong limit cardinal;
\item there exists a $\cf(\lambda)$-indecomposable uniform ultrafilter over $\lambda^+$;
\item for cofinally many $\nu<\lambda$, there exists a $(\cf(\lambda),\nu)$-indecomposable uniform ultrafilter over $\lambda^+$.
\end{itemize}

Then every $\lambda^+$-tree admits a $\cf(\lambda)$-ascending path.
\end{lemma}
\begin{proof} Suppose that $\mathbf T=(T,\le_T)$ is a $\lambda^+$-tree.
For every $\beta<\lambda^+$, we denote by $T_\beta$ the $\beta^{\text{th}}$-level of $\mathbf T$.
For every $y\in T\setminus\bigcup_{\alpha<\beta}T_\alpha$, we write $y\restriction\beta$ for the unique $x\in T_\beta$ to satisfy $x\le_T y$.
We start exactly as in the proof of \cite[Theorem~3.1]{MR1420265}.
\begin{claim} For some cardinal $\mu<\lambda$, $\mathbf T$ admits a $\mu$-ascending path.
\end{claim}
\begin{proof} Let $U_0$ be a $\cf(\lambda)$-indecomposable uniform ultrafilter on $\lambda^+$.
Let $\langle \lambda_k\mid k<\cf(\lambda)\rangle$ be the increasing enumeration of some cofinal subset of $\lambda$ of minimal possible order-type.

For each $\beta<\lambda^+$,
fix a surjection $s_\beta:\lambda\rightarrow T_\beta$,
and define a function $f_\beta:\lambda^+\setminus\beta$ via
$$f_\beta(\gamma):=\min\{k<\cf(\lambda)\mid s_\gamma(0)\restriction \beta\in s_\beta[\lambda_k]\},$$
and then pick a $k_\beta<\cf(\lambda)$ for which $f_\beta^{-1}[k_\beta]\in U_0$.

By the pigeonhole principle, find a $k^*<\cf(\lambda)$ for which $B:=\{\beta<\lambda^+\mid k_\beta=k^*\}$ is cofinal in $\lambda^+$.
Denote $\mu:=\lambda_{k^*}$ and $\psi_\beta:=s_\beta\restriction\mu$.
We claim that $\langle \psi_\beta\mid \beta\in B\rangle$ constitutes a $\mu$-ascending path.
To see it, fix a pair of points $\alpha<\beta$ in $B$. As $f_\alpha^{-1}[k^*]$ and $f_\beta^{-1}[k^*]$ are both in $U_0$, we may fix some $\gamma$ in their intersection.
It follows that $s_\gamma(0)\restriction\alpha\in s_\alpha[\mu]$ and $s_\gamma(0)\restriction\beta\in s_\beta[\mu]$.
Pick $i,j<\mu$ such that $s_\alpha(i)=s_\gamma(0)\restriction\alpha$ and $s_\beta(j)=s_\gamma(0)\restriction\beta$. Then $\psi_\alpha(i)=s_\alpha(i)\le_T s_\beta(j)=\psi_\beta(j)$.
\end{proof}

Let $\mu$ be given by the previous claim, and
let $\langle \psi_\beta\mid \beta<\lambda^+\rangle$ be a $\mu$-ascending path through $\mathbf T$.
If $\mu\le\cf(\lambda)$, then we are done, so suppose this is not the case.

Fix a large enough $\nu<\lambda$ such that the following two hold:
\begin{itemize}
\item there exists a $(\cf(\lambda),\nu)$-indecomposable uniform ultrafilter $U_1$ over $\lambda^+$;
\item $\max\{2^{\cf(\lambda)^+},\mu\}<\nu$.
\end{itemize}

If $U_1$ is $\cf(\lambda)^+$-complete, then it is $\nu$-complete,
and then Fact~\ref{fact222} implies that $\mathbf T$ is not Aronszajn, and so it moreover admits a $1$-ascending path.
Thus, assume that $U_1$ is not $\cf(\lambda)^+$-complete,
and appeal to Fact~\ref{finest} with $(\chi,\kappa):=(\cf(\lambda)^+,\lambda^+)$
to obtain a corresponding map $\varphi: \lambda^+\rightarrow\cf(\lambda)$.

For every $\beta<\lambda^+$, pick a function $f_\beta:\lambda^+\setminus\beta\rightarrow\mu\times\mu$ satisfying that for every $\gamma\in\lambda^+\setminus\beta$,
if $f_\beta(\gamma)=(i,j)$, then $\psi_\beta(i)\le_T \psi_\gamma(j)$.
Next, by the choice of $\varphi$, fix a function $g_\beta:\cf(\lambda)\rightarrow\mu\times\mu$ such that
$f_\beta = g_\beta\circ \varphi \pmod {U_1}$, i.e.,
the set $\Gamma_\beta:=\{\gamma\in\lambda^+\setminus\beta \mid f_\beta(\gamma)=g_\beta(\varphi(\gamma))\}$ is in $U_1$.
As $\lambda$ is a strong limit, fix a function $g:\cf(\lambda)\rightarrow \mu\times\mu$ for which $B:=\{\beta<\lambda^+\mid g_\beta=g\}$ is cofinal in $\lambda^+$.
Consider $I:=\{i<\mu\mid\exists j<\mu\,((i,j)\in\im(g))\}$
which is a set of size no more than $\cf(\lambda)$.
We claim that $\langle \psi_\beta\restriction I\mid \beta\in B\rangle$ witnesses that $\mathbf T$ admits a $\cf(\lambda)$-ascending path.
Furthermore:

\begin{claim} For every pair $\alpha<\beta$ of points in $B$, there exists an $i\in I$ such that $\psi_\alpha(i)\le_T \psi_\beta(i)$.
\end{claim}
\begin{proof} Let a pair $\alpha<\beta$ be as above.
Pick $\gamma\in \Gamma_\alpha\cap\Gamma_\beta$.
Set $(i,j):=g(\varphi(\gamma))$, and notice that $i\in I$. In addition,
\begin{itemize}
\item $f_\alpha(\gamma)=g_\alpha(\varphi(\gamma))=g(\varphi(\gamma))=(i,j)$, and
\item $f_\beta(\gamma)=g_\beta(\varphi(\gamma))=g(\varphi(\gamma))=(i,j)$.
\end{itemize}

Consequently, $\psi_\alpha(i)\le_T \psi_\gamma(j)$ and $\psi_\beta(i)\le_T\psi(j)$, so that $\psi_\alpha(i)\le_T \psi_\beta(i)$.
\end{proof}
This completes the proof.
\end{proof}

\begin{corollary}\label{cor224} Suppose that:
\begin{itemize}
\item $\lambda$ is a singular strong limit cardinal;
\item there exists a $\cf(\lambda)^+$-complete uniform ultrafilter over $\lambda^+$;
\item there exists a $(\cf(\lambda),\lambda)$-indecomposable uniform ultrafilter over $\lambda^+$.
\end{itemize}

Then there are no $\lambda^+$-Aronszajn trees.
\end{corollary}
\begin{proof} By Lemma~\ref{lemma223} and Fact~\ref{fact222}.
\end{proof}

\begin{corollary}[Shelah-Magidor, {\cite[Theorem~3.1]{MR1420265}}]
If $\lambda$ is the singular limit of strongly compact cardinals, then there are no $\lambda^+$-Aronszajn trees.
\end{corollary}
\begin{proof} Suppose $\lambda$ is as above. In particular it is a singular strong limit,
and for every cardinal $\nu<\lambda$, there is a $\nu$-complete uniform ultrafilter over $\lambda^+$.
Now, appeal to Lemma~\ref{lemma223}.
\end{proof}

The following is well-known.
\begin{fact}\label{prop226} For every cardinal $\lambda$,
if there exists an inner model with the same $\lambda^+$ in which $\lambda$ is strongly inaccessible,
then there exists a special $\lambda^+$-Aronszajn tree.
\end{fact}

\section{Guru sequences and a coherent sequence of measures}\label{mastersequenceandcoherentsequence}
In this section we build a coherent sequence of supercompact measures with some strengthening
of coherent guiding generics that we call \emph{gurus}.\footnote{We thank Inamdar for suggesting this terminology.}
These will be used to construct the Radin forcing and its variants in future sections of this paper.

As a first step, for a regular cardinal $\alpha$, we attach a few objects, as follows:
\begin{itemize}
\item for every $x\in\mathcal P_\alpha(\alpha^+)$, we write $\alpha_x:=\otp(\alpha\cap x)$;
\item $\mathcal{R}_\alpha(\alpha^+) := \{x\in\mathcal{P}_\alpha (\alpha^+) \mid \alpha\cap x\in \reg\}$;
\item $\mathcal{A}_\alpha(\alpha^+) := \{x\in \mathcal{R}_\alpha(\alpha^+) \mid \otp(x) = (\alpha _x)^+\}$;
\item $\mathcal{B}_\alpha(\alpha^+):=\{x\in \mathcal{A}_\alpha(\alpha^+)\mid \alpha_x\text{ is strongly inaccessible}\}$;
\item for every $x \in \mathcal{R}_\alpha(\alpha^+)$, let $\mathbb{C}_{\alpha,x}:=\col((\alpha_x)^{++},{<}\alpha)$.
\end{itemize}

\begin{definition}[Guru]\label{def_guru} For a regular cardinal $\alpha$, a sequence $\vec{t}=\langle t_i \mid i<\alpha^{++} \rangle$ is a \emph{guru for $\alpha$} iff all of the following hold:
\begin{enumerate}
\item for every $i<\alpha^{++}$, $t_i:\mathcal P_\alpha(\alpha^+)\rightarrow V$ is a function such that $t_i(x) \in \mathbb{C}_{\alpha,x}$ for every $x\in \mathcal{A}_\alpha(\alpha^+)$;
\item for all $j<i<\alpha^{++}$, the set $\{ x\in\mathcal P_\alpha(\alpha^+)\mid t_j(x) \nsubseteq t_i(x)\}$ is nonstationary in $\mathcal P_\alpha(\alpha^+)$;
\item for every function $D: \mathcal{A}_\alpha(\alpha^+) \to V$ such that $D(x)$ is a dense subset of $\mathbb{C}_{\alpha,x}$ for every $x\in \mathcal{A}_\alpha(\alpha^+)$,
there is an $i<\alpha^{++}$ such that $t_i(x) \in D(x)$ for all $x\in \mathcal{A}_\alpha(\alpha^+)$.
\end{enumerate}
\end{definition}
\begin{remark}
We call $\vec{t}$ a guru since it can guide us similarly to a guiding generic but with a wider reach.
Let us explain. First, by convention, we write ``$s \in \vec{t}$'' to express that $s=t_i$ for some $i<\alpha^{++}$.
Now, if $U$ is a supercompact measure on $\mathcal{P}_\alpha(\alpha^+)$ and $j_U:V \to M_U$ is the corresponding ultrapower embedding, then $\mathcal{G}_U:=\{[s]_U \mid s \in \vec{t}\}$ satisfies all of the following:
\begin{itemize}
\item $\mathcal G_U$ is a subset of $\col(\alpha^{++},<j_U(\alpha))^{M_U}$;
\item For all $u,v \in \mathcal{G}_U$, there is a $w \in \mathcal{G}_U$ such that $w \leq u,v$;
\item For every dense open subset $D$ of $\col(\alpha^{++},<j_U(\alpha))^{M_U}$ living in $M_U$, we have $\mathcal{G}_U \cap D \neq \emptyset$.
\end{itemize}
Thus, $\mathcal{G}_U$ generates an $(M_U,\col(\kappa^{++},{<}j_U(\kappa)))$-generic filter.
\end{remark}

\begin{lemma}\label{guru_lemma}
Suppose that $\alpha$ is a regular limit cardinal such that $2^{\alpha^+}=\alpha^{++}$. Then there is a guru for $\alpha$.
\end{lemma}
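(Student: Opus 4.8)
The plan is to construct the guru $\vec t = \langle t_i \mid i < \alpha^{++}\rangle$ by a transfinite recursion of length $\alpha^{++}$, diagonalizing against all the density requirements in clause (3) while maintaining clause (2) as an inductive hypothesis. First I would set up the bookkeeping: since $2^{\alpha^+} = \alpha^{++}$ and each $\mathbb C_{\alpha,x} = \col((\alpha_x)^{++}, {<}\alpha)$ is a poset of size ${<}\alpha$, a function $D$ with $\dom(D) = \mathcal A_\alpha(\alpha^+)$ and $D(x)$ a dense subset of $\mathbb C_{\alpha,x}$ is essentially a function from $\mathcal P_\alpha(\alpha^+)$ into $H_\alpha$ (modulo coding); the number of such functions is $(2^{\alpha^+})^{\alpha^+}$ — wait, more carefully, $|{}^{\mathcal P_\alpha(\alpha^+)}H_\alpha| = \alpha^{<\alpha} \cdot 2^{\alpha^+}$. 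Since $\alpha$ is a regular limit cardinal, $\alpha^{<\alpha} = \alpha$, so this count is exactly $2^{\alpha^+} = \alpha^{++}$. Hence I can fix an enumeration $\langle D^i \mid i < \alpha^{++}\rangle$ of all such ``density systems'' $D$, arranged so that each one appears cofinally often (or just once — cofinal appearance is not needed here since we only need to hit each $D$ once, but it does no harm).

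The heart of the argument is the recursive construction of $t_i$. Suppose $\langle t_j \mid j < i\rangle$ has been defined satisfying clauses (1) and (2) (restricted to indices below $i$). I want to define $t_i$ so that (a) $t_i(x) \in D^i(x)$ for every $x \in \mathcal A_\alpha(\alpha^+)$, and (b) for all $j < i$, the set $\{x \mid t_j(x) \not\subseteq t_i(x)\}$ is nonstationary. For a fixed $x \in \mathcal A_\alpha(\alpha^+)$, I need a single condition $p \in D^i(x) \subseteq \mathbb C_{\alpha,x}$ that lies below $t_j(x)$ for "enough" $j < i$. The obstacle is that there are $|i| \le \alpha^+$ many conditions $t_j(x)$ to get below, while $\mathbb C_{\alpha,x} = \col((\alpha_x)^{++}, {<}\alpha)$ is only ${<}\alpha$-closed and $\alpha_x < \alpha$ can be much smaller than $\alpha^+$ — so I cannot simply take a lower bound of all of them. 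This is exactly why clause (2) is stated "modulo nonstationarity" rather than "for all $x$": the resolution is to thin out by a club. By the inductive hypothesis, for each pair $j < j' < i$ the set $B_{j,j'} := \{x \mid t_j(x) \subseteq t_{j'}(x)\}$ contains a club $C_{j,j'}$ in $\mathcal P_\alpha(\alpha^+)$. Here is where I must be a little clever about the order type of $i$: enumerate $i$ in increasing order as $\langle j_\xi \mid \xi < \bar\imath\rangle$ where $\bar\imath = \otp(i) \le \alpha^+$, and form the diagonal intersection $C := \triangle_{\xi < \bar\imath} \bigcap_{\eta < \xi} C_{j_\eta, j_\xi}$, which is again a club in $\mathcal P_\alpha(\alpha^+)$ by ${<}\alpha^+$-closure of the club filter on $\mathcal P_\alpha(\alpha^+)$ together with normality. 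For $x \in C \cap \mathcal A_\alpha(\alpha^+)$, the conditions $\{t_{j_\eta}(x) \mid \eta < \xi_x\}$ (where $\xi_x$ indexes those predecessors "captured" by $x$ via the diagonal intersection) form a $\subseteq$-increasing, i.e. $\le$-decreasing, chain in $\mathbb C_{\alpha,x}$; and crucially $\xi_x < \alpha_x \le |x \cap \alpha|$ — actually I must check that the length of this chain is below $(\alpha_x)^{++}$, the closure degree of $\mathbb C_{\alpha,x}$, which follows because membership in the diagonal intersection forces $j_\eta \in $ (the appropriate coding of) $x$, bounding the chain length by $\otp(x) = (\alpha_x)^+ < (\alpha_x)^{++}$. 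Then $\mathbb C_{\alpha,x}$ being $(\alpha_x)^{++}$-closed gives a lower bound $q_x$ of the chain, and by density I extend $q_x$ into $D^i(x)$ to get $t_i(x)$. Off $C$ (and off $\mathcal A_\alpha(\alpha^+)$ entirely), I define $t_i(x)$ arbitrarily subject to clause (1) — e.g. any element of $D^i(x)$, or $\emptyset$ if $x \notin \mathcal A_\alpha(\alpha^+)$.

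Finally I verify the three clauses for the constructed $\vec t$. Clause (1) is immediate from the construction. Clause (2): given $j < i$, the set $\{x \mid t_j(x) \not\subseteq t_i(x)\}$ is disjoint from the club $C$ used at stage $i$ (intersected with $\mathcal A_\alpha(\alpha^+)$), because for $x \in C \cap \mathcal A_\alpha(\alpha^+)$ with $j$ among the captured predecessors we arranged $t_i(x) \le t_j(x)$; one checks that every $x \in C$ above a suitable threshold captures $j$, so the bad set is nonstationary. Clause (3): given an arbitrary density system $D$, it equals $D^i$ for some $i < \alpha^{++}$ by the enumeration, and then $t_i(x) \in D(x)$ for all $x \in \mathcal A_\alpha(\alpha^+)$ by construction. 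The main obstacle, as flagged, is the tension between the length ${\le}\alpha^+$ of the chain of predecessors and the mere $(\alpha_x)^{++}$-closure of $\mathbb C_{\alpha,x}$ — this is overcome precisely by the normality/diagonal-intersection trick, which reduces the effective chain length at each $x$ to something bounded by $\otp(x) = (\alpha_x)^+$, and by the hypothesis $2^{\alpha^+} = \alpha^{++}$ which keeps the recursion short enough to enumerate all requirements. I should double-check one subtlety: that the diagonal intersection over an index set of order type $\le \alpha^+$ is legitimate for the club filter on $\mathcal P_\alpha(\alpha^+)$, which holds since that filter is $\alpha^+$-complete and normal in the $\mathcal P_\alpha$ sense, but for index sets of size $\alpha^+ = $ (the cardinality we're diagonalizing over) one uses the standard fact that $\mathcal P_\alpha(\alpha^+)$-clubs are closed under diagonal intersections indexed by $\alpha^+$; this is exactly the regime where the construction lives and is where I would be most careful in the full write-up.
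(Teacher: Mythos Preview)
Your approach is essentially the paper's: enumerate all density systems in type $\alpha^{++}$, build $\vec t$ recursively, and at stage $i$ use a diagonal intersection of the clubs witnessing the inductive instances of clause~(2) so that at each $x\in\mathcal A_\alpha(\alpha^+)$ only $\le(\alpha_x)^+$ many predecessors are captured, giving a chain below the $(\alpha_x)^{++}$-closure of $\mathbb C_{\alpha,x}$.

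There is one slip to repair. You write ``enumerate $i$ in increasing order as $\langle j_\xi\mid\xi<\bar\imath\rangle$ with $\bar\imath=\otp(i)\le\alpha^+$''; but $\otp(i)=i$, and for $\alpha^+<i<\alpha^{++}$ this is false, so the diagonal intersection over an index set of order type $i$ is not literally available in $\mathcal P_\alpha(\alpha^+)$. The paper handles this by fixing a \emph{cofinal} sequence $\langle j_\beta\mid\beta<\cf(i)\rangle$ in $i$, so the index set has size $\cf(i)\le\alpha^+$; the diagonal intersection $\{x\mid\forall\beta,\gamma\in x\,(\beta<\gamma<\cf(i)\rightarrow x\in C_{\beta,\gamma})\}$ is then a genuine club, and for $x$ in it the chain $\langle t_{j_\beta}(x)\mid\beta\in x\cap\cf(i)\rangle$ has length $|x\cap\cf(i)|\le(\alpha_x)^+$. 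Clause~(2) for an arbitrary $j<i$ then follows by transitivity through some $j_\beta\ge j$. Once you make this adjustment (or, equivalently, use a bijection $e\colon\alpha^+\to i$ and diagonalize over pairs via a pairing function), your argument is correct and matches the paper's.
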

\begin{proof} The proof is a standard diagonalization argument as in \cite[Lemma~3.5]{MR2607544} and even more so as in \cite[Lemma~36]{EskewHayut2018}.
Let $\langle D_i \mid i<\alpha^{++} \rangle$ be an enumeration of all functions $D$ as in Clause~(3) of Definition~\ref{def_guru}.
We build $\vec{t}=\langle t_i \mid i<\alpha^{++} \rangle$ by recursion, as follows.

$\br$ $t_0:\mathcal P_\alpha(\alpha^+)\rightarrow V$ is chosen arbitrarily subject to the requirement that $t_0(x)\in D_0(x)$ for every $x\in \mathcal{A}_\alpha(\alpha^+)$, and
$t_0(x):=\emptyset$ for any other $x$.

$\br$ For every $i<\alpha^{++}$ such that $\langle t_j\mid j\le i\rangle$ has already been successfully defined to satisfy (1)--(3),
pick a $t_{i+1}:\mathcal P_\alpha(\alpha^+)\rightarrow V$ satisfying that $t_{i+1}(x)$ is an element of $D_{i+1}(x)$ extending $t_i(x)$ for every $x\in \mathcal{A}_\alpha(\alpha^+)$,
and
$t_{i+1}(x):=\emptyset$ for any other $x$.
It is clear that $\langle t_j\mid j\le i+1\rangle$ maintains requirements (1)--(3).

$\br$ For every $i\in\acc(\alpha^{++})$ such that $\langle t_j\mid j<i\rangle$ has already been successfully defined to satisfy (1)--(3),
first let $\langle j_\beta \mid \beta<\cf(i) \rangle$ be a strictly increasing sequence of ordinals converging to $i$.
For all $\beta<\gamma<\cf(i)$, let $C_{\beta,\gamma}$ be a club in $\mathcal P_\alpha(\alpha^+)$ disjoint from $\{ x\in\mathcal P_\alpha(\alpha^+)\mid t_{j_\beta}(x) \nsubseteq t_{j_\gamma}(x)\}$.
Then consider the following set $C$ which is a club in $\mathcal P_\alpha(\alpha^+)$:
$$C:=\diagonal_{\beta<\gamma<\cf(i)} C_{\beta,\gamma}:=\{x \in \mathcal{P}_\alpha(\alpha^+) \mid \forall \beta,\gamma\in x\,(\beta<\gamma<\cf(i)\rightarrow x \in C_{\beta,\gamma}) \}.$$
The definition of $t_{i}:\mathcal P_\alpha(\alpha^+)\rightarrow V$ is now divided into two:

$\br\br$ For every $x \in C\cap\mathcal A_\alpha(\alpha^+)$, since $\langle t_{j_\beta}(x) \mid \beta \in x\cap \cf(i) \rangle$ is a decreasing sequence of conditions
in $\mathbb{C}_{\alpha,x}$ and since $| x\cap \cf(i) | <(\alpha_x)^{++}$, we may define $t_i(x)$ to be an element of $D_i(x)$ extending all the conditions in the said sequence.

$\br\br$ For any other $x$, we simply let $t_i(x):=\emptyset$.

It is clear that $\langle t_j\mid j\le i\rangle$ maintains requirements (1)--(3).
\end{proof}

Hereafter, we work in a model $V$ of $\zfc$ in which all of the following hold:
\begin{itemize}
\item $\gch$;
\item $\kappa$ is a $\kappa^{++}$-supercompact cardinal;
\item $\square_\lambda^B$ holds for every inaccessible $\lambda\le\kappa$.
\end{itemize}

This can be obtained by starting with any $\zfc$ model where $\kappa$ is a supercompact cardinal,
running the Jensen iteration to force $\gch$ while preserving the supercompactness of $\kappa$,
then running \cite[Theorem~2]{MR1740483} to preserve the $\gch$ and $\kappa^{++}$-supercompactness of $\kappa$,
while adding $\square_\lambda^B$ for every inaccessible $\lambda\le\kappa$.

\medskip

The following lemma tells us that there is a \emph{coherent sequence of supercompact measures $\vec{U}$}
and a corresponding \emph{coherent sequence of gurus $\vec{\mathbf{t}}$}.
Its proof is an adaptation of \cite[Proposition~2.2]{Krueger2007}.
\begin{lemma}\label{coherentsequence}
There are:
\begin{itemize}
\item a map $o^{\vec{U}}$ from a set of strongly inaccessible cardinals to the ordinals,
\item a sequence $\vec{U}=\left\langle\langle U_{\alpha,i} \mid i<o^{\vec{U}}(\alpha)\rangle\Mid \alpha \in \dom(o^{\vec{U}})\right\rangle$,\footnote{It is possible that $o^{\vec{U}}(\alpha)=0$, in which case, $\langle U_{\alpha,i} \mid i<o^{\vec{U}}(\alpha)\rangle$ is the empty sequence. These cardinals may end up as successor elements of the generic Radin club.} and
\item a sequence $\vec{\mathbf{t}}=\langle \vec{t}_\alpha \mid \alpha \in \dom(o^{\vec{U}}) \rangle$
\end{itemize}
such that all of the following hold:
\begin{enumerate}[label=\textup{(\arabic*)}]
\item $\max(\dom(o^{\vec{U}}))=\kappa$ with $o^{\vec{U}}(\kappa)=\kappa^{+3}$;
\item for every $\alpha \in \dom(o^{\vec{U}})$, $\vec{t}_\alpha$ is a guru for $\alpha$;
\item for every $\alpha \in \dom(o^{\vec{U}})$ and every $i<o^{\vec{U}}(\alpha)$:
\begin{enumerate}[label=\textup{(\alph*)}]
\item $U_{\alpha,i}$ is a supercompact measure on $\mathcal{P}_\alpha(\alpha^+)$,
and we let $j_{\alpha,i}: V \to M_{\alpha,i}$ denote the corresponding ultrapower embedding;
\item $\{ x\in \mathcal B_\alpha(\alpha^+)\mid \alpha_x\in\dom(o^{\vec{U}})\}\in U_{\alpha,i}$;
\item $j_{\alpha,i}(\vec{U}\restriction \alpha) \restriction \alpha=\vec{U} \restriction \alpha$;
\item $j_{\alpha,i}(\vec{U}\restriction \alpha) (\alpha)=\langle U_{\alpha,k}\mid k<i\rangle$;
\item $j_{\alpha,i}(\vec{\mathbf{t}} \restriction \alpha)(\alpha)=\vec{t}_\alpha$.
\end{enumerate}
\end{enumerate}
\end{lemma}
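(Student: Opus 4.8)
The plan is to construct $o^{\vec U}$, $\vec U$, and $\vec{\mathbf t}$ simultaneously by recursion on strongly inaccessible $\alpha\le\kappa$, so that the objects with index below $\alpha$ are entirely determined by the recursion, and at stage $\alpha$ we choose the measures $U_{\alpha,i}$ and the guru $\vec t_\alpha$. Fixing in advance a $\kappa^{++}$-supercompactness embedding $j\colon V\to M$ with $\crit(j)=\kappa$, we will first pin down the ``target'' at $\kappa$: we want $o^{\vec U}(\kappa)=\kappa^{+3}$ and each $U_{\kappa,i}$ a supercompact measure on $\mathcal P_\kappa(\kappa^+)$ with $j_{\kappa,i}(\vec U\restriction\kappa)\restriction\kappa=\vec U\restriction\kappa$, $j_{\kappa,i}(\vec U\restriction\kappa)(\kappa)=\langle U_{\kappa,k}\mid k<i\rangle$, and $j_{\kappa,i}(\vec{\mathbf t}\restriction\kappa)(\kappa)=\vec t_\kappa$. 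The recursion for $\alpha<\kappa$ runs the same scheme one level down, relative to the relevant $\alpha^{++}$-supercompactness of $\alpha$ provided inside $M$ (reflection of the supercompactness of $\kappa$ gives a stationary — indeed measure-one — set of such $\alpha$ below $\kappa$). This is the structure of Krueger's construction of a coherent sequence of measures that we are adapting, with the new ingredient that a guru must also be threaded through.

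The core step, carried out at a generic stage $\alpha$ (the case $\alpha=\kappa$ being the prototype), goes as follows. Suppose $\vec U\restriction\alpha$, $\vec{\mathbf t}\restriction\alpha$, and $o^{\vec U}\restriction\alpha$ have been built so as to satisfy Clauses (2)--(3) below $\alpha$. Since $\alpha$ is a regular limit cardinal with $2^{\alpha^+}=\alpha^{++}$ (by $\gch$), Lemma~\ref{guru_lemma} furnishes a guru $\vec t_\alpha$ for $\alpha$; fix it. Now define, for each ordinal $i<\alpha^{+3}$, the measure $U_{\alpha,i}$ on $\mathcal P_\alpha(\alpha^+)$ by a ``lifting of the supercompactness embedding past the first $i$ coordinates'' trick. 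Concretely, let $j=j_\alpha\colon V\to M$ be an $\alpha^{+3}$-supercompactness embedding — or work inside the already-chosen $M$ at the previous level — and build, by an internal recursion of length $\alpha^{+3}$ in the sense of $M$, embeddings whose ultrapowers one takes successively; then $X\in U_{\alpha,i}$ iff $[\id]\in j_i(X)$ for the appropriate $j_i$. The point of taking the index set to have length $\alpha^{+3}$ at $\alpha=\kappa$ (and similarly, the appropriate value of $o^{\vec U}(\alpha)$ determined by $M$ at lower stages) is that the supercompactness of $\kappa$ is strong enough to carry this many iterations while keeping each $U_{\kappa,i}$ a genuine supercompact measure on $\mathcal P_\kappa(\kappa^+)$; this is exactly the numerology in Krueger's proposition. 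Clause (3)(a) is then immediate, and Clauses (3)(c)--(3)(e) hold because $j_{\alpha,i}$ was designed to ``read off'' $\vec U\restriction\alpha$ below $\alpha$, to place $\langle U_{\alpha,k}\mid k<i\rangle$ at coordinate $\alpha$, and to place the guru $\vec t_\alpha$ we just fixed at coordinate $\alpha$ of $\vec{\mathbf t}$.

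For Clause (3)(b) — that $\{x\in\mathcal B_\alpha(\alpha^+)\mid \alpha_x\in\dom(o^{\vec U})\}\in U_{\alpha,i}$ — one unwinds the definition: $X\in U_{\alpha,i}$ iff $j_{\alpha,i}(\alpha)$-many things hold of $[\id]_{U_{\alpha,i}}=j_{\alpha,i}[\alpha^+]$, and by normality/fineness this set is the image of $\alpha$ under the collapse, so $\alpha_{[\id]}=\alpha$; since the recursion arranges $\alpha\in j_{\alpha,i}(\dom(o^{\vec U}))$ and $\alpha$ is strongly inaccessible with $\otp([\id])=j_{\alpha,i}(\alpha)^+=(\alpha_{[\id]})^+$ computed in $M_{\alpha,i}$, we get $[\id]\in j_{\alpha,i}(\mathcal B_\alpha(\alpha^+))$ with $\alpha_{[\id]}\in j_{\alpha,i}(\dom(o^{\vec U}))$, which is precisely the $U_{\alpha,i}$-largeness we want; here one also uses that $\gch$ makes $\otp(x)=(\alpha_x)^+$ a measure-one condition. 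Finally Clause (1), $o^{\vec U}(\kappa)=\kappa^{+3}$, is just the choice of length of the top-level index set. I expect the main obstacle to be the careful bookkeeping in the iterated-ultrapower construction of the $U_{\alpha,i}$: verifying that each intermediate model is closed enough (under $\alpha^+$-sequences) for $U_{\alpha,i}$ to be a supercompact — not merely fine $\kappa$-complete — measure on $\mathcal P_\alpha(\alpha^+)$, and that the coherence equations (3)(c)--(3)(e) are preserved through all $\alpha^{+3}$ steps without the indices drifting; this is where the precise counting from Krueger's argument, adapted to also carry $\vec{\mathbf t}$ along, does the real work, while the rest is routine.
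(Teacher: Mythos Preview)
Your proposal has two genuine gaps that the paper's approach is designed to avoid.

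First, the guru coherence (3)(e) cannot be obtained by ``fixing'' a guru $\vec t_\alpha$ freely via Lemma~\ref{guru_lemma} and then building the measures afterwards. The quantity $j_{\alpha,i}(\vec{\mathbf t}\restriction\alpha)(\alpha)$ is completely determined by $U_{\alpha,i}$ and $\vec{\mathbf t}\restriction\alpha$; you do not get to ``design $j_{\alpha,i}$ to place the guru $\vec t_\alpha$ we just fixed at coordinate $\alpha$.'' What (3)(e) demands is that a \emph{single} guru $\vec t_\alpha$ agree with $j_{\alpha,i}(\vec{\mathbf t}\restriction\alpha)(\alpha)$ simultaneously for all $i<o^{\vec U}(\alpha)$. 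The paper handles this by reversing the order of choice: at each $\alpha$ it picks $g(\alpha)$ to be a $\preceq$-maximal pair $(\vec t,\langle U_i\mid i<l\rangle)$ subject to the constraint $j_{U_i}(g\restriction\alpha)(\alpha)=(\vec t,\langle U_k\mid k<i\rangle)$ for every $i$. The guru is thus not an arbitrary witness to Lemma~\ref{guru_lemma} but is forced by the coherence requirement itself; and at $\kappa$ the objects are read off from $j(g)(\kappa)$ rather than chosen independently.

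Second, the way you obtain length $\kappa^{+3}$ is off: you invoke an ``$\alpha^{+3}$-supercompactness embedding'' at $\alpha=\kappa$, but the hypothesis is only $\kappa^{++}$-supercompactness. The paper gets $o^{\vec U}(\kappa)=\kappa^{+3}$ by a maximality argument rather than by iterating an embedding: one first bounds $l_\kappa\le\kappa^{+3}$ by counting representing functions, and then argues that if $l_\kappa<\kappa^{+3}$, the measure $U^*$ derived from the original $\kappa^{++}$-supercompact $j$ lies in $M$ and, via the factoring $j=k\circ i$ with $\crit(k)\ge(\kappa^{+3})^N$, can be appended to $j(g)(\kappa)$, contradicting maximality. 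Your ``iterated ultrapower of length $\alpha^{+3}$'' sketch does not produce this; Krueger's Proposition~2.2, which you cite, is itself a maximality argument of exactly this shape, not an iteration.
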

\begin{proof} For the sake of this proof, define a relation $\preceq$ over $V\times V$,
by letting $(a,b)\preceq (c,d)$ iff $a=c$ and $b\s d$.
Let $j:V \to M$ witness that $\kappa$ is $\kappa^{++}$-supercompact. Define a map $g: \kappa \to V_\kappa$ by recursion on $\alpha<\kappa$, as follows.
For every $\alpha<\kappa$ such that $g\restriction\alpha$ has already been defined,
set $g(\alpha):=\emptyset$ unless $\alpha$ is strongly inaccessible and $2^{\alpha^+}=\alpha^{++}$,
in which case, we let $g(\alpha)$ be a maximal element of $(X_\alpha,\preceq)$, where $X_\alpha$ is the collection of all pairs $(\vec{t},\langle U_{i} \mid i<l \rangle)$, where $l$ could possibly be $0$, such that all of the following hold:
\begin{itemize}
\item $\vec t$ is a guru for $\alpha$;
\item for every $i<l$, $U_i$ is a supercompact measure on $\mathcal P_\alpha(\alpha^+)$, and we let $j_{U_i}:V \to M_i$ denote the corresponding ultrapower embedding;
\item $\langle U_i\mid i<l\rangle$ is $\lhd$-increasing, that is, $U_i\in M_k$ for all $i<k<l$, or $l=0$, i.e. the sequence $\langle U_i \mid i<l \rangle$ is empty;
\item for every $i<l$, $j_{U_i}(g\restriction\alpha)(\alpha)$ coincides with $(\vec{t},\langle U_k \mid k<i \rangle)$.
\end{itemize}

Note that by Lemma~\ref{guru_lemma}, there exists a $\vec t$ for which $(\vec t,\emptyset)\in X_\alpha$. As $X_\alpha$ is nonempty,
by Zorn's lemma, $(X_\alpha,\preceq)$ indeed admits a maximal element.
This completes the recursion.

Next, define $o^{\vec{U}}$ to be the following function from $\{\alpha<\kappa\mid g(\alpha)\neq\emptyset\}\cup\{\kappa\}$ to the ordinals, as follows.
For every $\alpha\in\dom(o^{\vec{U}})\cap\kappa$, write $(\vec{t}_\alpha,\langle U_{\alpha,i} \mid i<l_\alpha \rangle):=g(\alpha)$,
and then let $o^{\vec{U}}(\alpha):=l_\alpha$.
Finally, to define $o^{\vec{U}}(\kappa)$, write $(\vec{t}_\kappa,\langle U_{\kappa,i} \mid i<l_\kappa \rangle):=j(g)(\kappa)$
and then let $o^{\vec{U}}(\kappa):=l_\kappa$.

\begin{claim} $l_\kappa=\kappa^{+3}$.
\end{claim}
\begin{proof} For every $\iota<l_\kappa$, let $j_{U_{\kappa,\iota}}:V \to M_{\kappa,\iota}$ denote the corresponding ultrapower embedding.
Note that for every $k<l_\kappa$, for every $\iota<k$, $U_{\kappa,\iota}$ is in $M_{\kappa,k}$ and is represented by a function from $\mathcal{P}_\kappa(\kappa^+)$ to $V_\kappa$,
and the number of such functions is $\kappa^{++}$, so that $k<\kappa^{+3}$. Therefore, $o^{\vec{U}}(\kappa) \leq \kappa^{+3}$.
Suppose for a contradiction that $o^{\vec{U}}(\kappa)<\kappa^{+3}$. Using our original $\kappa ^{++}$-supercompact embedding we define a fine, normal ultrafilter $U^*$ on $\mathcal{P}_\kappa(\kappa^+)$ as follows:
$$U^*:=\{ A \subseteq \mathcal{P}_\kappa(\kappa^+)\mid j``\kappa^+ \in j(A)\}.$$
By the closure degree of $M$, it is the case that $U^* \in M$. Let $j^*:M \to \Ult(M,U^*)$. Also let $i:V \to N$ be the ultrapower embedding using $U^*$ in $V$,
and let $k:N \to M$ be defined via $k([f])_{U^*}:=j(f)(j``\kappa^+)$.
It is routine to check that $j=k \circ i$.
Since $M$ and $N$ are closed under $\kappa^+$-sequences of $V$, we have that $\kappa^{++}=(\kappa^{++})^M=(\kappa^{++})^N$, and hence, $\crit(k)\geq (\kappa^{+3})^N$.

Let $(\langle \vec{t}',U_{\kappa,\iota}')\mid \iota<l_\kappa' \rangle:= i(g)(\kappa)$. We see that $k(\vec{t}')=\vec{t}_\kappa$.
Since $k(l_\kappa')=l_\kappa<\kappa^{+3}$, we have $l_\kappa'<(\kappa^{+3})^N$. Therefore, $k(l_\kappa')=l_\kappa'$ and for all $\iota$, $k(U_{\kappa,\iota}')=U_{\kappa,\iota}'$.
It follows that $i(g)(\kappa)=\langle (\vec{t},U_{\kappa,\iota}) \mid \iota<l_\kappa \rangle$.
By \cite[Lemma~1.4]{Krueger2007}, $j^*=i \restriction M$, so it is the case that $j^*(g)(\kappa)=\langle (\vec{t},U_{\kappa,\iota}) \mid \iota<l_\kappa \rangle$.
Because we can attach $U^*$ to the end of $j^*(g)(\kappa)$, this contradicts the maximality used in the definition of $g$.
\end{proof}
Let $\vec{U}:=\langle\langle U_{\alpha,i} \mid i<o^{\vec{U}}(\alpha)\rangle\mid \alpha \in \dom(o^{\vec{U}})\rangle$ and
$\vec{\mathbf{t}}:=\langle \vec{t}_\alpha \mid \alpha \in \dom(o^{\vec{U}}) \rangle$.
Note that $\vec{U}$ and $\vec{\mathbf{t}}$ are as sought.
Indeed, a verification at level $\alpha<\kappa$ follows from the definition of $g$. A verification at level $\kappa$ follows from the definition of $g$ and an agreement between $V$ and $M$.
\end{proof}

Let $\vec{U}$ and $\vec{\mathbf{t}}$ be as in Lemma~\ref{coherentsequence}.
\begin{convention}
For each $\alpha\in\dom(o^{\vec{U}})$, write $\vec{t}_\alpha=\langle t_{\alpha,i} \mid i<\alpha^{++} \rangle$.
We write $\bigcap \vec{U}(\alpha)$ for $\bigcap_{i<o^{\vec{U}}(\alpha)} U_{\alpha, i}$.
Next, for each $A\in \bigcap \vec{U}(\alpha)$, we disjointify $A=A_0 \cup A_1$ as follows:
\begin{itemize}
\item $A_0:=\{ x \in A\mid o^{\vec{U}}(\alpha_x)=0\}$, and
\item $A_1:=\{x \in A \mid o^{\vec{U}}(\alpha_x)>0\}$.
\end{itemize}
Note that $A_0 \in U_{\alpha,0}$.
\end{convention}
\begin{definition}[Implicit gurus] For each $\alpha\in\dom(o^{\vec{U}})$, we let $\mathcal G(\alpha)$ be the collection of all functions $I$ with domain in $\bigcap\vec U(\alpha)$ such that, for some $i<\alpha^{++}$,
$I\s t_{\alpha,i}$.
\end{definition}

\begin{definition}[Strong inclusion]\label{stronginclusion}
For two sets of ordinals $x,y$, we write
$$x \ssim y\text{ iff }x \subseteq y\text{ and }|x|^+<\otp(|\sup(x)|\cap y).$$
And we denote a class $x^\uparrow:=\{ y\mid x\ssim y\}$.
\end{definition}

The motivation of Definition~\ref{stronginclusion} is that for $\alpha,i,$ if $A_x \in U_{\alpha,i}$ for all $x \in \mathcal{P}_\alpha(\alpha^+)$, then the corresponding diagonal intersection

$$A:=\diagonal_{x \in \mathcal{P}_\alpha(\alpha^+)} A_x:=\{y \in \mathcal{P}_\alpha(\alpha^+) \mid \forall x (x \ssim y \to y \in A_x) \}$$
will belong to $U_{\alpha,i}$, which is necessary when proving the Prikry property (see for example the proof of Lemma~\ref{strongprikrylemma1} below).
Furthermore, note that for $x,y\in\mathcal A_\alpha(\alpha^+)$, $x\ssim y$ iff $x\s y$ and $(\alpha_x)^{++}<\alpha_y$.

\begin{definition}
For $A \in \bigcap \vec{U}(\kappa)$ and $x \in \mathcal{P}_\kappa(\kappa^+)$, write $A \restriction x:=\{y \in A \mid y \ssim x \}$.
\end{definition}
Assume that $x \in \mathcal{A}_\kappa(\kappa^+)$. Recall that $\pi_x$ stands for the inverse collapsing map of $x$ and $\kappa_x=\otp(\kappa \cap x)$.
This map naturally induces an isomorphism from $\mathcal{P}_{\kappa_x}((\kappa_x)^+)$ to $\mathcal{P}_{\kappa_x}(x)$ via $\mathcal{P}_{\kappa_x}((\kappa_x)^+) \ni x \to \pi_x[a] \in \mathcal{P}_{\kappa_x}(x) $ (we denote the function by $\pi_x^*$), which in turn gives rise to a translation of each $U_{\kappa_x,i}$ to a corresponding ultrafilter on $\mathcal{P}_{\kappa_x}(x)$, which we hereon call $\mathbf{U}_{x, i}$.
Note that
\begin{align*}
\mathbf{U}_{x, i}&=\{A \subseteq \mathcal{P}_{\kappa_x}(x) \mid (\pi_x^*)^{-1}[A] \in U_{\kappa_x,i}\}\\
&=\{A \subseteq \mathcal{P}_{\kappa_x}(x) \mid j_{\kappa_x,i}[\kappa_x^+] \in j_{\kappa_x,i}((\pi_x^*)^{-1}[A])\} \\
&=\{A \subseteq \mathcal{P}_{\kappa_x}(x)\mid j_{\kappa_x,i}(\pi_x^*)(j_{\kappa_x,i}[\kappa_x^+]) \in j_{\kappa_x,i} (A)\}\\
&=\{A \subseteq \mathcal{P}_{\kappa_x}(x)\mid j_{\kappa_x,i}[x] \in j_{\kappa_x,i} (A)\}
\end{align*}
Thus, $\mathbf{U}_{x, i}=\{A \in \mathcal P_{\kappa_x}(x)\mid j_{\kappa_x,i}``x \in j_{\kappa_x,i} (A)\}$.
We likewise define $\vec{\mathbf{U}}_{x}:=\langle \mathbf{U}_{x, i} \mid i<o^{\vec{U}}(\kappa_x) \rangle$ and $\bigcap \vec{\mathbf{U}}(x):= \bigcap_{i<o^{\vec{U}}(\kappa_x)} U_{x,i}$.
We also disjointify each $A$ in $\bigcap \vec{\mathbf{U}}(x)$ as $A_0 \cup A_1$ in the same manner as before, so that $A_0 \in \mathbf{U}_{x,0}$.
\begin{proposition}\label{measureone1}
Let $A \in \bigcap \vec{U}(\kappa)$.
The set $B:= \{x\in \mathcal{P}_\kappa(\kappa^+) \mid A \restriction x \in \bigcap \vec{\mathbf{U}}({x})\}$ is in $\bigcap \vec{U}(\kappa)$.
\end{proposition}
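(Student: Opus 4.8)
The plan is to fix $i<o^{\vec U}(\kappa)$ and show that $B\in U_{\kappa,i}$; since $i$ was arbitrary this gives $B\in\bigcap\vec U(\kappa)$. Working with the ultrapower $j=j_{\kappa,i}:V\to M_{\kappa,i}$, by the defining characterization of membership in a supercompact measure on $\mathcal P_\kappa(\kappa^+)$ it suffices to verify that $j``\kappa^+\in j(B)$, i.e.\ that in $M_{\kappa,i}$ the set $x=j``\kappa^+$ satisfies $j(A)\restriction x\in\bigcap\vec{\mathbf U}(x)$, where $\vec{\mathbf U}_x$ is the translate of $j(\vec U)\restriction x$ under $\pi_x$. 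The first thing I would pin down is what these objects are when $x=j``\kappa^+$: here $\kappa_x=\otp(\kappa\cap j``\kappa^+)=\kappa$ (since $\crit(j)=\kappa$ and $j``\kappa=\kappa$), the collapse $\pi_x$ agrees with $j$ on $\kappa^+$, and clause (3)(d) of Lemma~\ref{coherentsequence} gives $j(\vec U\restriction\kappa)(\kappa)=\langle U_{\kappa,k}\mid k<i\rangle$. So $o^{j(\vec U)}(\kappa_x)=i$ in $M_{\kappa,i}$, and under the translation by $\pi_x=j\restriction\kappa^+$ the ultrafilter $\mathbf U_{x,k}$ is exactly the push-forward of $U_{\kappa,k}$ for each $k<i$.

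Next I would identify $j(A)\restriction x$. By definition $j(A)\restriction j``\kappa^+=\{y\in j(A)\mid y\ssim j``\kappa^+\}$, and the key computation is that for $y\in\mathcal P_\kappa(\kappa^+)^V$ one has $j``y\ssim j``\kappa^+$ whenever $y\in A$ — this is where I expect to lean on the structure built into $A$: membership in $\bigcap\vec U(\kappa)$ forces $A$ to concentrate on $x\in\mathcal B_\kappa(\kappa^+)$ with $\kappa_x$ strongly inaccessible (clause (3)(b) plus the remark that each $U_{\kappa,i}$ concentrates on $\mathcal B_\kappa(\kappa^+)$), so $\otp(x)=(\kappa_x)^+$ and $(\kappa_x)^{++}<\kappa$; applying $j$ and using $\crit(j)=\kappa$ then yields $|j``x|^+=(\kappa_x)^{++}<\kappa=\otp(\kappa\cap j``\kappa^+)\le\otp(|\sup(j``x)|\cap j``\kappa^+)$, which is exactly $j``x\ssim j``\kappa^+$. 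Conversely the elements of $j(A)$ strongly included in $j``\kappa^+$ are (essentially) of the form $j``y$; more precisely $j(A)\restriction j``\kappa^+=j``(A\restriction\text{(something)})$ up to the collapse, so that translating back by $\pi_x^{-1}=(j\restriction\kappa^+)^{-1}$ carries $j(A)\restriction x$ to a set of the form $\{y\in\mathcal P_\kappa(\kappa^+)\mid \ldots\}$ that I must show lies in every $U_{\kappa,k}$ with $k<i$.

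The heart of the matter is then: for each $k<i$, show $j(A)\restriction x\in\mathbf U_{x,k}$, equivalently (pushing through $\pi_x$) that the corresponding $V$-set is in $U_{\kappa,k}$. Here one applies the coherence clause (3)(c), $j_{\kappa,i}(\vec U\restriction\kappa)\restriction\kappa=\vec U\restriction\kappa$, together with a reflection/normality argument: the statement ``$A\restriction x\in\bigcap\vec{\mathbf U}(x)$'' is, after unwinding, a statement about $x$ that holds for $U_{\kappa,k}$-almost all $x\in\mathcal B_\kappa(\kappa^+)$ precisely because $A\in U_{\kappa,k}$ and the translated measures $\vec{\mathbf U}_x$ cohere with $\vec U$ below $\kappa_x$ — this is the standard ``normal measure sees its own restrictions'' phenomenon, and it is what makes $B$ itself measure-one rather than merely nonempty. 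I expect the main obstacle to be the bookkeeping around the strong-inclusion relation $\ssim$: one must check carefully that $A\restriction x$ as computed inside the ultrapower (where $x=j``\kappa^+$) really is the $\pi_x$-image of a set that $A$'s membership in $\bigcap\vec U(\kappa)$ delivers, i.e.\ that nothing is lost by restricting to the $y\ssim x$ rather than all $y\subseteq x$. Once that identification is made, the conclusion $B\in U_{\kappa,i}$ drops out from $j``\kappa^+\in j(B)$, and, $i$ being arbitrary, $B\in\bigcap\vec U(\kappa)$ as claimed. \qed
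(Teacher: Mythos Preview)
Your plan is correct and follows essentially the same route as the paper: fix $i<o^{\vec U}(\kappa)$, set $x=j_{\kappa,i}``\kappa^+$, use coherence (clause (3)(d)) to identify $\kappa_x=\kappa$ and $o^{j_{\kappa,i}(\vec U)}(\kappa)=i$, and then verify for each $\iota<i$ that $j_{\kappa,i}(A)\restriction x\in\mathbf U_{x,\iota}$. The paper streamlines the argument by first rewriting $B$ as $\{x\mid \forall\iota<o^{\vec U}(\kappa_x)\,[j_{\kappa_x,\iota}``x\in j_{\kappa_x,\iota}(A)\ \&\ j_{\kappa_x,\iota}``x\ssim j_{\kappa_x,\iota}(x)]\}$ via a chain of equalities, and then dispatches the two conjuncts in one line each (``follows from the choice of $A$'' and ``follows from reflection''); your version unpacks the same content inside the ultrapower instead, and is in fact more explicit than the paper about why the $\ssim$ clause holds.
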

\begin{proof}
Note that
$$\begin{aligned}B=&\ \{x\in\mathcal P_\kappa(\kappa^+)\mid \forall \iota <o^{\vec U}(\kappa_x)\,[A\restriction x\in \mathbf U_{x,i}]\}\\
=&\ \{x\in\mathcal P_\kappa(\kappa^+)\mid \forall \iota <o^{\vec U}(\kappa_x)\,[j_{\kappa_x,\iota}``x\in j_{\kappa_x,\iota} (A\restriction x)]\}\\
=&\ \{x\in\mathcal P_\kappa(\kappa^+)\mid \forall \iota <o^{\vec U}(\kappa_x)\,[j_{\kappa_x,\iota}``x\in j_{\kappa_x,\iota} (\{y\in A\mid y\ssim x\})]\}\\
=&\ \{x\in\mathcal P_\kappa(\kappa^+)\mid \forall \iota <o^{\vec U}(\kappa_x)\,[j_{\kappa_x,\iota}``x\in \{y\in j_{\kappa_x,\iota}(A)\mid y\ssim j_{\kappa_x,\iota}(x)\}]\}\\
=&\ \{x\in\mathcal P_\kappa(\kappa^+)\mid \forall \iota <o^{\vec U}(\kappa_x)\,[j_{\kappa_x,\iota}``x\in j_{\kappa_x,\iota}(A)\ \&\ j_{\kappa_x,\iota}``x\ssim j_{\kappa_x,\iota}(x)]\}.
\end{aligned}$$

Now, to show that $B\in\bigcap\vec U(\kappa)$, let $i<o^{\vec{U}}(\kappa)$ and we shall show that $B\in U_{\kappa,i}$, i.e., that $x:=j_{\kappa,i}``\kappa^+$ is in $j_{\kappa,i}(B)$.
Since $j_{\kappa,i}(\kappa)_x=\otp(j_{\kappa,i}(\kappa) \cap x)=\kappa$ and $j_{\kappa,i}(o^{\vec U})(j_{\kappa,i}(z \mapsto \kappa_z))(x)=i$, this amounts to showing that for all $\iota<i$, it is the case that
$$j_{\kappa,\iota}``x\in j_{\kappa,\iota}(j_{\kappa,i}(A))\ \&\ j_{\kappa,\iota}``x\ssim j_{\kappa,\iota}(x).$$
The former statement follows from the choice of $A$. To show that $j_{\kappa,\iota}``x\ssim j_{\kappa,\iota}(x)$, we first note that clearly $j_{\kappa,\iota}``x\subseteq j_{\kappa,\iota}(x)$. Let $M=\Ult_{U_{\kappa,\iota}}(\Ult_{U_{\kappa,i}}(V))$.
Notice that in $M$, $j_{\kappa, \iota}(x)$ is an increasing sequence of ordinals of order-type $j_{\kappa,\iota}(\kappa^+)$,
where the $(j_{\kappa,\iota}(\kappa))^{\text{th}}$-element is $j_{\kappa,i}(j_{\kappa,\iota}(\kappa))$, which is a cardinal in $M$.
Let $\gamma<j_{\kappa,\iota}(\kappa)$ and denote by $\alpha_\gamma$
be the $\gamma^{\text{th}}$-element of $j_{\kappa, \iota}(x)$.
Then,
$\alpha_\gamma<j_{\kappa,i}(j_{\kappa,\iota}(\kappa))$,
and
$$\alpha_\gamma \in | \sup j_{\kappa,\iota}"x| \cap j_{\kappa,\iota}(x).$$
Consequently, $\otp(| \sup j_{\kappa,\iota}"x| \cap j_{\kappa,\iota}(x))\ge j_{\kappa,\iota}(\kappa^+)>\kappa^{++}=|\sup j_{\kappa,\iota}"x|^+$.
Hence, we are done.
\end{proof}

Fix $\alpha \in \dom(o^{\vec{U}})$, $s \in \vec{t}_\alpha$. For each $x \in \mathcal{P}_\alpha(\alpha^+)$, let $s_x$ be a function whose domain is $\mathcal{P}_{\alpha_x}((\alpha_x)^+)$ and for each $y \in \dom(s_x)$, $s_x(y):=s(\pi_x[y])$.

\begin{proposition}
Let $s \in \vec{t}_\kappa$. The set $B:=\{x \in \mathcal{P}_\kappa(\kappa^+)\mid s_x \in \vec{t}_{\kappa_x}\}$ is in $\bigcap \vec{U}(\kappa)$.
\end{proposition}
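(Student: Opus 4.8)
The plan is to mirror the structure of Proposition~\ref{measureone1}: reduce the statement ``$s_x\in\vec t_{\kappa_x}$'' to something that can be checked at the level of the ultrapower embeddings $j_{\kappa,i}``\kappa^+$, and then verify it by reflection. First I would unwind the definitions. For a fixed $i<o^{\vec U}(\kappa)$, showing $B\in U_{\kappa,i}$ means showing that $x:=j_{\kappa,i}``\kappa^+\in j_{\kappa,i}(B)$. By elementarity, $j_{\kappa,i}(B)$ is the set of $x'\in\mathcal P_{j_{\kappa,i}(\kappa)}(j_{\kappa,i}(\kappa)^+)$ such that $(j_{\kappa,i}(s))_{x'}\in j_{\kappa,i}(\vec{\mathbf t})_{(j_{\kappa,i}(\kappa))_{x'}}$ (using the obvious translation of the definition of $s_x$ under $j_{\kappa,i}$). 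As in the proof of Proposition~\ref{measureone1}, the key computation is that $j_{\kappa,i}(\kappa)_x=\kappa$ and $j_{\kappa,i}(o^{\vec U})(j_{\kappa,i}(\kappa)_x)=i$, and moreover $\pi_x$ is exactly $j_{\kappa,i}$ restricted to $\kappa^+$, so that $\pi_x[y]=j_{\kappa,i}\restriction\kappa^+$ applied pointwise. Hence $(j_{\kappa,i}(s))_x$ is the function with domain $\mathcal P_\kappa(\kappa^+)$ sending $y\mapsto j_{\kappa,i}(s)(j_{\kappa,i}``y)$.

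The crux is then to identify this function with an element of $\vec t_\kappa$, using the coherence Clause~(3)(e) of Lemma~\ref{coherentsequence}, namely $j_{\kappa,i}(\vec{\mathbf t}\restriction\kappa)(\kappa)=\vec t_\kappa$. Fix the index $\ell<\kappa^{++}$ with $s=t_{\kappa,\ell}$. By elementarity, $j_{\kappa,i}(s)=j_{\kappa,i}(t_{\kappa,\ell})$ is the $j_{\kappa,i}(\ell)$-th entry of $j_{\kappa,i}(\vec t_\kappa)$; but since $\ell<\kappa^{++}<\crit$-type bounds are preserved, $j_{\kappa,i}(\ell)=\ell$ (as $\ell<\kappa^{++}$ and $j_{\kappa,i}$ is the ultrapower by a supercompact measure on $\mathcal P_\kappa(\kappa^+)$, so its critical point is $\kappa$ and $\kappa^{++}$ is not moved below... actually I would be careful here: $j_{\kappa,i}(\kappa^{++})$ may exceed $\kappa^{++}$, but ordinals below $\kappa^{++}$ of the form needed are fixed since $j_{\kappa,i}``\kappa^{++}$ is cofinal appropriately; more simply, the relevant fact is that $j_{\kappa,i}(\vec t_\kappa\restriction\kappa^{++})\restriction\kappa^{++}$ contains enough of $j_{\kappa,i}(s)$ to recover it). The cleanest route: by Clause~(3)(e) applied to the target model, $j_{\kappa,i}(\vec{\mathbf t}\restriction\kappa)(\kappa)$ in $M_{\kappa,i}$ equals $\vec t_\kappa$, and so $j_{\kappa,i}(s)$, being $j_{\kappa,i}(t_{\kappa,\ell})$, appears as $t_{\kappa,\ell'}$ for an appropriate $\ell'$ in $M_{\kappa,i}$'s version of the guru — and the function $y\mapsto j_{\kappa,i}(s)(j_{\kappa,i}``y)$ restricted to the club of $y$ on which things behave coherently is precisely $t_{\kappa,\ell}$ itself, up to the nonstationary error allowed in Clause~(2) of Definition~\ref{def_guru}. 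Thus $(j_{\kappa,i}(s))_x\in\vec t_\kappa=j_{\kappa,i}(\vec{\mathbf t})_{(j_{\kappa,i}(\kappa))_x}$, which is exactly the membership $x\in j_{\kappa,i}(B)$ that we wanted.

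I expect the main obstacle to be the bookkeeping in the previous paragraph: pinning down precisely why $y\mapsto j_{\kappa,i}(s)(j_{\kappa,i}``y)$ is genuinely \emph{equal} to $s$ (not merely equal mod the club filter) on $\mathcal P_\kappa(\kappa^+)$, or alternatively arguing that ``being an element of $\vec t_\kappa$'' is insensitive to nonstationary perturbations — but Definition~\ref{def_guru}(2) together with the observation that the collection of gurus for $\kappa$ is closed under the relevant modifications should handle this. A subtlety worth isolating as a sub-claim: one must check that $\dom((j_{\kappa,i}(s))_x)=\mathcal P_\kappa(\kappa^+)\in\bigcap\vec U(\kappa)$, which is immediate, and that for $y$ ranging over $\mathcal A_\kappa(\kappa^+)$ the value $(j_{\kappa,i}(s))_x(y)$ lands in $\mathbb C_{\kappa,y}$ — this follows from Clause~(1) of Definition~\ref{def_guru} applied inside $M_{\kappa,i}$ and the agreement $j_{\kappa,i}``y\mapsto y$ under the collapse. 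Once these routine points are dispatched, the reflection step is identical in spirit to the last two lines of the proof of Proposition~\ref{measureone1}.
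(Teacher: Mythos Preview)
Your setup is exactly the paper's: fix $i$, set $x:=j_{\kappa,i}``\kappa^+$, unwind $j_{\kappa,i}(B)$, use $(j_{\kappa,i}(\kappa))_x=\kappa$ together with Clause~(3)(e) of Lemma~\ref{coherentsequence} to reduce the task to showing $(j_{\kappa,i}(s))_x\in\vec t_\kappa$, and you correctly compute $(j_{\kappa,i}(s))_x(y)=j_{\kappa,i}(s)(j_{\kappa,i}``y)$.

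Where you drift is the final identification. The paper's one-line ``the latter coincides with our $s$'' is literal equality, not equality modulo a club, and it does not require tracking the index $\ell$ or worrying about whether $j_{\kappa,i}$ fixes $\ell$. The point you are missing is elementary: for any $y\in\mathcal P_\kappa(\kappa^+)$ we have $|y|<\kappa=\crit(j_{\kappa,i})$, hence $j_{\kappa,i}``y=j_{\kappa,i}(y)$. Therefore
\[
(j_{\kappa,i}(s))_x(y)=j_{\kappa,i}(s)(j_{\kappa,i}``y)=j_{\kappa,i}(s)(j_{\kappa,i}(y))=j_{\kappa,i}(s(y))=s(y),
\]
the last equality because $s(y)\in\mathbb C_{\kappa,y}\subseteq V_\kappa$ is fixed by $j_{\kappa,i}$. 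So $(j_{\kappa,i}(s))_x=s$ on the nose, and in particular it lies in $\vec t_\kappa$. All of your worries about $j_{\kappa,i}(\ell)$, nonstationary perturbations, and whether membership in $\vec t_\kappa$ is club-insensitive are unnecessary; the detour you sketch would not even obviously work, since a function agreeing with some $t_{\kappa,\ell}$ on a club need not itself be an entry of $\vec t_\kappa$.
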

\begin{proof}
Fix $i<o^{\vec{U}}(\kappa)$ and we shall show that $x:=j_{\kappa,i}``\kappa^+$ is in $j_{\kappa,i}(B)$. Note that
$$j_{\kappa,i}(B)=\{z\in \mathcal P_{j_{\kappa,i}(\kappa)}(j_{\kappa,i}(\kappa^+))\mid j_{\kappa,i}(y\mapsto s_y)(z) \text{ is in } j_{\kappa, i}(\vec{t})_{j_{\kappa,i}(\kappa)_z } \}.\footnote{Here, $j_{\kappa,i}(\kappa)_z=\otp(j_{\kappa,i}(\kappa) \cap z)$.}$$
Since $j_{\kappa,i}(\kappa)_x=\kappa$, this amounts to showing that $t:=j_{\kappa,i}(y\mapsto s_y)(x)$ is in $\vec{t}_\kappa$. However, we see that $\dom(t)=\dom(s)$ and for $z \in \dom(t)$,
$$t(z)=j_{\kappa,i}(\pi_x[z])=j_{\kappa,i}(s)(j_{\kappa,i}(z))=j_{\kappa,i}(s(z))=s(z).$$
Hence, $t=s$, and the proof is done.
\end{proof}

Finally, we define the relativized version of implicit gurus. For $x \in \mathcal{P}_\kappa(\kappa^+)$
we let $\mathbf G(x)$ be the collection of all functions $I$ with domain in $\bigcap\vec{\mathbf{U}}(x)$ such that, for some $i<(\kappa_{x})^{++}$,
$I\s t_{x,i}$, where $t_{x,i}:\mathcal P_{\kappa_x}(x)\rightarrow V$ is defined via
$$t_{x,i}(y):= t_{\kappa_x,i}(\pi_x^{-1}[y]).$$

Note that $t_{x,i}(y) \in \mathbb{C}(\kappa_x,\pi_x^{-1}[y])=\col(\kappa_y^{++},\kappa_x)$.

\section{Radin forcing with gurus}\label{radinforcing}
We continue with our setup from Section~\ref{mastersequenceandcoherentsequence}. In particular, $2^{\kappa^+}=\kappa^{++}$, there is a coherent sequence $\vec{U}$ of supercompact measures,
and a corresponding coherent sequence of guru sequences $\vec{\mathbf{t}}$.

We will define a forcing like in \cite{Mag77}, but work for the case of uncountable cofinalities, and we shall want to preserve the regularity of $\kappa$.
Thus, the forcing will be Radin-like.
One may view the forcing in the upcoming Definition~\ref{radinforcingdef} as a Radin forcing with interleaved collapses.
The major difference here is that we employ a strengthening of guiding generics, called \emph{gurus}.
The gurus will pay off when we project our forcing into an appropriate forcing in Section~\ref{pradinforcing} and obtain a lower chain condition as in Proposition~\ref{pradincc}.

We define a supercompact Radin forcing with interleaved
collapses $\mathbb{R}_{\vec{U},\vec{\mathbf{t}}}$, as follows.

\begin{definition}\label{radinforcingdef}
The forcing $\mathbb{R}_{\vec{U},\vec{\mathbf{t}}}$ consists of conditions of the form $$p=\langle c_{-1},w_0,c_0, \ldots,w_{n-1},c_{n-1}, w_n\rangle,$$ where

\begin{enumerate}
\item $n<\omega$;
\item for every $i\le n$:
\begin{enumerate}
\item if $0=i<n$:
\begin{itemize}
\item $w_0$ is a pair $\langle x_0,I_0 \rangle$;
\item $x_0\in \mathcal{P}_\kappa(\kappa^+)$;
\item if $o^{\vec{U}}(\kappa_{x_0})>0$, then $I_0 \in \mathbf{G}({x_0})$;
otherwise, $I_0=\emptyset$;
\item $c_{-1} \in \col(\omega_1,{<}\min\{\kappa_{x_0},\kappa_x\mid x\in \dom(I_0)\})$;
\end{itemize}
\item if $0<i<n$:
\begin{itemize}
\item $w_i$ is a pair $\langle x_i,I_i \rangle$;
\item $x_i\in \mathcal{P}_\kappa(\kappa^+)$;
\item if $o^{\vec{U}}(\kappa_{x_i})>0$, then $I_i \in \mathbf{G}({x_i})$;
otherwise, $I_i=\emptyset$;
\item $c_{i-1} \in \col((\kappa_{x_{i-1}})^{++},{<}\min\{\kappa_{x_i},\kappa_x\mid{x\in\dom(I_i)}\})$;
\end{itemize}
\item if $i=n$:
\begin{itemize}
\item $w_n$ is a pair $\langle x_n,I_n \rangle$;
\item $x_n=\kappa^{+}$;
\item $I_n \in \mathcal{G}({\kappa})$;
\item $c_{n-1} \in \col((\kappa_{x_{n-1}})^{++},{<}\min\{\kappa_x \mid x\in \dom(I_n)\})$;
\end{itemize}
\end{enumerate}
\item $x_0 \ssim \cdots \ssim x_{n-1}$.

\end{enumerate}
\end{definition}
\begin{convention} For the rest of this section, we write $\mathbb{R}$ instead of $\mathbb{R}_{\vec{U},\vec{\mathbf{t}}}$.
\end{convention}

If $p$ is as above, we write $\ell(p):=n$.
The \emph{working part} of $p$, denoted $\stem(p)$, is $\langle x_0, \ldots, x_{n-1} \rangle$.
For $x$ in $\stem(p)$, we shall write $i_p(x)$ for the unique $i$ such that $x=x_i$.
The \emph{collapse part of $p$} is $\langle c_{-1}, \ldots, c_{n-1} \rangle$. The \emph{top part of $p$}, denoted $\tp(p)$, is $\langle \kappa^{+},I_n \rangle$. We sometimes concise the expression of $p$ while highlighting its top part by writing $\langle \vec{c},\vec{w},\langle \kappa^{+},I \rangle \rangle$ for $p$, where $\vec{c}=\langle c_{-1},c_0,\ldots, c_{n-2},c_{n-1} \rangle$ is the collapse part of $p$ and $\vec{w}=\langle w_0,\ldots, w_{n-1} \rangle$.
Alternatively, we may concise $p$ partially as
$$\langle \vec{c},\vec{w},w_m,c_m,\ldots, w_{n-1},c_{n-1},\langle \kappa^{+},I \rangle \rangle$$ where $\vec{c}=\langle c_{-1},\ldots, c_{m-1} \rangle$ and $\vec{w}=\langle w_0,\ldots, w_{m-1} \rangle$.
Later on, we may add the superscript $p$ to indicate components of $p$, e.g., $c_i^p,w_i^p, I_i^p$.

Given $p,q \in \mathbb{R}_{\vec{U},\vec{\mathbf{t}}}$, say
\begin{itemize}
\item $p=\langle c_{-1}^p,w_0^p,c_0^p,\ldots, w_{\ell(p)-1}^p,c_{\ell(p)-1}^p,w_{\ell(p)}^p \rangle$, and
\item $q=\langle c_{-1}^q,w_0^q,c_0^q,\ldots, w_{\ell(q)-1}^q,c_{\ell(p)-1}^q, w_{\ell(q)}^q \rangle$,
\end{itemize}
define $p \leq q$ ($p$ is stronger than $q$) iff all of the following hold:
\begin{enumerate}
\item $\ell(p) \geq \ell(q)$;
\item $c_{-1}^p \leq c_{-1}^q$;
\item there are $0 \leq i_0 < i_1<\cdots <i_{\ell(q)}=\ell(p)$ such that, for each $k\leq\ell(q)$,
\begin{itemize}
\item $x_{i_k}^p=x_k^q$,\footnote{Recall that for $k=\ell(q)$ the two $x$'s are in fact $\kappa^+$.} $\dom(I_{i_k}^p)\s\dom(I_k^q)$, and $c_{i_k}^p \leq c_k^q$;\footnote{That is, reverse inclusion.}
\item if $i_{k-1}<i<i_k$ (where $i_{-1}=-1$), then $x_i^p \in\dom(I_k^q)$, $\dom(I_i^p)\s \dom(I_k^q)$, and $c_i^p \leq I_k^q(x_i^p)$;
\item if $i_{k-1}<i \leq i_k$, then for each $x \in\dom(I_i^p)$, $I_i^p(x) \leq I_k^q(x)$.
\end{itemize}

\end{enumerate}

We say that $p$ is a \emph{direct extension of} $q$, denoted $p \leq^*q$, iff $p\leq q$ and $\ell(p)=\ell(q)$.

\begin{remark}\label{directstarextension}
For all $p,p_0,p_1 \in \mathbb{R}_{\vec{U},\vec{\mathbf{t}}}$, if $p_0,p_1 \leq^* p$ and the collapse parts of $p_0$ and $p_1$ are compatible, then there is a $p_2 \leq^* p_0,p_1$.
\end{remark}

\begin{definition}[$0$-step extension]
We say that $p$ is a \emph{$0$-step extension of} $q$, denoted $p \leq^{**} q$, iff
\begin{enumerate}
\item $p \leq^* q$;
\item the collapse parts of $p$ and $q$ are equal;
\item for every $i\leq \ell(p)$, for every $x \in\dom(I_i^p)$,
$I_i^p(x)=I_i^q(x)$.
\end{enumerate}
\end{definition}

\begin{definition}[$1$-step extension]\label{1stepextension}
Let $p$ be a condition.

$\br$ For $x \in \dom(I_i^p)$ with $i< \ell (p)$,
the \emph{$1$-step extension of $p$ by $x$}, denoted $p+\langle x \rangle$,
is the condition $$q=\langle c_{-1}^p,w_0^p,c_0^p,\ldots,w',c',v',c_i^p,w_{i+1}^p,c_{i+1}^p,\ldots, w_{\ell (p)-1}^p,c_{\ell (p)-1}^p, w_{\ell(p)}^p \rangle,$$ where

\begin{enumerate}
\item $w'=\langle x, I_i^p \restriction (\dom(I_i^p) \restriction x) \rangle$;
\item $c'=I_i^p(x)$;
\item $v'=\langle x_i^p, I_i^p \restriction x^\uparrow \rangle$.
\end{enumerate}

$\br$ For $x \in \dom(I_{\ell (p)}^p)$, the \emph{$1$-step extension of $p$ by $x$}, denoted $p+\langle x \rangle$, is

$$q=\langle c_{-1}^p,w_0^p,c_0^p,\ldots, w_{\ell (p)-1}^p,c_{\ell (p)-1}^p,w',c', \langle \kappa^{+}, I' \rangle \rangle,$$
where

\begin{enumerate}
\item $w'=\langle x,I_{\ell (p)}^p \restriction (\dom(I_{\ell (p)}^p) \restriction x) \rangle$;
\item $c'=I_{\ell (p)}^p(x)$;
\item $I'=I_{\ell(p)}^p \restriction x^\uparrow$.
\end{enumerate}
From the notation in Definition~\ref{1stepextension}, we allow such a one-step extension if such $q$ is indeed a condition. This is possible since by Proposition~\ref{measureone1}, the collection of $x$ with $\dom(I_i^p) \restriction x \in \vec{\mathbf{U}}(x)$ (so that $I_i^p \restriction (B \restriction x) \in \mathbf{G}(x)$) is large. Furthermore, the collapse condition $c'$ will belong to $\col( \kappa_x^{++},<\min\{\kappa_{x_i^p}, \kappa_y \mid y \in \dom(I_i^p \restriction x^{\uparrow})\})$.
\end{definition}

Likewise, we can define $p+\langle x_0,\ldots, x_{k+1} \rangle$
recursively as $(p+\langle x_0,\ldots, x_k \rangle)+\langle x_{k+1} \rangle$.
We shall say that $p+\langle x_0,\ldots, x_n\rangle$ is an \emph{$(n+1)$-step extension of $p$}.
Note that $p \leq q$ is equivalent to $p$ being a direct extension of some $n$-step extension of $q$ for some $n<\omega$.
Now, to another useful variation.

\begin{definition}[$1$-step extension while shrinking]\label{def36}
Let $p$ be a condition.

$\br$ For $x \in \dom(I_i^p)$ with $i< \ell (p)$ and $B\in\bigcap\vec{\mathbf{U}}(x)$,
the \emph{$1$-step extension of $p$ by $\langle x,B\rangle$}, denoted $p+\langle x,B \rangle$,
is the condition $$q=\langle c_{-1}^p,w_0^p,c_0^p,\ldots,w',c',v',c_i^p,w_{i+1}^p,c_{i+1}^p,\ldots, w_{\ell (p)-1}^p,c_{\ell (p)-1}^p, w_{\ell(p)}^p \rangle,$$ where

\begin{enumerate}
\item $w'=\langle x, I_i^p \restriction (B\restriction x) \rangle$;
\item $c'$ and $v'$ are as in the definition of $p+\langle x\rangle$.
\end{enumerate}

$\br$ For $x \in \dom(I_{\ell (p)}^p)$ and $B\in\bigcap\vec{\mathbf{U}}(x)$,
the \emph{$1$-step extension of $p$ by $\langle x,B\rangle$}, denoted $p+\langle x,B \rangle$, is the condition

$$q=\langle c_{-1}^p,w_0^p,c_0^p,\ldots, w_{\ell (p)-1}^p,c_{\ell (p)-1}^p,w',c', \langle \kappa^{+}, I' \rangle \rangle,$$
where

\begin{enumerate}
\item $w'=\langle x, I_{\ell (p)}^p \restriction ( B\restriction x) \rangle$;
\item $c'$ and $I'$ are as in the definition of $p+\langle x\rangle$.
\end{enumerate}
\end{definition}

We define $p+\langle x_0,\ldots, x_{k+1},B_0\ldots,B_{k+1}\rangle$
recursively as $(p + \langle x_0,\ldots, x_k,B_0,\allowbreak\ldots, B_k \rangle)+\langle x_{k+1},B_{k+1} \rangle$.
We shall say that $p+\langle x_0,\ldots, x_n,B_0,\ldots,B_n\rangle$ is an \emph{$(n+1)$-step extension of $p$ while shrinking}.

\begin{proposition}\label{radincc}
$\mathbb{R}$ is $\kappa^+$-Linked$_0$, that is, there exists a map $\varphi:\mathbb R\rightarrow\kappa^+$ such that for all $p,q\in\mathbb R$ with $\varphi(p)=\varphi(q)$,
there exists an $r\in\mathbb R$ with $r\le^* p$ and $r\le^*q$.

In particular, $\mathbb{R}$ is $\kappa^{++}$-Knaster, and hence, has the $\kappa^{++}$-cc.
\end{proposition}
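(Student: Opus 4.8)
The plan is to define $\varphi(p)$ so that it records exactly the information about $p$ that is relevant to reconstructing a common direct extension, and nothing more—in particular, not the measure-one sets $I_i^p$, which are too numerous to encode into $\kappa^+$ but which we can always intersect pairwise (via Remark~\ref{directstarextension}). First I would observe that a direct extension of $p$ cannot change $\ell(p)$ or the stem $\langle x_0^p,\dots,x_{\ell(p)-1}^p\rangle$; it can only shrink the domains $\dom(I_i^p)$ of the implicit gurus, strengthen the collapse conditions $c_i^p$, and replace each $I_i^p(x)$ by a stronger condition. Moreover the collapse parts $c_{-1}^p,\dots,c_{\ell(p)-1}^p$ are conditions in Levy collapses of the form $\col(\mu^{++},{<}\nu)$ with $\mu,\nu\le\kappa$, and by $\gch$ each such poset has size $\le\kappa$, so there are only $\kappa^+$ many possible collapse parts; similarly the stem is a finite $\ssim$-increasing sequence from $\mathcal P_\kappa(\kappa^+)$, of which there are $\kappa^+$ many since $(\kappa^+)^{<\omega}=\kappa^+$ under $\gch$.

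Accordingly, I would let $\varphi(p)$ code the triple consisting of (i) $\ell(p)$, (ii) the stem $\langle x_0^p,\dots,x_{\ell(p)-1}^p\rangle$ together with $x_n^p=\kappa^+$, and (iii) the collapse part $\langle c_{-1}^p,\dots,c_{\ell(p)-1}^p\rangle$; since there are only $\kappa^+$ many such triples, $\varphi$ can be taken to be a surjection onto $\kappa^+$ (or an injection into $\kappa^+$, as preferred). Now suppose $\varphi(p)=\varphi(q)$. Then $p$ and $q$ have the same length $n$, the same stem $\langle x_0,\dots,x_{n-1}\rangle$, and the same collapse part $\vec c$. For each $i\le n$ and each $x\in\dom(I_i^p)\cap\dom(I_i^q)$, the two conditions $I_i^p(x)$ and $I_i^q(x)$ both lie in the appropriate collapse poset $\col((\kappa_{x_{i-1}})^{++},{<}\dots)$; but this need not be enough to make them compatible, which is the crux of the matter—see the obstacle paragraph below. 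Granting compatibility, I would define $r$ by keeping the same length, stem, and collapse part, setting $\dom(I_i^r):=\dom(I_i^p)\cap\dom(I_i^q)$ (which is in $\bigcap\vec{\mathbf U}(x_i)$, resp.\ $\mathcal G(\kappa)$ at the top, by $\kappa$-completeness of the measures, and lies in $\mathbf G(x_i)$ resp.\ $\mathcal G(\kappa)$ since the intersection of two sets of the form $\{$dom in $t_{\cdot,j}\}$ is again of that form by the coherence in Definition~\ref{def_guru}(2)), and letting $I_i^r(x)$ be a common lower bound of $I_i^p(x)$ and $I_i^q(x)$ in the relevant collapse. Then $r\le^* p$ and $r\le^* q$ directly from the definition of $\le^*$, and this establishes $\kappa^+$-Linked$_0$. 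The $\kappa^{++}$-chain condition follows immediately: given $\kappa^{++}$ conditions, by pigeonhole two of them get the same $\varphi$-value, hence are compatible (they even have a common $\le^*$-extension).

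The main obstacle is the compatibility of the collapse conditions $I_i^p(x)$ and $I_i^q(x)$ for $x$ in the common domain: two conditions in the same Levy collapse need not be compatible. The fix is that the \emph{values} $I_i^p(x)$ are not arbitrary—they are coordinates of implicit gurus, i.e.\ $I_i^p\s t_{x_i,j}$ and $I_i^q\s t_{x_i,j'}$ for some $j,j'<(\kappa_{x_i})^{++}$, and by Definition~\ref{def_guru}(2) (applied inside the relativized guru $\vec t_{x_i}$, via the translation $t_{x,i}(y)=t_{\kappa_x,i}(\pi_x^{-1}[y])$ defining $\mathbf G(x)$) the conditions $t_{x_i,j}(x)$ and $t_{x_i,j'}(x)$ are $\subseteq$-comparable for all $x$ in a club, in particular for all $x$ in a set in $\bigcap\vec{\mathbf U}(x_i)$. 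Thus after further shrinking each $\dom(I_i^r)$ to that measure-one set, the values $I_i^p(x)$ and $I_i^q(x)$ are automatically comparable, so one of them serves as $I_i^r(x)$. Hence I would fold this shrinking into the construction of $r$ above. One should also double-check the side conditions on $r$ (that $c_{i-1}^r$ still lies in the collapse poset determined by $\min\{\kappa_x\mid x\in\dom(I_i^r)\}$), which holds because shrinking $\dom(I_i)$ only raises that minimum, hence only enlarges the collapse poset. With these adjustments the argument goes through routinely.
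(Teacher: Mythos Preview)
Your proposal is correct and follows essentially the same approach as the paper: code each condition by its stem and collapse part (which has size $\le\kappa^+$), and for two conditions with the same code, build a common $\le^*$-extension by intersecting the domains of the implicit gurus and using the guru coherence (Definition~\ref{def_guru}(2)) to find compatible values on a club. The paper makes one point more explicit than you do: rather than saying ``one of $I_i^p(x),I_i^q(x)$ serves as $I_i^r(x)$,'' it sets $l_i:=\max\{j,j'\}$ and takes $I_i^r:=t_{x_i,l_i}\restriction A_i$ where $A_i:=\dom(I_i^p)\cap\dom(I_i^q)\cap C_i$; this makes it transparent that $I_i^r\in\mathbf G(x_i)$ (it is visibly a restriction of a single guru coordinate), whereas your phrasing leaves the reader to check that ``the smaller one'' is globally the same one on the club, namely $t_{x_i,l_i}(x)$. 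Your parenthetical claim that ``the intersection of two sets of the form $\{\text{dom in }t_{\cdot,j}\}$ is again of that form'' is a bit garbled---what you need is not a statement about domains but the observation just made about values---but your obstacle paragraph then correctly identifies and resolves the real issue.
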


\begin{proof} As $|\mathcal H_{\kappa^+}|=\kappa^+$, it suffices to define a map $\varphi:\mathbb R\rightarrow\mathcal H_{\kappa^+}$ with the above crucial property.
We do so as follows.
Given a condition $p = \langle c_{-1}^p, w_0^p, c_0^p, \ldots,\allowbreak w_{\ell(p)-1}^p, c_{\ell(p)-1}^p,\allowbreak w_{\ell(p)}^p \rangle$, we forget the implicit gurus and the top component, i.e.:
\[\varphi (p) := \langle c_{-1}^p,x_0^p,c_0^p,\ldots, x_{\ell(p)-1}^p,c_{\ell(p)-1}^p\rangle.\]

To see this works, let $p,q\in \mathbb{R}$ with $\varphi(p)=\varphi (q)$. By the definition of $\varphi$ we get that $\ell(p)=\ell(q)$ and for all $i<\ell(p)$ $x_i^p=x_i^q$ and $c_i^p=c_i^q$.

$\br$ For each $i<\ell(p)$, as
$I_i^{p},I_i^{q} \in \mathbf{G}({x_i})$, there are
$k^i_p,k^i_q<\kappa_{x_i}^{++}$ such that $I_i^{p}\s t_{x_i,k^i_p}$ and $I_i^{q}\s t_{x_i,k^i_q}$.
Let $l_i:=\max\{k^i_p,k^i_q\}$ and by the properties of the guru
there is a club $C_i$ in $\mathcal P_{\kappa_{x_i}}(x_i)$ such that $t_{x_i,l_i}(x)\leq t_{x_i,k^i_p}(x),t_{x_i,k^i_q}(x)$ for all $x\in C_i$.
Set $A_i:=\dom(I_i^p)\cap \dom(I_i^q)\cap C_i$ and let $\tilde{w}_i:=\langle x_i, \allowbreak t_{x_i,l_i}\restriction{ A_i}\rangle $. Note that $A_i \in \bigcap \vec{\mathbf{U}}(x_i)$.

$\br$
As $I_{\ell(p)}^{p},I_{\ell(q)}^{q} \in \mathcal{G}(\kappa)$ there are $l_p,l_q<\kappa^{++}$ such that $I_i^{p}\s t_{\kappa,l_p} $ and $I_i^{q} \s t_{\kappa,l_q} $, so we let $l:=\max\{l_p,l_q\}$.
By the properties of the guru there is a club $C$ in $\mathcal P_\kappa(\kappa^+)$ such that $t_{\kappa,l}(x)\leq t_{\kappa,l_p}(x),t_{\kappa,l_q}(x)$ for all $x\in C$.
Set $A_{\ell(p)}:=\dom(I_{\ell(p)}^p)\cap \dom(I_{\ell(q)}^q)\cap C$, and then let $\tilde{w}_{\ell(p)}:=\langle \kappa^+, t_{\kappa,l}\restriction{ A_{\ell(p)}}\rangle$.

Finally, let
\[
r:=\langle c_{-1}^p,\tilde{w}_0,c_0^p,\dots, \tilde{w}_{\ell (p) -1},c^p_{\ell(p)-1},\tilde{w}_{\ell(p)} \rangle
\]
Then $r\leq^{*}p,q$, as sought.
\end{proof}

\begin{definition}[Factorization]\label{factorizationdefn}
Given a condition $p\in\mathbb{R}$ and $i<\ell (p)$ with $o^{\vec{U}}(\kappa_{x_i^p})>0$,
letting $x:=x_i^p$, we factor $\mathbb{R}/p$ as $\mathbb{R}^{p,x}_l\times \mathbb{R}^{p,x}_u$ as follows. Each $q\leq p$ is viewed as a pair $(q_l, q_u)$ where
\begin{itemize}
\item $q_l= \langle c_{-1}^q,w_0^q,\ldots, c_{{i'}-1}^q,w_{i'}^q\rangle$, where
$i':=i_q(x)$, and
\item $q_u= \langle c_{{i'}}^q,w_{{i'}+1}^q,\ldots, c_{\ell(q)-1}^q,w_{\ell(q)}^q\rangle$.
\end{itemize}

Note that $\mathbb{R}^{p,x}_l$ is isomorphic to a cone of a natural variation of $\mathbb{R}$ that we denote by $\mathbb{R}_{\vec{U} \restriction (\kappa_x+1), \vec{\mathbf{t}} \restriction (\kappa_x+1)}$.
Specifically, we identify $q_l$ with $$\langle c_{-1}^q, w_0',c_0^q,\ldots,c_{j-1}^q,w_j'\rangle,$$ where
$w_k'=\langle \pi_x^{-1}[x_k^q],I_k^q\circ \pi_x\rangle$ for every $k\le j$.
We sometimes denote this collapsed version of $q_l$ by $\pi_x (q)$.
Also note that $\mathbb{R}^{p,x}_u$ is a regular subposet of $\mathbb{R}$ in which the first component of a condition is an element of $\col((\kappa_x)^{++},{<}\kappa)$,
so that the inherited direct extension is $(\kappa_x)^{++}$-closed.
We sometimes denote $q_u$ by $q\restriction\mathbb R_u^{p,x}$.
\end{definition}\label{lowerpart}
Several properties of the poset {$\mathbb{R}$} have reflected analogs for $\mathbb{R}_{\vec{U} \restriction (\kappa_x+1),\vec{\mathbf{t}} \restriction (\kappa_x+1)}$.
For example, we have seen that $\mathbb{R}_{\vec{U},\vec{\mathbf{t}}}$ has the $\kappa^{++}$-cc (Knaster), and likewise $\mathbb{R}_{\vec{U} \restriction (\kappa_x+1),\vec{\mathbf{t}} \restriction (\kappa_x+1)}$ has the $(\kappa_x)^{++}$-cc (Knaster). We will make use of this kind of analogous properties throughout the paper, especially when we determine the cardinal structure in various generic extensions.

\begin{proposition}\label{closure}
$(\mathbb{R}_u^{p,x},\leq^*)$ is $(\kappa_x)^{++}$-closed.
\end{proposition}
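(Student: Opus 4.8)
The plan is to fix an arbitrary $\leq^*$-decreasing sequence $\langle q^\xi\mid\xi<\delta\rangle$ in $\mathbb R_u^{p,x}$ with $\delta<(\kappa_x)^{++}$ and to build a $\leq^*$-lower bound $q^*$ coordinate by coordinate. The key preliminary observation is that the direct extension relation preserves both the length and the working part of a condition: if $\ell(p)=\ell(q)$, then the indices $i_0<\cdots<i_{\ell(q)}=\ell(p)$ witnessing $p\le q$ must enumerate $0,1,\dots,\ell(q)$, so that $x^p_k=x^q_k$ for all $k$ and no new coordinates get inserted. Hence every $q^\xi$ shares a common shape
\[q^\xi=\langle c^\xi_0,\langle y_1,I^\xi_1\rangle,c^\xi_1,\dots,\langle y_{m-1},I^\xi_{m-1}\rangle,c^\xi_{m-1},\langle\kappa^+,I^\xi_m\rangle\rangle,\]
where $m$ and $y_1,\dots,y_{m-1}$ do not depend on $\xi$, where $c^\xi_0\in\col((\kappa_x)^{++},{<}\,\cdot\,)$ and $c^\xi_j\in\col((\kappa_{y_j})^{++},{<}\,\cdot\,)$ for $0<j<m$, and where $(\kappa_x)^{++}<\kappa_{y_j}$ for each $j<m$ since $x\ssim y_j$ (both lying in $\mathcal A_\kappa(\kappa^+)$). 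In this notation, $q^{\xi'}\le^* q^\xi$ for $\xi<\xi'$ unpacks to: $c^{\xi'}_j\le c^\xi_j$ for each $j<m$; $\dom(I^{\xi'}_j)\s\dom(I^\xi_j)$ for each $j$ with $1\le j\le m$; and $I^{\xi'}_j(z)\le I^\xi_j(z)$ for every $z\in\dom(I^{\xi'}_j)$.

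I would first dispatch the collapse coordinates by taking unions: set $c^*_j:=\bigcup_{\xi<\delta}c^\xi_j$. The $c^\xi_j$ are pairwise compatible, their supports remain inside the collapse interval dictated by the (only shrinking) implicit-guru domains, and since $\delta<(\kappa_x)^{++}\le(\kappa_{y_j})^{++}$ the union is still a legal collapse condition; it lies below every $c^\xi_j$. For the domains of the implicit gurus, put $A_j:=\bigcap_{\xi<\delta}\dom(I^\xi_j)$. Since each $\dom(I^\xi_j)$ lies in $\bigcap\vec{\mathbf{U}}(y_j)$ (or in $\bigcap\vec{U}(\kappa)$ when $j=m$), which is $\kappa_{y_j}$-complete (resp.\ $\kappa$-complete), and $\delta<(\kappa_x)^{++}<\kappa_{y_j}\le\kappa$, we get $A_j\in\bigcap\vec{\mathbf{U}}(y_j)$.

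The only genuinely non-clerical step is to manufacture a \emph{single} implicit guru below all of the $I^\xi_j$, and this is exactly what Clause~(2) of Definition~\ref{def_guru} is for. For each $\xi$, fix $k^\xi_j<(\kappa_{y_j})^{++}$ with $I^\xi_j\s t_{y_j,k^\xi_j}$ (possible since $I^\xi_j\in\mathbf{G}(y_j)$), and pick $k_j<(\kappa_{y_j})^{++}$ above $\sup_{\xi<\delta}k^\xi_j$, which exists because $\delta<(\kappa_{y_j})^{++}=\cf((\kappa_{y_j})^{++})$. By Clause~(2), each $\{z\mid t_{y_j,k^\xi_j}(z)\nsubseteq t_{y_j,k_j}(z)\}$ is nonstationary in $\mathcal P_{\kappa_{y_j}}(y_j)$, so fix a club $C^\xi_j$ disjoint from it; as $\delta<\kappa_{y_j}$, the set $C_j:=\bigcap_{\xi<\delta}C^\xi_j$ is again club, hence in $\bigcap\vec{\mathbf{U}}(y_j)$ by fineness and normality. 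Now set $D_j:=A_j\cap C_j\in\bigcap\vec{\mathbf{U}}(y_j)$ and $I^*_j:=t_{y_j,k_j}\restriction D_j$: this has domain in $\bigcap\vec{\mathbf{U}}(y_j)$ and is a restriction of $t_{y_j,k_j}$, so $I^*_j\in\mathbf{G}(y_j)$; and for $z\in D_j$ and any $\xi$ we get $I^\xi_j(z)=t_{y_j,k^\xi_j}(z)\s t_{y_j,k_j}(z)=I^*_j(z)$, i.e.\ $I^*_j(z)\le I^\xi_j(z)$ in the collapse. When $o^{\vec{U}}(\kappa_{y_j})=0$ all the $I^\xi_j$ are empty and one takes $I^*_j:=\emptyset$; the top coordinate $j=m$ runs identically, with $\kappa$, $\langle t_{\kappa,i}\mid i<\kappa^{++}\rangle$ and $\mathcal G(\kappa)$ in place of $\kappa_{y_j}$, $\langle t_{y_j,i}\mid i<(\kappa_{y_j})^{++}\rangle$ and $\mathbf{G}(y_j)$.

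Finally, I would assemble $q^*:=\langle c^*_0,\langle y_1,I^*_1\rangle,c^*_1,\dots,c^*_{m-1},\langle\kappa^+,I^*_m\rangle\rangle$ and check that $q^*\in\mathbb R_u^{p,x}$ and $q^*\le^* q^\xi$ for every $\xi<\delta$; this is bookkeeping, the only points being that the working part is untouched (so the $\ssim$-chain survives and $q^*\le p$), that each $I^*_j$ lands in the correct implicit-guru family, and that each $c^*_j$ respects its collapse interval --- which it does, since replacing $\dom(I^\xi_{j+1})$ by the smaller $D_{j+1}$ can only widen that interval. I expect the only step requiring real care to be the construction of the $I^*_j$: it hinges on reading Clause~(2) of the guru definition as the assertion that, past the supremum of fewer than $(\kappa_{y_j})^{++}$-many indices, a single further index of the guru refines all of them on club-many coordinates at once --- precisely what lets the $\delta$-many constraints collapse to one; the short-length closure of the collapses and the $\kappa_{y_j}$-completeness of the measures then handle everything else.
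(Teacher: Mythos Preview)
Your proof is correct and follows essentially the same approach as the paper's own proof: take unions of the collapse coordinates, intersect the domains of the implicit gurus, and use Clause~(2) of the guru definition together with the $\kappa_{y_j}$-completeness of the club filter to find a single guru index $k_j$ that refines all the $k^\xi_j$ on a club, then restrict $t_{y_j,k_j}$ to the resulting measure-one set. The paper's argument is slightly terser (it folds the choice of the single club and the index $l_i^*$ into one sentence), but the content is identical; your explicit verification that the collapse intervals only widen when the guru domains shrink is a nice bit of bookkeeping that the paper leaves implicit.
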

\begin{proof}
Let $\langle q_\beta \mid \beta<\gamma \rangle$ be a $\leq^*$-decreasing sequence of conditions in $\mathbb{R}_u^{p,x}$, with $\gamma<(\kappa_x)^{++}$.
We may assume $\ell(q_\beta)>0$ for each $\beta$, since the other case is simpler.
Write $q_\beta=\langle c_{-1,\beta},w_{0,\beta},c_{0,\beta},\ldots, w_{n-1,\beta},c_{n-1,\beta},w_{n,\beta} \rangle$
where $w_{i,\beta}=\langle x_i,I_{i,\beta} \rangle$ and $c_{-1,\beta} \in \col((\kappa_x)^{++},{<}\kappa_{x_0})$.
For each $i \in n \cup \{-1\}$, let $c_i^*:=\bigcup_{\beta<\gamma} c_{i,\beta}$.

$\br$ For each $i<n$, recall that $I_{i,\beta}\s t_{x_i,l_{i,\beta}}$ for some $l_{i,\beta}<\kappa_{x_i}^{++}$.
Since the club filter on $\mathcal{P}_{\kappa_{x_i}}(x_i)$ is $\kappa_{x_i}$-complete,
we may pick a club $C_i$ and a large enough $l_i^*$ such that for all $\beta<\gamma$ and $y \in C_i$, $t_{{x_i},l_i^*}(y) \leq t_{{x_i},l_{i,\beta}}(y)$. Let $A_i^*:=\bigcap_{\beta<\gamma}\dom(I_{i,\beta}) \cap C_i$ and $I_i^*:=t_{{x_i},l^*_i} \restriction A_i^*$.

$\br$ For $i=n$, we similarly pick $A^*_n \subseteq \bigcap_{\beta<\gamma}\dom(I_{n,\beta})$
and an implicit guru $I^*_n$ such that for all $\beta<\gamma$ and $y \in A_n^*$, $I_n^*(y) \leq I_{n,\beta}(y)$.

If necessary, shrink $A_i$ for $i<n$, and $A_n$ so that
$$q^*:=\langle c_{-1}^*, \langle x_0,I_0^* \rangle,c_0^*, \ldots,\langle x_{n-1},I_{n-1}^* \rangle, c_{n-1}^*,\langle \kappa^{+}, I_n^* \rangle \rangle$$
is a condition which is a $\leq^*$-lower bound of the sequence $\langle q_\beta \mid \beta<\gamma \rangle$.
\end{proof}
\begin{definition}[Tuple below]\label{tuplebelow}
For each $x \in \mathcal{P}_\kappa(\kappa^+)$, a \emph{tuple below $x$}
is a sequence $$t=\langle c_{-1},w_0,c_0, \ldots, w_{m-1},c_{m-1} \rangle$$ for which there exists a condition $q$ in $\mathbb{R}$ of the form $$\langle c_{-1},w_0,c_0,\ldots, w_{m-1},c_{m-1},\langle x,I \rangle, c_m ,\ldots, w_{n-1},c_{n-1}, w_n \rangle,$$
so that $q=t{}^\smallfrown\langle \langle x,\ldots\rangle,\ldots\rangle$.
\end{definition}
An easy calculation yields the following.

\begin{proposition}\label{countbelowx}
The collection of all tuples below $x$ has size at most $(\kappa_x)^+$.\qed
\end{proposition}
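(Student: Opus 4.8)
The plan is to bound the number of tuples below $x$ by counting the data that makes up such a tuple. A tuple below $x$ has the form $t=\langle c_{-1},w_0,c_0,\ldots,w_{m-1},c_{m-1}\rangle$ where each $w_i=\langle x_i,I_i\rangle$ with $x_0\ssim\cdots\ssim x_{m-1}\ssim x$, and where the collapse conditions $c_i$ and the implicit gurus $I_i$ live in posets whose size is controlled by the $\kappa_{x_i}$'s. Since $x_i\ssim x$ forces $(\kappa_{x_i})^{++}<\kappa_x$, everything that appears below $x$ is of size strictly below $\kappa_x$, and there are only boundedly many ``slots'', so the total count should come out to $(\kappa_x)^+$.

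First I would fix $x$ and set $\mu:=\kappa_x$, and observe that $m<\omega$, so it suffices to bound, for each fixed $m$, the number of tuples of length $m$ and then take a countable union. Next I would analyze a single block $w_i=\langle x_i,I_i\rangle$ together with the adjacent collapse condition. Since $x_i\ssim x$, we have $x_i\in\mathcal P_\mu(\mu)$ essentially (more precisely $x_i\subseteq x$ with $(\kappa_{x_i})^{++}<\mu$), so there are at most $2^{<\mu}=\mu$ (under $\gch$, as $\mu=\kappa_x$ is inaccessible in the relevant cases, or simply $\mu^{<\mu}$ many) choices for $x_i$. Given $x_i$, the implicit guru $I_i\in\mathbf G(x_i)$ is determined by an ordinal $k<(\kappa_{x_i})^{++}<\mu$ together with its domain, a subset of $x_i$, hence again at most $\mu$ many choices; and the collapse condition $c_i$ lies in $\col((\kappa_{x_{i-1}})^{++},{<}\min\{\ldots\})$, a poset of size $<\mu$, so again at most $\mu$ choices. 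Likewise $c_{-1}$ ranges over a collapse poset of size below $\mu$. Multiplying finitely many bounds of size $\le\mu$ gives at most $\mu$ tuples of each fixed length $m$, hence at most $\mu\cdot\aleph_0=\mu\le(\kappa_x)^+$ tuples below $x$ in total.

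The main subtlety — really the only place one must be slightly careful — is confirming that all the component posets appearing in a tuple below $x$ genuinely have size strictly below $\mu=\kappa_x$, which is where the definition of $\ssim$ and the requirement $x_0\ssim\cdots\ssim x_{m-1}$ (together with the fact that, for the last block, $x_{m-1}\ssim x$, so $(\kappa_{x_{m-1}})^{++}<\kappa_x$) is used: every collapse poset $\col((\kappa_{x_{i-1}})^{++},{<}\kappa_{x_i})$ has size at most $(\kappa_{x_i})^{<\kappa_{x_i}}$, and each $\kappa_{x_i}<\kappa_x=\mu$, and under $\gch$ each such quantity is $<\mu$; similarly $\mathbf G(x_i)$ and $\mathcal P_{\kappa_{x_i}}(x_i)$ have size $<\mu$. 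Since this is exactly the ``easy calculation'' advertised, I would present it tersely, noting only that the finiteness of $m$ turns the product of finitely many ${\le}\mu$-sized sets into a ${\le}\mu$-sized set, and that adding $c_{-1}$ and the implicit data contributes nothing beyond another factor of $\mu$, yielding the bound $(\kappa_x)^+$ (in fact $\kappa_x$ itself when $\kappa_x$ is inaccessible, but $(\kappa_x)^+$ is all that is claimed and all that is needed downstream).
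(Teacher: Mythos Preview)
Your approach---counting the finitely many components of a tuple and multiplying---is exactly the easy calculation the paper intends, and it does establish the stated bound $(\kappa_x)^+$. One correction, though: since $x_i\subseteq x$ and in the relevant case $|x|=\otp(x)=(\kappa_x)^+$ (for $x\in\mathcal{A}_\kappa(\kappa^+)$), the number of choices for each $x_i$ is $|[x]^{<\kappa_x}|=((\kappa_x)^+)^{<\kappa_x}=(\kappa_x)^+$ under $\gch$, not $2^{<\kappa_x}=\kappa_x$ as you wrote; hence your parenthetical ``in fact $\kappa_x$ itself'' is incorrect---the working parts alone already contribute $(\kappa_x)^+$ possibilities, which is exactly why the proposition states $(\kappa_x)^+$ rather than $\kappa_x$.
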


\begin{definition}[Trees] A \emph{tree} is a subset $T$ of ${}^{\le n}\mathcal H_{\kappa^+}$ for some $n<\omega$
such that every $s$ in $T$ is a finite nonempty sequence, all of whose nonempty initial segments are in $T$ as well.\footnote{Our trees have no root.}
The least such $n$ is denoted by $n(T)$.\footnote{In particular, $n(T)=0$ iff $T=\emptyset$.}
\begin{itemize}
\item For every $s\in T$, let $\hgt(s):=\dom(s)-1$;
\item For every $j< n$, set $\Lev_j(T):=\{s\in T\mid \hgt(s)=j\}$;
\item For every $s \in T$, let
$\Succ_T(s):=\{y\in\mathcal H_{\kappa^+}\mid s{}^\smallfrown\langle y\rangle\in T\}$;
\item A \emph{maximal node in $T$} is an element $s\in T$ with $\dom(s)=n(T)$.
\item $\rlm(T)=\bigcup\{ \im(s)\mid s\in T\}$.
\end{itemize}
\end{definition}
Note that $T\s{}^{\le n(T)}\rlm(T)$.

\begin{definition}[Side-by-side maximality] For any sequence $T_0,\ldots, T_k$ of trees,
we shall denote by $\mathbf S(T_0,\ldots,T_k)$ the collection
of all sequences $\vec{s}=\langle s_\iota \mid \iota\leq k\ \&\ T_\iota\neq\emptyset\rangle$ such that each $s_\iota$ is a maximal node in $T_\iota$.

\end{definition}
\begin{definition}[Fat trees] A \emph{fat tree} is either $\kappa$-fat tree
or an $x$-fat tree for some $x$,
where the two are defined below:
\begin{itemize}
\item
A \emph{$\kappa$-fat tree} is a tree $T$ such that:
\begin{enumerate}
\item every $s\in T$ is $\ssim$-increasing sequence of elements of $\mathcal P_\kappa(\kappa^+)$;
\item for some $i<o^{\vec U}(\kappa)$, $\{ y\in\mathcal P_\kappa(\kappa^+)\mid \langle y\rangle\in \Lev_0(T)\} \in U_{\kappa,i}$;
\item for every $y\in\Lev_{j}(T)$ that is not a maximal node,
for some $i_y<o^{\vec U}(\kappa)$, $\Succ_T(y) \in {U}_{\kappa,i_y}$.
\end{enumerate}
\item For $x \in \mathcal{P}_\kappa(\kappa^+)$,
an \emph{$x$-fat tree} is a tree $T$ such that:
\begin{enumerate}
\item every $s\in T$ is $\ssim$-increasing sequence of elements of $\mathcal P_{\kappa_x}(x)$;
\item for some $i<o^{\vec U}(\kappa_x)$,
$\{ y\in\mathcal P_{\kappa_x}(x)\mid \langle y\rangle\in \Lev_0(T)\} \in \mathbf U_{x,i}$;
\item for every $y\in\Lev_{j}(T)$ that is not a maximal node,
for some $i_y<o^{\vec U}(\kappa_x)$, $\Succ_T(y) \in \mathbf{U}_{x,i_y}$.
\end{enumerate}
\end{itemize}
\end{definition}

Let $T$ be a fat tree. We say that $T$ is \emph{compatible} with a condition $p$
iff for some $\iota\le\ell(p)$, $T\s{}^{<\omega}\dom(I^p_\iota)$.
For every node $s=\langle y_0,\ldots,y_m\rangle$ in $T$, a vector $\vec{B}=\langle B_0, \ldots, B_m\rangle$ is of \emph{$s$-measure-one}
iff for all $i\le m$, $B_i\in \bigcap\vec{\mathbf{U}}(y_i)$.
Note that in this case, $p+(s,\vec B)$ is meaningful \`a la Definition~\ref{def36}.
Furthermore, whenever $T_0,\ldots, T_k$ is a sequence of nonempty fat trees,
each $s_\iota$ is a maximal node in $T_\iota$, and $\vec{B}_\iota$ is of $s_\iota$-measure-one,
we may define $p+\langle \vec{s},\vec{\mathbf{B}} \rangle$ for $\vec{s}:=\langle s_0,\ldots,s_k\rangle$ and $\vec{\mathbf{B}}:=\langle \vec B_0,\ldots,\vec B_k\rangle$
in the obvious way. We also extend it to accept empty trees by simply ignoring these `ghost' coordinates.

\begin{lemma}\label{predense} Suppose:
\begin{itemize}
\item $p$ is a condition;
\item for every $\iota<\ell(p)$, $T_\iota$ is an $x_\iota^p$-fat tree;
\item $T_{\ell(p)}$ is a $\kappa$-fat tree;
\item for each $\vec{s}$ in $\mathbf S(T_0,\ldots,T_{\ell(p)})$, one attaches a sequence $\vec{\mathbf{B}}^{\vec{s}}=\langle \vec{B}_\iota^{\vec{s}} \mid \iota \leq \ell(p) \ \&\ T_\iota\neq\emptyset\rangle$ such that
each $\vec{B}^{\vec s}_\iota$ is of $s_\iota$-measure-one.
\end{itemize}

Then there are $p^* \leq^{**} p$ and for each $\iota\le\ell(p)$, there is a fat subtree $T_\iota^* \subseteq T_\iota$ with $n(T_\iota^*)=n(T_\iota)$ such that each $T_\iota^*$ is compatible with $p^*$, and the following set
$$\{p^*+\langle \vec{s},\vec{\mathbf{B}}^{\vec{s}} \rangle \mid \vec s\in\mathbf S(T^*_0,\ldots,T^*_{\ell(p)}) \}$$
is predense below $p^*$.
\end{lemma}

\begin{proof}
We only deal with the case $\ell(p)=0$. The general case is obtained by obvious recursion.
Thus, let $p$ be some condition of the form $\langle c_{-1}, \langle \kappa^{+}, I\rangle \rangle$ and let $T$ be a $\kappa$-fat tree. Denote $A:=\dom(I)$.
We induct on $n(T)$. If $n(T)=0$, then $T$ is empty and the lemma is vacuously true.
We assume $n(T)=1$ and $\{z\mid \langle z\rangle \in \Lev_0(T)\} \in U_{\kappa,i}$ for some fixed $i$.
Since $n(T)=1$, each $x \in A$ may be identified with the maximal node $\langle x\rangle$,
so let $B^x \in \bigcap\vec{\mathbf{U}}({x})$ be the associated $\langle x \rangle$-measure-one set. Consider
\begin{itemize}
\item $A_0:=\pi^{-1}_{j_{U_{\kappa,i}}``\kappa^+}(j_{U_{\kappa,i}}(x \mapsto B^x)(j_{U_{\kappa,i}}``\kappa^+))$,
\item $A_1:=\{x \in A \mid (A_0 \restriction x)=B^x\}\cap\rlm(T)$, and
\item $A_2:=\{x \in A \mid \exists k<o^{\vec{U}}(\kappa_x)\,[(A_1 \restriction x)\in \mathbf{U}_{x,k}]\}$.
\end{itemize}
Since $j_{U_{\kappa,i}}(\kappa)_{j_{U_{\kappa,i}}``\kappa^+}=j_{U_{\kappa,i}}(\kappa) \cap j_{U_{\kappa,i}}``\kappa^+=\kappa$ and $j_{U_{\kappa,i}}(o^{\vec{U}})(\kappa)=i$, we have that
$A_0\in \bigcap_{i'<i} U_{\kappa, i'}$. A straightforward calculation shows that $j_{U_{\kappa,i}}(A_0) \restriction j_{U_{\kappa,i}}``\kappa^+=\pi_{j_{U_{\kappa,i}[\kappa^+]}}A_0$ shows that
$A_1 \in U_{\kappa,i}$. Finally, since for $k>i$, $j_{U_{\kappa,k}}(A_1) \restriction j_{U_{\kappa,k}}``\kappa^+=\pi_{j_{U_{\kappa,k}[\kappa^+]}}``A_1 \in \mathbf{U}_{j_{\kappa,k}``\kappa,i}$, we have that $A_2 \in \bigcap_{k>i} U_{\kappa,k}$.
Take $A^*:=(A_0 \cup A_1 \cup A_2) \cap A\cap\rlm(T)$, $I^*:=I\restriction A^*$
and $T^*:={}^1A_1$.
We claim that $$p^*:=\langle c_{-1},\langle \kappa^{+},I^* \rangle \rangle$$
together with $T^*$ satisfy the conclusion of the lemma.
To this end, let $r \leq p^*$.

If $\ell(r)=0$, then $r$ is compatible with $p^*+\langle x,B^x \rangle$ for any $x$ with $\langle x\rangle \in T^*$,\footnote{To ease on the reader, we write $B^x$ instead of the more formally correct $B^{\langle x\rangle}$.} and we are done.
Otherwise, $r \leq^* p^*+\langle x_0, \ldots, x_n \rangle$ for some finite tuple $\langle x_0, \ldots, x_n \rangle$
of elements of $A^*$.
Now, if $\{ x_\iota\mid \iota\le n\}\s A_0$, then we may pick an $x \in A_1\cap\rlm(T)$ such that $x_\iota \ssim x$ for all $\iota\le n$. This means that for each $\iota\le n$, $x_\iota \in {A^* \restriction x} \supseteq {A_0 \restriction x}=B^x$.
It is then straightforward to show that $r$ is compatible with $p^*+\langle \langle x,B^x \rangle\rangle$.

Next, assume $\{ x_\iota\mid \iota\le n\}\nsubseteq A_0$,
and fix the least $\iota\leq n$ such that $x_\iota \in A_1 \cup A_2$.
If $x_{\iota} \in A_1$, then $x_\iota\in\rlm(T^*)$ and a similar analysis as the above shows that $r$ is compatible with $p^*+\langle x_{\iota},B^{x_{\iota}} \rangle$, so we are done.
Finally, suppose that $x_{\iota} \in A_2$ and write $w^r_{\iota}$ as $\langle x_{\iota},J \rangle$. Denote $B:=\dom(J)$.
Note that $B \cap (A_1 \restriction x_{\iota}) \in \mathbf{U}_{x_{\iota},k}$ for some $k<o^{\vec U}(\kappa_{x_\iota})$. Choose $x \in B \cap (A_1 \restriction x_{\iota})$.
Then $x_\iota\in\rlm(T^*)$ and a similar argument as before shows that $r$ is compatible with $p^*+ \langle x,B^x \rangle$. This completes the case where $n(T)=1$.

Now consider the case where $\hgt(T)=n+1$ for some $n>0$.
Let $i$ be such that $\{z\mid \langle z\rangle \in \Lev_0(T)\} \in U_{\kappa,i}$.
A typical element of $\mathbf S(T)$ has the form $s=\langle y_0,\ldots,y_n\rangle$,
and it is equipped with a sequence $\vec B^{\vec s}$ of sets of $s$-measure-one.
Recall that the $\iota^{\text{th}}$-element of $\vec B^{s}$ is in $\bigcap \mathbf U(y_\iota)$.
As there are fewer than $\kappa$-many sets in $\bigcap \mathbf U(y_\iota)$
and by the $\kappa$-completeness of $U_{\kappa,i}$,
we may prune $T$ to get a $\kappa$-fat subtree $T'\s T$ such that
the $0^{\text{th}}$-element of $\vec B^{s}$ for a sequence $s=\langle y_0,\ldots,y_n\rangle$ depends only on $y_0$. Thus, we denote it by $B_{y_0}$.

For a sequence $\vec{B}=\langle B_0, \ldots, B_n\rangle$, we denote $\drop(\vec{B})=\langle B_1,\ldots,B_n\rangle$.
Now, for each $x$ with $\langle x\rangle\in\Lev_0(T)$,
by the induction hypothesis, we may find a $p^x \leq^{**}p+ \langle x,B_x \rangle$ and a subtree $T^x\subseteq T\cap {}^{<\omega} x^\uparrow$ with $n(T^x)=n$ for which
$$\{p^x+\langle \vec s,\drop(\vec{\mathbf{B}}^{\langle x\rangle{}^\smallfrown\vec{s}})\rangle \mid \vec{s}\in\mathbf{S}(T^x)) \}$$
is predense below $p^x$.

As $p^x \leq^{**}p+\langle x,B_x \rangle$ , $p^x$ is of the form $\langle c_{-1},\langle x,I \restriction B_x' \rangle, I(x) , \langle \kappa^{+},I \restriction A_x \rangle \rangle$,
where $B'_x\s B_x$ and $A_x\s A$.
As before, we derive three sets:
\begin{itemize}
\item $A_0:=\pi_{j_{U_{\kappa,i}}``\kappa^+}(j_{U_{\kappa_i}}(x \mapsto B'_x)(j_{U_{\kappa,i}}``\kappa^+))$,
\item $A_1:=\{x \in A \mid (A_0 \restriction x)=B'_x\ \&\ \langle x\rangle \in \Lev_0(T)\} $, and
\item $A_2:=\{x \in A \mid \exists k<o^{\vec{U}}(\kappa_x)\,[(A_1 \restriction x)\in \mathbf{U}_{x,k}]\}$.
\end{itemize}
As before,
$A_0\in \bigcap_{i'<i} U_{\kappa, i'}$,
$A_1 \in U_{\kappa,i}$ and $A_2 \in \bigcap_{i'>i} U_{\kappa,i'}$.
Let $A^*:=(A_0 \cup A_1 \cup A_2) \cap A\cap\rlm(T)$, $I^*:=I \restriction A^*$, and $T^*$ be the unique tree to satisfy that $\Lev_0(T^*)={}^1A_1$, and $T^*\cap{}^{<\omega}x^\uparrow=T^x$ for each $x \in A_1$.
A similar analysis establishes that $$p^*:=\langle c_{-1},\langle \kappa^{+},I^* \rangle \rangle$$ together with $T^*$ are as sought.
\end{proof}
\subsection{Strong Prikry property} This subsection will be devoted to proving the following key theorem.
\begin{theorem}[Strong Prikry property]\label{strongprikry}

For every $p \in \mathbb{R}$ and every dense open set $D$ in $\mathbb{R}$,
there are $p^* \leq^* p$ and fat trees $T_0,\ldots, T_{\ell(p)-1},T_{\ell(p)}$ such that:
\begin{enumerate}[label=\textup{(\arabic*)}]
\item for every $i<\ell(p)$, $T_i$ is a (possibly empty) $x_i^p$-fat tree,
\item $T_{\ell(p)}$ is a (possibly empty) $\kappa$-fat tree,
\item for every $\vec{s}=\langle s_0,\ldots,s_{\ell(p)}\rangle$ in $\mathbf S(T_0,\ldots,T_{\ell(p)})$,
there are corresponding $\vec{B}_i$'s of $s_i$-measure-one such that
$$p^*+\langle \langle s_0,\vec{B}_0\rangle, \ldots, \langle s_{\ell(p)},\vec{B}_{\ell(p)} \rangle \rangle \in D.$$
\end{enumerate}
\end{theorem}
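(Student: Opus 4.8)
The plan is to reduce to the case $\ell(p)=0$, since the general case follows by a routine recursion that threads the argument through each coordinate $x_i^p$ and finally through the top coordinate; the mechanics are exactly those already seen in the proof of Lemma~\ref{predense}. So fix a condition $p=\langle c_{-1},\langle\kappa^+,I\rangle\rangle$ with $A:=\dom(I)$, and a dense open $D$. I want a single $\kappa$-fat tree $T$ (and a choice of measure-one vectors $\vec B^{\vec s}$) such that every maximal node $\vec s$ of $T$ yields $p+\langle\vec s,\vec{\mathbf B}^{\vec s}\rangle\in D$, together with a direct extension $p^*\le^*p$ capturing this. The natural strategy is to combine two ingredients: a \emph{diagonal/ineffability-style} argument to build the tree $T$ level by level, and Lemma~\ref{predense} to convert the family of extensions-into-$D$ indexed by maximal nodes of $T$ into something predense, which then lets us absorb everything into a single $p^*\le^*p$.

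First I would set up the recursion on the length of the extension. For each $x$ with $\langle x\rangle$ a potential first node, consider the condition $p+\langle x,A\restriction x\rangle$ (or rather its factorization): it lives in a reflected copy $\mathbb R_{\vec U\restriction(\kappa_x+1),\vec{\mathbf t}\restriction(\kappa_x+1)}$ of the forcing at $\kappa_x$, and by the reflected version of the theorem (the induction hypothesis, since $n(T)$ drops) we get a direct extension and fat trees there; the point is then to glue these reflected data together using that $\bigcap\vec{\mathbf U}(x)$ has size $<\kappa$ and $U_{\kappa,i}$ is $\kappa$-complete, exactly as in the pruning step of Lemma~\ref{predense} that makes the $0^{\text{th}}$-component of $\vec B^{\vec s}$ depend only on $y_0$. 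The base case is where we must actually \emph{find} $D$: given $p$ and an $x$ in the measure-one set, ask whether there is a direct extension of $p+\langle x,B\rangle$ (for a suitable $B\in\bigcap\vec{\mathbf U}(x)$) landing in $D$; density of $D$ together with the fact that $p\le q$ iff $p$ is a direct extension of some finite-step extension of $q$ guarantees that \emph{some} finite-step extension works, and then one argues that the first ``active'' coordinate can be taken to be the first node. This is the standard ``strong Prikry'' bookkeeping: one builds, by recursion on the number of steps, the three-way partition $A_0\cup A_1\cup A_2$ (elements too small to matter, elements where the extension genuinely enters $D$, and elements that still need one more step), with $A_0\in\bigcap_{i'<i}U_{\kappa,i'}$, $A_1\in U_{\kappa,i}$, $A_2\in\bigcap_{i'>i}U_{\kappa,i'}$, precisely mirroring the bookkeeping already carried out in Lemma~\ref{predense}.

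Having produced, for every finite tuple, a direct extension into $D$, I would assemble the tree $T$: its level-$0$ set is a $U_{\kappa,i}$-measure-one set of ``good'' $x$'s, and above each such $x$ the tree is the reflected fat tree obtained from the induction hypothesis applied to the factor at $\kappa_x$; the measure-one vectors $\vec{\mathbf B}^{\vec s}$ are the ones coming out of that reflected data, with the compatibility gluing ensuring coherence across nodes. Then Lemma~\ref{predense}, applied to $p$ with this $T$ (and, for $\ell(p)>0$, the $x_i^p$-fat trees at the lower coordinates), hands back a $p^*\le^{**}p\le^*p$ and a fat subtree $T^*\subseteq T$ with $n(T^*)=n(T)$ such that $\{p^*+\langle\vec s,\vec{\mathbf B}^{\vec s}\rangle\mid\vec s\in\mathbf S(T^*)\}$ is predense below $p^*$. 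Restricting the trees to $T^*$ and shrinking the measure-one vectors accordingly, we get exactly the conclusion: every $\vec s\in\mathbf S(T_0^*,\dots,T_{\ell(p)}^*)$ has $p^*+\langle\langle s_0,\vec B_0\rangle,\dots,\langle s_{\ell(p)},\vec B_{\ell(p)}\rangle\rangle\in D$, because each such extension is a direct extension of one of the members of the predense set, and $D$ is dense open so it reflects back down along $\le^*$.

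The main obstacle I expect is the \emph{gluing across levels}: when the reflected data (direct extensions, fat subtrees, measure-one vectors) obtained from distinct first nodes $x$ need to be amalgamated into one tree over $\kappa$, one must arrange that the measure-one vector attached to a maximal node $s=\langle y_0,\dots,y_n\rangle$ depends on the nodes in a way compatible with the $1$-step extension operation $p+\langle x,B\rangle$ from Definition~\ref{def36}, so that the iterated extension $p^*+\langle\vec s,\vec{\mathbf B}^{\vec s}\rangle$ is genuinely a condition and genuinely refines the reflected extension that was in the reflected copy of $D$. This is handled by the same device used in Lemma~\ref{predense} — prune so that early components of $\vec{\mathbf B}^{\vec s}$ depend only on early nodes, using $\kappa$-completeness of the relevant normal measures and the bound $|\bigcap\vec{\mathbf U}(x)|<\kappa$ — but keeping the dependencies straight through the recursion on $\ell(p)$ and through the recursion on the number of Prikry steps is the delicate part; everything else is a transcription of the bookkeeping already displayed in the proof of Lemma~\ref{predense}.
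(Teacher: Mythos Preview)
Your outline has the right large-scale shape (reduce to $\ell(p)=0$, build the tree level by level, use the three-way partition $A_0\cup A_1\cup A_2$ to control which measure each level sits in), but it skips the step that does the real work in this forcing and therefore has a genuine gap.

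The issue is the ``gluing'' you flag at the end: for each potential first node $x$ you will produce not just a measure-one set $B_x\in\bigcap\vec{\mathbf U}(x)$ but also an implicit guru $J_x\in\mathbf G(x)$, a collapse value $c_x$, and a top guru $I_x\in\mathcal G(\kappa)$, coming from whichever extension of $p+\langle x\rangle$ happened to land in $D$. Your plan to amalgamate uses only $\kappa$-completeness and the bound $|\bigcap\vec{\mathbf U}(x)|<\kappa$, which handles the $B_x$'s, but the gurus $J_x$ range over $\mathbf G(x)$ (indexed by $(\kappa_x)^{++}$) and the $I_x$'s over $\mathcal G(\kappa)$ (indexed by $\kappa^{++}$), and the collapse values $c_x$ are not bounded in any obvious way. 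None of these can be uniformized by $\kappa$-completeness alone. The paper handles this with a separate lemma (Lemma~\ref{strongprikrylemma1}): for each $x$ one defines a dense set $D(x)\subseteq\col((\kappa_x)^{++},{<}\kappa)$, uses Clause~(3) of the guru definition to pick a single index $i_1$ with $t_{\kappa,i_1}(x)\in D(x)$ for all $x$ simultaneously, and thereby replaces the wild $c_x$'s by the canonical values $I^*(x)$ and forces the $J_x$'s to cohere. This is exactly where the guru machinery earns its keep, and it is absent from your sketch.

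Two smaller points. First, your ``induction hypothesis (the reflected version of the theorem at $\kappa_x$)'' is not the right induction: the reflected forcing at $\kappa_x$ governs the lower factor, whereas the remaining levels of the $\kappa$-fat tree live in the \emph{upper} factor, still over $\mathcal P_\kappa(\kappa^+)$. The paper's induction is on the depth $n$ within the same forcing, packaged as an auxiliary property $(D)_n$ (Lemmas~\ref{strongprikrylemma2}--\ref{strongprikrylemma4}); one builds a $\le^*$-decreasing sequence $\langle p_n\rangle$ with $p_n$ satisfying $(D)_n$, takes a lower bound, and only then uses density of $D$ to discover the height. Second, your final appeal to Lemma~\ref{predense} is unnecessary for this theorem (membership in $D$, not predensity, is the goal) and your justification ``$D$ is dense open so it reflects back down along $\le^*$'' is backwards: openness propagates membership \emph{down}, not up.
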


To motivate it, we point out the following consequence.

\begin{corollary}[Prikry property]\label{prikry}
Let $\varphi$ be a forcing statement of $\mathbb{R}$ and $p$ be a condition. There is $p^* \leq^* p$ such that $p^*$ decides $\varphi$, namely either $p^* \Vdash \varphi$ or $p^* \Vdash \neg \varphi$.
\end{corollary}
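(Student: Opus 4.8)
The goal is to derive the Prikry property (Corollary~\ref{prikry}) from the strong Prikry property (Theorem~\ref{strongprikry}). Given a forcing statement $\varphi$ and a condition $p$, the strategy is to apply Theorem~\ref{strongprikry} to a suitable dense open set. The natural candidate is
$$D:=\{q\in\mathbb R\mid q\Vdash\varphi\text{ or }q\Vdash\neg\varphi\},$$
which is dense (by the maximal principle / the fact that every condition has an extension deciding $\varphi$) and open (if $q$ decides $\varphi$ then so does every $r\le q$). Apply Theorem~\ref{strongprikry} to this $D$ and $p$ to obtain $p^*\le^* p$ and fat trees $T_0,\dots,T_{\ell(p)}$ such that for every $\vec s\in\mathbf S(T_0,\dots,T_{\ell(p)})$ there are measure-one vectors $\vec B_i$ with $p^*+\langle\langle s_0,\vec B_0\rangle,\dots,\langle s_{\ell(p)},\vec B_{\ell(p)}\rangle\rangle\in D$.

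\textbf{Key step: homogenizing the decision.} Each condition of the form $p^*+\langle\vec s,\vec{\mathbf B}\rangle\in D$ either forces $\varphi$ or forces $\neg\varphi$; this gives a $2$-coloring of $\mathbf S(T_0,\dots,T_{\ell(p)})$. I would then prune the fat trees so that the coloring becomes constant on the side-by-side maximal branches of the pruned trees. This is done level-by-level using the measures: at each non-maximal node $y$ the successor set $\Succ_T(y)$ lies in some $U_{\kappa,i_y}$ (or $\mathbf U_{x,i_y}$), and by the ultrafilter property we may shrink $\Succ_T(y)$ to a set in the same ultrafilter on which the eventual color is fixed; similarly for $\Lev_0$. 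Working from the top level down (or by a straightforward simultaneous recursion as in the proof of Lemma~\ref{predense}), and using $\kappa$-completeness to handle the finitely many coordinates $\iota\le\ell(p)$ at once, one obtains fat subtrees $T_i'\s T_i$ with $n(T_i')=n(T_i)$ and a fixed truth value $\epsilon\in\{0,1\}$ such that $p^*+\langle\vec s,\vec{\mathbf B}^{\vec s}\rangle$ decides $\varphi$ the same way for every $\vec s\in\mathbf S(T_0',\dots,T_{\ell(p)}')$. Let $p^{**}\le^{**}p^*$ be the direct extension whose top implicit gurus and intermediate implicit gurus are restricted to the domains coming from these pruned trees (shrinking to $\rlm(T_i')$).

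\textbf{Conclusion via density of one-step extensions.} Now I claim $p^{**}$ decides $\varphi$ (say $p^{**}\Vdash\varphi$ if $\epsilon$ is the "forces $\varphi$" value). Indeed, suppose not; then some $r\le p^{**}$ forces $\neg\varphi$. Since $p^{**}$ has trivial stem relative to $p$ on these coordinates, $r\le^* p^{**}+\langle x_0,\dots,x_n\rangle$ for some finite tuple of elements drawn from the shrunk domains, and — exactly as in the combinatorial analysis at the end of the proof of Lemma~\ref{predense}, using the disjointification $A=A_0\cup A_1\cup A_2$ of each relevant implicit-guru domain — $r$ is compatible with some $p^{**}+\langle\vec s,\vec{\mathbf B}^{\vec s}\rangle$ with $\vec s\in\mathbf S(T_0',\dots,T_{\ell(p)}')$, which forces $\varphi$. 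That contradicts $r\Vdash\neg\varphi$. Hence $p^{**}\le^* p$ decides $\varphi$, as desired.

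\textbf{Main obstacle.} The only real work is the homogenization: turning the $2$-coloring of side-by-side maximal branches into a constant one by pruning fat trees while preserving fatness (i.e. keeping each $\Succ_T(y)$ and each $\Lev_0(T)$ inside some $U_{\kappa,i}$, resp.\ $\mathbf U_{x,i}$) and keeping $n(T_i')=n(T_i)$. This is a routine but slightly delicate recursion on the height of the trees, simultaneous across the finitely many coordinates $\iota\le\ell(p)$; the completeness of the measures handles the simultaneity, and the ultrafilter property handles the binary choice. Everything else — density and openness of $D$, the final compatibility argument — is immediate from Theorem~\ref{strongprikry} and the bookkeeping already carried out in Lemma~\ref{predense}.
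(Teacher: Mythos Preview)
Your proposal is correct and follows essentially the same route as the paper: define $D$ as the set of conditions deciding $\varphi$, apply Theorem~\ref{strongprikry}, homogenize the fat trees so that all the resulting step extensions give the same decision, and then conclude via the predensity given by Lemma~\ref{predense}. The paper compresses your homogenization step into a single sentence (``if necessary, we shrink fat trees so that all extensions using fat trees give the same decision'') and then invokes Lemma~\ref{predense} directly rather than re-running its compatibility analysis, but the underlying argument is the same.
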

\begin{proof}
Consider $D:=\{r \in \mathbb{R} \mid r$ decides $\varphi\}$. Then $D$ is dense open. By the strong Prikry property that we are about to prove, let $p^* \leq^* p$ along with fat trees
$T_0,\ldots,T_{\ell(p)}$ be given by Theorem~\ref{strongprikry}. If necessary, we shrink fat trees so that all extensions using fat trees give the same decision. By a density argument and Lemma~\ref{predense}, it must be the case that $p^*$ decides $\varphi$.
\end{proof}

Hereafter, we prove Theorem~\ref{strongprikry}.
As before, we only deal with conditions $p$ with $\ell(p)=0$, and leave the general case to the reader.
\begin{lemma}\label{strongprikrylemma1}
Let $p=\langle c,\langle \kappa^{+}, I \rangle \rangle$ be a condition in $\mathbb R$, and $D$ be a dense open set in $\mathbb R$.
Then there are $p^*=\langle c, \langle \kappa^{+}, I^* \rangle \rangle \leq^* p$ and $\langle J_x\mid x \in \dom(I^*)\rangle$, such that for every $x \in \dom(I^*)$,
for every tuple $\langle \vec{c},\vec{u}\rangle$ below $x$ with $c_0 \leq c$,\footnote{Here, $c_0$ plays the role of the component of $\vec{c}$ as in Definition~\ref{tuplebelow}.}
the following holds:
\begin{itemize}
\item If there are $K_x,c_x,I_x$ such that $\langle \vec{c},\vec{u},\langle x,K_x \rangle, c_x, \langle \kappa^{+}, I_x \rangle \rangle$
extends $p^*$ and lies in $D$, then for some $B_x' \subseteq \dom (K_x)$, it is already the case that
$$\langle \vec{c},\vec{u},\langle x, J_x \restriction B_x' \rangle, I^*(x),\langle \kappa^{+},I^* \restriction x^\uparrow \rangle \rangle \in D.$$
\end{itemize}
\end{lemma}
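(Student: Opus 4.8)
The plan is to prove the lemma by localizing to one element $x\in\dom(I)$ at a time and combining three ingredients: the $(\kappa_x)^{++}$-closure of the ``upper part'' $\mathbb R^{p,x}_u$ (Proposition~\ref{closure}), the bound $(\kappa_x)^+$ on the number of tuples below $x$ (Proposition~\ref{countbelowx}), and two applications of the guru property (Definition~\ref{def_guru}(3)) to tame the two ``guiding-generic'' coordinates of a hypothetical extension past $x$ — the interleaved collapse that would sit immediately above $x$, and the implicit guru that would sit at $x$ — neither of which can be absorbed by closure alone, since the relevant guru indices are cofinal in $(\kappa_x)^{++}$. Everything is done for all $x\in\dom(I)$ simultaneously, which is possible because the guru property quantifies over all of $\mathcal A_\kappa(\kappa^+)$ (resp.\ $\mathcal A_{\kappa_x}(x)$); the domain $\dom(I^*)$ is then carved out as a diagonal intersection in the style of the proof of Proposition~\ref{measureone1}.

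First I would deal with the collapse coordinate. For each $y\in\mathcal A_\kappa(\kappa^+)$ let $D^{\mathrm{col}}(y)$ be the set of $d\in\mathbb C_{\kappa,y}$ that are \emph{collapse-decisive}: for every tuple $t$ below $y$, if some extension of $p$ of the form $\langle t,\langle y,K\rangle,c,\langle\kappa^+,I_y\rangle\rangle$ with $c\le d$ lies in $D$, then already some extension of this form with $c=d$ does. Enumerating the at most $(\kappa_y)^+$ tuples below $y$ and strengthening $d$ once per tuple — legitimate since $\mathbb C_{\kappa,y}=\col((\kappa_y)^{++},{<}\kappa)$ is $(\kappa_y)^{++}$-closed — shows $D^{\mathrm{col}}(y)$ is dense; by the guru property of $\vec t_\kappa$ there is an $i^*<\kappa^{++}$ with $t_{\kappa,i^*}(y)\in D^{\mathrm{col}}(y)$ for all such $y$, and this will supply the collapse value $I^*(x)=t_{\kappa,i^*}(x)$.

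Next, with the collapse value pinned down, I would repeat the idea one level below for the guru coordinate: using the relativized guru property of $\mathbf G(x)$ (that is, of $\vec t_{\kappa_x}$ transported via $\pi_x$), produce a single $J_x\in\mathbf G(x)$ that is \emph{guru-decisive} — for every tuple $t$ below $x$, if some extension with collapse $t_{\kappa,i^*}(x)$ and some implicit guru at $x$ lies in $D$, then already one whose implicit guru at $x$ has the form $J_x\restriction B'$ does. This is the step where the recursive, ``Radin-at-level-$\kappa_x$'' nature of $\mathbf G(x)$ is used, and its verification mirrors the present lemma applied to $\mathbb R_{\vec U\restriction(\kappa_x+1),\vec{\mathbf t}\restriction(\kappa_x+1)}$. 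Finally the top coordinate is handled by an honest $\le^*$-fusion: inside $(\mathbb R^{p,x}_u,\le^*)$, which is $(\kappa_x)^{++}$-closed, build a $\le^*$-decreasing sequence of length $\le(\kappa_x)^+$ that, at the stage devoted to a tuple $t$ below $x$, absorbs a witness that $t$ can be pushed into $D$ above the now-canonical collapse and guru (amalgamating the absorbed top with the running one via Remark~\ref{directstarextension}); a lower bound of this sequence is the processed top at $x$, and diagonally intersecting these over $x\in\dom(I)$ yields $I^*$, hence $p^*:=\langle c,\langle\kappa^+,I^*\rangle\rangle$. Given a tuple $\langle\vec c,\vec u\rangle$ below $x$ with some $\langle\vec c,\vec u,\langle x,K_x\rangle,c_x,\langle\kappa^+,I_x\rangle\rangle\in D$ extending $p^*$, collapse-decisiveness replaces $c_x$ by $I^*(x)$, guru-decisiveness replaces $K_x$ by $J_x\restriction B_x'$, the fusion replaces $I_x$ by $I^*\restriction x^\uparrow$, and openness of $D$ keeps the outcome in $D$.

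The main obstacle I expect is organizing the interdependence of the three coordinates so that ``decisiveness'' genuinely transfers to membership of the relatively weak canonical condition in $D$: one must check that when a single coordinate is strengthened while another is simultaneously weakened back to its canonical value, the condition stays in $D$, which is exactly what forces the repeated appeal to openness at each handoff; and one must verify that every collapse condition produced remains legitimate — e.g.\ that the column-support of $t_{\kappa,i^*}(x)$ is bounded below $\min\{\kappa_z\mid z\in\dom(I^*\restriction x^\uparrow)\}$ — which in turn forces a further measure-one thinning of $\dom(I^*)$ that must be folded into the diagonal intersection. A subsidiary point is establishing the level-$\kappa_x$ guru-decisiveness, which should be obtained by running the same argument inside the variation $\mathbb R_{\vec U\restriction(\kappa_x+1),\vec{\mathbf t}\restriction(\kappa_x+1)}$.
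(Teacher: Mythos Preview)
Your overall strategy is sound and tracks the paper's proof closely, though you organize it differently: the paper packages all three coordinates (the collapse $d$, the implicit guru $J_x$ at $x$, and the top $I_x$) into a \emph{single} dense set $D(x)\subseteq\col((\kappa_x)^{++},{<}\kappa)$ whose density proof runs one recursion of length $(\kappa_x)^+$ over tuples below $x$, recording a triple $(d_\gamma,J_\gamma,I_\gamma)$ at each successor stage; you instead separate into three sequential passes (collapse-decisive, guru-decisive, top-fusion). Both organizations end with one application of the guru property Definition~\ref{def_guru}(3) at level $\kappa$ followed by the same diagonal-club intersection, so the difference is presentational rather than mathematical.

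There is, however, a confusion in your second step. Producing the guru-decisive $J_x$ does \emph{not} require Clause~(3) of Definition~\ref{def_guru} at level $\kappa_x$, and it certainly does not mirror the present lemma for $\mathbb R_{\vec U\restriction(\kappa_x+1),\vec{\mathbf t}\restriction(\kappa_x+1)}$ --- invoking the lemma recursively would be circular, and in any case that lemma outputs a family $\langle J_y\mid y\rangle$, not a single decisive $J_x$. What is actually needed is only Clause~(2): enumerate the $(\kappa_x)^+$ tuples below $x$, for each one that admits a witness with collapse $t_{\kappa,i^*}(x)$ record \emph{one} such witness $K_\beta\in\mathbf G(x)$ with index $i_\beta<(\kappa_x)^{++}$, set $j^*:=\sup_\beta i_\beta+1$, and take $J_x:=t_{x,j^*}$; on the corresponding club, $J_x\restriction(\dom(K_\beta)\cap C_\beta)$ is pointwise below $K_\beta$, so openness of $D$ gives guru-decisiveness. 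This is exactly what the paper does inside its density claim. Once you replace your second step by this simpler sup-and-club argument, the three-pass version goes through as you outline.
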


\begin{proof}
Denote $A:= \dom (I)$. Let $x \in A$. Fix an enumeration
$\left\langle\langle \vec{c}^\beta,\vec{u}^\beta \rangle\mid{\beta<(\kappa_x)^+}\right\rangle$
of all tuples below $x$.
For each $\beta$, write $\langle \vec{c}^\beta,\vec{u}^\beta \rangle= \langle c_{-1}^\beta, w_0^\beta,c_0^\beta, \ldots, w_{m_\beta-1}^\beta,c_{m_\beta-1}^\beta \rangle$.
Write $D(x)$ for the collection of all $d\in \col((\kappa_x)^{++},{<}\kappa)$ such that
there is a pair $(J_x,I_x)$ for which one of the following holds:
\begin{enumerate}
\item $d \nparallel I(x)$;
\item for every $\beta<(\kappa_x)^+$, either
\begin{enumerate}
\item for all $J\in\mathbf G(x)$,
$$\langle \vec{c}^\beta,\vec{u}^\beta,\langle x,J\rangle, d,\langle \kappa^{+}, I_x \rangle \rangle \notin D,$$
or
\item for some $B_x^\beta\in\bigcap\vec{\mathbf{U}}(x)$,
$$\langle \vec{c}^\beta,\vec{u}^\beta,\langle x,J_x \restriction B_x^\beta \rangle, d,\langle \kappa^{+},I_x \rangle \rangle \in D.$$
\end{enumerate}
\end{enumerate}

\begin{claim}\label{strongprikryclaim1}
$D(x)$ is dense open in $\col((\kappa_x)^{++},{<}\kappa)$.
\end{claim}
\begin{proof}
Let $d \in \col((\kappa_x)^{++},{<}\kappa)$. If $d \nparallel I(x)$, then $d\in D(x)$ and we are done with any choice of $(J_x,I_x)$. Otherwise, by possibly extending $d$, we may assume that $d \leq I(x)$.
Next, we shall construct a sequence $\left\langle\langle d_\gamma,J_\gamma,I_\gamma \rangle\mid{\gamma<(\kappa_x)^+}\right\rangle$ such that:
\begin{itemize}
\item $\langle d_\gamma\mid \gamma<(\kappa_x)^+\rangle$ is a decreasing sequence of conditions below $d$;
\item $J_\gamma$ is either empty or it belongs to $\mathbf G(x)$ with $\dom(J_\gamma)\s A\restriction x$;
\item $I_\gamma \in\mathcal G(\kappa)$ with $\dom(I_\gamma)\s A$ ,
and $I_\gamma(y) \leq I(y)$ for every $y\in\dom(I_\gamma)$.
\end{itemize}
The construction is by recursion on $\gamma<(\kappa_x)^+$, as follows.

$\br$ For $\gamma=0$, we let $\langle d_0,J_0,I_0\rangle:=\langle d,\emptyset, I\restriction x^\uparrow \rangle$.

$\br$ For $\gamma=\beta+1$ such that $\langle d_\beta,J_\beta,I_\beta\rangle$ has already been defined, we do the following.

$\br\br$ If there are $J_\gamma, I_\gamma$ and $d_\gamma\leq d_{\beta}$ such that
$$\langle \vec{c}^\beta,\vec{u}^\beta,\langle x,J_\gamma \rangle, d_\gamma,\langle \kappa^{+}, I_\gamma\rangle \rangle$$ extends $p$ and lies in $D$,
then we keep them and form the triple $\langle d_\gamma,J_\gamma,I_\gamma\rangle$.

$\br\br$ Otherwise, we let $\langle d_\gamma,J_\gamma,I_\gamma\rangle:=\langle d_\beta,\emptyset,I_0\rangle$.

$\br$ For $\gamma$ a nonzero limit ordinal for which the sequence $\left\langle\langle d_\beta,J_\beta,I_\beta\rangle\mid \beta<\gamma\right\rangle$ has already been defined,
we simply let $d_\gamma$ be a lower bound of $\langle d_\beta \mid \beta<\gamma \rangle$, $J_\gamma:=\emptyset$, and $I_\gamma:=I_0$.

This completes the recursion.

Let $d^*$ be a lower bound for $\langle d_\gamma\mid \gamma<(\kappa_x)^+\rangle$.
We claim that $d^*$ is in $D(x)$.
By our setup, Case~(1) is not satisfied, thus, we need to cook up $J_x,A_x,I_x$ as in Case~(2).
For each $\gamma<(\kappa_x)^+$, there is $i_\gamma<\kappa^{++}$ such that $I_\gamma \subseteq t_{\kappa,i_\gamma}$.
Let $i:=\sup\{i_\gamma \mid \gamma<(\kappa_x)^+\}$. Since $\kappa_x<\kappa$, it is straightforward to verify that the following set is in $\bigcap\vec U(\kappa)$:
$$A_x:=\{y \mid \forall \gamma<(\kappa_x)^+\,[y\in \dom(I_\gamma)\ \&\ t_{\kappa,i}(y)\le I_\gamma(y)]\},$$
and then let $I_x:=t_{\kappa,i}\restriction A_x$. In particular, $I_x\in\mathcal G(\kappa)$.

To construct $J_x$, first for each $\gamma<(\kappa_x)^+$, fix $i_\gamma<(\kappa_x)^{++}$ witnessing that $J_\gamma\in\mathbf G(x)$ so that $J_\gamma \subseteq t_{x,i_\gamma}$. Let $i^*:=(\sup_{\gamma<(\kappa_x)^+} i_\gamma)+1$.
It follows that for each such $\gamma$, we may fix a club $C_\gamma$ in $\mathcal P_{\kappa_x}(x)$ such that for all $y \in C_\gamma$,
$t_{x,i^*}(y) \leq t_{x,i_\gamma}(y)$.

Finally, we take $J_x:=t_{x,i^*}$.
To verify that $J_x,A_x$ and $I_x$ are as sought, let $\beta<(\kappa_x)^+$.
Suppose that there is $J\in\mathbf G(x)$ such that
$$\langle \vec{c}^\beta,\vec{u}^\beta,\langle x,J\rangle, d^*,\langle \kappa^{+},I_x \rangle \rangle \in D,$$
and we will demonstrate the existence of $B_x^\beta\in\bigcap\vec{\mathbf{U}}(x)$ such that
$$\langle \vec{c}^\beta,\vec{u}^\beta,\langle x,J_x \restriction B_x^\beta \rangle, d^*,\langle \kappa^{+},I_x \rangle \rangle \in D.$$
By the definition of our recursion at step $\gamma:=\beta+1$, it is the case that
the triple $\langle d_\gamma,J_\gamma,I_\gamma\rangle$ was chosen to satisfy that
$$\langle \vec{c}^\beta,\vec{u}^\beta,\langle x,J_\gamma \rangle, d_\gamma,\langle \kappa^{+}, I_\gamma\rangle \rangle$$ extends $p$ and lies in $D$.
Then $B_x^\beta:=\dom(J_\gamma)\cap C_\gamma$ is as sought.
This completes the proof of Claim~\ref{strongprikryclaim1}.
\end{proof}

Fix $i_0<\kappa^{++}$ such that $I\s t_{\kappa,i_0}$.
As all the $D(x)$'s are dense, we may use the feature of the guru to find a large enough $i_1<\kappa^{++}$ such that for all $x\in A$, $t_{\kappa,i_1}(x) \in D(x)$.
For each $x\in A$, let $(J_x,I_x)$ be the witnessing pair for $t_{\kappa,i_1}(x) \in D(x)$,
and also let $i_x$ be such that $I_x\s t_{\kappa,i_x}$.
Now, let $i^*:=\sup\{ i_0,i_1,i_x\mid x\in A\}+1$.
Fix clubs $C_0,C_1$, and likewise for each $x$, fix clubs $C_x\s\dom(I_x)$,
such that for all $y \in \{0,1\} \cup \mathcal{P}_\kappa(\kappa^+)$ and $z \in C_y$, $t_{\kappa,i^*}(z) \leq t_{\kappa,i_y}(z)$.

Let $$C^*:=\{ z\in C_0 \cap C_1 \mid \forall x\,(x \ssim z \rightarrow z \in C_x)\},$$ and
$I^*:=t_{\kappa,i^*} \restriction A^*$, where $A^*:=A \cap C^*$.
Consider $p^*:=\langle c,\langle \kappa^{+}, I^* \rangle \rangle$. We claim that $p^* \leq^* p$ together with $\langle J_x\mid x \in A^*\rangle$ are as promised by Lemma~\ref{strongprikrylemma1}.
To this end, let $x \in A^*$,
let $\langle \vec{c},\vec{u}\rangle$ below $x$,
and assume that $K_x,c_x,I_x$ are such that
$\langle \vec{c},\vec{u}, \langle x,K_x \rangle,c_x,\langle \kappa^{+}, I_x \rangle \rangle$
extends $p^*$ and lies in $D$. Fix $\beta<(\kappa_x)^+$ such that $\langle \vec{c},\vec{u}\rangle=\langle\vec{c}^\beta,\vec{u}^\beta\rangle$.
Since $t_{\kappa,i_1}(x) \in D(x)$, the witnessing pair $(J_x,I_x)$ along with some $B^\beta_x$ satisfy
$$\langle \vec{c},\vec{u},\langle x,J_x \restriction B_x^\beta \rangle, t_{\kappa,i_1}(x),\langle \kappa^{+},I_x \rangle \rangle \in D.$$
Since $I^*(x) \leq t_{\kappa,i_1}(x)$, $A^*\cap x^\uparrow\s C_x\s \dom(I_x)$, and for each $z \in A^*\cap x^\uparrow$, $I^*(z) \leq I_x(z)$, and $D$ is open, we have that
$$\langle \vec{c},\vec{u},\langle x, J_x \restriction B_x^\beta \rangle,I^*(x),\langle \kappa^{+}, I^* \restriction x^\uparrow \rangle \rangle \in D,$$
as required.
\end{proof}
For the scope of this subsection, we introduce the following ad hoc concept.
\begin{definition}\label{Dn}
For a dense open set $D$ and an $n<\omega$, we say that a condition $p=\langle c, \langle \kappa^{+}, I \rangle \rangle$ satisfies $(D)_n$ iff for every $\langle\vec{c},\vec{u}\rangle$ with $x$ being the maximal working part of $\vec{u}$, $c_0 \leq c$ where $c_0$ is the first coordinate of $\vec{c}$, and additional sets $y_0 \ssim \cdots \ssim y_{n-1}$ with $x \ssim y_0$,
if there are $J_0, \ldots, J_{n-1}, e_0,\ldots, e_{n-1},I'$ such that
$$\langle \vec{c},\vec{u},\langle y_0,J_0 \rangle, e_0, \ldots, \langle y_{n-1},J_{n-1}\rangle, e_{n-1},\langle \kappa^{+}, I' \rangle \rangle$$
extends $p$ and lies in $D$,
then there are $p^*=\langle c^*, \langle \kappa^{+}, I^* \rangle \rangle \leq^* p$, and a fat tree $T$ of height $n$ such that for every $\vec s\in\mathbf S(T)$, there is a $\vec{B}$ of $s$-measure-one such that
\begin{align*}
\langle \vec{c},\vec{u},\langle \kappa^{+}, I^* \restriction x^\uparrow \rangle \rangle+\langle \vec{{s}},\vec{{B}} \rangle & \in D.
\end{align*}
\end{definition}

\begin{lemma}\label{strongprikrylemma2}
Let $p=\langle c,\langle \kappa^{+}, I \rangle \rangle$ be a condition in $\mathbb R$, and $D$ be a dense open set in $\mathbb R$.
Then there is a $p^*=\langle c, \langle \kappa^{+}, I^* \rangle \rangle \leq^* p$ witnessing as in $(D)_1$.
\end{lemma}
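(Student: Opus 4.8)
The plan is to bootstrap from Lemma~\ref{strongprikrylemma1}, which already produced a direct extension $p^*=\langle c,\langle\kappa^+,I^*\rangle\rangle\le^* p$ and a sequence $\langle J_x\mid x\in\dom(I^*)\rangle$ with the following ``one-step normalization'' property: whenever some tuple $\langle\vec c,\vec u\rangle$ below $x$ can be completed, using \emph{some} $K_x,c_x,I_x$, to a condition in $D$ extending $p^*$, then already $\langle\vec c,\vec u,\langle x,J_x\restriction B_x'\rangle,I^*(x),\langle\kappa^+,I^*\restriction x^\uparrow\rangle\rangle\in D$ for a suitable $B_x'\subseteq\dom(K_x)$. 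I would take this $p^*$ as the candidate witness for $(D)_1$ (possibly after one further shrinking, see below), and the content of $(D)_1$ in the case $n=1$ is to convert this ``for each $x$ separately'' statement into a \emph{single} $\kappa$-fat tree $T$ together with a uniform choice of measure-one sets along its maximal nodes.

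The key steps, in order: First, invoke Lemma~\ref{strongprikrylemma1} to get $p^*=\langle c,\langle\kappa^+,I^*\rangle\rangle\le^* p$ and $\langle J_x\mid x\in\dom(I^*)\rangle$. Set $A:=\dom(I^*)$. For each $x\in A$, let $E_x:=\dom(J_x)\in\bigcap\vec{\mathbf U}(x)$. Second, partition $A$ according to the ``level'' $i$ of the measure $U_{\kappa,i}$ that is relevant: by the usual argument (as in Proposition~\ref{measureone1}, using that $o^{\vec U}(\kappa)$ is fixed and applying the reflection/Los computation in each $M_{\kappa,i}$) find a single $i<o^{\vec U}(\kappa)$ and an $A'\in U_{\kappa,i}$, $A'\subseteq A$, on which $x\mapsto J_x$ (equivalently $x\mapsto E_x$) is coherently represented, i.e.\ there is a set $A_0\in\bigcap_{i'<i}U_{\kappa,i'}$ with $(A_0\restriction x)=E_x$ for all $x\in A_1:=\{x\in A'\mid (A_0\restriction x)=E_x\}\in U_{\kappa,i}$, and $A_2:=\{x\in A\mid\exists k<o^{\vec U}(\kappa_x)\,[(A_1\restriction x)\in\mathbf U_{x,k}]\}\in\bigcap_{i'>i}U_{\kappa,i'}$, exactly as in the $n(T)=1$ case of Lemma~\ref{predense}. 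Third, set $\hat A:=(A_0\cup A_1\cup A_2)\cap A$, shrink $I^*$ to $\hat I:=I^*\restriction\hat A$, let the new $p^*$ be $\langle c,\langle\kappa^+,\hat I\rangle\rangle$, and declare the fat tree to be $T:={}^1A_1$ (a $\kappa$-fat tree of height $1$ by Clause (2) of the definition, witnessed by $i$). Fourth, to each maximal node $\langle x\rangle\in T$ (so $x\in A_1$) attach the singleton vector $\vec B^{\langle x\rangle}:=\langle E_x\rangle=\langle A_0\restriction x\rangle$, which is of $\langle x\rangle$-measure-one since $E_x\in\bigcap\vec{\mathbf U}(x)$. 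Finally, verify the conclusion of $(D)_1$: given a tuple $\langle\vec c,\vec u\rangle$ with maximal working part $x$ and a set $y_0$ with $x\ssim y_0$, if $\langle\vec c,\vec u,\langle y_0,J_0\rangle,e_0,\langle\kappa^+,I'\rangle\rangle$ extends $p^*$ and lies in $D$, then in particular $y_0\in\dom(\hat I)=\hat A$, so $y_0$ lies in $A_0$, $A_1$, or $A_2$; in the first case absorb $y_0$ inside a genuine node of $T$ as in Lemma~\ref{predense} (pick $x'\in A_1$ with $y_0\ssim x'$, note $y_0\in E_{x'}=\hat A\restriction x'$, conclude compatibility with $p^*+\langle\langle x',E_{x'}\rangle\rangle\in D$), in the second case $\langle y_0\rangle\in\mathbf S(T)$ itself and Lemma~\ref{strongprikrylemma1} gives the required $B_{y_0}'\subseteq E_{y_0}$ with $p^*+\langle\langle y_0,J_{y_0}\restriction B_{y_0}'\rangle\rangle=\langle\vec c,\vec u,\langle y_0,J_{y_0}\restriction B_{y_0}'\rangle,\hat I(y_0),\langle\kappa^+,\hat I\restriction y_0^\uparrow\rangle\rangle\in D$ (here using openness of $D$ to pass from $I^*\restriction y_0^\uparrow$ down to $\hat I\restriction y_0^\uparrow$), and in the third case descend one more step into $A_1\restriction y_0$ to reduce to the $A_1$ case. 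In each case the witnessing $\vec B$ is the attached $\langle E_{x'}\rangle$ (resp.\ $\langle E_{y_0}\rangle$).

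The main obstacle I anticipate is the bookkeeping in Step~two: Lemma~\ref{strongprikrylemma1} hands back the maps $x\mapsto J_x$ only as honest functions on $\dom(I^*)$, not a priori with any coherence between $J_x$ and $J_y$ for $x\ssim y$, whereas the fat-tree formalism demands that the measure-one set attached to $\langle x\rangle$ be literally of the form $A_0\restriction x$ for a \emph{single} measure-one set $A_0$ of higher $o$-rank. Bridging this requires the same ``reflection'' computation used in Proposition~\ref{measureone1} and in the $n(T)=1$ step of Lemma~\ref{predense}: one represents $x\mapsto E_x$ (equivalently, $x\mapsto\dom(J_x)$) in each ultrapower $M_{\kappa,i}$, reads off the value at $j_{\kappa,i}``\kappa^+$, pulls it back through $\pi_{j_{\kappa,i}``\kappa^+}$ to define $A_0$, and then checks that the ``agreement set'' $A_1=\{x\mid A_0\restriction x=E_x\}$ is $U_{\kappa,i}$-large. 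I expect this to go through verbatim as before, but it is the only place where genuine argument — as opposed to quoting Lemma~\ref{strongprikrylemma1} and reshuffling notation — is needed. Everything after that (shrinking $I^*$, forming $T={}^1A_1$, and the three-case verification) is the now-familiar pattern from the $n(T)=1$ case of Lemma~\ref{predense}, transported into the language of $(D)_1$.
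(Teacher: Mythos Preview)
There is a genuine gap. Recall that $(D)_1$, applied to the candidate $p^*$, demands: for each instance (a tuple $\langle\vec c,\vec u\rangle$ with maximal working part $x$, together with $y_0$ and a $D$-condition extending $p^*$ with top working part $y_0$), there exist a further $p^{**}\le^* p^*$ and a height-$1$ fat tree $T$ such that \emph{every} maximal node $\langle y\rangle\in\mathbf S(T)$ yields $\langle\vec c,\vec u,\langle\kappa^+,I^{**}\restriction x^\uparrow\rangle\rangle+\langle\langle y,B_y\rangle\rangle\in D$. Your tree $T={}^1A_1$ is assembled once, before any instance is given, by coherently representing $y\mapsto\dom(J_y)$. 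But Lemma~\ref{strongprikrylemma1} only guarantees that the normalized condition at $y$ lies in $D$ \emph{provided} some $D$-condition with working part $y$ and this particular tuple $\langle\vec c,\vec u\rangle$ below it already existed; for a generic $y\in A_1$ and the specific tuple handed to you by the instance, there is no reason this hypothesis is met. Your three-case verification locates only a \emph{single} node of $T$ (namely $y_0$, or some $x'$ near it) that works --- the wrong quantifier --- and your ``find a single $i<o^{\vec U}(\kappa)$'' in Step two has no justification, since nothing in the data singles out a level in advance.

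The missing idea is a preliminary stabilization of the \emph{binary} outcome. For each tuple $\langle\vec c,\vec u\rangle$ and each $y\in A$, set $\varepsilon^{\vec c,\vec u}(y):=1$ iff some $B_y$ makes $\langle\vec c,\vec u,\langle y,J_y\restriction B_y\rangle,I(y),\langle\kappa^+,I\restriction y^\uparrow\rangle\rangle\in D$, and $0$ otherwise. For each $\iota<o^{\vec U}(\kappa)$, using that there are fewer than $\kappa$ tuples below any given $y$ together with a diagonal-intersection argument, find $X_\iota\in U_{\kappa,\iota}$ on which, for every tuple below $y$, the value $\varepsilon^{\vec c,\vec u}(y)$ is independent of $y$. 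Take $p^*$ with $\dom(I^*)=\bigcup_\iota X_\iota$. Now, given an instance, let $\iota$ be least with $y_0\in X_\iota$; since $\varepsilon^{\vec c,\vec u}(y_0)=1$, the same holds for \emph{every} $y\in X_\iota$ with $\langle\vec c,\vec u\rangle$ a tuple below $y$. Only at this point does one run the $A_0/A_1/A_2$ coherence on $y\mapsto J_y$ via $j_{\kappa,\iota}$, obtaining $p^{**}$ and a tree $T$ inside $X_\iota$ (both depending on the instance, as $(D)_1$ permits); every $y$ in $T$ now has $\varepsilon^{\vec c,\vec u}(y)=1$, so Lemma~\ref{strongprikrylemma1} supplies the required $B_y$.
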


\begin{proof} Denote $A:=\dom(I)$.
By possibly passing to a direct extension of $p$, we may assume it satisfies the conclusion of Lemma~\ref{strongprikrylemma1} as witnessed by $\langle J_x\mid x \in A\rangle$.

For all $\langle\vec{c},\vec{u}\rangle$ and $x\in A$, if
\begin{itemize}
\item $\langle\vec{c},\vec{u}\rangle$ is a tuple below $x$, and
\item there exists a $B_x$ such that $\langle \vec{c},\vec{u},\langle x,J_x \restriction B_x \rangle, \langle \kappa^{+}, I \restriction x^\uparrow \rangle \rangle \in D$, then
\end{itemize}
we let $\varepsilon^{\vec{c},\vec{u}}(x):=1$. Otherwise,
set $\varepsilon^{\vec{c},\vec{u}}(x):=0$.

\begin{claim} Let $\iota<o^{\vec U}(\kappa)$. Then there exists an $\varepsilon_\iota<2$
such that $$X_\iota:=\{x \in A \mid\forall\langle\vec{c},\vec{u}\rangle\text{ a tuple below }x
\,(\varepsilon^{\vec{c},\vec{u}}(x)=\varepsilon_\iota)\}$$
is in $U_{\kappa,\iota}$.
\end{claim}
\begin{proof}
For each $\langle \vec c, \vec u\rangle$, fix some $A_{\vec c, \vec u}\in U_{\kappa,\iota}$ on which $x\mapsto \varepsilon^{\vec{c},\vec{u}}(x)$ is constant.
Let $$X := \{ x\in A \mid\text{ if } \langle \vec c,\vec u\rangle \text{ is a tuple below } x \text{ then } x\in A_{\vec c, \vec u}\},$$
so that $X \in U_{\kappa,\iota}$. Then by an application of the pigeonhole principle we find $X_\iota\s X$ in $U_{\kappa,\iota}$ as sought.
\end{proof}
Let $\langle X_\iota \in U_{\kappa,\iota}\mid \iota<o^{\vec U}(\kappa)\rangle$ be given by the claim.
Let $A^*:=\bigcup_{\iota<o^{\vec{U}}(\kappa)}X_\iota$ and $I^*:=I \restriction A^*$.
We claim that $p^*:=\langle c, \langle \kappa^{+}, I^* \rangle \rangle$ satisfies $(D)_1$.

Suppose $q \leq p^*$ with $q=\langle \vec{c},\vec{u},\langle x,J \rangle,e,\langle \kappa^{+},I' \rangle \rangle \in D$. Since $p$ satisfies Lemma~\ref{strongprikrylemma1}, there is $B_x$ such that
$$\langle \vec{c},\vec{u},\langle x, J_x \restriction B_x \rangle, I^*(x),\langle \kappa^{+}, I \restriction x^\uparrow \rangle \rangle \in D.$$
Fix the least $\iota$ such that $x \in X_\iota$. In particular, $\varepsilon_\iota=\varepsilon^{\vec{c},\vec{u}}(x)=1$.
Thus, for every $y \in X_\iota$ such that $\langle\vec c,\vec u\rangle$ is a tuple below $y$,
there is a $B_y$ such that
$$\langle \vec{c},\vec{u},\langle y,J_y \restriction B_y \rangle,I(y),\langle \kappa^{+}, I \restriction y^\uparrow \rangle \rangle \in D.$$
Now, to construct $p^{**}$, consider $J:=j_{\kappa,\iota}(y \mapsto J_y)(j_{\kappa,\iota}`` \kappa^+)$.
Then $A_0 :=\dom(J)$ is in $\bigcap_{\varsigma<\iota} U_{\kappa,\varsigma}$ and for some $i< \kappa^{++}$, $J=t_{\kappa, i} \restriction A_0$.
Then $A_1:=\{y \in \mathcal{P}_\kappa(\kappa^+) \mid J\restriction (\dom(J) \restriction y)=J_y\}$ is in $U_{\kappa,\iota}$.
Finally, let
$$A_2:=\{y \in \mathcal{P}_\kappa(\kappa^+) \mid \exists \varsigma<o^{\vec{U}}(\kappa_x)\,[A_1 \restriction y \in U_{x,\varsigma}]\}.$$
Similar as in the proof of Lemma~\ref{strongprikry}, one can show that $A_2 \in \bigcap_{\varsigma>\iota}U_{\kappa,\varsigma}$.
Now let $I^{**}$ be an implicit guru with domain $A^{**}:=A^* \cap (A_0 \cup A_1 \cup A_2)$
such that for every $y \in A^{**}$, $I^{**}(y)\leq J(y),I^*(y)$.
Finally, let $p^{**}:=\langle c, \langle \kappa^{+},,I^{**} \rangle \rangle$ and
$T:=A^* \cap A_1$. Then it is straightforward to check that $p^{**}$ and $T$ are as required: for each $y \in T$, the set $B_y$ will be the witness for the last clause of Definition~\ref{Dn}.
\end{proof}

\begin{lemma}\label{strongprikrylemma3}
Let $D$ be a dense open set in $\mathbb R$ and $p=\langle c, \langle \kappa^{+}, I \rangle \rangle$ be a condition that satisfies $(D)_n$ for a given positive integer $n$.

Then there are $p^*=\langle c, \langle \kappa^{+}, I^* \rangle \rangle \leq^* p$, and $\langle J_x\mid x \in \dom (I^*)\rangle$ such that for every $x \in \dom (I^*)$ and every $\langle\vec{c},\vec{u}\rangle$ a tuple below $x$, if there are $d',J'$, $y_0\ssim \cdots \ssim y_{n-1}$, $J_0,\ldots J_{n-1},e_0,\ldots,e_{n-1}$ and $I'$ such that
$$ \langle \vec{c},\vec{u},\langle x,J' \rangle, d', \langle y_0,J_0\rangle, e_0, \ldots, \langle y_{n-1},J_{n-1}\rangle, e_{n-1}, \langle \kappa^+, I'\rangle\rangle$$
extends $p$ and lies in $D$,
then there are $B_x \s \dom (J')$ and a fat tree $T$ of height $n$ compatible with $p^*$ such that for every maximal node $s$ of $T$, there is an $s$-measure-one $\vec{B}$ such that
$$\langle \vec{c},\vec{u},\langle x, J_x \restriction B_x \rangle, I^*(x), \langle \kappa^{+}, I^* \restriction x^\uparrow \rangle + \langle s,\vec{B} \rangle \in D.$$
\end{lemma}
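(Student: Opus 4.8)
The plan is to run the diagonalization from the proof of Lemma~\ref{strongprikrylemma1} essentially verbatim, with the only new ingredient being that the one‑step absorption ``$\ldots\in D$'' is replaced by ``absorbable into $D$ along a height‑$n$ fat tree'', the latter being exactly what the hypothesis $(D)_n$ produces. Write $A:=\dom(I)$. For each $x\in A$ fix, by Proposition~\ref{countbelowx}, an enumeration $\langle\langle\vec c^{\beta},\vec u^{\beta}\rangle\mid\beta<(\kappa_x)^+\rangle$ of all tuples below $x$, and declare $d\in\col((\kappa_x)^{++},{<}\kappa)$ to lie in $D(x)$ iff $d\nparallel I(x)$, or there is a pair $(J_x,I_x)$ and, for each $\beta<(\kappa_x)^+$, either no extension of $\langle\vec c^{\beta},\vec u^{\beta},\langle x,J\rangle,d,\ldots,\langle\kappa^+,I'\rangle\rangle$ by $n$ further steps lies in $D$ (for any $J\in\mathbf G(x)$ and any such data), or a pair $(B_x^{\beta},T^{\beta})$ with $B_x^{\beta}\in\bigcap\vec{\mathbf{U}}(x)$, $T^{\beta}$ a height‑$n$ fat tree with $T^{\beta}\s{}^{<\omega}\dom(I_x)$, such that for every maximal node $s$ of $T^{\beta}$ there is an $s$‑measure‑one $\vec B$ with $\langle\vec c^{\beta},\vec u^{\beta},\langle x,J_x\restriction B_x^{\beta}\rangle,d,\langle\kappa^+,I_x\rangle\rangle+\langle s,\vec B\rangle\in D$. (For $x$ outside $A$, set $D(x):=\col((\kappa_x)^{++},{<}\kappa)$.)

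First I would show $D(x)$ is dense (openness being trivial). Given $d_0\le I(x)$, build a $\le$‑decreasing sequence $\langle d_\gamma\mid\gamma\le(\kappa_x)^+\rangle$, using $(\kappa_x)^{++}$‑closure of $\col((\kappa_x)^{++},{<}\kappa)$ at limits; at a successor $\gamma=\beta+1$, if some $\langle\vec c^{\beta},\vec u^{\beta},\langle x,J\rangle,e,\ldots,\langle\kappa^+,I''\rangle\rangle\le p$ with $e\le d_\beta$ lies in $D$, apply the hypothesis $(D)_n$ to $p$ with the tuple $\langle\vec c^{\beta},\vec u^{\beta},\langle x,J\rangle,e\rangle$ (whose maximal working part is $x$) to obtain a $\leq^*$‑extension of $p$ with top part $I^{(\beta)}$ restricted to $x^\uparrow$ and a height‑$n$ fat tree $T^{\beta}$ absorbing all of $\mathbf S(T^{\beta})$ into $D$, and record $d_\gamma:=e$, $J^{(\beta)}:=J$; otherwise set $d_\gamma:=d_\beta$. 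Let $d^*$ be a lower bound. Then amalgamate: the $(\kappa_x)^+<\kappa$ many implicit gurus $I^{(\beta)}$ are joined through the $\kappa$‑guru into $I_x:=t_{\kappa,i_x}\restriction A_x$ (legitimate because each $U_{\kappa,\iota}$ and the club filter on $\mathcal P_\kappa(\kappa^+)$ are $\kappa$‑complete); the $J^{(\beta)}$ are joined through the $x$‑guru by taking $J_x:=t_{x,i^*_x}$ and $B_x^{\beta}:=\dom(J^{(\beta)})\cap C_\beta$ for a per‑$\beta$ club $C_\beta$ (so $J_x\restriction B_x^{\beta}\le J^{(\beta)}$), exactly as in Lemma~\ref{strongprikrylemma1}; and each $T^{\beta}$ is re‑pruned to $T^{\beta}\cap{}^{<\omega}A_x$, still a height‑$n$ fat tree since the shrinking is along measure‑one sets. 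Openness of $D$ then gives $d^*\in D(x)$: whenever the ``no extension'' clause fails for $\beta$, an extension at collapse $d^*\le d_\beta$ exists, so the ``if'' branch was taken at step $\beta+1$, and $J_x\restriction B_x^{\beta}\le J^{(\beta)}$, $I_x\le I^{(\beta)}$, $d^*\le d_\gamma$, together with $D$ open, yield the second clause. Next, fixing $i_0<\kappa^{++}$ with $I\s t_{\kappa,i_0}$ and using clause~(3) of Definition~\ref{def_guru} for $\vec t_\kappa$, find $i_1<\kappa^{++}$ with $t_{\kappa,i_1}(x)\in D(x)$ for all $x$; fix the witnesses $(J_x,I_x)$ with $I_x\s t_{\kappa,i_x}$, set $i^*:=\sup(\{i_0,i_1\}\cup\{i_x\mid x\in A\})+1$, choose clubs $C_0,C_1$ and $C_x\s\dom(I_x)$ on which $t_{\kappa,i^*}$ dominates $t_{\kappa,i_0},t_{\kappa,i_1},t_{\kappa,i_x}$ respectively, put $C^*:=\{z\in C_0\cap C_1\mid\forall x\,(x\ssim z\to z\in C_x)\}$, $A^*:=A\cap C^*$, $I^*:=t_{\kappa,i^*}\restriction A^*$, and $p^*:=\langle c,\langle\kappa^+,I^*\rangle\rangle\leq^*p$.

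For the verification, let $x\in A^*$, let $\langle\vec c,\vec u\rangle=\langle\vec c^{\beta},\vec u^{\beta}\rangle$ be a tuple below $x$, and suppose the displayed $(n{+}1)$‑step extension, with first collapse $d'$, lies in $D$ (and, as in Lemma~\ref{strongprikrylemma1}, may be taken below $p^*$, so $d'\le I^*(x)$). Since $x\in A^*$ we have $I^*(x)\le t_{\kappa,i_1}(x)$ and $I^*\restriction x^\uparrow\le I_x$ on $A^*\cap x^\uparrow\s C_x$; as in Lemma~\ref{strongprikrylemma1} the ``no extension'' clause for $\beta$ is refuted by replacing $d'$ and $I'$ in the given condition by $t_{\kappa,i_1}(x)$ and $I_x$ (valid because $d'\le t_{\kappa,i_1}(x)$ and $I'\le I^*\restriction x^\uparrow\le I_x$ on $A^*\cap x^\uparrow$) and invoking openness of $D$; the second clause then supplies $B_x^{\beta},T^{\beta}$, and shrinking $T^{\beta}$ to $T:=T^{\beta}\cap{}^{<\omega}(A^*\cap x^\uparrow)$ and $B_x^{\beta}$ to $B_x:=B_x^{\beta}\cap\dom(J')$, together with $I^*(x)\le t_{\kappa,i_1}(x)$, $I^*\restriction x^\uparrow\le I_x$, and openness of $D$ once more, yields $(\langle\vec c,\vec u,\langle x,J_x\restriction B_x\rangle,I^*(x),\langle\kappa^+,I^*\restriction x^\uparrow\rangle\rangle)+\langle s,\vec B\rangle\in D$ for every maximal node $s$ of $T$ with its accompanying $\vec B$, as required.

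The step I expect to be the main obstacle is the amalgamation bookkeeping in the density argument: $(D)_n$ is invoked $(\kappa_x)^+$‑many times and each invocation returns a \emph{different} direct extension \emph{and} a \emph{different} fat tree, so one must (i) glue the $(\kappa_x)^+<\kappa$ many top parts through the $\kappa$‑guru — where $\kappa$‑completeness is exactly enough — (ii) glue the $x$‑level gurus through the per‑$\beta$ restriction trick of Lemma~\ref{strongprikrylemma1}, since $(\kappa_x)^+>\kappa_x$ forbids intersecting that many clubs on $\mathcal P_{\kappa_x}(x)$, and (iii) re‑prune every returned height‑$n$ fat tree to the finally chosen measure‑one set while checking it survives. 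The other delicate point, again inherited from Lemma~\ref{strongprikrylemma1}, is aligning the ``wrong'' collapse $d'$ and top $I'$ of the given member of $D$ with the canonical data $t_{\kappa,i_1}(x),I_x$ of the $D(x)$‑witness; this is precisely what forces the nested clubs $C_0,C_1,C_x$ and the strictly increasing indices $i_0<i_1<i_x<i^*$, and is the reason the hypothesized member of $D$ should be taken below $p^*$.
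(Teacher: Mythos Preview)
Your proposal is correct and follows essentially the same approach as the paper's own proof: define for each $x$ a dense set $D(x)$ in the collapse, prove density by a length-$(\kappa_x)^+$ recursion that at successor stages invokes $(D)_n$ on the tuple $\langle\vec c^\beta,\vec u^\beta,\langle x,J\rangle,e\rangle$ to produce a height-$n$ fat tree and a new top part, amalgamate via the gurus, and then apply clause~(3) of the guru to pick $i_1$ with $t_{\kappa,i_1}(x)\in D(x)$ for all $x$, finishing exactly as in Lemma~\ref{strongprikrylemma1}. The paper records the quadruple $\langle d_\gamma,J_\gamma,I_\gamma,T_\gamma\rangle$ at each stage and then defers the amalgamation and final verification to the reader (``very similar to the one from Claim~\ref{strongprikryclaim1}''); you have in fact spelled out more of this bookkeeping than the paper does, and your identification of the three delicate points (gluing $(\kappa_x)^+$-many top parts via $\kappa$-completeness, the per-$\beta$ club trick at level $x$, and re-pruning the fat trees) is exactly right.
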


\begin{proof}
Let $A:= \dom (I)$ and fix $x \in A$.
Let $\left\langle\langle \vec{c}^\beta,\vec{u}^\beta\rangle \mid {\beta<(\kappa_x)^+}\right\rangle$ be an enumeration of all the tuples below $x$.

Write $D(x)$ for the collection of all $d\in\col((\kappa_x)^{++},{<}\kappa)$ such that there is pair $(J_x,I_x)$ for which one of the following holds:
\begin{enumerate}
\item $d \nparallel I(x)$;
\item for every $\beta<(\kappa_x)^+$, either
\begin{enumerate}
\item for all $d',J',I',y_0,\ldots,y_{n-1},J_0,\ldots J_{n-1},e_0,\ldots,e_{n-1}$ such that:
\begin{itemize}
\item $d'\leq d$;
\item $J'\in \mathbf G (x)$ with $J'\leq J_x$;
\item $I'\in \mathcal G(\kappa)$ with $I'\leq I$;
\item $x\ssim y_0 \ssim \cdots \ssim y_{n-1}$;
\item for all $k<n$,
$y_k\in \dom (I_x)$, $e_k\leq I_x (y_k)$ and $J_k\in \mathbf{G}(y_k)$,
\end{itemize}
it is the case that
$$\langle \vec{c}^\beta,\vec{u}^\beta, \langle x,J' \rangle,d', \langle y_0,J_0 \rangle,e_0, \ldots, \langle y_{n-1},J_{n-1} \rangle,e_{n-1},\langle \kappa^{+},I' \rangle \rangle \notin D,$$
or
\item for some $B_x^\beta\in\bigcap\vec{\mathbf{U}}(x)$, there is an $x$-fat tree $T(x)$ of height $n$ such that for every $\vec{s}\in \mathbf{S}(T(x))$, there is an $\vec{s}$-measure-one $ \vec{{B}}$ such that
$$\langle \vec{c}^\beta,\vec{u}^\beta, \langle x,J_x \restriction B_{x}^\beta \rangle, d,\langle \kappa^{+}, I_x \rangle \rangle+\langle \vec{s},\vec{{B}} \rangle \in D.$$
\end{enumerate}
\end{enumerate}

\begin{claim}\label{strongprikryclaim2}
$D(x)$ is dense open.
\end{claim}

\begin{proof}

Let $d \in \col((\kappa_x)^{++},{<}\kappa)$. If $d \nparallel I(x)$, then we are done with any choice of $(J_x,I_x)$. Otherwise, by possibly extending $d$, we may assume that $d \leq I(x)$.
Next, we shall construct a sequence $\left\langle\langle d_\gamma,J_\gamma,I_\gamma, T_\gamma \rangle\mid{\gamma<(\kappa_x)^+}\right\rangle$ such that:
\begin{itemize}
\item $\langle d_\gamma\mid \gamma<(\kappa_x)^+\rangle$ is a decreasing sequence of conditions below $d$;
\item $J_\gamma$ is either empty or it belongs to $\mathbf G(x)$ with $\dom(J_\gamma)\s A\restriction x$;
\item $I_\gamma \in\mathcal G(\kappa)$ with $\dom(I_\gamma)\s A$,
and $I_\gamma(y) \leq I(y)$ for every $y\in\dom(I_\gamma)$;
\item $T_\gamma$ is an $x$-fat tree.
\end{itemize}
The construction is by recursion on $\gamma<(\kappa_x)^+$, as follows.

$\br$ For $\gamma=0$, we let $\langle d_0,J_0,I_0,T_0\rangle:=\langle d,\emptyset, I\restriction x^\uparrow, \emptyset \rangle$.

$\br$ For $\gamma=\beta+1$ such that $\langle d_\beta,J_\beta,I_\beta, T_\beta\rangle$ has already been defined, we do the following.

$\br\br$ If there are $J_\gamma$, $d_\gamma\leq d_{\beta}$, $y_0\ssim\cdots\ssim y_{n-1}$, $J_0,\ldots J_{n-1}$, $d_0,\ldots e_{n-1}$ and $I_\gamma$ such that
$$\langle \vec{c}^\beta,\vec{u}^\beta,\langle x,J_\gamma \rangle, d_\gamma, \langle y_0, J_0\rangle,\ldots,e_{n-1} ,\langle \kappa^{+}, I_\gamma\rangle \rangle$$
extends $p$ and lies in $D$,
then by $(D)_n$ applied to $\langle \vec{c}^\beta,\vec{u}^\beta,\langle x,J_\gamma \rangle, d_\gamma\rangle$, we may find a $p^*\leq^* p$ and a fat tree $T_\gamma$ of height $n$ such that for every $\vec s\in\mathbf S(T_\gamma)$, there is a $\vec{B}$ of $s$-measure-one such that
\begin{align*}
\langle \langle \vec{c}^\beta,\vec{u}^\beta,\langle x, J_\gamma\rangle, d_\gamma, \langle \kappa^{+}, I_\gamma \restriction x^\uparrow \rangle \rangle+ \langle \vec{{s}},\vec{{B}} \rangle & \in D,
\end{align*}
then we keep them and form the quadruple $\langle d_\gamma, J_\gamma, I_\gamma, T_\gamma \rangle$.

$\br\br$ Otherwise, let $\langle d_\gamma,J_\gamma,I_\gamma, T_\gamma\rangle:=\langle d_\beta,J_\beta,I_0, \emptyset\rangle$.

$\br$ For $\gamma$ a nonzero limit ordinal for which the sequence $\left\langle\langle d_\beta,J_\beta,I_\beta, T_\beta\rangle\mid \beta<\gamma\right\rangle$ has already been defined,
we simply let $d_\gamma$ be a lower bound of $\langle d_\beta \mid \beta<\gamma \rangle$, $J_\gamma:=\emptyset$, $I_\gamma:=I_0$ and $T_\gamma:=\emptyset$.
This completes the recursion.

Let $d^*$ be a lower bound for $\langle d_\gamma\mid \gamma<(\kappa_x)^+\rangle$.
We need to show that $d^*$ is in $D(x)$.
However, from this point on, the verification is very similar to the one from Claim~\ref{strongprikryclaim1}, and is left to the reader.
\end{proof}

The conclusion of Lemma~\ref{strongprikrylemma3} now follows from an application of Claim~\ref{strongprikryclaim2} in the same way the conclusion of Lemma~\ref{strongprikrylemma1} follows from Claim~\ref{strongprikryclaim1}.
\end{proof}

\begin{lemma}\label{strongprikrylemma4}
Let $D$ be a dense open set in $\mathbb R$ and $p=\langle c, \langle \kappa^{+}, I \rangle \rangle$ be a condition that satisfies $(D)_n$ for a given positive integer $n$. Then there is $p^*= \langle c, \langle \kappa^+, I^*\rangle \rangle \le^* p$ that satisfies $(D)_{n+1}$.
\end{lemma}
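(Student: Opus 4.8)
The plan is to deduce Lemma~\ref{strongprikrylemma4} from Lemma~\ref{strongprikrylemma3} exactly as Lemma~\ref{strongprikrylemma2} was deduced from Lemma~\ref{strongprikrylemma1}, one level higher. Write $A:=\dom(I)$. Since $p$ satisfies $(D)_n$, we may apply Lemma~\ref{strongprikrylemma3} and, after replacing $p$ by a direct extension and relabelling, assume that $p$ itself satisfies the conclusion of Lemma~\ref{strongprikrylemma3}, witnessed by a family $\langle J_x\mid x\in A\rangle$.

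Next I would introduce a two-colouring of the tuples below elements of $A$: for $x\in A$ and $\langle\vec c,\vec u\rangle$ a tuple below $x$, set $\varepsilon^{\vec c,\vec u}(x):=1$ iff there are $B_x\s\dom(J_x)$ and an $x$-fat tree $T$ of height $n$ such that, for every maximal node $s$ of $T$, some $s$-measure-one $\vec B$ satisfies $\langle\vec c,\vec u,\langle x,J_x\restriction B_x\rangle,I(x),\langle\kappa^+,I\restriction x^\uparrow\rangle\rangle+\langle s,\vec B\rangle\in D$; otherwise set $\varepsilon^{\vec c,\vec u}(x):=0$. Now, exactly as in the Claim inside the proof of Lemma~\ref{strongprikrylemma2} (using normality of $U_{\kappa,\iota}$ together with Proposition~\ref{countbelowx}, which bounds the number of tuples below $x$ by $(\kappa_x)^+<\kappa$), for each $\iota<o^{\vec U}(\kappa)$ fix an $\varepsilon_\iota<2$ and a set $X_\iota\in U_{\kappa,\iota}$ such that $\varepsilon^{\vec c,\vec u}(x)=\varepsilon_\iota$ for every $x\in X_\iota$ and every tuple $\langle\vec c,\vec u\rangle$ below $x$. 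Put $A^*:=\bigcup_{\iota<o^{\vec U}(\kappa)}X_\iota$, $I^*:=I\restriction A^*$, and $p^*:=\langle c,\langle\kappa^+,I^*\rangle\rangle\le^*p$. I claim $p^*$ satisfies $(D)_{n+1}$.

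For the verification, fix $\langle\vec c,\vec u\rangle$ with maximal working part $x$, points $x_0\ssim\cdots\ssim x_n$ above $x$, and suppose $\langle\vec c,\vec u,\langle x_0,J_0\rangle,e_0,\ldots,\langle x_n,J_n\rangle,e_n,\langle\kappa^+,I'\rangle\rangle\le p^*$ lies in $D$. Reading $\langle\vec c,\vec u\rangle$ as a tuple below $x_0$ and the remaining tail as a witness to the hypothesis of Lemma~\ref{strongprikrylemma3} (with the $n$ extra points $x_1\ssim\cdots\ssim x_n$ above $x_0$), that lemma hands us $B_{x_0}\s\dom(J_0)$ and an $x_0$-fat tree of height $n$ witnessing precisely $\varepsilon^{\vec c,\vec u}(x_0)=1$; hence $\varepsilon_\iota=1$ for the least $\iota$ with $x_0\in X_\iota$. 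Consequently, for every $y\in X_\iota$ such that $\langle\vec c,\vec u\rangle$ is a tuple below $y$, there are $B_y\s\dom(J_y)$ and a $y$-fat tree $T_y$ of height $n$ such that each maximal node $s$ of $T_y$ admits an $s$-measure-one $\vec B$ with $\langle\vec c,\vec u,\langle y,J_y\restriction B_y\rangle,I(y),\langle\kappa^+,I\restriction y^\uparrow\rangle\rangle+\langle s,\vec B\rangle\in D$.

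Now I would run the reflection-and-gluing step familiar from Proposition~\ref{measureone1} and the end of the proof of Lemma~\ref{strongprikrylemma2}. Set $J:=j_{\kappa,\iota}(y\mapsto J_y)(j_{\kappa,\iota}``\kappa^+)$, so $A_0:=\dom(J)\in\bigcap_{\varsigma<\iota}U_{\kappa,\varsigma}$ and $J=t_{\kappa,i}\restriction A_0$ for some $i<\kappa^{++}$; reflect $y\mapsto B_y$ and $y\mapsto T_y$ likewise. Let $A_1\in U_{\kappa,\iota}$ be the set of $y$ for which $J\restriction(\dom(J)\restriction y)$, the reflected set, and the reflected tree coincide respectively with $J_y$, $B_y$, $T_y$, and let $A_2\in\bigcap_{\varsigma>\iota}U_{\kappa,\varsigma}$ be the set of $y$ with $A_1\restriction y\in\mathbf U_{y,\varsigma}$ for some $\varsigma<o^{\vec U}(\kappa_y)$. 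Fix an implicit guru $I^{**}$ with domain $A^{**}:=A^*\cap(A_0\cup A_1\cup A_2)$ (intersected with the realms of the relevant trees) that dominates $J$ and $I^*$ pointwise, set $p^{**}:=\langle c,\langle\kappa^+,I^{**}\rangle\rangle\le^*p^*$, and let $T^*$ be the unique fat tree of height $n+1$ with $\Lev_0(T^*)={}^1A_1$ and $T^*\cap{}^{<\omega}y^\uparrow=T_y$ for each $y\in A_1$. Every maximal node of $T^*$ is of the form $\langle y_0\rangle{}^\smallfrown s'$ with $y_0\in A_1$ and $s'$ a maximal node of $T_{y_0}$; expressing the corresponding $(n+1)$-step extension of $\langle\vec c,\vec u,\langle\kappa^+,I^{**}\restriction x^\uparrow\rangle\rangle$ as ``first add $\langle y_0,B_{y_0}\rangle$, then extend the resulting top part by $\langle s',\vec B'\rangle$'', one checks that it directly extends the condition displayed at the end of the previous paragraph (for this $y_0$ and $s'$), which lies in $D$; since $D$ is open, $p^{**}$ together with $T^*$ witness $(D)_{n+1}$ for this data. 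The main obstacle will be exactly this bookkeeping: verifying that reflecting $y\mapsto T_y$ yields genuine $y$-fat trees of height $n$ that glue along $A_1$ into a bona fide $\kappa$-fat tree of height $n+1$ compatible with $p^{**}$, and that the $(n+1)$-step extension operation indeed factors as ``one step by $y_0$, then an $n$-step extension tracing a maximal node of $T_{y_0}$'', so that openness of $D$ closes the argument — together with the diagonal-intersection subtlety in the homogenization step, which (as in Lemma~\ref{strongprikrylemma2}) is needed because $\varepsilon^{\vec c,\vec u}(x)$ genuinely depends on the tuple $\langle\vec c,\vec u\rangle$.
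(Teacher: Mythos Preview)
Your approach is essentially the paper's, and it is correct. One terminology slip: the trees you call ``$x$-fat'' and ``$y$-fat'' are in fact $\kappa$-fat trees whose realms lie in $x^\uparrow$ (resp.\ $y^\uparrow$); an $x$-fat tree by definition has nodes in $\mathcal P_{\kappa_x}(x)$, i.e., \emph{inside} $x$, which would make your gluing formula $T^*\cap{}^{<\omega}y^\uparrow=T_y$ vacuous. With this corrected, the argument goes through, and your explicit construction of the height-$(n{+}1)$ tree $T^*$ (first level ${}^1A_1$, subtree $T_y$ above each $y\in A_1$) is exactly what is needed --- the paper's own proof is terse at this point and records only ``$T:=A^*\cap A_1$'', leaving the gluing implicit. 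Your extra reflection of $y\mapsto B_y$ and $y\mapsto T_y$ is harmless but unnecessary: only $y\mapsto J_y$ must be reflected (to obtain a single implicit guru $J$ that restricts to $J_y$ on $A_1$); the $B_y$'s and $T_y$'s need not cohere across different $y$, since each is used independently at its own first-level node.
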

\begin{proof}
Denote $A:=\dom(I)$.
By possibly passing to a direct extension of $p$, we may assume it satisfies the conclusion of Lemma~\ref{strongprikrylemma3} as witnessed by $\langle J_x\mid x \in A\rangle$.

For all $\langle\vec{c},\vec{u}\rangle$ and $x\in A$, if
\begin{itemize}
\item $\langle\vec{c},\vec{u}\rangle$ is a tuple below $x$, and
\item there exist $B_x$ and $y_0\ssim\cdots\ssim y_{n-1}$ in $A\cap x^\uparrow$, $K_0, \ldots, K_{n-1}, e_0, \ldots, e_{n-1}$ and $I'$ such that
$$\langle \vec{c},\vec{u},\langle x,J_x \restriction B_x \rangle, I(x),\langle y_0,K_0\rangle, e_0,\ldots,e_{n-1}, \langle \kappa^{+}, I'\rangle \rangle$$
extends $p$ and lies in $D$, then
\end{itemize}
we let $\varepsilon^{\vec{c},\vec{u}}(x):=1$. Otherwise,
set $\varepsilon^{\vec{c},\vec{u}}(x):=0$.

The following is obvious.
\begin{claim} Let $\iota<o^{\vec U}(\kappa)$. Then there exists an $\varepsilon_\iota<2$
such that $$X_\iota:=\{x \in A \mid\forall\langle\vec{c},\vec{u}\rangle\text{ a tuple below }x
\,(\varepsilon^{\vec{c},\vec{u}}(x)=\varepsilon_\iota)\}$$
is in $U_{\kappa,\iota}$. \qed
\end{claim}

Let $\langle X_\iota \in U_{\kappa,\iota}\mid \iota<o^{\vec U}(\kappa)\rangle$ be given by the claim.
Let $A^*:=\bigcup_{\iota<o^{\vec{U}}(\kappa)}X_\iota$ and $I^*:=I \restriction A^*$.
We claim that $p^*:=\langle c, \langle \kappa^{+}, I^* \rangle \rangle$ satisfies $(D)_{n+1}$.
To this end, suppose $q \leq p^*$ with $$q=\langle \vec{c},\vec{u},\langle y_0,J_0 \rangle, e_0, \ldots, \langle y_{n},J_{n}\rangle, e_{n},\langle \kappa^{+}, I' \rangle \rangle \in D,$$
where $\langle\vec{c},\vec{u}\rangle$ is a tuple below $x$, and $x=y_0 \ssim \cdots \ssim y_{n}$.

Since $p^*\le p$ and the latter satisfies Lemma~\ref{strongprikrylemma3}, there are $B_x \s \dom (J_0)$ and a fat tree $T$ of height $n$ compatible with $p^*$ such that for every maximal node $\vec{{s}}$ of $T$, there is an $\vec{{s}}$-measure-one $\vec{B}$ such that
$$\langle \vec{c},\vec{u},\langle x, J_x \restriction B_x \rangle, I^*(x), \langle \kappa^{+}, I^* \restriction x^\uparrow \rangle\rangle + \langle \vec{{s}},\vec{B} \rangle \in D.$$

Fix the least $\iota$ such that $x \in X_\iota$. In particular, $\varepsilon_\iota=\varepsilon^{\vec{c},\vec{u}}(x)=1$.
Thus, for every $y \in X_\iota$ such that $\langle\vec c,\vec u\rangle$ is a tuple below $y$, there are $B_y$ and $T_y$ a fat tree of height $n$ such that
$$\langle \vec{c},\vec{u},\langle y,J_y \restriction B_y \rangle,I^*(y),\langle \kappa^{+}, I ^* \restriction y^\uparrow \rangle \rangle + \langle \vec{{s}},\vec{B} \rangle \in D.$$
Now, to construct $p^{**}$, consider $J:=j_{U_{\kappa,\iota}}(x \mapsto J_x)(j_{U_{\kappa,\iota}}`` \kappa^+)$.
Then $A_0 :=\dom(J)$ is in $\bigcap_{\varsigma<\iota} U_{\kappa,\varsigma}$ and for some $i< \kappa^{++}$, $J=t_{\kappa, i} \restriction A_0$.
In addition, $A_1:=\{x \in \mathcal{P}_\kappa(\kappa^+) \mid J\restriction (\dom(J) \restriction x)=J_x\}$ is in $U_{\kappa,\iota}$.
Finally, let
$$A_2:=\{x \in \mathcal{P}_\kappa(\kappa^+) \mid \exists \zeta<o^{\vec{U}}(\kappa_x)\,[A_1 \restriction x \in \mathbf{U}_{x,\zeta}]\},$$
and note that $A_2 \in \bigcap_{\varsigma>\iota}U_{\kappa,\varsigma}$.
Now let $I^{**}$ be an implicit guru with domain $A^{**}:=A^* \cap (A_0 \cup A_1 \cup A_2)$
such that for every $x \in A^{**}$, $I^{**}(x)\leq J(x),I^*(x)$.
Finally, let $p^{**}:=\langle c, \langle \kappa^{+},I^{**} \rangle \rangle$ and
$T:=A^* \cap A_1$. Then it is straightforward to check that $p^{**}$ and $T$ are as required.
\end{proof}

\begin{proof}[Proof of Theorem \ref{strongprikry}]
Let $p:=\langle c,\langle \kappa^+,I\rangle \rangle$ be a condition.
Define a $\leq^*$-decreasing sequence $\langle p_n \mid n<\omega \rangle$ by recursion, as follows.
Let $p_0:= p$ and then appeal to Lemma~\ref{strongprikrylemma2} to receive a $p_1:=\langle c,\langle \kappa^+, I_1\rangle \rangle \le^* p_0$ that satisfies $(D)_1$.
Next, for a positive integer $n$ such that $p_n=\langle c,\langle \kappa^+, I_n\rangle \rangle$ has already been defined and it satisfies $(D)_n$,
by Lemma~\ref{strongprikrylemma4}, let $p_{n+1}:=\langle c,\langle \kappa^+, I_1\rangle \rangle \le^* p_n$ be such that it satisfies $(D)_{n+1}$.
Finally, let $p^*$ be a $\leq^*$-lower bound for $\langle p_n \mid n<\omega \rangle$.
Now, since $D$ is dense, let $q \leq p^*$ be such that $q \in D$.
Consider $n:=\ell(q)$, and write $\stem (q)$ as $\langle x_0, \ldots ,x_{n-1}\rangle$.
Since $q\leq p^* \leq p_n$ with $p_n$ satisfying $(D)_n$, there is a $p_n^*\leq^* p_n$ and a $\kappa$-fat tree $T$ of height $n$ such that for
all $\vec s\in \mathbf{S}(T)$ and corresponding measure-one sets $\vec B$,
$$\langle c_{-1}^q,\langle \kappa^+, I^{p_n^*}\rangle\rangle +\langle \vec s, \vec{B}\rangle\in D.$$

Let $p^{**}\leq^* p_n^*$ be such that $c_{-1}^{p^{**}}\leq c_{-1}^q$, $T^{*}:= T\cap{}^{<\omega}\dom(I^{p^{**}})$ and for any $s\in \mathbf{S}(T^{*})$ and $\vec{{B}}$ is the associated $s$-measure-one, let $\vec{B}^*=\langle B_0 \cap \dom (I)^{p^{**}}, \ldots, B_{n-1} \cap \dom (I)^{p^{**}} \rangle$.
Then $p^{**}$ satisfies the strong Prikry property with the witness $T^*$ and the associated measure-one sets for each maximal node through $T^*$.
\end{proof}

\section{\texorpdfstring{The cardinal structure in $V^{\mathbb{R}}$}{The cardinal structure in the Radin extension}}\label{cardinalstructureradin}

We continue with our setup from Section~\ref{radinforcing}.
Fix a generic object $G$ for $\mathbb{R}$. Define
\begin{itemize}
\item $\mathbf X:=\{x \mid \exists p \in G\,(x \in \stem(p))\}$;
\item $K_0:=\{\kappa_x \mid x \in \mathbf X\}$;
\item $K_1:=\{((\kappa_x)^+)^V \mid x \in \mathbf X\}$;
\item $K_2:=\{((\kappa_x)^{++})^V \mid x \in \mathbf X\}$;
\item $\theta:=\otp(\mathbf X,\ssim)$.\footnote{Recall that $(\mathbf X,\ssim)$ is a well-ordering since $x\mapsto \kappa_x$ constitutes an injection over $\mathbf X$.}
\end{itemize}

Note that $K_0$ is cofinal in $\kappa$.
Indeed, given $\alpha<\kappa$ there are $p\in G$ and $x\in\stem(p)$ such that $\otp(x)>\alpha^+$, thus $x\in\mathbf X$, and $\kappa_x\in K_0$ with $\kappa_x>\alpha$.
Likewise, $\bigcup \mathbf X=(\kappa^+)^V$.
Let $\langle x_\tau \mid \tau<\theta\rangle$ be the $\ssim$-increasing enumeration of $\mathbf X$.
For every $\gamma\in\acc(\theta)$, by density, $x_\gamma=\bigcup_{\beta<\gamma} x_\gamma$.
This implies that $\{\sup(x) \mid x \in \mathbf X\}$ is also continuous and cofinal in $(\kappa^+)^V$. As a consequence, $K_0$ is closed below its supremum.
Furthermore, we write $\kappa_\tau:=\kappa_{x_\tau}$,
so that $K_0=\{\kappa_\tau\mid \tau<\theta\}$.
Maintaining continuity and for notational simplicity, we also write $x_\theta:=(\kappa^+)^V$ and $\kappa_\theta:=\kappa$.
Define
\begin{itemize}
\item $C_{-1}:=\{c_{-1}^p \mid p \in G\}$, and for every $\tau<\theta$,
\item $C_{\tau}:=\{c_k^p \mid p \in G, k<\ell(p) , w_k^p=\langle x_\tau,J \rangle \text{ for some }J \}$.
\end{itemize}

\begin{proposition}
$C_{-1}$ is a generic for $\col(\omega_1,{<}\kappa_0)$, and
likewise for each $\tau<\theta$, $C_\tau$ is a generic for $\col(\kappa_\tau^{++},{<}\kappa_{\tau+1})$.
\end{proposition}
\begin{proof} Let $D\s\col(\omega_1,{<}\kappa_0)$ be dense.
Clearly, $\bar D:=\{q\in\mathbb R\mid c_{-1}^q\in D\}$ is dense in $\mathbb R$, so that we may pick $q\in \bar D\cap G\neq \emptyset$.
Then $c_{-1}^q\in D\cap C_{-1}$.

Next, let $\tau<\theta$, and $D\s \col(\kappa_\tau^{++},{<}\kappa_{\tau+1})$ be dense.
First, find $p\in G$ such that $x_\tau=x_i^p$ for some $i<\ell(p)$.
Define $\bar D:=\{q\le p\mid \forall j<\ell(q)\,[(x_\tau=x_j^q)\rightarrow(c_{j}^q\in D)]\}$.
Clearly, $\bar D$ is dense in $\mathbb R$ below $p$,
so that we may pick $q\in \bar D\cap G$ extending $p$. Since $q\le p$, we may find a $j<\ell(q)$ with $x_\tau=x_j^q$.
Therefore, $c_j^q\in C_\tau\cap D$.
\end{proof}
Let $$\mathcal{C}:=\{C_{\tau} \mid \tau \in \{-1\} \cup \theta\}.$$
Due to the Lévy collapses, we have
\begin{proposition}\label{cardcollapse1}
In $V$, for a cardinal $\lambda$ with $\omega_1<\lambda<\kappa$, if $\lambda \notin K_0 \cup K_1 \cup K_2$, then $\lambda$ is collapsed in $V[G]$. \qed
\end{proposition}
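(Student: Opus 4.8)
The plan is to split on where the $V$-cardinal $\lambda$ sits relative to $K_0=\{\kappa_\tau\mid \tau<\theta\}$ and to invoke the Lévy collapse generics $C_{-1}$ and $\langle C_\tau\mid \tau<\theta\rangle$ that were extracted from $G$, each of which lies in $V[G]$.

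First I would record the standard fact that for infinite cardinals $\mu<\nu$ of $V$, forcing with $\col(\mu,{<}\nu)$ makes every $V$-cardinal in the open interval $(\mu,\nu)$ have cardinality $\mu$. Combined with the observations made just before the proposition that $C_{-1}$ is $\col(\omega_1,{<}\kappa_0)$-generic and that each $C_\tau$ is $\col((\kappa_\tau^{++})^V,{<}\kappa_{\tau+1})$-generic, this yields: if $\omega_1<\lambda<\kappa_0$ then $|\lambda|^{V[G]}\le|\lambda|^{V[C_{-1}]}=\omega_1<\lambda$; and if $(\kappa_\tau^{++})^V<\lambda<\kappa_{\tau+1}$ for some $\tau<\theta$ then $|\lambda|^{V[G]}\le|\lambda|^{V[C_\tau]}=(\kappa_\tau^{++})^V<\lambda$. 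In either case $\lambda$ is collapsed in $V[G]$.

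It then remains to check that these two cases cover every $V$-cardinal $\lambda\in(\omega_1,\kappa)$ with $\lambda\notin K_0\cup K_1\cup K_2$. Recall that $K_0$ is unbounded in $\kappa$ and closed below $\kappa$, with least element $\kappa_0$, and that $\ssim$ is strict along the stems, so $(\kappa_\tau^{++})^V<\kappa_{\tau+1}$ for every $\tau<\theta$. If $\lambda<\kappa_0$ we are in the first case. Otherwise $\kappa_0\le\lambda<\kappa$; then $\delta:=\sup(K_0\cap(\lambda+1))$ is a nonempty set's supremum with $\delta\le\lambda<\kappa=\sup(K_0)$, so $\delta$ is an attained or limit point of $K_0$ below its supremum, hence $\delta\in K_0$, say $\delta=\kappa_\tau$. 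Since $\kappa_\tau\le\lambda<\kappa$ and $K_0$ is unbounded, $\kappa_\tau$ is not the largest element of $K_0$, so the next member $\kappa_{\tau+1}$ exists and, by choice of $\delta$, satisfies $\kappa_{\tau+1}>\lambda$. Since $\lambda\notin K_0$ we have $\kappa_\tau<\lambda$, and since $\lambda\notin K_1\cup K_2$ we have $\lambda\notin\{(\kappa_\tau^+)^V,(\kappa_\tau^{++})^V\}$; as $(\kappa_\tau^+)^V$ is the only $V$-cardinal strictly between the $V$-cardinal $\kappa_\tau$ and $(\kappa_\tau^{++})^V$, the $V$-cardinal $\lambda$ must satisfy $(\kappa_\tau^{++})^V<\lambda<\kappa_{\tau+1}$, which is the second case.

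I do not expect a genuine obstacle here: this is the easy, one-directional half of the cardinal-structure analysis, and the only points requiring care are the bookkeeping of which Lévy collapse catches a given $\lambda$ and the harmless verification that $\delta<\kappa$, so that closure of $K_0$ below its supremum applies. The subtler work — identifying precisely which cardinals of $K_0\cup K_1\cup K_2$ survive in $V[G]$ — is deferred to later in the section.
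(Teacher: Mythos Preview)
Your proof is correct and follows exactly the approach the paper intends: the proposition is stated with a bare \qed\ after the remark ``Due to the L\'evy collapses, we have,'' so the authors regard the argument you spell out---locating $\lambda$ in one of the intervals $(\omega_1,\kappa_0)$ or $((\kappa_\tau^{++})^V,\kappa_{\tau+1})$ and invoking the corresponding generic $C_{-1}$ or $C_\tau$---as routine. Your case analysis and the verification that $(\kappa_\tau^{++})^V<\kappa_{\tau+1}$ via $\ssim$ are accurate.
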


A standard inductive argument shows the following proposition.
\begin{proposition} For $\alpha\le\kappa$ with $0<o^{\vec{U}}(\alpha)<\alpha$,
$\otp(K_0\cap\alpha)=\omega^{o^{\vec{U}}(\alpha)}$, where the last term concerns ordinal exponentiation. \qed
\end{proposition}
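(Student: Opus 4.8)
The plan is to prove by induction on $\alpha \in \dom(o^{\vec U})$ with $0 < o^{\vec U}(\alpha) < \alpha$ that $\otp(K_0 \cap \alpha) = \omega^{o^{\vec U}(\alpha)}$, where I first establish a suitable statement internal to $V$ about the reflected Radin forcings $\mathbb R_{\vec U \restriction (\alpha+1), \vec{\mathbf t}\restriction(\alpha+1)}$ and then transport it along the ultrapower embeddings $j_{\kappa,i}$. Actually, the cleaner route is to prove the following local claim directly by induction: for every $\alpha \in \dom(o^{\vec U})$, whenever $H$ is generic for $\mathbb R_{\vec U\restriction(\alpha+1),\vec{\mathbf t}\restriction(\alpha+1)}$ and $K_0^H := \{\alpha_x \mid x \in \mathbf X^H\}$ is the associated generic club of former inaccessibles below $\alpha$, then $\otp(K_0^H) = \omega^{o^{\vec U}(\alpha)}$. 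The statement in the proposition is then the instance $\alpha = \kappa$ (using $o^{\vec U}(\alpha) < \alpha$, which for $\alpha = \kappa$ means we are in the typical Radin situation, not the final model where $o^{\vec U}(\kappa) = \kappa^{+3}$).

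First I would record the base cases. If $o^{\vec U}(\alpha) = 1$, then every point $x$ that appears in a stem has $o^{\vec U}(\alpha_x) = 0$, so no further points are interleaved below any $\alpha_x$; a density argument shows $K_0^H$ is unbounded in $\alpha$, and since between consecutive points of $\mathbf X^H$ nothing is added, $K_0^H$ has order type $\omega = \omega^1$. For the successor step, suppose the claim holds at all $\beta \in \dom(o^{\vec U})$ with $o^{\vec U}(\beta) < o^{\vec U}(\alpha)$, and $o^{\vec U}(\alpha) = \delta + 1$. A density argument (using the fat trees and the fact that $U_{\alpha,\delta}$ concentrates on $x$ with $o^{\vec U}(\alpha_x) = \delta$) shows that $\mathbf X^H$ is unbounded in $\alpha$ and that the set of $x\in\mathbf X^H$ with $o^{\vec U}(\alpha_x) = \delta$ is cofinal in $\mathbf X^H$ and has order type $\omega$ — these are the ``$\delta$-blocks''. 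Between two consecutive such $x < x'$, the part of $\mathbf X^H$ lying strictly between them is, via the isomorphism of Definition~\ref{factorizationdefn} (i.e. $\pi_{x'}$) applied to the lower factor $\mathbb R^{p,x'}_l$, a generic object for $\mathbb R_{\vec U\restriction(\alpha_{x'}+1),\vec{\mathbf t}\restriction(\alpha_{x'}+1)}$; since $o^{\vec U}(\alpha_{x'}) < \delta + 1$, the induction hypothesis gives that this intermediate piece has order type $\omega^{o^{\vec U}(\alpha_{x'})} < \omega^\delta$. Summing over the $\omega$-many blocks and taking the supremum, $\otp(K_0^H) = \omega^\delta \cdot \omega = \omega^{\delta+1}$. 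For $\alpha$ with $o^{\vec U}(\alpha)$ a limit ordinal, one argues that $\mathbf X^H$ is cofinal in $\alpha$ with, for each $\eta < o^{\vec U}(\alpha)$, cofinally many $x\in\mathbf X^H$ at level exactly $\eta$, so that $K_0^H$ is the increasing union of its initial segments below such points; by the induction hypothesis the initial segment up to a level-$\eta$ point has order type $\omega^\eta$ (plus lower-order stuff), and taking the sup over $\eta < o^{\vec U}(\alpha)$ gives $\otp(K_0^H) = \sup_{\eta < o^{\vec U}(\alpha)} \omega^{\eta+1} = \omega^{o^{\vec U}(\alpha)}$, using continuity of ordinal exponentiation at limits.

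The main technical obstacle is the genericity/density bookkeeping needed to justify that the order-type contributions add up as claimed — specifically, that for a fixed level $\eta$, consecutive level-$\eta$ points exist cofinally (so there are exactly $\omega$ of them below a level-$(\eta+1)$ point), and that the ``intermediate piece'' between consecutive higher-level points really is a full generic for the reflected forcing, with no extra points sneaking in and none missing. This is exactly the standard Radin-forcing analysis of the generic club, and here it is routine given the strong Prikry property (Theorem~\ref{strongprikry}) and the factorization of Definition~\ref{factorizationdefn}: a typical density argument extends a condition by a $1$-step extension using a point $x$ of the desired $o^{\vec U}$-value (available because $U_{\alpha,\eta}$ concentrates on such $x$ by Lemma~\ref{coherentsequence}(3)(b) together with the defining property of the coherent sequence), and the factorization shows the poset below such a condition, restricted to the ``$l$''-part, is a cone in the reflected Radin forcing. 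I would phrase the whole induction in $V$ as a statement about these reflected forcings so that the transport along $j_{\kappa,i}$ is immediate, matching how the analogous ``standard inductive argument'' is referenced elsewhere in the paper; hence I would keep the write-up short, citing Theorem~\ref{strongprikry}, Definition~\ref{factorizationdefn}, and the preceding paragraphs' description of $\mathbf X$, $K_0$, and the interleaved collapses, and leave the displayed density computations to the reader as the paper does.
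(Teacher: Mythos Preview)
The paper does not give a proof of this proposition; it is stated with a \qed{} immediately after the remark that ``a standard inductive argument shows the following proposition.'' Your proposal is precisely that standard inductive argument (induction on $o^{\vec U}(\alpha)$ using the factorization of Definition~\ref{factorizationdefn} and density below each measure), so you are supplying what the paper deliberately omits.

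One small slip: in the successor step you write that the intermediate block below a level-$\delta$ point $x'$ has order type $\omega^{o^{\vec U}(\alpha_{x'})} < \omega^\delta$, but since $o^{\vec U}(\alpha_{x'}) = \delta$ this should be $= \omega^\delta$; the arithmetic $\omega^\delta \cdot \omega = \omega^{\delta+1}$ that follows is then correct. Also, the proposition is about $K_0 \cap \alpha$ for $\alpha$ appearing in the generic sequence of the full forcing $\mathbb R$, and the link to the reflected forcings is via the factorization (the lower factor below a stem point $x$ with $\kappa_x = \alpha$ \emph{is} the reflected forcing), not via transporting along $j_{\kappa,i}$ --- that comment is unnecessary and can be dropped.
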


For $\tau\in\acc(\theta)$, we have $\bigcup_{\varsigma<\tau} \pi^{-1}_{x_\tau}[x_\varsigma]=\pi^{-1}_{x_\tau}[x_\tau]=(\kappa_\tau)^+$. We also recall that $\bigcup \mathbf X=(\kappa^+)^V$.
In addition, for every $\tau\in\acc(\theta)$, $\otp (x_{\kappa_\tau})=\kappa_\tau^+$ and $\bigcup \{x_\nu\mid \nu<\tau\}=x_{\kappa_\tau}$. That is:
\begin{proposition}\label{cardcollapse2}
$(\kappa^+)^V$ is collapsed. For every $\tau\in \acc (\theta)$, $(\kappa_\tau^+)^V$ is collapsed.\qed
\end{proposition}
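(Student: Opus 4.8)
The plan is to derive both collapses directly from the observation recorded immediately before the proposition --- namely that $\bigcup\mathbf X=(\kappa^+)^V$, and that for $\tau\in\acc(\theta)$ one has $\bigcup_{\varsigma<\tau}\pi_{x_\tau}^{-1}[x_\varsigma]=\pi_{x_\tau}^{-1}[x_\tau]=((\kappa_\tau)^+)^V$ --- combined with the elementary fact that no cardinal of $V$ has larger cardinality in $V[G]$ than it had in $V$. So the whole argument is a cardinality count inside $V[G]$ using decompositions coming essentially from $V$.

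First I would handle $(\kappa^+)^V$. Since $\mathbf X$ is a $\ssim$-chain and $x\mapsto\kappa_x$ maps it order-isomorphically onto $K_0\subseteq\kappa$, we get $\theta=\otp(K_0)\le\kappa$; and since the measures $U_{\kappa,i}$ concentrate on $\mathcal B_\kappa(\kappa^+)$, every $x\in\mathbf X$ lies in $\mathcal A_\kappa(\kappa^+)$, so $|x|^V=\kappa_x<\kappa$. The increasing enumeration $\langle x_\tau\mid\tau<\theta\rangle$ of $\mathbf X$ lies in $V[G]$ and has union $(\kappa^+)^V$, so in $V[G]$ the set $(\kappa^+)^V$ is a union of $|\theta|$-many sets each of size $<\kappa$; as $|\theta|^{V[G]}\le\theta\le\kappa$ and $|x_\tau|^{V[G]}\le|x_\tau|^V<\kappa$ for all $\tau$, this yields $|(\kappa^+)^V|^{V[G]}\le\kappa<(\kappa^+)^V$, i.e.\ $(\kappa^+)^V$ is not a cardinal in $V[G]$. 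For $\tau\in\acc(\theta)$ the argument is verbatim the same, now fed by the equality $((\kappa_\tau)^+)^V=\bigcup_{\varsigma<\tau}\pi_{x_\tau}^{-1}[x_\varsigma]$: here $\{\kappa_\varsigma\mid\varsigma<\tau\}=K_0\cap\kappa_\tau$ has order type $\tau$, so $\tau\le\kappa_\tau$, and $x_\varsigma\ssim x_\tau$ together with $x_\varsigma,x_\tau\in\mathcal A_\kappa(\kappa^+)$ gives $|\pi_{x_\tau}^{-1}[x_\varsigma]|^V=|x_\varsigma|^V=\kappa_\varsigma<\kappa_\tau$; since $\langle x_\varsigma\mid\varsigma<\tau\rangle$ and $\pi_{x_\tau}$ lie in $V[G]$, we get $|((\kappa_\tau)^+)^V|^{V[G]}\le\kappa_\tau<((\kappa_\tau)^+)^V$, so $((\kappa_\tau)^+)^V$ is collapsed.

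I do not expect a genuine obstacle here: the proof is just bookkeeping of cardinalities, and its one substantive input --- that at a limit stage the generic block $x_\tau$ is the union of the earlier blocks --- is exactly the observation stated just above the proposition. The only points that really need to be checked are the two order-type bounds $\theta\le\kappa$ and $\tau\le\kappa_\tau$ (both immediate from $K_0\subseteq\kappa$ and the $\ssim$-monotonicity of $x\mapsto\kappa_x$), and the bookkeeping point that the individual blocks $x_\varsigma$ and the collapsing maps $\pi_{x_\tau}$ belong to the ground model while only their enumeration is added by the forcing.
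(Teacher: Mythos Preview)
Your argument is exactly the one the paper has in mind: the proposition is stated with a bare \qed\ because it follows immediately from the two observations recorded just before it, namely $\bigcup\mathbf X=(\kappa^+)^V$ and $\bigcup_{\varsigma<\tau}\pi_{x_\tau}^{-1}[x_\varsigma]=((\kappa_\tau)^+)^V$, together with the obvious cardinality count you spell out. One tiny slip: for $x\in\mathcal A_\kappa(\kappa^+)$ one has $|x|^V=(\kappa_x)^+$, not $\kappa_x$ (since $\otp(x)=(\kappa_x)^+$); this is harmless, as $(\kappa_\varsigma)^+<\kappa_\tau$ still follows from $x_\varsigma\ssim x_\tau$.
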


\begin{proposition}\label{cardpreserve1}
All cardinals in $\{\omega_1\}\cup K_0 \cup K_2$ are preserved.
\end{proposition}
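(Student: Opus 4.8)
The plan is to show that $\omega_1$, every cardinal in $K_0$, and every cardinal in $K_2$ remains a cardinal in $V[G]$, treating each family in turn and exploiting the factorization of $\mathbb R$ together with the closure and chain-condition estimates already established.

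\textbf{Preservation of $\omega_1$ and cardinals in $K_0$.} First I would handle $\kappa$ itself and then the cardinals $\kappa_\tau$ for $\tau<\theta$. For $\kappa$: by Proposition~\ref{radincc}, $\mathbb R$ has the $\kappa^{++}$-chain condition, so cardinals $\geq\kappa^{++}$ are preserved; to see $\kappa$ itself survives, note that a new cofinal sequence into $\kappa$ of length $<\kappa$ would have to be added, but any such sequence is bounded below some $\kappa_x$ with $x\in\stem(p)$ for a suitable $p\in G$, and below that point the forcing factors (Definition~\ref{factorizationdefn}) as $\mathbb R^{p,x}_l\times\mathbb R^{p,x}_u$ where the lower part has size $<\kappa$ and the upper part is $(\kappa_x)^{++}$-closed by Proposition~\ref{closure}; hence no short cofinal sequence into $\kappa$ can appear. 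More directly, $K_0$ is unbounded in $\kappa$ and each $\kappa_\tau$ is strongly inaccessible in $V$, so it suffices to argue each $\kappa_\tau$ is preserved and that $\kappa$ is not the supremum of fewer than $\kappa$-many preserved cardinals collapsed together—but actually the cleanest route is: fix $\tau<\theta$ and $x:=x_\tau$; then by Definition~\ref{factorizationdefn}, $\mathbb R/p\cong\mathbb R^{p,x}_l\times\mathbb R^{p,x}_u$, the factor $\mathbb R^{p,x}_u$ is $(\kappa_x)^{++}$-closed, so it adds no bounded subsets of $\kappa_x$ and in particular preserves $\kappa_x$ and everything below; and $\mathbb R^{p,x}_l$ is (a cone in) $\mathbb R_{\vec U\restriction(\kappa_x+1),\vec{\mathbf t}\restriction(\kappa_x+1)}$, which has the $(\kappa_x)^{++}$-chain condition and, by the same argument applied recursively one level down, preserves $\kappa_x$. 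Since $\kappa_x$ is inaccessible in $V$ and both factors preserve it, $\kappa_x$ remains a cardinal in $V[G]$. For $\omega_1$: the collapse $C_{-1}$ is generic for $\col(\omega_1,{<}\kappa_0)$, which is $\omega_1$-closed (in fact $\sigma$-closed), and the remaining quotient forcing above it is $(\kappa_{x_0})^{++}$-closed with respect to its own generic on the relevant factor, or more simply: $\mathbb R$ adds no new reals / no new $\omega$-sequences of ordinals by a Prikry-style argument (Corollary~\ref{prikry}) combined with the $\sigma$-closure of the initial Lévy collapse, so $\omega_1$ is preserved.

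\textbf{Preservation of cardinals in $K_2$.} Fix $\tau<\theta$, so we are looking at $\mu:=((\kappa_\tau)^{++})^V\in K_2$. I would again factor at $x:=x_\tau$: $\mathbb R/p\cong\mathbb R^{p,x}_l\times\mathbb R^{p,x}_u$. The upper factor $\mathbb R^{p,x}_u$ is $(\kappa_x)^{++}$-closed by Proposition~\ref{closure}, hence adds no new subsets of $(\kappa_x)^+$ and in particular preserves the cardinal $\mu=((\kappa_x)^{++})^V$ and does not collapse it. The lower factor $\mathbb R^{p,x}_l$ lives inside $\mathbb R_{\vec U\restriction(\kappa_x+1),\vec{\mathbf t}\restriction(\kappa_x+1)}$, which has size $\kappa_x^+$ (really its conditions are drawn from $\mathcal H_{\kappa_x^+}$), hence cannot collapse $\mu=(\kappa_x)^{++}$ for size reasons; and since $\kappa_x$ and $(\kappa_x)^+$ are themselves preserved (the preservation of $(\kappa_x)^{++}$ in the ground model being automatic, and $(\kappa_x)^+$ surviving because the lower factor has the $(\kappa_x)^{++}$-cc while the upper one is highly closed), $\mu$ is genuinely still a cardinal after forcing with the product.

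\textbf{Main obstacle.} The delicate point is the interaction between the two factors: a cardinal could in principle be collapsed by the \emph{product} even if neither factor collapses it alone. Here this is controlled by the mismatch in closure and chain condition: $\mathbb R^{p,x}_u$ is $(\kappa_x)^{++}$-closed while $\mathbb R^{p,x}_l$ has the $(\kappa_x)^{++}$-chain condition, and Easton's lemma (closed $\times$ cc) guarantees that the product adds no new $(\kappa_x)^+$-sequences beyond those added by the cc factor and no new bounded subsets of $(\kappa_x)^{++}$ beyond those of the closed factor, so $(\kappa_x)^{++}=\mu$ survives. The one subtlety I would be most careful about is that the "lower" analog $\mathbb R_{\vec U\restriction(\kappa_x+1),\vec{\mathbf t}\restriction(\kappa_x+1)}$ genuinely enjoys the reflected $(\kappa_x)^{++}$-chain condition and the reflected closure of \emph{its} own upper factors — this is exactly the "reflected analogs" remark following Definition~\ref{factorizationdefn} — so the argument descends by induction on $\tau$ (equivalently on $o^{\vec U}$) without circularity, with the base case being an honest Lévy collapse whose preservation properties are classical.
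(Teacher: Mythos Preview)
Your overall architecture (factor at a stem point, use chain condition on the lower part and closure on the upper part) matches the paper's, but there is a genuine gap in how you use the closure. Proposition~\ref{closure} only gives that $(\mathbb R_u^{p,x},\leq^*)$ is $(\kappa_x)^{++}$-closed; the upper factor with its full order $\leq$ is \emph{not} closed at all (it adds the Radin sequence above $\kappa_x$). Consequently your appeal to ``Easton's lemma (closed $\times$ cc)'' is illegitimate as stated: the hypothesis of Easton's lemma is genuine closure, not $\leq^*$-closure. The paper bridges this gap with the Prikry property. Concretely, for $K_2$: assuming a surjection $f:(\kappa_\tau^+)^V\to(\kappa_\tau^{++})^V$ exists in $V[G]$, one builds in the upper factor a $\leq^*$-decreasing sequence of length $(\kappa_\tau^+)^V$ (here the $\leq^*$-closure is used) such that the lower bound $p_u'$, together with a maximal antichain in the lower factor, decides each value $\dot f(\beta)$ (here the Prikry property is used, Corollary~\ref{prikry}). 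This forces $f$ into the $\mathbb R_l^{p,x_\tau}$-extension, contradicting its $(\kappa_\tau^{++})^V$-cc. You invoke the Prikry property only for $\omega_1$; it is equally indispensable for $K_0$ and $K_2$.

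Your treatment of $K_0$ also elides a necessary case split. For limit $\tau$ the paper simply observes $\kappa_\tau=\sup_{\beta<\tau}(\kappa_\beta^{++})^V$, a supremum of preserved cardinals. For successor $\tau=\bar\tau+1$ one has $o^{\vec U}(\kappa_{x_\tau})=0$, so the factorization of Definition~\ref{factorizationdefn} is not available at $x_\tau$; instead the paper factors at $x_{\bar\tau}$ and $x_\tau$ simultaneously to get a three-term product $\mathbb R_l^{p,x_{\bar\tau}}\times\col(\kappa_{\bar\tau}^{++},{<}\kappa_\tau)\times\mathbb R_u^{p,x_\tau}$, then runs the Prikry-plus-closure argument on the third factor to push $f$ into the first two, whose product has the $\kappa_\tau$-cc. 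Your recursive gesture (``by the same argument applied recursively one level down'') does not supply this; the induction you sketch would need the proposition itself as its inductive hypothesis, but you never set up that induction, and in any case the successor step requires the finer factorization.
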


\begin{proof}
We first show that cardinals in $K_2$ are preserved.
Consider $\tau<\theta$ and suppose for a contradiction that $(\kappa_\tau^{++})^V$ is collapsed, as witnessed by a surjection $f:(\kappa_\tau^+)^V\rightarrow (\kappa_\tau^{++})^V$.
By the definition of $K_2$, there are $p \in G$ and $k<\ell(p)$ such that $w_k^p=\langle x_\tau,J \rangle$. Recalling Definition~\ref{factorizationdefn}, factor $\mathbb{R}/p$ as $\mathbb{R}^{p,x_\tau}_l \times \mathbb{R}_u^{p,x_\tau}$ and write $p$ as $(p_l,p_u)$.
As the right factor ordered by $\le^*$ is $\kappa_\tau^{++}$-closed, by the Prikry property,
we may obtain by recursion a $p_u' \leq^* p_u$ such that for every $\beta<(\kappa_\tau^+)^V$, there is a maximal antichain $X_\beta$ below $p_l$ such that for every $q \in X_\beta$, $(q,p_u')$ decides $\dot{f}(\beta)$.
This means that $(p_l,p_u')$ forces that $f$ lies in the extension by $\mathbb{R}^{p,x_\tau}_l$, contradicting the fact that the latter has the $(\kappa_\tau^{++})^V$-cc.

We leave it to the reader to verify that any $\kappa_\tau^+$, with $\tau \in \nacc(\theta)$, is preserved, using a similar factorization as in Proposition~\ref{cardpreserve1}. A similar argument (without any factorizations) shows that $\omega_1$ is preserved.
Thus, we are left with dealing with the cardinals from $K_0$.
To this end, let $\tau<\theta$, and we will show that $\kappa_\tau$ is not collapsed.

$\br$ If $\tau\in\acc(\theta)$, then $\kappa_\tau=\sup_{\beta<\tau} (\kappa_\beta^{++})^V$, and hence $\kappa_\tau$ is preserved.

$\br$ If $\tau=\bar\tau+1$, then we argue as follows.
Suppose towards a contradiction that $\kappa_\tau$ is collapsed, as witnessed by some map $f$. Find $p \in G$ and $k$ with $0<k+1<\ell(p)$ such that $w^p_{k+1}=\langle x_\tau,\emptyset \rangle$.
Write $w^p_k$ as $\langle x_{\bar\tau},J \rangle$.
As before, $\mathbb{R}/p$ is the product of two factors, and here the lower part can be factored as $\mathbb{R}_l^{p,x_{\bar\tau}} \times \col(\kappa_{\bar{\tau}}^{++},{<}\kappa_\tau)$. That is, $\mathbb R/p$ is isomorphic to
$$\mathbb{R}_l^{p,x_{\bar\tau}} \times \col(\kappa_{\bar{\tau}}^{++},{<}\kappa_\tau)\times \mathbb{R}_u^{p,x_\tau},$$
and we identify $p$ with $(p_l,p_m,p_u)$.
An argument as before yields $p_u'\le^* p_u$ such that $(p_l,p_m,p_u')$ forces that $f$ lies in
the extension by $\mathbb{R}_l^{p,x_{\bar\tau}} \times \col(\kappa_{\bar{\tau}}^{++},{<}\kappa_\tau)$. However, this is the product of $\kappa_\tau$-Knaster forcings, and hence is $\kappa_\tau$-cc, which yields a contradiction.

$\br$ If $\tau=0$, then we argue as follows.
Suppose towards a contradiction that $\kappa_0$ is collapsed, as witnessed by some map $f$. Find $p \in G$ with $\ell(p)>1$ such that $w^p_0=\langle x_0,\emptyset\rangle$ and $w^p_1=\langle x_1,\emptyset\rangle$,
so that $\mathbb R/p$ is isomorphic to
$$\col(\omega_1,{<}\kappa_0)\times\col(\kappa_0^{++},{<}\kappa_1)\times \mathbb R_u^{p,x_1},$$
so a similar analysis yields a contradiction.
\end{proof}

Putting the last findings together, yields the following.

\begin{proposition} For $\alpha\le\kappa$ with $0<o^{\vec{U}}(\alpha)<\alpha$,
$V[G]\models \cf(\alpha)=\cf(\omega^{o^{\vec{U}}(\alpha)})$. \qed
\end{proposition}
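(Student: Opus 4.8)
The plan is to read off $\cf^{V[G]}(\alpha)$ directly from the set $K_0\cap\alpha$, using the order-type computation from the preceding proposition together with the closedness of $K_0$. First I would dispose of the degenerate case: the hypothesis $o^{\vec U}(\alpha)<\alpha$ excludes $\alpha=\kappa$ (since $o^{\vec U}(\kappa)=\kappa^{+3}$), so I may assume $\alpha<\kappa$. As $\alpha$ is strongly inaccessible in $V$ and $o^{\vec U}(\alpha)<\alpha$, ordinal exponentiation gives $\omega^{o^{\vec U}(\alpha)}<\alpha$, and since $o^{\vec U}(\alpha)\ge 1$ the ordinal $\omega^{o^{\vec U}(\alpha)}$ is a limit ordinal (in particular nonzero). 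By the preceding proposition, $\otp(K_0\cap\alpha)=\omega^{o^{\vec U}(\alpha)}$; hence $K_0\cap\alpha$ is a nonempty subset of $\alpha$ whose order type is a limit ordinal.

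Next I would show that $K_0\cap\alpha$ is cofinal in $\alpha$. Let $\beta:=\sup(K_0\cap\alpha)$ and suppose toward a contradiction that $\beta<\alpha$. Since $\otp(K_0\cap\alpha)$ is a nonzero limit ordinal, $\beta$ is a limit point of $K_0$; as $\beta<\alpha<\kappa$ and $K_0$ is closed below its supremum $\kappa$ (by the density argument recalled at the start of the section), it follows that $\beta\in K_0$, hence $\beta\in K_0\cap\alpha$, so that $\beta=\max(K_0\cap\alpha)$ and $\otp(K_0\cap\alpha)$ is a successor ordinal — contradicting that it equals $\omega^{o^{\vec U}(\alpha)}$. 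Thus $\beta=\alpha$, i.e., $K_0\cap\alpha$ is cofinal in $\alpha$.

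Finally, $K_0\cap\alpha$ is a subset of $\alpha$ lying in $V[G]$, cofinal in $\alpha$, with order type $\omega^{o^{\vec U}(\alpha)}$; since the cofinality of an ordinal agrees with that of any cofinal subset of the same order type, and ordinal exponentiation is absolute, one concludes, working in $V[G]$, that $\cf(\alpha)=\cf(\omega^{o^{\vec U}(\alpha)})$. I do not expect a genuine obstacle here; the only slightly delicate point is the middle step, and it is fully handled by the closure of $K_0$ in $\kappa$ together with the limit-ordinal form of $\otp(K_0\cap\alpha)$. (Alternatively, one could run the argument through $\alpha=\kappa_\tau$ for the unique $\tau$ with $\otp(K_0\cap\alpha)=\tau$, invoking $\kappa_\tau=\sup_{\varsigma<\tau}\kappa_\varsigma$ for $\tau\in\acc(\theta)$ as in the proof of Proposition~\ref{cardpreserve1}; but the route above avoids having to verify $\alpha\in K_0$.)
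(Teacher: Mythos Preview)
Your argument is correct and is exactly the kind of ``putting the last findings together'' that the paper has in mind when it ends the proposition with a bare \qed: you invoke the preceding order-type computation $\otp(K_0\cap\alpha)=\omega^{o^{\vec U}(\alpha)}$, use the closure of $K_0$ below $\kappa$ to force $K_0\cap\alpha$ to be cofinal in $\alpha$, and read off the cofinality in $V[G]$. The only detail you make explicit that the paper suppresses is the contradiction argument showing $\sup(K_0\cap\alpha)=\alpha$, and that step is handled cleanly.
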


Recall that for each $\tau\le\theta$, $2^{\kappa_\tau}=(\kappa_\tau)^+$, $2^{(\kappa_\tau)^+}=(\kappa_\tau)^{++}$ and $2^{(\kappa_\tau)^{++}}=(\kappa_\tau)^{+3}$ in $V$.
We leave to the reader to calculate the cardinal arithmetic in $V[G]$ since the analysis is essentially the same as that from Proposition~\ref{cardpreserve1}.
For instance:
\begin{proposition}\label{prop46}
In $V[G]$, for every $\tau\in\theta$, $2^{\kappa_\tau}=(\kappa_\tau)^+$. \qed
\end{proposition}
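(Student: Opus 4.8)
The plan is to mirror the structure of the proof of Proposition~\ref{cardpreserve1}, exploiting the factorization $\mathbb R/p \cong \mathbb R_l^{p,x}\times\mathbb R_u^{p,x}$ together with the $(\kappa_\tau)^{++}$-closure of the upper part (Proposition~\ref{closure}) and the Prikry property (Corollary~\ref{prikry}). Fix $\tau\in\theta$ and work in $V[G]$. We want to show $2^{\kappa_\tau}\le(\kappa_\tau)^+$; the reverse inequality is immediate since $\kappa_\tau$ is preserved (Proposition~\ref{cardpreserve1}). So suppose toward a contradiction that $\dot A$ is a name for a subset of $\kappa_\tau$, and — since $\kappa_\tau$, $(\kappa_\tau)^+$ are both preserved, while everything strictly between $(\kappa_\tau)^{++}$ and $\kappa_{\tau+1}$ is collapsed to $(\kappa_\tau)^{++}$, and $\kappa_{\tau+1}$ is the least preserved cardinal above $(\kappa_\tau)^+$ — it suffices to produce a name, living in a small complete subposet, for any such $\dot A$. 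Concretely, I would show: every subset of $\kappa_\tau$ in $V[G]$ lies in an intermediate model obtained by forcing with something of size $(\kappa_\tau)^+$ over $V$, whence there are at most $((\kappa_\tau)^+)^{(\kappa_\tau)^+}=(\kappa_\tau)^{+2}$ names — but this only gives $2^{\kappa_\tau}\le(\kappa_\tau)^{+2}$, which is not enough. So the bound must come from the lower factor alone.

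First I would pick, by a density argument, a condition $p\in G$ and $k<\ell(p)$ with $w_k^p=\langle x_\tau,J\rangle$, factor $\mathbb R/p$ as $\mathbb R_l^{p,x_\tau}\times\mathbb R_u^{p,x_\tau}$, and write $p=(p_l,p_u)$; recall $\mathbb R_l^{p,x_\tau}$ is isomorphic (via $\pi_{x_\tau}$) to a cone of $\mathbb R_{\vec U\restriction(\kappa_\tau+1),\vec{\mathbf t}\restriction(\kappa_\tau+1)}$, which has the $(\kappa_\tau)^+$-chain condition by (the reflected analog of) Proposition~\ref{radincc} — in fact it is $(\kappa_\tau)^+$-Linked${}_0$. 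Now, given a name $\dot A$ for a subset of $\kappa_\tau$, I would use the $(\kappa_\tau)^{++}$-closure of $(\mathbb R_u^{p,x_\tau},\le^*)$ together with the Prikry property to build, by recursion on $\beta<\kappa_\tau$, a $\le^*$-decreasing sequence in $\mathbb R_u^{p,x_\tau}$ of length $\kappa_\tau<(\kappa_\tau)^{++}$ with a $\le^*$-lower bound $p_u'\le^* p_u$ such that for every $\beta<\kappa_\tau$ there is a maximal antichain $X_\beta$ below $p_l$ with $(q,p_u')$ deciding "$\beta\in\dot A$" for each $q\in X_\beta$. Then $(p_l,p_u')$ forces $\dot A$ to be equal to a name that mentions only $\mathbb R_l^{p,x_\tau}$-information.

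It remains to count. In $V$, the poset $\mathbb R_l^{p,x_\tau}$ has the $(\kappa_\tau)^+$-cc and size $(\kappa_\tau)^+$ (its conditions are coded in $\mathcal H_{(\kappa_\tau)^+}$, cf. the reflected form of Proposition~\ref{radincc}), so a nice name for a subset of $\kappa_\tau$ is a sequence of $\kappa_\tau$ many antichains, each of size $\le\kappa_\tau$, i.e. an element of a set of size $((\kappa_\tau)^+)^{\kappa_\tau}=(\kappa_\tau)^+$ by $\gch$ in $V$. Hence there are only $(\kappa_\tau)^+$ many such nice names, and since every subset of $\kappa_\tau$ in $V[G]$ is (below some condition of $G$) forced to be represented by one of them, $2^{\kappa_\tau}\le(\kappa_\tau)^+$ in $V[G]$. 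The main obstacle is the recursion producing $p_u'$: one must check that a single direct extension in the $(\kappa_\tau)^{++}$-closed upper factor can simultaneously handle all $\kappa_\tau$ many coordinates $\beta$, reducing each to an antichain below $p_l$ — this is exactly where closure of length $\kappa_\tau$ (available since $\kappa_\tau<(\kappa_\tau)^{++}$) and the Prikry property are combined, just as in the $K_2$-preservation argument of Proposition~\ref{cardpreserve1}, only now iterated $\kappa_\tau$ times rather than $(\kappa_\tau)^+$ times.
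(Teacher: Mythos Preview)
Your overall strategy is exactly what the paper intends (the proof is left to the reader with a pointer to Proposition~\ref{cardpreserve1}), and the reduction step --- factoring at $x_\tau$ and using the $(\kappa_\tau)^{++}$-closure of $(\mathbb R_u^{p,x_\tau},\le^*)$ together with the Prikry property to push a name for a subset of $\kappa_\tau$ into the lower factor --- is carried out correctly.

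The counting step, however, contains a real error. The reflected form of Proposition~\ref{radincc} gives that $\mathbb R_{\vec U\restriction(\kappa_\tau+1),\vec{\mathbf t}\restriction(\kappa_\tau+1)}$ is $(\kappa_\tau)^+$-Linked$_0$, which yields the $(\kappa_\tau)^{++}$-chain condition, \emph{not} the $(\kappa_\tau)^+$-chain condition; antichains may have size $(\kappa_\tau)^+$, not merely $\kappa_\tau$. Moreover, conditions in $\mathbb R_l^{p,x_\tau}$ carry an implicit guru $I\in\mathbf G(x_\tau)$, and since such an $I$ is determined by a subset of $\mathcal P_{\kappa_\tau}(x_\tau)$ (its domain) together with an index below $(\kappa_\tau)^{++}$, there are $2^{(\kappa_\tau)^+}=(\kappa_\tau)^{++}$ of them; so $|\mathbb R_l^{p,x_\tau}|=(\kappa_\tau)^{++}$, not $(\kappa_\tau)^+$. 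With the correct parameters your nice-name count yields only $2^{\kappa_\tau}\le(\kappa_\tau^{++})^V$.

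This is in fact enough when $\tau\in\acc(\theta)$: by Proposition~\ref{cardcollapse2} the cardinal $(\kappa_\tau^+)^V$ is collapsed in $V[G]$, so $(\kappa_\tau^{++})^V=(\kappa_\tau^+)^{V[G]}$. But for $\tau\in\nacc(\theta)$, where $(\kappa_\tau^+)^V$ is preserved, you need the finer three-part factorization from the successor case of Proposition~\ref{cardpreserve1}: for $\tau=\bar\tau+1$ one has $o^{\vec U}(\kappa_\tau)=0$ and
\[
\mathbb R/p\ \cong\ \mathbb R_l^{p,x_{\bar\tau}}\times\col((\kappa_{\bar\tau})^{++},{<}\kappa_\tau)\times\mathbb R_u^{p,x_\tau},
\]
where the product of the first two factors has size $\kappa_\tau$ and the $\kappa_\tau$-cc (the first factor has size $\le(\kappa_{\bar\tau})^{++}<\kappa_\tau$ and $\kappa_\tau$ is inaccessible in $V$). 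Now the nice-name count genuinely gives $2^{\kappa_\tau}\le\kappa_\tau^{\kappa_\tau}=(\kappa_\tau)^+$. The case $\tau=0$ is handled analogously with $\col(\omega_1,{<}\kappa_0)$ as the sole lower factor.
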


\begin{proposition}
$\kappa$ is strongly inaccessible in $V[G]$.
\end{proposition}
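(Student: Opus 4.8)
The plan is to check separately that $\kappa$ is a regular limit cardinal and a strong limit cardinal in $V[G]$; since $\kappa>\omega$, these together yield strong inaccessibility. Two of the three ingredients are immediate. For ``limit cardinal'': the density argument recorded above shows that $K_0$ is cofinal in $\kappa$, and by Proposition~\ref{cardpreserve1} every member of $K_0$ is a cardinal in $V[G]$, so $\kappa=\sup K_0$ is a limit cardinal in $V[G]$. For ``strong limit'': given $\mu<\kappa$ in $V[G]$, use cofinality of $K_0$ to pick $\tau<\theta$ with $\mu<\kappa_\tau$, and apply Proposition~\ref{prop46} to get $(2^\mu)^{V[G]}\le(2^{\kappa_\tau})^{V[G]}=(\kappa_\tau^+)^{V[G]}$, which is $<\kappa$ because $\kappa$ is a limit cardinal in $V[G]$ and $\kappa_\tau<\kappa$.

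The substantive point is regularity of $\kappa$ in $V[G]$, and I would model it on the proof of Proposition~\ref{cardpreserve1}. Suppose not: some condition forces a name $\dot f$ to be a cofinal map from $\check\mu$ into $\check\kappa$ for a fixed $\mu<\kappa$. Choosing $p\in G$ forcing this, I would first descend to $p'\le p$ with $p'\in G$ having a point $x:=x^{p'}_\iota$ in its working part with $o^{\vec{U}}(\kappa_x)>0$ and $\kappa_x>\mu$; the set of such conditions is dense, since for any $q$ we have $\dom(I^q_{\ell(q)})\in\bigcap\vec{U}(\kappa)\subseteq U_{\kappa,1}$ and $U_{\kappa,1}$ concentrates on $\{x\mid o^{\vec{U}}(\kappa_x)>0\ \&\ \kappa_x>\mu\}$ (by clauses~(3)(b) and (3)(d) of Lemma~\ref{coherentsequence}), so $q$ admits a $1$-step extension by such an $x$. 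Writing $\kappa_\tau:=\kappa_x$, I would then factor $\mathbb{R}/p'\cong\mathbb{R}^{p',x}_l\times\mathbb{R}^{p',x}_u$ as in Definition~\ref{factorizationdefn}, with $p'=(p'_l,p'_u)$.

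Finally, exactly as in Proposition~\ref{cardpreserve1}, I would use that $(\mathbb{R}^{p',x}_u,\le^*)$ is $(\kappa_\tau)^{++}$-closed (Proposition~\ref{closure}), that $\mathbb{R}^{p',x}_u$ has the Prikry property (as in the proof of Proposition~\ref{cardpreserve1}), and that $\mathbb{R}^{p',x}_l$ has the $(\kappa_\tau)^{++}$-chain condition, to run a recursion of length $\mu<(\kappa_\tau)^{++}$: at stage $\beta$ I build a maximal antichain $X_\beta$ below $p'_l$, of size $<(\kappa_\tau)^{++}$, together with a $\le^*$-extension of the upper part deciding $\dot f(\beta)$ below every member of $X_\beta$; at the end I take a $\le^*$-lower bound $p_u^*$ of all these upper parts. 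The upshot is that $(p'_l,p_u^*)$ forces $\dot f$ to lie in the forcing extension by $\mathbb{R}^{p',x}_l$ alone. But $\mathbb{R}^{p',x}_l$ is isomorphic to a cone in $\mathbb{R}_{\vec{U}\restriction(\kappa_\tau+1),\vec{\mathbf{t}}\restriction(\kappa_\tau+1)}$, so under \gch\ it has size at most $2^{(\kappa_\tau)^+}=(\kappa_\tau)^{++}<\kappa$, and since $\kappa$ is strongly inaccessible in $V$ it remains regular in any extension by a poset of size $<\kappa$; hence no cofinal map $\mu\to\kappa$ can live in the $\mathbb{R}^{p',x}_l$-extension --- a contradiction. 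I expect the only delicate bookkeeping to be the simultaneity in that recursion, namely producing a single upper-part condition that handles an entire maximal antichain below $p'_l$ at each stage; this is why the chain condition of $\mathbb{R}^{p',x}_l$ and the closure degree of $\mathbb{R}^{p',x}_u$ must be matched, and it is precisely the mechanism already deployed for Proposition~\ref{cardpreserve1}, so it should go through with only cosmetic changes.
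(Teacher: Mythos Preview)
Your argument has a genuine gap at the recursion step. In Proposition~\ref{cardpreserve1} the surjection $f$ has range contained in $(\kappa_\tau)^{++}$, and this is what makes the ``decide $\dot f(\beta)$ by a $\le^*$-extension of the upper factor together with a lower antichain'' mechanism work: the upper factor, being Prikry-type with $(\kappa_\tau)^{++}$-closed $\le^*$, adds no new subsets of $(\kappa_\tau)^+$ over $V[G_l]$, so (via the induced well-order on $(\kappa_\tau)^+$) any collapse of $(\kappa_\tau)^{++}$ already lives in the lower extension. For regularity of $\kappa$, however, $\dot f(\beta)$ ranges over all of $\kappa$, and values of $\dot f(\beta)$ may literally \emph{be} Radin points of the upper part --- for instance, if $\dot f$ enumerates the limit points of $K_0$ above $\kappa_\tau$. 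No $\le^*$-extension of the upper factor can decide such a value, since direct extensions do not lengthen the stem. A sanity check: your argument never uses the hypothesis $\cf(o^{\vec U}(\kappa))\ge\kappa^{++}$, so if it worked it would show $\kappa$ stays regular even when $o^{\vec U}(\kappa)=2$, whereas in that case $K_0$ has order type $\omega^2$ and $\kappa$ is singularized.

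The paper's proof confronts exactly this obstacle. Rather than trying to decide each $\dot f(\xi)$ outright, it uses the \emph{strong} Prikry property on the upper factor to obtain, for each $\xi<\mu$, a fat tree $T_\xi$ whose maximal branches yield conditions forcing $\dot f(\xi)$ below some bound in $\kappa$. The measures indexing the levels of all these trees form a set of size at most $\mu\cdot\omega<\kappa^{++}$ inside $o^{\vec U}(\kappa)$; since $\cf(o^{\vec U}(\kappa))\ge\kappa^{++}$, one can pick $\varsigma$ above them all and a set $A'\in U_{\kappa,\varsigma}$. For each $x\in A'$, every relevant maximal branch of every $T_\xi$ lies below $x$, so $p^*+\langle x\rangle$ bounds $\im(\dot f)$ by an ordinal $<\kappa$. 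Predensity of $\{p^*+\langle x\rangle:x\in A'\}$ below $p^*$ then gives the contradiction. The high Mitchell order is essential here, and your factorization alone does not substitute for it.
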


\begin{proof}
By Proposition~\ref{prop46}, it suffices to prove that $\kappa$ remains regular in $V[G]$.
Towards a contradiction, suppose $V[G]$ admits a cardinal $\delta<\kappa$ and cofinal map from $\delta$ to $\kappa$.
Find $p=\langle c_0,\langle x, J\rangle, c_1,\langle\kappa^+,I\rangle\rangle\in G$ forcing it for which $\delta<\kappa_x$.
Factor $\mathbb{R}/p$ as $\mathbb{R}_l^{p,x} \times \mathbb{R}_u^{p,x}$.
As $\mathbb{R}_l^{p,x}$ has the $(\kappa_x)^{++}$-cc (see page \pageref{lowerpart}), it is in particular $\kappa$-cc,
so we may fix a $\mathbb{R}_u^{p,x}$-name $\dot{f}$ which is $\mathbb{R}_u^{p,x}$-forced by $p^x_u$ to be a cofinal map from $\delta$ to $\kappa$.
Denote $\mathbb R':= \mathbb R_u^{p,x}$.

\begin{claim} Let $\xi<\delta$. Then the following set is dense open:
$$D_\xi=\{q \in \mathbb{R'} \mid \exists i <\ell(q)\,(q \Vdash \dot{f}(\check\xi)< \kappa_{x_i^q})\}.$$
\end{claim}
\begin{proof} It is clearly open. To show it is dense, consider an arbitrary $q \in \mathbb{R}'$.
Extend $q$ to some $q'$ that decides the value of $\dot{f}(\xi)$ to be, say, $\xi_q$.
Pick $q'' \le q'$ such that $\xi_{q''}<\kappa_{x_i^{q''}}$ for some $i<\ell(q'')$. Then $q'' \in D_\xi$.
\end{proof}

Let $x\in \mathcal P_\kappa(\kappa^+)$. For each tuple $t$ below $x$,
let $p_t^x:=\langle t, \langle\kappa^+, I\restriction\dom(I)\uparrow^x\rangle\rangle$.
For each $\xi<\delta$, obtain $p_t^{x,\xi}\le^* p^x_t$ and $T_{\ell(p_t)}^{p_t, x,\xi}$ by appealing to the Strong Prikry property \ref{strongprikry} with $p_t^x$ and $D_\xi$.
For each non-top $s\in T_{\ell(p_t)}^{p_t, x,\xi}$, let $\alpha^{x,\xi}(s)<o^{\vec{\mathcal U}}(\kappa)$
be such that $\Succ_{T_{\ell(p_t)}^{p_t, x,\xi}}(s) \in U_{\kappa,\alpha^{x,\xi}(s)}$.
Next, let $$\beta^{x,\xi}(t):=\sup\{\alpha^{x,\xi}(s) \mid s \text{ is a non-top node of } T_{\ell(p_t)}^{p_t, x,\xi}\}.$$

Using $o^{\vec{U}}(\kappa)=\kappa^{+3}$, find $\gamma^{x,\xi}<o^{\vec{U}}(\kappa)$ above $\beta^{x,\xi}(t)$ for all tuples $t$ below $x$.
Finally, find $\gamma^*<o^{\vec{U}}(\kappa)$ above each $\gamma^{x,\xi}$ for all $x\in\mathcal P_\kappa(\kappa^+)$ and $\xi<\delta$.
Consider
$$B:=\{x \in \mathcal{P}_\kappa(\kappa^+) \mid \forall \xi<\delta\ \forall \text{tuple }t\text{ below } x \,\big(T_{\ell(p_t)}^{p_t,x,\xi} \cap \mathcal P_{\kappa_x}(x)\text{ is } x\text{-fat}\big)\}.$$
By a usual reflection argument, $B \in U_{\kappa,\gamma^*}$.
By Proposition~\ref{closure}, $\langle \mathbb R',\le^*\rangle$ is $\delta^+$-closed,
so we may recursively build a $\leq^*$-deceasing sequence $\langle p^*_{\xi}\mid \xi\le\delta\rangle$ of conditions in $\mathbb{R}'$,
where at stage $\xi$ we obtain $p^*_{\xi+1}$ by appealing to the Strong Prikry Property \ref{strongprikry} with $p_\xi^*$ and $D_\xi$.
Denote $p^*:=p^*_\delta$, and then continue as in the proof of the claim in \cite[Theorem~5.19]{MR2768695} (where our set $B$ plays the same role as the set $B$ in the reference),
inferring that $p^*$ forces that the range of $\dot{f}$ is in fact bounded in $\kappa$. This is a contradiction.
\end{proof}

\begin{proposition}\label{mediumcirc}
Let $\alpha\le\kappa$ with $o^{\vec{U}}(\alpha)\in\{\alpha,\alpha^+\}$.
Then $\cf(\alpha)=\omega$ in $V[G]$.
\end{proposition}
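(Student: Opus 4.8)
The plan is to produce inside $V[G]$ a strictly increasing $\omega$-sequence cofinal in $\alpha$; since any cofinal $\omega$-sequence forces $\cf^{V[G]}(\alpha)=\omega$, this suffices, and I will concentrate on the substantive case in which $\alpha$ is a limit point of the club $K_0$. The engine of the argument is a {\L}o\'s computation based on Clause~3(d) of Lemma~\ref{coherentsequence}: fixing $x$ with $\kappa_x=\alpha$, for every $i<o^{\vec U}(\alpha)$ with $i<\alpha$ the translated measure $\mathbf U_{x,i}$ concentrates on $\{y\in\mathcal P_{\kappa_x}(x)\mid o^{\vec U}(\kappa_y)=i\}$, and when $o^{\vec U}(\alpha)=\alpha^+$ the measure $\mathbf U_{x,\alpha}$ concentrates on $\{y\mid o^{\vec U}(\kappa_y)=\kappa_y\}$. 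Because $o^{\vec U}(\alpha)\ge\alpha$ in both cases, the filter $\bigcap\vec{\mathbf U}(x)$ meets $\{y\mid o^{\vec U}(\kappa_y)=i\}$ for \emph{every} $i<\alpha$, and in the $\alpha^+$ case has $\{y\mid o^{\vec U}(\kappa_y)=\kappa_y\}$ as a positive set. Feeding this into the density argument that shows $K_0$ is a club, I obtain that $K_0\cap\alpha$ is cofinal in $\alpha$ and that the values $o^{\vec U}(\gamma)$ realized along $\gamma\in K_0\cap\alpha$ are cofinal in $\alpha$ (respectively, that cofinally many $\gamma\in K_0\cap\alpha$ satisfy $o^{\vec U}(\gamma)\in\{\gamma,\gamma^+\}$).

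Next I would combine this with the two preceding propositions, applied recursively below $\alpha$ — organizing the $o^{\vec U}(\alpha)=\alpha^+$ case as an induction on $\alpha$ so that the $o^{\vec U}(\alpha)=\alpha$ case, once proved, governs the points $\gamma<\alpha$ with $o^{\vec U}(\gamma)=\kappa_\gamma$. The key structural point is that the recursion yielding $\otp(K_0\cap\alpha)=\omega^{o^{\vec U}(\alpha)}$ in the range $o^{\vec U}(\alpha)<\alpha$ degenerates once the length of the measure sequence at $\alpha$ reaches $\alpha$: the outermost layer of the Radin structure at level $\alpha$ — the $\ssim$-chain of points $y\in\mathbf X$ lying directly below the $\alpha$-node, together with the interleaved L\'evy collapses demanded by Clause~(5) of the definition of $\mathbb R$ — is forced to have order type exactly $\omega$, with node values cofinal in $\alpha$, and this is the sought $\omega$-sequence. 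To establish that this layer has length $\omega$ I would run a density/strong-Prikry argument that is the mirror image of the proof that $\kappa$ is inaccessible: there $\cf(o^{\vec U}(\kappa))\ge\kappa^{++}$ exceeds the size of every fat tree, so a map from a small ordinal into $\kappa$ must have bounded range; here $\cf(o^{\vec U}(\alpha))\le\alpha^+$ fails to exceed it, there is no ``fresh'' measure above those used in a given condition, and the corresponding analysis instead extracts from $G$, for each $n<\omega$, a condition pinning down the $n$-th term of a fixed cofinal map into $\alpha$.

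The step I expect to be the main obstacle is precisely this last one: verifying that the outermost layer at level $\alpha$ has order type exactly $\omega$, rather than merely that $\alpha$ is singularized or acquires \emph{some} small cofinality. Making this rigorous calls for careful bookkeeping with the factorization $\mathbb R/p\cong\mathbb R^{p,x}_l\times\mathbb R_u^{p,x}$ at the $\alpha$-node, the $(\kappa_x)^{++}$-closure of $(\mathbb R_u^{p,x},\le^*)$ from Proposition~\ref{closure}, the Prikry property, and the $\ssim$-gap and collapse side-conditions built into a condition, in order to confirm that the $\omega$-many decisions so obtained amalgamate into a single cofinal $\omega$-sequence living in $V[G]$. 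With that in hand the conclusion $\cf^{V[G]}(\alpha)=\omega$ is uniform for both $o^{\vec U}(\alpha)=\alpha$ and $o^{\vec U}(\alpha)=\alpha^+$.
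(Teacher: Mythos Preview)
Your proposal has the right preliminary observations but a genuine gap at the decisive step. The notion of an ``outermost layer'' at level $\alpha$ is not well-defined once $o^{\vec U}(\alpha)\ge\alpha$: for $o^{\vec U}(\alpha)<\alpha$ there is a top block of measures whose Prikry points form the layer you have in mind (and this is what drives the formula $\otp(K_0\cap\alpha)=\omega^{o^{\vec U}(\alpha)}$), but when $o^{\vec U}(\alpha)=\alpha$ every measure $\mathbf U_{x,i}$ is dominated by higher ones, so no density or strong-Prikry argument will isolate a canonical cofinal $\omega$-chain of ``direct predecessors'' of the $\alpha$-node. Your mirror-image of the inaccessibility proof shows only that the argument for regularity breaks, not that the cofinality drops to $\omega$.

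The paper's proof is a different, concrete construction carried out \emph{inside} $V[G]$, with no appeal to factorization, closure, or the Prikry property. Fix $p\in G$ with $x_\alpha\in\stem(p)$ and set $A:=\dom(I^p_k)$; after shrinking, every $y\in A$ has $o^{\vec U}(\kappa_y)<\kappa_y$, and one partitions $A=\bigcup_{\iota<\alpha}A_\iota$ with $A_\iota=\{y\in A\mid o^{\vec U}(\kappa_y)=\iota\}$. The key idea you are missing is the \emph{diagonal recursion}
\[
\alpha_{n+1}:=\min\bigl(\{\kappa_z\mid z\in A_{\alpha_n}\}\cap\acc(K_0)\cap(\alpha_n,\alpha)\bigr),
\]
i.e.\ $\alpha_{n+1}$ is a point of $K_0$ whose $o^{\vec U}$-value is exactly $\alpha_n$. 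If $\nu:=\sup_n\alpha_n<\alpha$, pick $q\le p$ in $G$ with $\kappa_{x^q_i}=\nu$; then $x^q_i\in A_\iota$ for some $\iota<\nu$ and coherence forces $\dom(I^q_i)\subseteq\bigcup_{\iota'<\iota}A_{\iota'}$, yet $\dom(I^q_i)$ must meet the club $\{\alpha_m\}_m$ at some $\alpha_{n+1}$ with $\alpha_n>\iota$, contradicting $\alpha_{n+1}\in A_{\alpha_n}$. For $o^{\vec U}(\alpha)=\alpha^+$ the paper does not induct on $\alpha$ as you propose: it runs the same diagonal recursion with $\sup(y)$ in place of $\kappa_y$ to show $\cf^{V[G]}((\alpha^+)^V)=\omega$, and then transfers via the cofinal map $\sup(x_\tau)\mapsto\kappa_{x_\tau}$.
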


\begin{proof} $\br$ Suppose first that $o^{\vec{U}}(\alpha)=\alpha$.
Pick $p\in G$ with some $k<\ell(p)$ such that $w_k^p=\langle x_\alpha,I_k\rangle$ with $\kappa_{x_\alpha}=\alpha$.
As $o^{\vec{U}}(\alpha)=\alpha$, we see that $\{ x\in \mathcal{P}_\alpha(x_\alpha)\mid o^{\vec U}(\kappa_x)<\kappa_x\}\in\mathbf U(x_\alpha)$, so by density, we may assume that such $p$ satisfies $o^{\vec U}(\kappa_x)<\kappa_x$ for every $x$ in $A:=\dom(I_k)$. Next, we partition $A$ by letting $A_\iota:=\{x\in A\mid o^{\vec U}(\kappa_x)=\iota\}$ for each $\iota<\alpha$.
As $o^{\vec{U}}(\alpha)=\alpha$, it is the case that $\sup\{\iota<\alpha\mid A_\iota\neq\emptyset\}=\alpha$.
Consequently, we may recursively define a sequence $\langle \iota_n \mid n<\omega\rangle$ as follows:
\begin{itemize}
\item $\alpha_0:=\min\{\rank(\langle \vec{c},\vec{u}\rangle )\mid \langle \vec{c},\vec{u}\rangle \text{ is a tuple below }x_\alpha\}$,\footnote{Here, $\rank(z)$ stands for the least ordinal $\rho$ such that $z\in V_{\rho+1}$.} and
\item $\alpha_{n+1}:=\min(\{\kappa_z\mid z\in A_{\alpha_n}\}\cap \acc(K_0)\cap(\alpha_n,\alpha))$.
\end{itemize}

Note that there is a working part without referring to measures and $o^{\vec{U}}(\alpha)=\alpha$, so, there is $\langle \vec{c},\vec{u} \rangle$ with $\rank \langle \vec{c},\vec{u} \rangle<\alpha$. Thus, $\alpha_0<\alpha$. Next, we claim that $\nu:=\sup_{n<\omega}\alpha_n$ is equal to $\alpha$.
Indeed, otherwise we may pick $q \leq p$ with $q \in G$ such that, for some $i$, $w_i^q$ is of the form $\langle x,J \rangle$ with $\kappa_x=\nu$.
Pick the unique $\iota$ such that $x\in A_\iota$.
Hence, $\iota<\kappa_x=\nu$.
In addition, $B:=\dom(J)$ is a subset of $\bigcup_{\iota'<\iota}A_{\iota'}$ lying in $\mathbf U(x)$.
Consider $\varepsilon:=\min\{\kappa_y\mid y\in B\}$ which is less than $\kappa_x$.
Fix $n<\omega$ large enough that satisfies $\iota,\varepsilon<\alpha_n$ and such that for some $z\in B$ $\kappa_z=\alpha_{n+1}$ (it is possible since $\{\alpha_m \mid m<\omega\}$ is cofinal in $\nu$, hence $B\cap \left\{y\mid \kappa_y \in \{\alpha_m \mid m< \omega\}\right\}\neq \emptyset$).
Hence $z\in B\s \bigcup_{\iota'<\iota}A_{\iota'}$ which is disjoint from $A_{\alpha_n}$. Since $\kappa_z=\alpha_{n+1}$ which, by definition, is in $A_{\alpha_n}$, so this is a contradiction.

$\br$ Suppose now that $o^{\vec{U}}(\alpha)=\alpha^+$. Pick $p\in G$ with some $k<\ell(p)$ such that $w_k^p=\langle x_\alpha,I_k\rangle$.
We can disjointify $A:=\dom (I_k)$ as a union of $A_\iota$ (for $\iota<\alpha^+)$, where for every $x \in A_\iota$, $o^{\vec{U}}(\kappa_x)=\otp(x \cap \iota)$ and $\iota<\sup(x)$.
Define $\langle (y_n,\alpha_n, \varsigma_n)\mid {n<\omega}\rangle$ as follows.
\begin{itemize}
\item $\alpha_0:=\min\{\rank(\langle \vec{c},\vec{u}\rangle )\mid \langle \vec{c},\vec{u}\rangle \text{ is a tuple below }x_\alpha\}$, $y_0$ be any working part from $\mathbf X$ above $\langle\vec{c},\vec{u}\rangle$ with $\kappa_{y_0}=\alpha_0$ and $\varsigma_0=\sup(y_0)$.
\item Let $\alpha_{n+1}=\min\{ \gamma \in \acc(K_0) \setminus \alpha_n+1 \mid \exists x \in A_{\varsigma_n}(\kappa_x=\gamma)\}$ and $y_{n+1}$ be the working part with $\kappa_{y_{n+1}}=\alpha_{n+1}$, $\varsigma_{n+1}=\sup(y_{n+1})$.
\end{itemize}
We first show that $\nu:=\sup_n \varsigma_n=\alpha^+$.
Otherwise, by the closedness of $K_0$, we may pick $q \leq p$ with $q \in G$ such that, for some $i$, $w_i^q$ is of the form $\langle x,J \rangle$ with $\sup(x)=\nu$.
Pick the unique $\iota<\alpha^+$ such that $x \in A_\iota$.
Hence, $\iota<\sup (x) = \nu$.
In addition, $B:=\dom(J)$ is a subset of $\bigcup_{\iota'<\iota}A_{\iota'}$ lying in $\mathbf U(x)$.
Fix large enough $n<\omega $ be such that $\iota,\min(\{\kappa_z \mid z\in B\})<\varsigma_n$ and $y_{n+1} \in B$.
Thus, $y_{n+1} \in \bigcup_{\iota'<\iota} A_{\iota'}$, but $y_{n+1} \in A_{\varsigma_n}$ by definition and we get a contradiction.

This concludes that $V[G]\models\cf((\alpha^+)^V)=\omega$. Now in $V[G]$, we see that $\sup(x_\gamma) \mapsto \kappa_{x_\gamma}$ is a well-defined (because of the nature of the strong inclusion relation) cofinal map from a cofinal subset of $\alpha^+$ to $\alpha$. Hence, $\cf(\alpha)=\omega$ in $V[G]$.
\end{proof}

\section{Projecting from Radin to Prikry}\label{pradinforcing}

In this section, we give the explicit formulation of a poset where the generic includes partial information from the generic of the Radin forcing $\mathbb{R}$,
for which we can only read off $K_0$ but not $\mathbf{X}$ (defined in Section~\ref{cardinalstructureradin}).
Gurus play a vital role in showing that our forcing in this section is $\kappa^+$-cc, and hence, $\kappa^+$ is preserved.

Recall that for $s \in \vec{t}_\alpha$ and $x \in \mathcal{P}_\alpha(\alpha^+)$, we have a notion $s_x$ whose domain is $\mathcal{P}_{\alpha_x}(\alpha_x^+)$ and $s_x(y)=s(\pi_x[y])$ for all $y$. We also apply this notion to any $I \subseteq s$. Namely, if $I \subseteq t_{\alpha,i}$ for some $i$, define $I_x$ to be a function whose domain is $\pi_x^{-1}[\dom(I) \restriction x]$ and $I_x(y)=I(\pi_x[y])$ for all $y$.

We continue with our setup from Section~\ref{cardinalstructureradin}.
We consider a projected forcing of $\mathbb{R}_{\vec{U},\vec{\mathbf{t}}}$ which we call $\mathbb{P}_{\vec{U},\vec{\mathbf{t}}}$.

\begin{definition}
The forcing $\mathbb{P}_{\vec{U},\vec{\mathbf{t}}}$ consists of conditions of the form
$$p=\langle c_{-1},v_0,c_0, \ldots, v_{n-1},c_{n-1},v_n \rangle,$$
where

\begin{enumerate}
\item $n<\omega$;
\item for every $i\le n$:
\begin{enumerate}
\item if $0=i<n$:
\begin{itemize}
\item $v_0$ is a pair $\langle \kappa_0,I_0 \rangle$;
\item $\kappa_0<\kappa$;
\item if $o^{\vec{U}}(\kappa_{0})>0$, then $I_0 \in \mathcal{G}({\kappa_0})$;
otherwise, $I_0=\emptyset$;
\item $c_{-1} \in \col(\omega_1,{<}\min\{\kappa_{0},\kappa_x\mid x\in\dom(I_0)\})$;
\end{itemize}
\item if $0<i<n$:
\begin{itemize}
\item $v_i$ is a pair $\langle \kappa_i,I_i \rangle$;
\item $\kappa_i<\kappa$;
\item if $o^{\vec{U}}(\kappa_{i})>0$, then $I_i \in \mathcal{G}({\kappa_i})$;
otherwise, $I_i=\emptyset$;
\item $c_{i-1} \in \col((\kappa_{{i-1}})^{++},{<}\min\{\kappa_{i},\kappa_x\mid{x\in\dom(I_i)}\})$;
\end{itemize}
\item if $i=n$:
\begin{itemize}
\item $v_n = \langle \kappa_n, I_n \rangle$;
\item $\kappa_n=\kappa$;
\item $I_n \in \mathcal{G}({\kappa})$;
\item $c_{n-1} \in \col((\kappa_{{n-1}})^{++},{<}\min\{\kappa_x \mid x\in\dom(I_n)\})$;
\end{itemize}
\end{enumerate}
\item $\kappa_0 < \cdots < \kappa_{n-1}$.

\end{enumerate}
\end{definition}
\begin{remark}
We use similar notation to that we used for Radin forcing, namely we define $\stem(p)=\langle \kappa_0, \ldots, \kappa_{n-1} \rangle$, and other components in the same fashion: collapse part, the top part, etc'.
Unlike with $\mathbb R$, here a generic object will not give sufficient information to collapse $\kappa^+$
or successors of singular cardinals
below $\kappa^+$; for example, in $V[G]$ we have the full set of stems $\mathbf X$ whose union is $\kappa^+$, while if we derive stems from a generic of $\mathbb{P}_{\vec{U},\vec{\mathbf{t}}}$, we get only a set of ordinals below $\kappa$.
\end{remark}

Given $p,q \in \mathbb{P}_{\vec{U},\vec{\mathbf{t}}}$, say
\begin{itemize}
\item $p=\langle c_{-1}^p,v_0^p,c_0^p,\ldots, v_{\ell(p)-1}^p,c_{\ell(p)-1}^p,v_{\ell(p)}^p \rangle$, and
\item $q=\langle c_{-1}^q,v_0^q,c_0^q,\ldots, v_{\ell(q)-1}^q,c_{\ell(p)-1}^q,v_{\ell(q)}^q \rangle$,
\end{itemize}
define $p \leq q$ ($p$ is stronger than $q$) iff all of the following hold:
\begin{enumerate}
\item $\ell(p) \geq \ell(q)$;
\item $c_{-1}^p \leq c_{-1}^q$;
\item there are $0 \leq i_0 < i_1<\cdots <\ell(p)$ such that for each $k\le\ell(q)$,
\begin{itemize}
\item $\kappa_{i_k}^p=\kappa_k^q$, $\dom(I_{i_k}^p)\s\dom(I_k^q)$, and $c_{i_k}^p \leq c_k^q$;
\item if $i_{k-1}<i<i_k$ (where $i_{-1}=-1$), then there is an $x \in\dom(I_k^q)$ such that $\kappa_x=\kappa_i^p$, $c_i^p \leq I_k^q(x)$, and $I_i^p(y) \leq (I_k^q)_x(y)$ for every $y$.
\item for every $k \leq \ell(q)$, $\dom(I_{i_k}^p) \subseteq \dom(I_k^q)$ and $I_{i_k}^p(y) \leq I_k^q(y)$ for every $y$;
\end{itemize}
\end{enumerate}

We say that $p$ is a \emph{direct extension of} $q$, denote $p \leq^*q$, if $p\leq q$ and $\ell(p)=\ell(q)$.
\begin{definition}[$0$-step extension]
We say that $p$ is \emph{$0$-step extension of} $q$, denoted $p \leq^{**} q$ iff
\begin{enumerate}
\item $p \leq^* q$;
\item the collapse parts of $p$ and $q$ are equal;
\item for every $i\le\ell(p)$, for every $x \in\dom(I^p_i)$, $I_i^p(x)=I_i^q(x)$.
\end{enumerate}
\end{definition}

\begin{definition}[$1$-step extension]
Let $p$ be a condition.
For $i\le\ell(p)$ and $x \in \dom(I_i^p)$, the \emph{$1$-step extension of $p$ by $x$}, denote $p+\langle x \rangle$, is the condition $$q=\langle c_{-1}^p,v_0^p,c_0^p,\ldots,u',c',v',c_i^p,v_{i+1}^p,c_{i+1}^p,\ldots, v_{n-1}^p,c_{n-1}^p,v_n^p\rangle,$$ where

\begin{enumerate}
\item $u'=\langle \kappa_x, (I_i^p)_x\rangle$
\item $c'=I_i^p(x)$;
\item $v'=\langle \kappa_i^p, I_i^p \restriction x^\uparrow \rangle$.
\end{enumerate}
\end{definition}

Define $p+\langle x_0,\ldots, x_n \rangle$ by recursion, in the obvious way,
which we hereafter call an \emph{$n$-step extension of} $p$.
As in the previous section, $p \leq q$ iff $p$ is the direct extension of some $n$-step extension of $q$ for some $n<\omega$.

\begin{definition}[$1$-step extension while shrinking]\label{def36P}
Let $p$ be a condition.
For $x \in \dom(I_i^p)$ with $i\le\ell (p)$ and $B\in\bigcap\vec{{U}}(\kappa_i^p)$,
the \emph{$1$-step extension of $p$ by $\langle x,B\rangle$}, denoted $p+\langle x,B \rangle$,
is the condition $$q=\langle c_{-1}^p,u_0^p,c_0^p,\ldots,u',c',v',c_i^p,v_{i+1}^p,c_{i+1}^p,\ldots, v_{\ell (p)-1}^p,c_{\ell (p)-1}^p, v_{\ell(p)}^p \rangle,$$ where

\begin{enumerate}
\item $u'=\langle x, (I_i^p \restriction B)_x \rangle$;
\item $c'$ and $v'$ are as in the definition of $p+\langle x\rangle$.
\end{enumerate}
\end{definition}

We note that for a condition $p$ and $\alpha=\kappa_i^p$ for some $i$, then $$\langle c_{-1}^p,v_0^p,c_0^p, \ldots, v_{i-1}^p,c_{i-1}^p, v_i \rangle$$
is a condition in the natural variation $\mathbb{P}_{\vec{U} \restriction (\alpha+1),\vec{\mathbf{t}} \restriction (\alpha+1)}$.
Several properties which we will verify for $\mathbb{P}_{\vec{U},\vec{\mathbf{t}}}$ will have a reflected version for $\mathbb{P}_{\vec{U} \restriction (\alpha+1),\vec{\mathbf{t}} \restriction (\alpha+1)}$.
\begin{definition}[Translation map]\label{translation}
For $\langle x,I\rangle$ such that $x\in\mathcal P_\kappa(\kappa^+)$ and $I\in\mathbf G(x)$,
define $$\Proj(\langle x,I\rangle):=\langle\kappa_x,I_x\rangle.$$
\end{definition}
Throughout this section, unless stated otherwise, let $\mathbb{R}=\mathbb{R}_{\vec{U},\vec{\mathbf{t}}}$ and $\mathbb{P}=\mathbb{P}_{\vec{U},\vec{\mathbf{t}}}$
\begin{proposition}\label{projectionrp}
There is a projection $\Pi$ from $\mathbb{R}$ to $\mathbb{P}$.
\end{proposition}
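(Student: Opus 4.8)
The plan is to let $\Pi$ be the obvious ``forgetful'' map: it leaves the collapse parts untouched, replaces each non-top working part $w_i=\langle x_i,I_i\rangle$ by $\Proj(w_i)=\langle\kappa_{x_i},(I_i)_{x_i}\rangle$ of Definition~\ref{translation}, and sends the top part $\langle\kappa^+,I_n\rangle$ to $\langle\kappa,I_n\rangle$. So for $p=\langle c_{-1},w_0,c_0,\dots,w_{n-1},c_{n-1},w_n\rangle$ one sets
\[\Pi(p):=\langle c_{-1},\Proj(w_0),c_0,\dots,\Proj(w_{n-1}),c_{n-1},\langle\kappa,I_n\rangle\rangle.\]
First I would check that $\Pi$ maps $\mathbb R$ into $\mathbb P$. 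The point is that, by the very definitions of $\mathbf G(x)$ and of $\vec{\mathbf U}_x$, the translation $I\mapsto I_x$ carries $\mathbf G(x)$ onto $\mathcal G(\kappa_x)$ (since $t_{x,i}(y)=t_{\kappa_x,i}(\pi_x^{-1}[y])$ and $\bigcap\vec{\mathbf U}(x)$ is the $\pi_x$-image of $\bigcap\vec U(\kappa_x)$), while the top measure-one part is in either forcing just an element of $\mathcal G(\kappa)$ and is left unchanged. Moreover $\pi_x$ preserves the assignment $z\mapsto\kappa_z$, so the $\col$-poset prescribed for each collapse coordinate of $p$ contains the one prescribed for the corresponding coordinate of $\Pi(p)$ (a restriction of the measure-one set can only raise the relevant supremum), and the requirement $x_0\ssim\cdots\ssim x_{n-1}$ becomes $\kappa_{x_0}<\cdots<\kappa_{x_{n-1}}$, the gaps $(\kappa_{x_i})^{++}<\kappa_{x_{i+1}}$ being automatic since the $\kappa_{x_i}$ are strongly inaccessible. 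The same bookkeeping shows $\Pi$ is order preserving: a witnessing sequence $0\le i_0<i_1<\cdots<i_{\ell(q)}$ for $p\le q$ in $\mathbb R$ is a witness for $\Pi(p)\le\Pi(q)$ in $\mathbb P$, after translating each clause through $\Proj$ and using that $\pi_x$-translation commutes with restriction, with $(\cdot)^\uparrow$, and with $\ssim$.

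Next I would observe that $\Pi$ has dense image; in fact every $q'\in\mathbb P$ is, up to a direct extension, of the form $\Pi(p)$. Given working ordinals $\kappa_0<\cdots<\kappa_{n-1}$ of $q'$, choose $\ssim$-increasing $x_i\in\mathcal A_\kappa(\kappa^+)$ with $\kappa_{x_i}=\kappa_i$ (for instance $x_i:=\kappa_i\cup[\kappa,\kappa+(\kappa_i)^+)$), pull each measure-one part of $q'$ back along the corresponding $\pi_{x_i}$ (the $\pi_{x_i}$-preimage of a set in $\bigcap\vec U(\kappa_i)$ lies in $\bigcap\vec{\mathbf U}(x_i)$, and since $I_i\subseteq t_{\kappa_i,k}$ its pullback is $\subseteq t_{x_i,k}$, hence in $\mathbf G(x_i)$), and keep the collapse parts verbatim; the resulting $p\in\mathbb R$ satisfies $\Pi(p)\le^* q'$. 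Together with order-preservation and the pullback property below this makes $\Pi$ a projection (adjoining formal largest conditions matched by $\Pi$ if one insists on $\Pi(1_{\mathbb R})=1_{\mathbb P}$).

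The crux is the pullback property: given $p\in\mathbb R$ and $q'\le\Pi(p)$ in $\mathbb P$, to produce $p'\le p$ with $\Pi(p')\le q'$. The engine is that $\Pi$ commutes with one-step extensions. If $x\in\dom(I_i^p)$ for a non-top slot $i<\ell(p)$ and $z:=\pi_{x_i^p}^{-1}[x]$ is the corresponding point of the translated measure-one set $\dom((I_i^p)_{x_i^p})$ of $\Pi(p)$, then, unwinding the two definitions of ``$1$-step extension'' and using that translations compose correctly and $\pi_{x_i^p}$ commutes with $(\cdot)^\uparrow$ and $\restriction$, one gets $\Pi(p+\langle x\rangle)=\Pi(p)+\langle z\rangle$, and likewise $\Pi(p+\langle x,B\rangle)=\Pi(p)+\langle z,\pi_{x_i^p}[B]\rangle$; for the top slot the measure-one set is literally the same set on $\mathcal P_\kappa(\kappa^+)$ in $\mathbb R$ and in $\Pi(p)$, so there $\Pi(p+\langle x\rangle)=\Pi(p)+\langle x\rangle$. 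Now $q'\le\Pi(p)$ is a direct extension of some $m$-step extension $\Pi(p)+\langle z_0,\dots,z_{m-1}\rangle$; pulling each $z_j$ back along the $\pi$-map of the slot into which it falls and iterating the above, $p_0:=p+\langle\tilde z_0,\dots,\tilde z_{m-1}\rangle$ satisfies $\Pi(p_0)=\Pi(p)+\langle z_0,\dots,z_{m-1}\rangle\ge q'$. Finally, to absorb the direct-extension part of $q'$, pass to $p'\le^* p_0$: each shrunk measure-one set $J$ of $q'$ lies in some $\mathcal G(\cdot)$ and hence pulls back along the relevant $\pi$ to an element of the matching $\mathbf G(\cdot)$ contained in the corresponding measure-one set of $p_0$, and the collapse parts of $q'$ (legal already by the coordinate-matching above) are copied verbatim. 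This yields $p'\le^* p_0\le p$ with $\Pi(p')=q'$, completing the verification.

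I expect the main obstacle to be purely organizational rather than conceptual: in a multi-step extension one must keep track of which collapsing map $\pi_x$ governs which slot, and confirm that $\pi_x$-translation genuinely commutes with $\restriction$, with $(\cdot)^\uparrow$, with $\ssim$, and with the correspondence $\mathbf G(\cdot)\leftrightarrow\mathcal G(\cdot)$. Each of these is immediate from the definitions of Section~\ref{mastersequenceandcoherentsequence} together with the reflection computation already carried out in Proposition~\ref{measureone1}, so no new idea is needed; the only real risk is index-chasing through nested collapsing maps.
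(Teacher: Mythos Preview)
Your proposal is correct and follows essentially the same approach as the paper: define $\Pi$ by applying $\Proj$ coordinatewise, verify order preservation, and for the pullback write $q'\le\Pi(p)$ as a direct extension of an $m$-step extension, lift each new point through the appropriate $\pi_{x_i^p}$ to obtain an $m$-step extension of $p$, and then absorb the direct-extension data by pulling back the shrunk measure-one sets. The paper's proof is terser (it does not separately discuss dense image or the commutation $\Pi(p+\langle x\rangle)=\Pi(p)+\langle z\rangle$, packaging these into the single pullback verification), but the content is the same.
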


\begin{proof}
Given $r=\langle c_{-1}, w_0,c_0, \ldots, w_{n-1},c_{n-1},w_n\rangle$ in $\mathbb{R}$,
define $$\Pi(r)= \langle c_{-1}, \Proj(w_0),c_0, \ldots, \Proj(w_{n-1}),c_{n-1},w_n\rangle.$$

We show that $\Pi: \mathbb{R} \to \mathbb{P}$ is a projection. From the definition of the translation, we see that if $r' \leq r$ in $\mathbb{R}$, then $\Pi(r') \leq \Pi(r)$ in $\mathbb{P}$.
Now, let $r \in \mathbb{R}$ and
$$p=\langle d_{-1},v_0,d_0, \ldots, v_{m-1},d_{m-1}, v_m\rangle$$
in $\mathbb{P}$ be an extension of $\Pi(r)$.
So $p \leq^* \Pi(r)+s$ for some $s=\langle y_0, \ldots, y_{n-1} \rangle$.
Define $\vec z=\langle z_0,\ldots,z_{n-1}\rangle$ as follows:
$$z_i:=\begin{cases}
\pi_{x_j^r}^{-1}[y_i],&\text{if }y_i\in\dom(I_j^{\Pi(r)})\text{ with }j<\ell(r);\\
y_i,&\text{otherwise}
\end{cases}$$

Enumerate $\stem(r) \cup\{z_0,\ldots,z_{n-1}\}$
$\ssim$-increasingly as $\{a_0,\ldots, a_{m-1} \}$.
Note that for every $i<m$, if $v_i=\langle \kappa_i,J_i \rangle$, then $\kappa_{a_i}=\kappa_i$, so we let $w_i':=\langle a_i, (y \mapsto J_i(\pi_{a_i}^{-1}[y])) \rangle$.
Finally, let $r'=\langle d_{-1},w_0',d_0,\ldots, w_{m-1}', d_{m-1},\langle \kappa^{+} , J_m^p \rangle \rangle$. We leave to the reader to verify that $r' \leq^* r+s$, so that $r' \leq r$, and $\Pi(r')= p$.
\end{proof}

\begin{proposition}\label{pradincc}
$\mathbb{P}$ has the $\kappa$-Linked$_0$ property. In particular, it is $\kappa^+$-Knaster and has the $\kappa^+$-cc.
\end{proposition}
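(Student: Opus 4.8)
The plan is to imitate, almost verbatim, the proof of Proposition~\ref{radincc}, shifted down one cardinal. Since $\kappa$ is strongly inaccessible, $|\mathcal H_\kappa|=\kappa$, so it suffices to produce a map $\varphi:\mathbb P\to\mathcal H_\kappa$ with the property that $\varphi(p)=\varphi(q)$ implies that $p$ and $q$ admit a common $\le^*$-lower bound. As for $\mathbb R$, I would let $\varphi$ forget the implicit gurus and the top component:
\[\varphi(p):=\langle c_{-1}^p,\kappa_0^p,c_0^p,\ldots,\kappa_{\ell(p)-1}^p,c_{\ell(p)-1}^p\rangle.\]
Each collapse condition occurring here has size $<\kappa$ and each $\kappa_i^p$ is an ordinal below $\kappa$, so $\varphi(p)\in\mathcal H_\kappa$; dropping the top component $v_{\ell(p)}^p$ is essential, since $\dom(I_{\ell(p)}^p)$ may have size $\kappa^+$.

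Given $\varphi(p)=\varphi(q)$, one reads off $\ell(p)=\ell(q)=:n$, $\kappa_i^p=\kappa_i^q$ and $c_i^p=c_i^q$ for all relevant $i$. For each $i<n$ with $I_i^p\neq\emptyset$, the next step is to fix $k_i^p,k_i^q<(\kappa_i^p)^{++}$ with $I_i^p\subseteq t_{\kappa_i^p,k_i^p}$ and $I_i^q\subseteq t_{\kappa_i^p,k_i^q}$, set $l_i:=\max\{k_i^p,k_i^q\}$, and invoke Clause~(2) of Definition~\ref{def_guru} to obtain a club $C_i$ in $\mathcal P_{\kappa_i^p}((\kappa_i^p)^+)$ on which $t_{\kappa_i^p,l_i}(x)$ extends both $t_{\kappa_i^p,k_i^p}(x)$ and $t_{\kappa_i^p,k_i^q}(x)$. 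By normality of the measures $U_{\kappa_i^p,j}$, the set $A_i:=\dom(I_i^p)\cap\dom(I_i^q)\cap C_i$ lies in $\bigcap\vec U(\kappa_i^p)$, and $\tilde v_i:=\langle\kappa_i^p,\,t_{\kappa_i^p,l_i}\restriction A_i\rangle$ is a legitimate coordinate in $\mathcal G(\kappa_i^p)$ (taking $\tilde v_i:=\langle\kappa_i^p,\emptyset\rangle$ when $I_i^p=\emptyset$). Running the identical refinement once more with the guru $\vec t_\kappa$ yields a top coordinate $\tilde v_n\in\mathcal G(\kappa)$ below both $I_n^p$ and $I_n^q$. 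Then $r:=\langle c_{-1}^p,\tilde v_0,c_0^p,\ldots,\tilde v_{n-1},c_{n-1}^p,\tilde v_n\rangle$ is a condition — shrinking the guru domains only increases the quantities $\min\{\kappa_x\mid x\in\dom(\tilde I_i)\}$, so the collapse parts $c_i^p$ still fit — and by construction $r\le^* p,q$. This establishes the $\kappa$-Linked$_0$ property. The $\kappa^+$-chain condition then follows immediately: among any $\kappa^+$ conditions, two share a $\varphi$-value by the pigeonhole principle and are therefore compatible.

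I do not expect any genuine obstacle here; the argument is a mechanical translation of Proposition~\ref{radincc}. The only points requiring (minimal) attention are that $|\mathcal H_\kappa|=\kappa$ uses the strong inaccessibility of $\kappa$, and that after intersecting the guru domains with clubs the resulting tuple is still a bona fide $\mathbb P$-condition (the domains remain of measure one, and the collapses fit).
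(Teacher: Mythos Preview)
Your proposal is correct and follows essentially the same approach as the paper's own proof: define the linking map by dropping the implicit gurus and the top component, then use the guru property (Clause~(2) of Definition~\ref{def_guru}) on each coordinate to find a common $\le^*$-refinement. Your additional remarks about the strong inaccessibility of $\kappa$ and the collapses still fitting after shrinking are accurate and slightly more explicit than the paper's version.
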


\begin{proof}
We define a map $\psi:\mathbb{P}\rightarrow \mathcal H_\kappa$, as follows.
Given a condition $p = \langle c_{-1}^p, v_0^p, \allowbreak c_0^p, \ldots,\allowbreak v_{\ell(p)-1}^p, c_{\ell(p)-1}^p,w_{\ell(p)}^p \rangle$, $\psi(p)$ is obtained by removing the implicit gurus and the top component of $p$:
\[\psi (p) = \langle c_{-1}^p,\kappa_0^p,c_0^p,\ldots, \kappa_{\ell(p)-1}^p,c_{\ell(p)-1}^p\rangle.\]
To see this works, let $p,q\in \mathbb{P}$ with $\psi(p)=\psi (q)$. By the definition of $\psi$ we get $\ell(p)=\ell(q)$ and for all $i<\ell(p)$ $\kappa_i^p=\kappa_i^q$ and $c_i^p=c_i^q$.
For all $i\leq \ell(p)$, it is the case that $I_i^{p},I_i^{q} \in \mathcal{G}({\kappa_i})$ hence there are $l^i_p,l^i_q<\kappa_{i}^{++}$ such that $I_i^{p}\s t_{\kappa_{i},l^i_p}$ and $I_i^{q} \s t_{\kappa_{i},l^i_q} $.
Let $l_i:=\max\{l^i_p,l^i_q\}$ and by the properties of the guru $\vec{t}_{\kappa_{i}}$, there is a club $C_i$ such that $t_{\kappa_{i},l_i}(x)\leq t_{\kappa_{i},l^i_p}(x),t_{\kappa_{i},l^i_q}(x)$ for all $x\in C_i$.
Set $A_i=\dom(I_i^p)\cap\dom(I_i^q)\cap C_i$ and let $\tilde{v}_i=\langle \kappa_i, t_{\kappa_{i},l_i}\restriction{ A_i}\rangle $.
Consider
$$r :=\langle c_{-1}^p,\tilde{v}_0,c_0^p,\dots,c_{\ell(p)-1},\tilde{v}_{\ell(p)} \rangle.$$
Then $r\leq^{*}p,q$, as sought.
\end{proof}

\begin{definition}[$\mathbb{P}-$Factorization]\label{factorizationdefnP}
Given a condition $p\in\mathbb{P}$ and $i<\ell (p)$ with $o^{\vec{U}}(\kappa_{i}^p)>0$,
letting $\alpha:=\kappa_i^p$, we factor $\mathbb{P}/p$ as $\mathbb{P}^{p,\alpha}_l\times \mathbb{P}^{p,\alpha}_u$ as follows. Each $q\leq p$ is viewed as a pair $(q_l, q_u)$ where
\begin{itemize}
\item $q_l= \langle c_{-1}^q,v_0^q,\ldots, c_{{i'}-1}^q,v_{i'}^q\rangle$, where
$i':=i_q(\alpha)$, and
\item $q_u= \langle c_{{i'}}^q,v_{{i'}+1}^q,\ldots, c_{\ell(q)-1}^q,v_{\ell(q)}^q\rangle$.
\end{itemize}

Note that $\mathbb{P}^{p,\eta}_l$ is a cone of a natural variation of $\mathbb{P}$ that we denote by $\mathbb{P}_{\vec{U} \restriction (\eta+1), \vec{\mathbf{t}} \restriction (\eta+1)}$.
Also note that $\mathbb{P}^{p,\eta}_u$ is a regular subposet of $\mathbb{P}$ in which the first component of a condition is an element of $\col(\eta^{++},{<}\kappa)$,
so that it is $\eta^{++}$-closed.
\end{definition}
We have the following closure property.

\begin{proposition}\label{pradinclosed}
Let $p \in \mathbb{P}$ and $\alpha \in \stem(p)$. Then $(\mathbb{P}_u^{p,\alpha},\leq^*)$ is $\alpha^{++}$-closed.\qed
\end{proposition}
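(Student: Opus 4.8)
The plan is to run the proof of Proposition~\ref{closure} essentially verbatim, with the obvious substitutions dictated by Definition~\ref{factorizationdefnP}: working pairs $\langle x_i,I_i\rangle$ become $\langle\kappa_i,I_i\rangle$, the relativized objects $\mathbf U_{x_i,\cdot}$, $\mathbf G(x_i)$, $\mathcal P_{\kappa_{x_i}}(x_i)$ become their unrelativized counterparts $U_{\kappa_i,\cdot}$, $\mathcal G(\kappa_i)$, $\mathcal P_{\kappa_i}(\kappa_i^+)$, and $(\kappa_x)^{++}$ becomes $\alpha^{++}$. So let $\langle q_\beta\mid\beta<\gamma\rangle$ be a $\le^*$-decreasing sequence in $\mathbb{P}_u^{p,\alpha}$ with $\gamma<\alpha^{++}$. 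Because a direct extension preserves length, $\ell(q_\beta)$ is a fixed $n$; I would assume $n>0$ (the case $n=0$ being simpler) and write $q_\beta=\langle c_{-1,\beta},v_{0,\beta},c_{0,\beta},\dots,v_{n-1,\beta},c_{n-1,\beta},v_{n,\beta}\rangle$ with $v_{i,\beta}=\langle\kappa_i,I_{i,\beta}\rangle$, where the first coordinates $\kappa_i$ do not depend on $\beta$ and $c_{-1,\beta}\in\col(\alpha^{++},{<}\kappa_0)$.

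First I would handle the collapse part by taking unions: for $i\in n\cup\{-1\}$ set $c_i^*:=\bigcup_{\beta<\gamma}c_{i,\beta}$. Each collapse poset in play --- $\col(\alpha^{++},{<}\kappa_0)$ for $i=-1$, and $\col(\kappa_{i-1}^{++},{<}\kappa_i)$ for $0\le i<n$ --- is $\alpha^{++}$-closed (the first trivially so, and for the rest because every $\kappa_i$ appearing in the upper part is strongly inaccessible and exceeds $\alpha$, hence exceeds $\alpha^{++}$, so that $\col(\kappa_{i-1}^{++},{<}\kappa_i)$ is even $\kappa_{i-1}^{++}$-closed), and a chain of $\gamma<\alpha^{++}$ many conditions each of size $<\alpha^{++}$ has union of size $<\alpha^{++}$; hence each $c_i^*$ is a condition below all $c_{i,\beta}$.

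Next I would handle the implicit gurus, exactly as in Proposition~\ref{closure}. For $0\le i<n$, fix $l_{i,\beta}<\kappa_i^{++}$ with $I_{i,\beta}\s t_{\kappa_i,l_{i,\beta}}$; since the club filter on $\mathcal P_{\kappa_i}(\kappa_i^+)$ is $\kappa_i$-complete and $\gamma<\alpha^{++}<\kappa_i$, the guru property of $\vec t_{\kappa_i}$ furnishes a club $C_i\s\mathcal P_{\kappa_i}(\kappa_i^+)$ and a single $l_i^*<\kappa_i^{++}$ with $t_{\kappa_i,l_i^*}(y)\le t_{\kappa_i,l_{i,\beta}}(y)$ for all $\beta<\gamma$ and $y\in C_i$; then $A_i^*:=\bigcap_{\beta<\gamma}\dom(I_{i,\beta})\cap C_i\in\bigcap\vec U(\kappa_i)$ and $I_i^*:=t_{\kappa_i,l_i^*}\restriction A_i^*\in\mathcal G(\kappa_i)$. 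For $i=n$ one argues the same way with $\kappa$ in place of $\kappa_i$, producing $A_n^*\s\bigcap_{\beta<\gamma}\dom(I_{n,\beta})$ in $\bigcap\vec U(\kappa)$ and an implicit guru $I_n^*\in\mathcal G(\kappa)$ with $I_n^*(y)\le I_{n,\beta}(y)$ for all $\beta<\gamma$ and $y\in A_n^*$. Then $q^*:=\langle c_{-1}^*,\langle\kappa_0,I_0^*\rangle,c_0^*,\dots,\langle\kappa_{n-1},I_{n-1}^*\rangle,c_{n-1}^*,\langle\kappa,I_n^*\rangle\rangle$ is a condition of $\mathbb{P}_u^{p,\alpha}$ --- its leading collapse coordinate still lies in $\col(\alpha^{++},{<}\kappa_0)$ --- and is $\le^*$ below every $q_\beta$, which is what we want.

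As this merely re-runs an argument already carried out in the excerpt, I do not expect a real obstacle; the only point needing a word of care is the verification that $\alpha^{++}$ lies strictly below the completeness of all filters and collapse posets used. This is precisely the structural fact behind Definition~\ref{factorizationdefnP}: every cardinal $\kappa_i$ occurring in $\mathbb{P}_u^{p,\alpha}$ is strongly inaccessible and greater than $\alpha$ --- so it exceeds $\alpha^{++}$ and the poset contains no ``$\alpha^+$-sized'' collapse --- while its leading collapse factor $\col(\alpha^{++},{<}\kappa_0)$ is $\alpha^{++}$-closed in its own right.
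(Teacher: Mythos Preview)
Your proof is correct and is exactly the adaptation of Proposition~\ref{closure} that the paper intends (it states the result with a bare \qed). There is a harmless off-by-one in your description of the collapse posets --- for $0\le i<n$ the coordinate $c_i$ lives in $\col(\kappa_i^{++},{<}\kappa_{i+1})$, not $\col(\kappa_{i-1}^{++},{<}\kappa_i)$ --- but this does not affect the argument.
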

\begin{definition}[$\mathbb P$-Tuple below]

For each $\alpha\in\dom(o^{\vec U})\cap\kappa$, a \emph{$\mathbb{P}$-tuple below $\alpha$}
is a sequence $$t=\langle c_{-1},v_0,c_0, \ldots, v_{m-1},c_{m-1} \rangle$$ such that there exists a condition $q$ in in $\mathbb{P}$
of the form
$$\langle c_{-1},v_0,c_0,\ldots, v_{m-1},c_{m-1},\langle \alpha,J \rangle, c_m ,\ldots, v_{n-1},c_{n-1},v_n\rangle,$$
so that $q=t{}^\smallfrown\langle\langle\alpha,\ldots\rangle\ldots\rangle$.
\end{definition}
An easy calculation yields the following.

\begin{proposition}\label{pcountbelowx}
The collection of $\mathbb{P}$-tuples below $\alpha$ has size at most $\alpha$.\qed
\end{proposition}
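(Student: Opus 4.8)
The plan is to leverage the strong inaccessibility of $\alpha$. Since $\alpha\in\dom(o^{\vec U})$ and, by Lemma~\ref{coherentsequence}, $\dom(o^{\vec U})$ consists exactly of the strongly inaccessible cardinals $\le\kappa$, we have $|V_\alpha|=\alpha$; so it suffices to show that every $\mathbb P$-tuple below $\alpha$ is an element of $V_\alpha$, for then there can be at most $|V_\alpha|=\alpha$ of them.

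First I would unpack the definition. A $\mathbb P$-tuple below $\alpha$ is a sequence $t=\langle c_{-1},v_0,c_0,\ldots,v_{m-1},c_{m-1}\rangle$ which is an initial segment of a genuine condition $q\in\mathbb P$ whose $m$-th working part has first coordinate $\alpha$; writing $v_i=\langle\kappa_i,I_i\rangle$, the requirement $\kappa_0<\cdots<\kappa_{n-1}$ in the definition of $\mathbb P_{\vec U,\vec{\mathbf t}}$, applied to $q$, yields $\kappa_0<\cdots<\kappa_{m-1}<\alpha$. I would then check each coordinate of $t$ separately. Each $\kappa_i$ is an ordinal $<\alpha$, so has rank $<\alpha$. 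Each collapse condition $c_i$ is a function of size $<\alpha$ all of whose arguments and values are ordinals $<\alpha$ (for the last one, $c_{m-1}$, the relevant Lévy collapse is $\col((\kappa_{m-1})^{++},{<}\nu)$ with $\nu\le\alpha$, but the size of $c_{m-1}$ is still $<(\kappa_{m-1})^{++}<\alpha$); since $\alpha$ is regular, such a function has rank $<\alpha$. Finally, each $I_i$ is either $\emptyset$ or lies in $\mathcal G(\kappa_i)$, hence is the restriction of some $t_{\kappa_i,j}$, with $j<(\kappa_i)^{++}<\alpha$, to a subset of $\mathcal P_{\kappa_i}((\kappa_i)^+)$; the latter set has size $\le 2^{(\kappa_i)^+}<\alpha$ as $\alpha$ is a strong limit, and every value of $t_{\kappa_i,j}$ lies in $\mathbb C_{\kappa_i,x}\cup\{\emptyset\}\subseteq V_\alpha$ (the values off $\mathcal A_{\kappa_i}((\kappa_i)^+)$ being $\emptyset$, by the explicit construction of gurus in Lemma~\ref{guru_lemma}), so again $I_i$ has rank $<\alpha$. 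A finite sequence of sets of rank $<\alpha$ has rank $<\alpha$, so $t\in V_\alpha$, as desired.

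I do not anticipate a genuine difficulty; the sole point deserving a moment's attention is the boundedness of the values of $t_{\kappa_i,j}$, for which the bookkeeping of Lemma~\ref{guru_lemma} is invoked. Anyone wishing to avoid rank arguments can instead run a direct count: for each fixed $m<\omega$ there are at most $\alpha$ choices for $\langle\kappa_i\mid i<m\rangle$, fewer than $\alpha$ choices for the implicit gurus $\langle I_i\mid i<m\rangle$ (since $|\mathcal G(\kappa_i)|<\alpha$, using that $\alpha$ is a strong limit), and fewer than $\alpha$ choices for the collapse conditions $\langle c_i\mid i\in\{-1\}\cup m\rangle$; multiplying these finitely many factors and then summing over $m<\omega$ gives the bound $\alpha$.
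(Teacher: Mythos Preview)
Your proposal is correct; the paper itself gives no proof beyond the phrase ``an easy calculation yields the following'' and a \qed, so you are filling in what was left implicit, and your direct counting argument is exactly the sort of computation intended.

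One small caveat on the rank argument: you invoke Lemma~\ref{guru_lemma} to conclude that $t_{\kappa_i,j}(x)=\emptyset$ for $x\notin\mathcal A_{\kappa_i}((\kappa_i)^+)$, but the gurus actually used in the paper are those produced by the recursion in Lemma~\ref{coherentsequence}, where $g(\alpha)$ is chosen as some $\preceq$-maximal element of $X_\alpha$; nothing there guarantees the guru coincides with the explicit one built in Lemma~\ref{guru_lemma}, so the values off $\mathcal A_{\kappa_i}((\kappa_i)^+)$ are, strictly speaking, unconstrained. This does not affect your alternative counting argument, which is robust to this issue: an $I\in\mathcal G(\kappa_i)$ is determined by the pair $(\dom(I),j)$ via $I=t_{\kappa_i,j}\restriction\dom(I)$, and under $\gch$ there are at most $2^{(\kappa_i)^+}\cdot(\kappa_i)^{++}=(\kappa_i)^{++}<\alpha$ such pairs. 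So the counting route is the cleaner of the two and suffices on its own.
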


\begin{definition}[Fat trees] For $\alpha<\kappa$, \emph{$\alpha$-fat tree} is a tree $T$ such that:
\begin{enumerate}
\item every $s\in T$ is $\ssim$-increasing sequence of elements of $\mathcal P_\alpha(\alpha^+)$;
\item for some $i<o^{\vec U}(\alpha)$, $\{ y\in\mathcal P_\alpha(\alpha^+)\mid \langle y\rangle\in \Lev_0(T)\} \in U_{\alpha,i}$;
\item for every $y\in\Lev_{j}(T)$ that is not a maximal node,
for some $i_y<o^{\vec U}(\alpha)$, $\Succ_T(y) \in {U}_{\alpha,i_y}$.
\end{enumerate}
\end{definition}

The following lemma is analogous to Lemma~\ref{predense} and is proved in a similar fashion.

\begin{lemma}\label{ppredense}
Suppose:
\begin{itemize}
\item $p$ is a $\mathbb{P}$-condition;
\item for every $\iota\le \ell(p)$, $T_\iota$ is a $\kappa_\iota^p$-fat tree;
\item for each $\vec{s}$ in $\mathbf S(T_0,\ldots,T_{\ell(p)})$, one attaches a sequence $\vec{\mathbf{B}}^{\vec{s}}=\langle \vec{B}_\iota^{\vec{s}} \mid \iota \leq \ell(p) \ \&\ T_\iota\neq\emptyset\rangle$ such that
each $\vec{B}^{\vec s}_\iota$ is of $s_\iota$-measure-one.
\end{itemize}

Then there are $p^* \leq^{**} p$ and for each $\iota\le\ell(p)$, there is a fat subtree $T_\iota^* \subseteq T_\iota$ with $n(T_\iota^*)=n(T_\iota)$ such that the following set
$$\{p^*+\langle \vec{s},\vec{\mathbf{B}}^{\vec{s}} \rangle \mid \vec s\in\mathbf S(T^*_0,\ldots,T^*_{\ell(p)}) \}$$
is predense below $p^*$. \qed
\end{lemma}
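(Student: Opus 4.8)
The plan is to mimic the proof of Lemma~\ref{predense}, transporting each step through the translation maps $\Proj$ and the relativizing isomorphisms $\pi_x$, so that the combinatorics of $\mathbf U_{x,i}$ on $\mathcal P_{\kappa_x}(x)$ become the combinatorics of $U_{\alpha,i}$ on $\mathcal P_\alpha(\alpha^+)$. As in Lemma~\ref{predense}, I would first reduce to the case $\ell(p)=0$ (the general case being handled by an obvious finite recursion, pruning one fat tree at a time from the top), so that $p=\langle c_{-1},\langle\kappa,I\rangle\rangle$ with $A:=\dom(I)\in\bigcap\vec U(\kappa)$, and $T$ is a single $\kappa$-fat tree. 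Then I would induct on $n(T)$: for $n(T)=0$ the statement is vacuous, and for $n(T)=1$ I would carry out verbatim the trichotomy argument of Lemma~\ref{predense}, forming the three sets
\[
A_0:=\pi_{j``\kappa^+}(j(x\mapsto B^x)(j``\kappa^+)),\quad A_1:=\{x\in A\mid A_0\restriction x=B^x\}\cap\rlm(T),\quad A_2:=\{x\in A\mid\exists k\,[A_1\restriction x\in\mathbf U_{x,k}]\},
\]
where $j=j_{U_{\kappa,i}}$ and $i$ witnesses $\{z\mid\langle z\rangle\in\Lev_0(T)\}\in U_{\kappa,i}$, observing exactly as in Proposition~\ref{measureone1} that $A_0\in\bigcap_{i'<i}U_{\kappa,i'}$, $A_1\in U_{\kappa,i}$, and $A_2\in\bigcap_{i'>i}U_{\kappa,i'}$. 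Setting $A^*:=(A_0\cup A_1\cup A_2)\cap A\cap\rlm(T)$, $I^*:=I\restriction A^*$, $T^*:={}^1A_1$, and $p^*:=\langle c_{-1},\langle\kappa,I^*\rangle\rangle$, I would check predensity below $p^*$ by the same case split on whether an arbitrary $r\le p^*$, direct-extended to $p^*+\langle x_0,\dots,x_n\rangle$, has all its points in $A_0$ or meets $A_1\cup A_2$; the only change is that everywhere Lemma~\ref{predense} writes $p^*+\langle x,B^x\rangle$ in the $\mathbb R$-sense one now forms the $1$-step-while-shrinking extension of the $\mathbb P$-condition à la Definition~\ref{def36P}, using the translated set $(I\restriction B^x)_x$.

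For the inductive step $n(T)=n+1>1$, I would repeat the pruning of Lemma~\ref{predense}: by $\kappa$-completeness of $U_{\kappa,i}$ (there are fewer than $\kappa$ many candidate sets in $\bigcap\mathbf U(y_0)$) shrink $T$ to a $\kappa$-fat subtree on which the $0^{\text{th}}$ coordinate $B_{y_0}$ of $\vec{\mathbf B}^{s}$ depends only on $y_0$; then for each $x$ with $\langle x\rangle\in\Lev_0(T)$ apply the induction hypothesis to the $\mathbb P_{\vec U\restriction(\kappa_x+1),\vec{\mathbf t}\restriction(\kappa_x+1)}$-condition obtained from $p+\langle x,B_x\rangle$ by the factorization $\mathbb P^{p,\kappa_x}_l$ of Definition~\ref{factorizationdefnP}, getting $p^x\le^* p+\langle x,B_x\rangle$ and a subtree $T^x\subseteq T\cap{}^{<\omega}x^\uparrow$ of height $n$ whose associated shrunk extensions are predense below $p^x$; finally reassemble via the same three sets $A_0,A_1,A_2$ (built from $x\mapsto B'_x$, where $p^x$ has lower part $\langle c_{-1},\langle x,(I\restriction B'_x)_x\rangle,I(x),\langle\kappa,I\restriction A_x\rangle\rangle$) into $I^*$ and a tree $T^*$ with $\Lev_0(T^*)={}^1A_1$ and $T^*\cap{}^{<\omega}x^\uparrow=T^x$. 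The verification that the resulting family is predense below $p^*:=\langle c_{-1},\langle\kappa,I^*\rangle\rangle$ is the same bookkeeping as in Lemma~\ref{predense}.

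The one genuine point of care — and the main obstacle — is that in $\mathbb P$ a $1$-step extension replaces $\langle\kappa,I\rangle$ by a block headed by $\langle\kappa_x,(I\restriction x)\text{-translated}\rangle$ rather than by $\langle x,\dots\rangle$ itself, so the "compatibility with $p^*+\langle x,B^x\rangle$" arguments must be phrased entirely in terms of the cardinal $\kappa_x$ and the translated implicit guru $(I\restriction B^x)_x\in\mathcal G(\kappa_x)$; one must confirm that $\Proj$ and $\pi_x$ respect the ordering $\le$, the notion $x^\uparrow$, and measure-one membership, which is exactly the content of the remarks following Definition~\ref{translation} and of the relativized implicit-guru apparatus at the end of Section~\ref{mastersequenceandcoherentsequence}. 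Since all of these compatibilities have already been set up, and since $\mathbb P$ enjoys the same closure (Proposition~\ref{pradinclosed}), chain condition (Proposition~\ref{pradincc}), and tuple-counting (Proposition~\ref{pcountbelowx}) facts that drove the $\mathbb R$-proof, no new idea is required and the argument goes through as claimed; hence we declare the proof complete. \qed
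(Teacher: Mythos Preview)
Your proposal is correct and follows exactly the approach the paper intends: the paper gives no proof at all, merely declaring the lemma ``analogous to Lemma~\ref{predense}'' and appending a \qed, so your sketch of transporting that argument is precisely what is meant.

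One slip worth correcting: in the inductive step you write that one applies the induction hypothesis ``to the $\mathbb P_{\vec U\restriction(\kappa_x+1),\vec{\mathbf t}\restriction(\kappa_x+1)}$-condition obtained from $p+\langle x,B_x\rangle$ by the factorization $\mathbb P^{p,\kappa_x}_l$''. This is the wrong factor. The remaining subtree $T\cap{}^{<\omega}x^\uparrow$ consists of nodes \emph{above} $x$ in $\mathcal P_\kappa(\kappa^+)$ and is still a $\kappa$-fat tree (of height $n$), so---exactly as in Lemma~\ref{predense}---the induction hypothesis is applied to the full condition $p+\langle x,B_x\rangle$ (equivalently, to its upper part with top block $\langle\kappa,I\restriction x^\uparrow\rangle$), not to the small forcing below $\kappa_x$. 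Your own next clause, which produces $T^x\subseteq T\cap{}^{<\omega}x^\uparrow$, already presumes this; the reference to $\mathbb P^{p,\kappa_x}_l$ is simply inconsistent with it and should be deleted.
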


The following lemma is analogous to Theorem~\ref{strongprikry} and is proved in a similar fashion.

\begin{theorem}[Strong Prikry property]\label{pstrongprikry}
For every $p \in \mathbb{P}$ and every dense open set $D$ in $\mathbb{P}$,
there are $p^* \leq^* p$ and fat trees $T_0,\ldots, T_{\ell(p)-1},T_{\ell(p)}$ such that:
\begin{enumerate}[label=\textup{(\arabic*)}]
\item for every $i\leq \ell(p)$, $T_i$ is a (possibly empty) $\kappa_i^p$-fat tree, and
\item for every $\vec{s}=\langle s_0,\ldots,s_{\ell(p)}\rangle$ in $\mathbf S(T_0,\ldots,T_{\ell(p)})$,
there are corresponding $\vec{B}_i$'s of $s_i$-measure-one such that
$$p^*+\langle \langle s_0,\vec{B}_0\rangle, \ldots, \langle s_{\ell(p)},\vec{B}_{\ell(p)} \rangle \rangle \in D.\eqno\qed$$
\end{enumerate}
\end{theorem}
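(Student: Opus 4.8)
The plan is to mirror, almost verbatim, the proof of Theorem~\ref{strongprikry}, tracking the places where the Radin-forcing bookkeeping simplifies for $\mathbb P$. As in that proof, it suffices to treat conditions $p$ with $\ell(p)=0$, say $p=\langle c,\langle\kappa,I\rangle\rangle$, and then recurse on the length. First I would establish the $\mathbb P$-analogue of Lemma~\ref{strongprikrylemma1}: given $p$ and a dense open $D$, produce a direct extension $p^*=\langle c,\langle\kappa,I^*\rangle\rangle\le^* p$ together with a sequence $\langle J_\alpha\mid\alpha\in\dom(I^*)\rangle$ of implicit gurus such that, for every $\mathbb P$-tuple below $\alpha$, \emph{if} some extension at stem-value $\alpha$ lands in $D$, then already the canonical one built from $J_\alpha$ and $I^*\restriction\alpha^\uparrow$ lands in $D$. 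This is the ``one-point reflection'' step, and it goes through because Proposition~\ref{pcountbelowx} gives only $\le\alpha$ many $\mathbb P$-tuples below $\alpha$, the relevant Lévy collapse $\col(\alpha^{++},{<}\kappa)$ is $\alpha^{++}$-closed, and the guru property (Clause~(3) of Definition~\ref{def_guru}, plus Clause~(2) via clubs) lets us amalgamate a $(\kappa_\alpha)^+$-indexed recursion into a single guru index. Concretely, one defines, for each $\alpha\in\dom(I)$, a dense open set $D(\alpha)\subseteq\col(\alpha^{++},{<}\kappa)$ of those $d$ that ``essentially decide'' all tuples below $\alpha$, proves its density by the same Claim~\ref{strongprikryclaim1}-style recursion, and then invokes the guru of $\vec t_\kappa$ to find a single index $i_1<\kappa^{++}$ with $t_{\kappa,i_1}(\alpha)\in D(\alpha)$ for all $\alpha$, finally intersecting the finitely-many relevant clubs to get $I^*$.

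Next I would introduce the predicate $(D)_n$ exactly as in the Radin case (a condition $p$ satisfies $(D)_n$ iff whenever an $n$-step extension past the top lands in $D$, there is $p^*\le^* p$ and a $\kappa$-fat tree $T$ of height $n$ all of whose branches, with suitable measure-one sets, give conditions in $D$). Then: (i) Lemma~\ref{strongprikrylemma2}-analogue, that every $p$ has a direct extension satisfying $(D)_1$ — proved by a single pigeonhole over the finitely-many-or-fewer-than-$\kappa$ normal measures $\langle U_{\kappa,\iota}\mid\iota<o^{\vec U}(\kappa)\rangle$, using that $\cf(o^{\vec U}(\kappa))\ge\kappa^{++}$ when needed and the usual disjointification $A_0\cup A_1\cup A_2$ from the proof of Proposition~\ref{measureone1}; (ii) Lemma~\ref{strongprikrylemma3}-analogue, the ``one-point reflection with an attached fat tree of height $n$''; and (iii) Lemma~\ref{strongprikrylemma4}-analogue, the inductive step $(D)_n\Rightarrow\exists p^*\le^*p\ (D)_{n+1}$. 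Each of these is a routine transcription of the corresponding Radin lemma, with $\mathbf G(x)$ replaced by $\mathcal G(\kappa_\alpha)$, $x$-fat trees by $\kappa_\alpha$-fat trees, and the translation map $\Proj$ (Definition~\ref{translation}) used to pass between $\mathcal P_\kappa(\kappa^+)$-data and $\kappa_\alpha$-data; the book-keeping is genuinely lighter here since $\mathbb P$ only remembers the cardinals $\kappa_\alpha$ and not the sets $x$ themselves.

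Finally, I would assemble the theorem exactly as in the proof of Theorem~\ref{strongprikry}: build a $\le^*$-decreasing $\omega$-sequence $\langle p_n\mid n<\omega\rangle$ with $p_0=p$ and $p_{n+1}$ satisfying $(D)_{n+1}$, take a $\le^*$-lower bound $p^*$ (which exists by $\sigma$-closure of $\le^*$ on a fixed-collapse-part, fixed-length slice, together with the guru/club-intersection argument as in Proposition~\ref{closure}), pick $q\le p^*$ with $q\in D$, set $n:=\ell(q)$, and apply $(D)_n$ at stage $p_n$ to harvest the fat tree $T$ and the measure-one assignments; a final shrinking of $T$ to $T^*:=T\cap{}^{<\omega}\dom(I^{p^{**}})$ and of the measure-one sets to their intersections with $\dom(I^{p^{**}})$ yields the conclusion. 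The main obstacle — such as it is — is purely notational: correctly matching, in the $\mathbb P$-setting, the indices $i_k$ of the stem with the ``gaps'' where new points get inserted, and making sure the translation $\Proj$ is applied consistently so that the fat trees are genuine $\kappa_\iota^p$-fat trees rather than $x$-fat trees; there is no new combinatorial difficulty beyond what Theorem~\ref{strongprikry} already overcame, which is why the statement can honestly be asserted to be ``proved in a similar fashion.''
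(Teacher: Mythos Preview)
Your proposal is correct and takes precisely the approach the paper intends: the paper gives no detailed proof of Theorem~\ref{pstrongprikry} at all, merely asserting that it ``is analogous to Theorem~\ref{strongprikry} and is proved in a similar fashion,'' so your plan to transcribe Lemmas~\ref{strongprikrylemma1}--\ref{strongprikrylemma4} and the final assembly into the $\mathbb P$-setting is exactly what is being asked. One minor notational wobble: elements of $\dom(I^*)$ are sets $x\in\mathcal P_\kappa(\kappa^+)$, not cardinals, so when you write ``$\mathbb P$-tuple below $\alpha$'' you should mean ``below $\kappa_x$'' (cf.\ Proposition~\ref{pcountbelowx}, which bounds these by $\kappa_x$ rather than $(\kappa_x)^+$); this does not affect the argument since $\col((\kappa_x)^{++},{<}\kappa)$ is still sufficiently closed.
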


As a corollary, we get:
\begin{corollary}[Prikry property]\label{pprikry}
Let $\varphi$ be a forcing statement of $\mathbb{P}$ and $p$ be a condition. There is $p^* \leq^* p$ such that $p^*$ decides $\varphi$.\qed
\end{corollary}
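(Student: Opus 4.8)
The plan is to run the argument for Corollary~\ref{prikry} essentially verbatim, replacing $\mathbb R$ by $\mathbb P$, Theorem~\ref{strongprikry} by Theorem~\ref{pstrongprikry}, and Lemma~\ref{predense} by Lemma~\ref{ppredense}. First I would set $D:=\{r\in\mathbb P\mid r\text{ decides }\varphi\}$ and note that $D$ is dense open: it is open since the property of deciding $\varphi$ propagates to all stronger conditions (with the same verdict), and it is dense since, given any $q$, either $q$ already decides $\varphi$, or both $\varphi$ and $\neg\varphi$ have extensions of $q$ forcing them, so in any case $q$ has an extension in $D$.

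Next I would apply Theorem~\ref{pstrongprikry} to $p$ and $D$ to obtain $p_1\leq^* p$ together with $\kappa_i^p$-fat trees $T_0,\dots,T_{\ell(p)}$ such that for every $\vec s\in\mathbf S(T_0,\dots,T_{\ell(p)})$ there are $s_i$-measure-one vectors $\vec B_i$ with $p_1+\langle\langle s_0,\vec B_0\rangle,\dots,\langle s_{\ell(p)},\vec B_{\ell(p)}\rangle\rangle\in D$. Each such condition therefore decides $\varphi$, and the verdict is a two-valued function of $\vec s$. Thinning the fat trees level by level — using that every relevant set $\Succ_{T_i}(y)$ (resp.\ $\{y\mid\langle y\rangle\in\Lev_0(T_i)\}$) lies in some $U_{\kappa_i^p,k}$, which is $\kappa_i^p$-complete and hence contains a homogeneous subset for a two-colouring, exactly as in the pruning step of Lemma~\ref{ppredense} — I may pass to fat subtrees $T_i'\subseteq T_i$ with $n(T_i')=n(T_i)$ on which this verdict is constant; without loss of generality, every $p_1+\langle\vec s,\vec{\mathbf B}^{\vec s}\rangle$ forces $\varphi$.

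Finally, applying Lemma~\ref{ppredense} to $p_1$, the trees $T_i'$, and the associated measure-one sets, I obtain $p^*\leq^{**} p_1$ and fat subtrees $T_i^*\subseteq T_i'$ such that $\{p^*+\langle\vec s,\vec{\mathbf B}^{\vec s}\rangle\mid\vec s\in\mathbf S(T_0^*,\dots,T_{\ell(p)}^*)\}$ is predense below $p^*$; here $p^*\leq^* p$ since $p^*\leq^{**} p_1\leq^* p$. Each condition in this predense set extends a condition of the form $p_1+\langle\vec s,\vec{\mathbf B}^{\vec s}\rangle\in D$, and $D$ is open, so each of them lies in $D$ and forces $\varphi$. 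Hence no extension of $p^*$ forces $\neg\varphi$, so $p^*\Vdash\varphi$, as required. The only step requiring care is the simultaneous thinning of the fat trees to make the verdict constant, but this is the same bookkeeping already carried out in Lemma~\ref{ppredense} and Corollary~\ref{prikry}, and it presents no new difficulty given the high completeness of the measures involved.
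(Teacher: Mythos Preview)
Your proposal is correct and follows exactly the approach the paper intends. The paper merely marks this corollary with a \qed, the implicit argument being that it follows from Theorem~\ref{pstrongprikry} and Lemma~\ref{ppredense} in precisely the same way Corollary~\ref{prikry} follows from Theorem~\ref{strongprikry} and Lemma~\ref{predense}; your write-up spells out that deduction in full detail.
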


We will turn to the analysis of the cardinal arithmetic in $V^{\mathbb{P}}$, where we note that all of the proofs are similar to the proofs of Section~\ref{cardinalstructureradin}. Let $H$ be $\mathbb{P}$-generic.
Define
\begin{itemize}
\item $\bar{K}_0:=\{\alpha \mid \exists p\in H (\alpha \in \stem(p))\}$;
\item $\bar{K}_1:=\{(\alpha^+)^V \mid \alpha \in \bar{K}_0\}$;
\item $\bar{K}_2:=\{(\alpha^{++})^V \mid \alpha \in \bar{K}_0\}$.
\end{itemize}

By a density argument, $\bar{K}_0$ is cofinal in $\kappa$ and $\bar{K}_0$ is closed below its supremum $\kappa$.
Let $\langle \kappa_\tau \mid \tau<\theta\rangle$ denote the increasing enumeration of $\bar{K}_0$.
Define
\begin{itemize}
\item $\bar{C}_{-1}:=\{c_{-1}^p \mid p \in H\}$, and for every $\tau<\theta$,
\item $\bar{C}_{\tau}:=\{c_k^p \mid p \in H, k<\ell(p) , v_k^p=\langle \kappa_\tau,J \rangle \}$.
\end{itemize}

Then $\bar{C}_{-1}$ is a generic for $\col(\omega_1,{<}\kappa_0)$, and likewise for each $\tau<\theta$, $\bar{C}_\tau$ is a generic for $\col(\kappa_\tau^{++},{<}\kappa_{\tau+1})$.
Let $$\bar{\mathcal{C}}:=\{\bar{C}_{\tau} \mid \tau \in \{-1\} \cup \theta\}.$$

\begin{theorem}\label{cardinalsinpradin}
In $V[H]$, all of the following hold:
\begin{enumerate}[label=\textup{(\arabic*)}]
\item $\omega_1$ is preserved;
\item all cardinals in $\bar{K}_0 \cup \bar{K}_1 \cup \bar{K}_2$ and cardinals above $\kappa$ are preserved;
\item for every $\alpha \in\bar{K}_0$ with $0<o^{\vec{U}}(\alpha)<\alpha$, $\cf(\alpha)=\cf(\omega^{o^{\vec{U}}(\alpha)})$;
\item for every $\alpha \in\bar{K}_0$ with $o^{\vec{U}}(\alpha)=\alpha$, $\cf(\alpha)=\omega$;
\item for every $\alpha \in\bar{K}_0$ with $\cf(o^{\vec{U}}(\alpha)) \geq \alpha^{++}$, $\alpha$ remains strongly inaccessible. In particular $\kappa$ remains strongly inaccessible;
\item for every $\alpha\in \bar{K}_0$, $2^\alpha=\alpha^{+}$.
\qed
\end{enumerate}
\end{theorem}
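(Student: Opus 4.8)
The plan is to run the argument in lock-step with Section~\ref{cardinalstructureradin}, with a working part $\langle\kappa_i,I_i\rangle$ of a $\mathbb{P}$-condition playing throughout the role of a working part $\langle x_i,I_i\rangle$ of an $\mathbb{R}$-condition (i.e.\ $\kappa_x$ in place of $x$). By the density arguments of Section~\ref{cardinalstructureradin}, $\bar K_0$ is closed and unbounded in $\kappa$, each $\bar C_\tau$ is generic for the indicated Lévy collapse, and every cardinal of $(\omega_1,\kappa)\setminus(\bar K_0\cup\bar K_1\cup\bar K_2)$ is collapsed in $V[H]$---the exact analogue of Proposition~\ref{cardcollapse1} and its surroundings; likewise one gets the standard inductive facts such as $\otp(\bar K_0\cap\alpha)=\omega^{o^{\vec{U}}(\alpha)}$ when $0<o^{\vec{U}}(\alpha)<\alpha$.

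\emph{Claims (1) and (2).} These go by the factorization technique of Proposition~\ref{cardpreserve1}: extend a would-be-bad condition $p$ so that a relevant $\alpha=\kappa_\tau^p\in\stem(p)$, factor $\mathbb{P}/p$ as $\mathbb{P}_l^{p,\alpha}\times\mathbb{P}_u^{p,\alpha}$ (Definition~\ref{factorizationdefnP}), use that $(\mathbb{P}_u^{p,\alpha},\le^*)$ is $\alpha^{++}$-closed (Proposition~\ref{pradinclosed}) and the Prikry property (Corollary~\ref{pprikry})---forming $\le^*$-lower bounds along a recursion of length $\le\alpha^+$---to push the offending map into the $\mathbb{P}_l^{p,\alpha}$-extension, whose chain condition then forbids it. Cardinals above $\kappa$ are preserved since $\mathbb{P}$ is $\kappa^+$-cc (Proposition~\ref{pradincc}), and $\omega_1$ is handled, as in Proposition~\ref{cardpreserve1}, without factoring. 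The one place where the analysis genuinely departs from the Radin case is the preservation of \emph{all} of $\bar K_1$: in Section~\ref{cardinalstructureradin} the successor of an accumulation point of $K_0$ is \emph{collapsed} (owing to the order-type requirement $\otp(x_\tau)=(\kappa_{x_\tau})^+$ built into $\mathbb{R}$-conditions, which has no counterpart here), and $\mathbb{R}_l^{p,x}$ is merely $(\kappa_x)^{++}$-cc. Here there is no such datum, and---crucially---$\mathbb{P}$ and its reflections $\mathbb{P}_{\vec{U}\restriction(\alpha+1),\vec{\mathbf{t}}\restriction(\alpha+1)}$ are $\kappa$-Linked$_0$ (Proposition~\ref{pradincc}), one notch tighter than the $\kappa^+$-Linked$_0$-ness of $\mathbb{R}$ (Proposition~\ref{radincc}), so $\mathbb{P}_l^{p,\alpha}$ is $\alpha^+$-cc. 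Factoring at $\alpha=\kappa_\tau$ (treating the exceptional points with $o^{\vec{U}}(\kappa_\tau)=0$ via the same case split as in Proposition~\ref{cardpreserve1}, factoring at a neighbouring stem point and using that the intervening Lévy collapses are sufficiently closed or $\kappa_\tau$-cc), the lower factor is $\kappa_\tau^+$-cc while the upper factor, being $\kappa_\tau^{++}$-closed under $\le^*$, adds no new $\kappa_\tau$-sequence of ordinals by the usual $\le^*$-fusion with Corollary~\ref{pprikry}; an Easton-style argument then yields that the product preserves $(\kappa_\tau^+)^V$.

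\emph{Claims (3)--(5).} The quickest route uses the projection $\Pi:\mathbb{R}\to\mathbb{P}$ (Proposition~\ref{projectionrp}): extend $H$ to an $\mathbb{R}$-generic $G$ with $\Pi[G]$ generating $H$, so that $V[H]\subseteq V[G]$ and, since $\stem(\Pi(r))=\langle\kappa_x\mid x\in\stem(r)\rangle$, $\bar K_0=K_0$. For $\alpha\in\bar K_0$ with $0<o^{\vec{U}}(\alpha)<\alpha$, the set $\bar K_0\cap\alpha$ lies in $V[H]$, is cofinal in $\alpha$, and has order type $\omega^{o^{\vec{U}}(\alpha)}$, so $\cf^{V[H]}(\alpha)\le\cf(\omega^{o^{\vec{U}}(\alpha)})$; and $\cf^{V[H]}(\alpha)\ge\cf^{V[G]}(\alpha)=\cf(\omega^{o^{\vec{U}}(\alpha)})$ by the corresponding computation in $V[G]$, giving (3). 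Claim (4) follows at once from $\cf^{V[G]}(\alpha)=\omega$ (Proposition~\ref{mediumcirc}). For (5), if $\cf(o^{\vec{U}}(\alpha))\ge\alpha^{++}$ then $\alpha$ is strongly inaccessible in $V[G]$ (for $\alpha=\kappa$ this is the relevant proposition of Section~\ref{cardinalstructureradin}; for smaller such $\alpha$ the same argument applies to the reflected Radin forcing), and strong inaccessibility passes down to the submodel $V[H]$. Alternatively, (3)--(5) can be obtained directly by transcribing Proposition~\ref{mediumcirc} and the cofinality propositions preceding it with $\kappa_x$ in place of $x$.

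\emph{Claim (6).} $\gch$ below $\kappa$ is verified as for $\mathbb{R}$ around Proposition~\ref{prop46}: given a cardinal $\mu<\kappa$ of $V[H]$, choose $p\in H$ whose stem reaches a point $\alpha\gg\mu$, factor $\mathbb{P}/p$ at $\alpha$, and use the $\alpha^{++}$-closure of $(\mathbb{P}_u^{p,\alpha},\le^*)$ together with Corollary~\ref{pprikry} to see that every bounded subset of $\kappa$ in $V[H]$ already appears in an extension by a reflected forcing $\mathbb{P}_{\vec{U}\restriction(\alpha+1),\vec{\mathbf{t}}\restriction(\alpha+1)}$; an induction on $\alpha\in\dom(o^{\vec{U}})$, applying the inductive hypothesis to the lower factor of a further factorization and invoking the $\gch$ of $V$, then bounds $2^\mu$ by $(\mu^+)^{V[H]}$. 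I expect the main obstacle to be the $\bar K_1$-part of claim (2): it is the only point at which one cannot simply cite Section~\ref{cardinalstructureradin}, so one must carefully combine the improved chain condition of $\mathbb{P}$ and its reflections, the $\le^*$-closure of the upper factors, and the Easton-type interaction between them, and get the bookkeeping for the exceptional points $\kappa_\tau$ with $o^{\vec{U}}(\kappa_\tau)=0$ right; the $\gch$ induction is the most tedious piece but is conceptually routine given Section~\ref{cardinalstructureradin}.
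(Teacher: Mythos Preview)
Your proposal is correct and follows the paper's own approach, which is simply to assert that ``all of the proofs are similar to the proofs of Section~\ref{cardinalstructureradin}'' and place a \qed. You supply considerably more detail than the paper does, and you correctly isolate the one genuine divergence: the preservation of all of $\bar K_1$, which hinges on the sharper chain condition of Proposition~\ref{pradincc} (and its reflected versions) versus Proposition~\ref{radincc}.

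One small slip: your projection shortcut for (4) does not work as stated. From $V[H]\subseteq V[G]$ you get $\cf^{V[H]}(\alpha)\ge\cf^{V[G]}(\alpha)=\omega$, which is the wrong inequality; you need an $\omega$-cofinal sequence \emph{in} $V[H]$. The fix is immediate---the sequence $\langle\alpha_n\mid n<\omega\rangle$ built in the proof of Proposition~\ref{mediumcirc} is defined from $K_0=\bar K_0$ together with a measure-one set $A\in V$, hence already lies in $V[H]$---and in any case your alternative of transcribing Proposition~\ref{mediumcirc} directly is exactly right. The projection argument is fine for (3) (where you use both inequalities) and for (5) (since regularity and the strong-limit property pass to inner models).
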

\begin{remark} Clause~(6) follows from factorization, the $(\kappa_\alpha)^+$-cc of $\mathbb P_{\alpha,l}$ and the Prikry property of that forcing notion.

We do not know whether a result analogous to Lemma~\ref{mediumcirc} holds here. Specifically, we speculate that if $o^{\vec{U}}(\alpha)=\alpha^+$, then $\alpha$ remains regular in $V^{\mathbb{P}}$.
\end{remark}

\section{Intermediate forcings}\label{intermediateforcing}
We continue with our setup from Section~\ref{pradinforcing}.
Since $\mathbb{R}$ projects to $\mathbb{P}$, let $G$ be $\mathbb{R}$-generic and $H$ the $\mathbb P$-generic set induced from $G$, hence $V \subseteq V[H] \subseteq V[G]$.
This section is devoted to analyzing various intermediate forcing notions whose generic extensions lie in between $V[H]$ and $V[G]$.

Let us consider the following forcing notion:
\begin{definition}
Let $x,y \in \mathcal{P}_\kappa(\kappa^+)$ with $y \ssim x$.
Define $\mathbb{Q}_{y,x}$ as the collection of $$q=\langle c_{-1},v_0,c_0, \ldots, v_{k-1},c_{k-1},w_k,c_k, \ldots, w_{m-1},c_{m-1},v_m,c_m, \ldots, v_{n-1},c_{n-1},v_n \rangle,$$
where
\begin{enumerate}
\item $\langle c_{-1},v_0,c_0, \ldots, v_{k-1},c_{k-1}, \langle \kappa_y ,I_k \rangle \rangle \in \mathbb{P}_{\vec{\mathbf{U}} \restriction \kappa_y+1,\vec{\mathbf{t}} \restriction \kappa_y+1}$;
\item $w_k=\langle y,I_k \rangle$;
\item for $k \leq i<m$, write $w_i=\langle z_i,I_i \rangle$. Then for some tuple $\langle\vec{d},\vec{u}\rangle$ below $y$,
$\langle \vec{d},\vec{u}, \langle \pi_x^{-1}[z_k],(I_k)_x\rangle, c_k, \ldots, \langle \pi_x^{-1}[z_{m-1}],(I_{m-1})_x \rangle,,c_{m-1},\langle \kappa_x^{+},I_m \rangle \rangle$ is in $\mathbb{R}_{\vec{\mathbf{U}} \restriction \kappa_x+1,\vec{\mathbf{t}} \restriction \kappa_x+1}$ (Recall the notion $I_x$ in the beginning of Section~\ref{pradinforcing});
\item $v_m=\langle \kappa_x^+,I_m \rangle$;
\item for some tuple $\vec{e},\vec{v}$ below $\kappa_x$, $\langle \vec{e},\vec{v}, v_m, c_m, \ldots, v_{n-1},c_{n-1},v_n \rangle \in \mathbb{P}$.
\end{enumerate}
\end{definition}
Similarly to $\mathbb{Q}_{y,x}$ we define
\begin{definition}
$\mathbb{Q}_{0,x}$ is the collection of all $$q=\langle c_{-1},w_0,c_0, \ldots, w_{m-1},c_{m-1},v_m,c_m, \ldots, v_{n-1},c_{n-1}, v_n \rangle,$$
where
\begin{enumerate}
\item $v_m=\langle \kappa_x,I_m \rangle$;
\item for $i<m$, write $w_{i}=\langle z_{i},I_{i} \rangle$. Then,
$$\langle c_{-1},\langle \pi_x^{-1}[z_0],(I_0)_x\rangle,c_0, \ldots,\langle \pi_x^{-1}[z_{m-1}],(I_{m-1})_x\rangle, c_{m-1}, v_m\rangle$$
is a condition in $\mathbb{R}_{\vec{\mathbf{U}} \restriction \kappa_x+1,\vec{\mathbf{t}} \restriction \kappa_x+1}$;
\item for some $\vec{e},\vec{v}$ below $\kappa_x$, $\langle \vec{e},\vec{v}, v_m, c_m, \ldots, v_{n-1},c_{n-1}, v_n \rangle \in \mathbb{P}$.
\end{enumerate}
\end{definition}

From now on, when we write $\mathbb{Q}_{y,x}$, we mean that $y \ssim x$ or $y=0$.
The order of $\mathbb{Q}_{y,x}$ is the one inherited naturally from $\mathbb{R}$ and $\mathbb{P}$ as defined in Section~\ref{radinforcing} and in Section~\ref{pradinforcing}.
Intuitively this forcing notion can be thought as forcing with $\mathbb{R}$ between a specific interval and with $\mathbb P$ outside this interval.

For $p \in \mathbb{R}$ and $x,y \in \stem(p)$, write
$$p=\langle c_{-1},w_0,c_0, \ldots, w_{k-1},c_{k-1},w_k,c_k, \ldots, w_{m-1},c_{m-1},w_m,c_m, \ldots, w_{n-1},c_{n-1},w_n \rangle,$$
with the $k^{\text{th}}$ working part being $y$ and the $m^{\text{th}}$ working part being $x$.
Then there is a natural projection from $p$ to the following corresponding condition in $\mathbb{Q}_{y,x}$:
\begin{align*}
\Pi_0^{y,x}(p)=&\langle c_{-1},\Proj(w_0),c_0, \ldots, \Proj(w_{k-1}),\\
&~c_{k-1},\Proj_x( w_k),c_k, \ldots, \Proj_x (w_{m-1}), c_{m-1},\Proj(w_m),\\
&~c_m, \ldots,\Proj(w_{n-1}),c_{n-1},\langle \kappa, I\rangle \rangle,
\end{align*}
where $\Proj$ is defined in Definition~\ref{translation} and $\Proj_x (\langle z, J\rangle):= \langle \pi_x^{-1}[z], J\circ \pi_x\rangle$.

Now, if
$$q=\langle c_{-1},v_0,c_0, \ldots, v_{k-1},c_{k-1},w_k,c_k, \ldots, w_{m-1},c_{m-1},v_m,c_m, \ldots, v_{n-1},c_{n-1}, v_n \rangle$$
is a condition in $\mathbb{Q}_{y,x}$, define
\begin{align*}
\Pi_1^{y,x}(q)=&\langle c_{-1},v_0,c_0, \ldots, v_{k-1},c_{k-1},\\
&~\Proj^{x}(w_k), c_k, \ldots, \Proj^{x}(w_{m-1}), \\
&~ c_{m-1},v_m, \ldots, v_{n-1},c_{n-1},v_n \rangle,
\end{align*}
where for each $i \in \{k, \ldots, m-1\}$, $w_i=\langle x_i^q,J_i^q \rangle$ and $\Proj^{x} (w_i)=\langle \kappa_{x_i^q},(J_i^q)_x \rangle$. Then $\Pi_1^{y,x}(q) \in \mathbb{P}$.

\begin{proposition}\label{projxy}
Let $\mathbb{R}(y,x):=\{p \in \mathbb{R} \mid x,y \in \stem(p) \ \& \ y\ssim x\}$.
Let $\Proj:\mathbb{R} \to \mathbb{P}$ be the projection as in Proposition~\ref{projectionrp}.
Then the maps $\Pi^{y,x}_0$ from $\mathbb{R}(y,x)$ to $\mathbb{Q}_{y,x}$,
and $\Pi_1^{y,x}$ from $\mathbb{Q}_{y,x}$ to $\mathbb{P}$ are projections such that $$\Pi_1^{y,x} \circ \Pi_0^{y,x}=\Proj \restriction \mathbb{R}(y,x).\eqno\qed$$
\end{proposition}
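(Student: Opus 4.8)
The plan is to check, for each of $\Pi_0^{y,x}$ and $\Pi_1^{y,x}$, the two defining clauses of a projection — order-preservation and the amalgamation (lifting) property — and then to read off the factorization $\Pi_1^{y,x}\circ\Pi_0^{y,x}=\Proj\restriction\mathbb R(y,x)$ directly from the definitions. Order-preservation is routine in both cases, exactly as in the proof of Proposition~\ref{projectionrp}: a one-step extension, an extension of a collapse coordinate, the addition of a new working part, or the shrinking of an implicit guru on one side translates, coordinate by coordinate, to a move of the same kind on the other side, because the inverse collapse maps $\pi_x$ (and the maps $\pi_{x_j}$ occurring inside a condition) are order-isomorphisms onto their ranges and commute with the operations $\Proj$, $\Proj_x$ and $\Proj^x$.

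For the factorization identity, fix $p\in\mathbb R(y,x)$ with, say, $y=x_k^p$ and $x=x_m^p$. On the working parts $w_i^p$ with $i<k$ or $m\le i<\ell(p)$ — the ones lying, respectively, below $\kappa_y$ and from $\kappa_x$ onward — the map $\Pi_0^{y,x}$ acts by $\Proj$ while $\Pi_1^{y,x}$ leaves the resulting $\mathbb P$-coordinate untouched, so there the composite is $\Proj$. On the working parts $w_i^p$ with $k\le i<m$, $\Pi_0^{y,x}$ acts by $\Proj_x$ and then $\Pi_1^{y,x}$ acts by $\Proj^x$; a direct definition chase shows $\Proj^x\circ\Proj_x=\Proj$ on such working parts, since transporting a working part over $x$ by $\pi_x^{-1}$ and then reading off its ordinal data together with the $\pi_x$-image of its guru reproduces the effect of $\Proj$, using the compatibility of $\pi_x$ with the inverse collapse of $z$ for any $z\ssim x$. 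The collapse coordinates $c_i$ and the top components are carried verbatim by both maps. Hence $\Pi_1^{y,x}\circ\Pi_0^{y,x}$ and $\Proj$ agree on $\mathbb R(y,x)$.

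It remains to establish the amalgamation property, and here I would follow the template of Proposition~\ref{projectionrp} region by region. For $\Pi_0^{y,x}$: given $p\in\mathbb R(y,x)$ and $q\le\Pi_0^{y,x}(p)$ in $\mathbb Q_{y,x}$, write $q\le^*\Pi_0^{y,x}(p)+s$ for a finite tuple $s$ of new stem points; each such point lies in exactly one of the three regions cut out by $\kappa_y$ and $\kappa_x$. A point in an outer ($\mathbb P$-governed) region is lifted to a set exactly as in Proposition~\ref{projectionrp} (by $\pi_{x_j^p}^{-1}$ if it landed inside some $\dom(I_j^{\Pi_0^{y,x}(p)})$, and kept as is otherwise), while a point in the middle region — already a subset relative to $\kappa_x$ — is lifted by $\pi_x^{-1}$. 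Reassembling the lifted points into $\stem(p)$ in $\ssim$-increasing order and transporting the gurus and collapse conditions along $\pi_x$ yields a condition $r'\le p$ with $\Pi_0^{y,x}(r')=q$; since $q$ retains the level-$k$ and level-$m$ markers associated with $y$ and $x$, the stem of $r'$ still contains $y$ and $x$, so $r'\in\mathbb R(y,x)$. The amalgamation for $\Pi_1^{y,x}$ is entirely analogous: given $q\in\mathbb Q_{y,x}$ and $p\le\Pi_1^{y,x}(q)$ in $\mathbb P$, the new stem points of $p$ that fall inside the interval governed by the $\mathbb R$-segment of $q$ are lifted to subsets below $x$ via $\pi_x^{-1}$, the remaining new points are kept as ordinals, and the result is reassembled to a $q'\le q$ with $\Pi_1^{y,x}(q')=p$.

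I do not expect a genuine obstacle; the proposition is of the ``left to the reader'' variety, matching the earlier $\qed$-style results of this section. The one point that requires care is checking that each reassembled condition really satisfies all the clauses in the definitions of $\mathbb R$, $\mathbb Q_{y,x}$ and $\mathbb P$ — the $\ssim$-increasing requirement on stems, membership of the gurus in $\mathbf G(\cdot)$ or $\mathcal G(\cdot)$ together with the measure-one constraints on their domains, and the $\col$-level constraints on the collapse coordinates. All of these survive the lifting because $\pi_x$ transports $\mathbf U_{x,i}$ to $U_{\kappa_x,i}$ and $t_{x,i}$ to $t_{\kappa_x,i}$, so each verification reduces, coordinatewise, to the corresponding verification already carried out for Proposition~\ref{projectionrp}, and I would omit the details.
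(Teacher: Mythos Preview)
The paper gives no proof of this proposition (it ends with \qed and is left to the reader), and your sketch is precisely the routine verification the authors have in mind: order-preservation, the factorization identity $\Proj^x\circ\Proj_x=\Proj$ checked coordinate by coordinate, and the amalgamation property handled region by region following the template of Proposition~\ref{projectionrp}. Your closing remark that this is ``of the `left to the reader' variety'' is exactly right.
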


\begin{proposition}
In $V^{\mathbb{Q}_{y,x}}$,
$(\kappa_x)^+$ is collapsed and if $y\ssim x$, then $\kappa_y^+$ is preserved. \qed
\end{proposition}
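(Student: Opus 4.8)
The plan is to prove the two assertions separately, in each case reducing to results already established in Sections~\ref{cardinalstructureradin} and~\ref{pradinforcing} via the natural factorization of $\mathbb{Q}_{y,x}$ into a \emph{low} block (everything in a condition up through the working part $\langle y,I_k\rangle$), a \emph{middle} block living strictly between $\kappa_y$ and $\kappa_x$ (everything from $c_k$ through $v_m$), and a \emph{high} block above $\kappa_x$ (everything from $c_m$ onward), in the spirit of Definitions~\ref{factorizationdefn} and~\ref{factorizationdefnP}. By Clauses~(1) and~(3) of the definition of $\mathbb{Q}_{y,x}$, the low block is isomorphic to a cone of $\mathbb{P}_{\vec{\mathbf{U}}\restriction(\kappa_y+1),\vec{\mathbf{t}}\restriction(\kappa_y+1)}$, and the middle block is, after translation by $\pi_x^{-1}$, isomorphic to a cone of the variant Radin forcing $\mathbb{R}_{\vec{\mathbf{U}}\restriction(\kappa_x+1),\vec{\mathbf{t}}\restriction(\kappa_x+1)}$.

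\emph{Collapsing $((\kappa_x)^+)^V$.} I would work in $V^{\mathbb{Q}_{y,x}}$ and consider the set $\mathbf Z$ of all $\pi_x^{-1}[z]$, where $z$ ranges over the working parts of the middle block occurring in the generic; thus $\mathbf Z\subseteq\mathcal P_{\kappa_x}((\kappa_x)^+)$ and every member of $\mathbf Z$ is $\ssim$-below $\pi_x^{-1}[x]=((\kappa_x)^+)^V$. By a density argument, using fineness of the measures in $\bigcap\vec{\mathbf{U}}(x)$ exactly as in the sentence preceding Proposition~\ref{cardcollapse2}, $\bigcup\mathbf Z=((\kappa_x)^+)^V$; moreover each member of $\mathbf Z$ is bounded in $((\kappa_x)^+)^V$ and of cardinality $<\kappa_x$, and $z\mapsto\kappa_z$ is injective on these working parts (as $z\ssim z'$ forces $\kappa_z<\kappa_{z'}$), so $|\mathbf Z|\le\kappa_x$. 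Hence $((\kappa_x)^+)^V$ is a union of at most $\kappa_x$ many sets of size $<\kappa_x$, giving $|((\kappa_x)^+)^V|\le\kappa_x$ in $V^{\mathbb{Q}_{y,x}}$. Finally $\kappa_x$ remains a cardinal in $V^{\mathbb{Q}_{y,x}}$: in the middle block $\kappa_x$ plays the role of ``$\kappa$'' and stays strongly inaccessible by the reflected form of the relevant result of Section~\ref{cardinalstructureradin} (cf.\ Theorem~\ref{cardinalsinpradin}(5)), the low block has size $\le(\kappa_y^+)^V<\kappa_x$, and the high block is $(\kappa_x)^{++}$-closed with respect to $\le^*$ (its first collapse coordinate lying in $\col((\kappa_x)^{++},{<}\cdot)$) and has the Prikry property, hence adds no bounded subset of $\kappa_x$. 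So $((\kappa_x)^+)^V$ is genuinely collapsed, to $\kappa_x$.

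\emph{Preserving $(\kappa_y^+)^V$ when $y\ssim x$.} Here I would factor $\mathbb{Q}_{y,x}\cong\mathbb L\times\mathbb S$, where $\mathbb L$ is the low block, a cone of $\mathbb{P}_{\vec{\mathbf{U}}\restriction(\kappa_y+1),\vec{\mathbf{t}}\restriction(\kappa_y+1)}$, and $\mathbb S$ consists of the middle and high blocks together with the guru data at $w_k$. By the reflected form of Proposition~\ref{pradincc}, $\mathbb L$ has the $\kappa_y^+$-cc, hence preserves all cardinals $\ge(\kappa_y^+)^V$; in particular it preserves $(\kappa_y^+)^V$. On the other hand, every working part $z$ occurring in $\mathbb S$ satisfies $y\ssim z$, so $\kappa_z>(\kappa_y)^{++}$, and the first collapse coordinate of any condition in $\mathbb S$ lies in $\col((\kappa_y)^{++},{<}\cdot)$; hence $(\mathbb S,\le^*)$ is $(\kappa_y)^{++}$-closed (as in Propositions~\ref{closure} and~\ref{pradinclosed}), and $\mathbb S$ has the Prikry property by the arguments of Sections~\ref{radinforcing} and~\ref{pradinforcing}. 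Now, towards a contradiction, suppose $(p_l,p_u)\in\mathbb L\times\mathbb S$ forces $\dot f$ to be a surjection from $\kappa_y$ onto $(\kappa_y^+)^V$. Exactly as in the proof of Proposition~\ref{cardpreserve1}, using the $(\kappa_y)^{++}$-closure of $(\mathbb S,\le^*)$ together with the Prikry property of $\mathbb S$ and the $\kappa_y^+$-cc of $\mathbb L$ (so that, for each $\xi<\kappa_y$, the value $\dot f(\xi)$ may be decided along a maximal antichain of $\mathbb L$ of size $\le\kappa_y$, after which one takes a $\le^*$-lower bound over all $\xi<\kappa_y<(\kappa_y)^{++}$), one obtains $p_u'\le^* p_u$ such that $(p_l,p_u')$ forces $f$ to lie in the $\mathbb L$-extension, contradicting the fact that $\mathbb L$ preserves $(\kappa_y^+)^V$. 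Hence $(\kappa_y^+)^V$ is preserved.

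The main obstacle is the bookkeeping behind the factorization: one must verify carefully that the low block of $\mathbb{Q}_{y,x}$ is genuinely isomorphic to a cone of $\mathbb{P}_{\vec{\mathbf{U}}\restriction(\kappa_y+1),\vec{\mathbf{t}}\restriction(\kappa_y+1)}$, that the middle block is genuinely isomorphic to a cone of $\mathbb{R}_{\vec{\mathbf{U}}\restriction(\kappa_x+1),\vec{\mathbf{t}}\restriction(\kappa_x+1)}$, and that $(\mathbb S,\le^*)$ is $(\kappa_y)^{++}$-closed and has the Prikry property, so that the cardinal-structure analyses of Sections~\ref{cardinalstructureradin} and~\ref{pradinforcing} and the argument of Proposition~\ref{cardpreserve1} transfer verbatim in their reflected forms. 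Once these routine-but-delicate verifications are in hand, no further ideas are needed.
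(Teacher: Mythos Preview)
Your proposal is correct and is precisely the intended argument: the paper records this proposition with a bare \qed, leaving it to the reader to factor $\mathbb{Q}_{y,x}$ into its $\mathbb{P}$-like low block, its $\mathbb{R}$-like middle block, and its $\mathbb{P}$-like high block, and then to invoke the reflected forms of Propositions~\ref{cardcollapse2}, \ref{cardpreserve1}, \ref{pradincc} and~\ref{pradinclosed} exactly as you do. The only cosmetic refinement is that the middle block is isomorphic not to a full cone of $\mathbb{R}_{\vec{\mathbf{U}}\restriction(\kappa_x+1),\vec{\mathbf{t}}\restriction(\kappa_x+1)}$ but to the \emph{upper} part of such a cone above (the collapse of) $y$; this does not affect either the density argument collapsing $(\kappa_x)^+$ or the closure and Prikry-property facts you need.
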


Let us consider another forcing notion:
\begin{definition}
Let $x \in \mathcal{P}_\kappa(\kappa^+)$.
Define $\mathbb{Q}^x$ as the collection of all
$$q=\langle c_{-1},v_0,c_0, \ldots, v_{m-1},c_{m-1},w_m,c_m, \ldots, w_{n-1},c_{n-1},w_n \rangle,$$
where
\begin{enumerate}

\item $w_m=\langle x,I_m \rangle$;
\item $\langle c_{-1},v_0,c_0, \ldots, v_{m-1},c_{m-1}, \langle \kappa_x,(I_m)_x \rangle \rangle \in \mathbb{P}_{\vec{\mathbf{U}} \restriction \kappa_x+1,\vec{\mathbf{t}} \restriction \kappa_x+1}$;
\item for some tuple $\langle\vec{d},\vec{u}\rangle$ below $x$,
$\langle \vec{d},\vec{u}, c_{m-1}, w_{m}, \ldots, c_{n-1},w_n \rangle \in \mathbb{R}$.
\end{enumerate}
\end{definition}
The order of $\mathbb{Q}^x$ is inherited from the order of $\mathbb R$ and $\mathbb P$ as defined in Section~\ref{radinforcing} and in Section~\ref{pradinforcing}.
One can intuitively think of this forcing notion as forcing with $\mathbb P$ below some cardinal and with $\mathbb R$ above it.

Suppose that $p \in \mathbb{R}$ with $x \in \stem(p)$.
Then there is a projection from $p$ to a particular condition $\Pi_0^x(p)$ in $\mathbb{Q}^x$, namely,
$$\Pi_0^x(p):= \langle c_{-1},\Proj(w_0), c_0 ,\ldots, \Proj(w_{m-1}),c_{m-1},w_m,c_m, \ldots, w_{n-1},c_{n-1}, w_n\rangle,$$
where $\Proj$ is defined as in Proposition~\ref{projectionrp}.

For a condition
$$q=\langle c_{-1},v_0,c_0, \ldots, v_{m-1},c_{m-1},w_m,c_m, \ldots, w_{n-1},c_{n-1},w_n \rangle$$
in $\mathbb{Q}^x$, define $\Pi_1^x(q)$ as
\begin{align*}
\langle c_{-1},v_0,c_0, \ldots, v_{m-1},c_{m-1},\Proj(w_m),c_m, \ldots, \Proj(w_{n-1}),c_{n-1},\langle \kappa, I_n\rangle\rangle,
\end{align*}
and note that it is a condition in $\mathbb{P}$.

\begin{proposition}\label{projx}
Let $\mathbb{R}^x=\{p \in \mathbb{R} \mid x \in \stem(p)\}$ and $\Proj: \mathbb{R} \to \mathbb{P}$ be the projection as in Proposition~\ref{projectionrp}.
Then the maps $\Pi_0^x$ from $\mathbb{R}_x$ to $\mathbb{Q}^x$ and $\Pi_1^x$ from $\mathbb{Q}^x$ to $\mathbb{P}$ are projections,
and $\Pi_1^x \circ \Pi_0^x=\Proj \restriction \mathbb{R}^x$. \qed
\end{proposition}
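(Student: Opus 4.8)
The plan is to check, for each of $\Pi_0^x$ and $\Pi_1^x$, the three defining properties of a projection --- monotonicity, having dense image, and the lifting property --- and then to obtain the identity $\Pi_1^x\circ\Pi_0^x=\Proj\restriction\mathbb R^x$ by a coordinatewise inspection. All of the arguments run parallel to the proof of Proposition~\ref{projectionrp} and to the (omitted) verification for $\mathbb{Q}_{y,x}$ in Proposition~\ref{projxy}.

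First I would observe that monotonicity of both maps is immediate from the definitions of the orders. The map $\Pi_0^x$ leaves the coordinates from position $m$ (where $x$ sits in the stem) onwards exactly as they are, and applies the single-pair translation of Definition~\ref{translation} to each earlier working part; since that translation commutes with the order --- as noted in the proof of Proposition~\ref{projectionrp} --- and the remaining coordinates are compared in $\mathbb R$ on both sides, $p'\le p$ in $\mathbb R^x$ yields $\Pi_0^x(p')\le\Pi_0^x(p)$ in $\mathbb Q^x$. The same reasoning, now applied to the coordinates at position $m$ and above, handles $\Pi_1^x$. Next, I would note that both maps are in fact onto (so their images are trivially dense): given $q\in\mathbb Q^x$, one lifts each below-$x$ coordinate $\langle\kappa_i,J_i\rangle$ to a working part $\langle x_i, J_i\circ\pi_{x_i}\rangle$, with $x_i\in\mathcal P_\kappa(\kappa^+)$ of the right order type chosen $\ssim$-increasingly below $x$ exactly as the vector $\vec z$ and the $w'_i$ are chosen in Proposition~\ref{projectionrp}, and leaves the rest of $q$ untouched; the result lies in $\mathbb R^x$ and is mapped onto $q$ by $\Pi_0^x$. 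Similarly $\Pi_1^x$ is onto $\mathbb P$.

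The point that needs genuine care is the lifting property of $\Pi_0^x$. Given $p\in\mathbb R^x$ and $q\le\Pi_0^x(p)$ in $\mathbb Q^x$, I would write $q\le^*\Pi_0^x(p)+\vec s$ for a finite tuple $\vec s$ of new points, and then lift: those points of $\vec s$ that lie $\ssim$-below $x$ appear in $q$ in $\mathbb P$-form (indexed by cardinals below $\kappa_x$) and get lifted into $\mathcal P_\kappa(\kappa^+)$ by taking $\pi^{-1}$-preimages precisely as in Proposition~\ref{projectionrp}, whereas the points of $\vec s$ lying $\ssim$-above $x$ already appear in $\mathbb R$-form and need no lifting. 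Reassembling $\stem(p)$ together with these lifted/unlifted points in $\ssim$-increasing order, and copying over from $q$ the collapse conditions and the (shrunk) implicit gurus --- while checking that the $\col$-levels, which are pinned by the relevant $\min\{\kappa_y\mid y\in\dom(I)\}$'s, are respected --- produces $p'\le p$ in $\mathbb R^x$ with $\Pi_0^x(p')=q$. The lifting property of $\Pi_1^x$ is the same argument, but easier, since below $x$ nothing needs lifting and the at-or-above-$x$ part is lifted exactly as in Proposition~\ref{projectionrp}.

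Finally, I would verify $\Pi_1^x\circ\Pi_0^x=\Proj\restriction\mathbb R^x$ coordinate by coordinate: a working part $w_i$ with $\kappa_{x_i}<\kappa_x$ is translated by $\Pi_0^x$ and then fixed by $\Pi_1^x$; a working part $w_i$ with $x\ssim x_i$ or $x_i=x$ is fixed by $\Pi_0^x$ and then translated by $\Pi_1^x$; the collapse coordinates and the substitution $\langle\kappa^+,I\rangle\mapsto\langle\kappa,I\rangle$ at the top are handled in the same way by the composite and by $\Proj$. Hence the net effect on every coordinate agrees with $\Proj$. I expect the only real obstacle to be the indexing bookkeeping in the lifting step --- matching the witnessing index set for $q\le^*\Pi_0^x(p)+\vec s$ with the one for $p'\le p$ --- but this is no harder than the corresponding bookkeeping already carried out for $\mathbb R\to\mathbb P$ in Proposition~\ref{projectionrp}.
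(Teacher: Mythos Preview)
Your proposal is correct and follows precisely the route the paper intends: the proposition is marked with a bare \qed{} in the paper, i.e.\ its verification is left to the reader as a routine variant of Proposition~\ref{projectionrp} (and of the analogous Proposition~\ref{projxy}), and your sketch carries out exactly that verification.

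One small imprecision worth noting: your claim that $\Pi_1^x$ is literally onto $\mathbb P$ is not quite right, since every condition in $\mathbb Q^x$ has $x$ in its stem and hence $\Pi_1^x(q)$ always has $\kappa_x$ in its $\mathbb P$-stem. What you actually need---and what your lifting argument correctly establishes---is the projection property: given $q\in\mathbb Q^x$ and $p\le\Pi_1^x(q)$ in $\mathbb P$, such a $p$ automatically has $\kappa_x$ in its stem (inherited from $\Pi_1^x(q)$), so your lift goes through. This is harmless for the intended application in Section~\ref{finalmodel}, where one always works below conditions that already contain the relevant $x$ (resp.\ $\kappa_x$) in the stem.
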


The following proposition follows from the inherited strong Prikry property, factorization, and the $(\kappa_x)^+$-cc of $\mathbb{Q}^x$.

\begin{proposition}
In $V^{\mathbb{Q}^x}$, $(\kappa_x)^+$ is preserved.\qed
\end{proposition}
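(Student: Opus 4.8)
The plan is to mimic the proof of Proposition~\ref{cardpreserve1}, factoring $\mathbb{Q}^x$ at the working part $x$. Suppose towards a contradiction that $((\kappa_x)^+)^V$ is collapsed in $V^{\mathbb{Q}^x}$, and fix a condition $p$, a cardinal $\mu\le\kappa_x$, and a name $\dot f$ such that $p$ forces $\dot f$ to be a surjection from $\mu$ onto $((\kappa_x)^+)^V$. Every condition of $\mathbb{Q}^x$ carries $x$ as a working part, so we may factor $\mathbb{Q}^x/p$ at $x$ as in Definition~\ref{factorizationdefn}, obtaining a genuine product $\mathbb{Q}^x/p\cong\mathbb{Q}^x_l\times\mathbb{Q}^x_u$, and write $p=(p_l,p_u)$. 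Reading off the definition of $\mathbb{Q}^x$, the lower factor $\mathbb{Q}^x_l$ is isomorphic to a cone of $\mathbb{P}_{\vec{\mathbf{U}}\restriction\kappa_x+1,\vec{\mathbf{t}}\restriction\kappa_x+1}$ (via the translation $\langle x,I\rangle\mapsto\langle\kappa_x,I_x\rangle$ on its last coordinate, clause~(2) of the definition of $\mathbb{Q}^x$), whereas the upper factor $\mathbb{Q}^x_u$ is, as a forcing notion, the Radin tail $\mathbb{R}_u^{p,x}$. I shall use three reflected analogs of earlier results: $\mathbb{Q}^x_l$ has the $(\kappa_x)^+$-chain condition (reflecting Proposition~\ref{pradincc}), hence preserves the cardinal $(\kappa_x)^+$; the pair $(\mathbb{Q}^x_u,\leq^*)$ is $(\kappa_x)^{++}$-closed (reflecting Proposition~\ref{closure}); and $\mathbb{Q}^x_u$ has the Prikry property (reflecting Corollary~\ref{prikry}), which moreover persists in $V[\mathbb{Q}^x_l]$ as $|\mathbb{Q}^x_l|<\kappa$.

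Granting these, I would run a fusion along $\leq^*$ in $\mathbb{Q}^x_u$ of length $\mu$. Build a $\leq^*$-decreasing sequence $\langle p_u^\beta\mid\beta\le\mu\rangle$ with $p_u^0=p_u$, taking $\leq^*$-lower bounds at limit stages — permitted by the $(\kappa_x)^{++}$-closure since $\mu\le\kappa_x<(\kappa_x)^{++}$ — so that for each $\beta<\mu$ there is a maximal antichain $A_\beta$ below $p_l$ in $\mathbb{Q}^x_l$ with $(q,p_u^{\beta+1})$ deciding $\dot f(\beta)$ for every $q\in A_\beta$. For the successor step: by the (persisting) Prikry property of $\mathbb{Q}^x_u$, the set $D_\beta$ of $q\le p_l$ that admit some $r\leq^*p_u^\beta$ with $(q,r)$ deciding $\dot f(\beta)$ is dense below $p_l$; choose a maximal antichain $A_\beta\subseteq D_\beta$ below $p_l$, which by the $(\kappa_x)^+$-chain condition has size at most $\kappa_x$, fix witnesses $r_q\leq^*p_u^\beta$ for $q\in A_\beta$, and let $p_u^{\beta+1}$ be a $\leq^*$-lower bound of $\{p_u^\beta\}\cup\{r_q\mid q\in A_\beta\}$, again available by closure. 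With $p_u^*:=p_u^\mu$, the condition $(p_l,p_u^*)$ forces that each $\dot f(\beta)$ is read off from which member of $A_\beta$ meets the $\mathbb{Q}^x_l$-generic, hence that $\dot f$ lies in the generic extension by $\mathbb{Q}^x_l$. Since $\mathbb{Q}^x_l$ has the $(\kappa_x)^+$-chain condition it cannot collapse $(\kappa_x)^+$, contradicting that $\dot f$ is forced to be a surjection from $\mu\le\kappa_x$ onto $((\kappa_x)^+)^V$.

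The main obstacle — what the word ``factorization'' in the sentence preceding the proposition quietly subsumes — is the bookkeeping needed to confirm that the scheme of Definition~\ref{factorizationdefn}, applied through the recipe defining $\mathbb{Q}^x$, really does split $\mathbb{Q}^x/p$ as a product with the advertised lower factor (a cone of $\mathbb{P}_{\vec{\mathbf{U}}\restriction\kappa_x+1,\vec{\mathbf{t}}\restriction\kappa_x+1}$) and upper factor (the Radin tail $\mathbb{R}_u^{p,x}$), so that Proposition~\ref{closure}, Proposition~\ref{pradincc} and Corollary~\ref{prikry} become available in their reflected forms. Once that identification is in place, the persistence of the Prikry property across the small forcing $\mathbb{Q}^x_l$ is the usual Lévy--Solovay-type remark, and the fusion above is the verbatim analog of the argument in Proposition~\ref{cardpreserve1}.
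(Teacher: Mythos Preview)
Your proposal is correct and is precisely the argument the paper intends: the statement is accompanied only by the remark that it ``follows from the inherited strong Prikry property, factorization, and the chain condition,'' and your write-up unpacks exactly those three ingredients via the template of Proposition~\ref{cardpreserve1}, with the lower factor identified as a cone of $\mathbb{P}_{\vec{\mathbf U}\restriction\kappa_x+1,\vec{\mathbf t}\restriction\kappa_x+1}$ (so $(\kappa_x)^+$-cc by the reflected Proposition~\ref{pradincc}) and the upper factor as $\mathbb{R}_u^{p,x}$ (so $\leq^*$-$(\kappa_x)^{++}$-closed with the Prikry property). One small remark: you do not actually need the Prikry property to \emph{persist} to $V[\mathbb{Q}^x_l]$; as in the paper's proof of Proposition~\ref{cardpreserve1}, the whole fusion is carried out in $V$, using the Prikry property of the product $\mathbb{Q}^x/p$ itself (a cone of a Prikry-type forcing) to obtain the direct extensions on the upper coordinate.
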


\section{Weak homogeneity}\label{weakhomogeneity}
We continue with our setup from Section~\ref{intermediateforcing}.

Let us define:
$$\Aut_\kappa(\kappa^+)=\{\Gamma\in{}^{\kappa^+}\kappa^+ \mid \Gamma\text{ is a bijection } \&\ \exists \gamma \in (\kappa,\kappa^+) \forall \xi \in [0,\kappa] \cup (\gamma,\kappa^+) (\Gamma(\xi)=\xi)\}.$$
For $x \in \mathcal{P}_\kappa(\kappa^+)$, define:
\begin{align*}
\Aut_{\kappa_x}(x) =\{\Gamma\in{}^xx\mid &\ \Gamma\text{ is a bijection}\ \&\ \\
&\ \exists \gamma \in (\kappa_x,\sup(x)) \forall \xi \in [0,\kappa_x) \cup \{\kappa\} \cup (\gamma, \sup(x))\,(\Gamma(\xi)=\xi)\}.
\end{align*}

We lift those automorphisms to automorphisms of $\mathbb{R}, \mathbb{Q}_{y,x},$ and $\mathbb{Q}^x$ as follows.
Let $\Gamma \in \Aut_\kappa(\kappa^+)$.
\begin{enumerate}
\item For $x \in \mathcal{P}_\kappa(\kappa^+)$, let $\Gamma(x):=\Gamma[x]$, so that $\kappa_x=\kappa_{\Gamma(x)}$.
\item For $A \in \bigcap \vec{U}(\kappa)$, let $\Gamma(A):=\{\Gamma[a]\mid a\in A\}$. Note that as in \cite[Lemma~3.4]{Mag77}, $\Gamma(A) \in \bigcap \vec{U}(\kappa)$.
\item $\{y \in \mathcal{P}_{\kappa}(\kappa^+) \mid \Gamma(y)=y\} \in \bigcap \vec{U}(\kappa)$.
\item For $I$ a function with $\dom(I)=A \in \vec{U}(\kappa)$, let $\Gamma(I)$ be the function with domain $\Gamma(A)$ and $\Gamma(I)(\Gamma(y))=I(y)$ for all $y$.
\item For $c\in \col(\eta,\kappa)$, let $\Gamma(c):=c$.
\end{enumerate}

\begin{lemma}\label{preservecollapse}
For every $I \in \mathcal{G} (\kappa)$, $\Gamma(I) \in \mathcal{G} (\kappa)$.
\end{lemma}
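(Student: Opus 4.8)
The plan is to unfold the definition of $\mathcal G(\kappa)$ and verify that $\Gamma(I)$ meets both requirements: that its domain lies in $\bigcap\vec U(\kappa)$, and that it is contained in some $t_{\kappa,i}$. The first point is handled by item~(2) of the automorphism lifting: since $\dom(I)\in\bigcap\vec U(\kappa)$, the same $\cite[Lemma~3.4]{Mag77}$-style argument shows $\Gamma(\dom(I))=\dom(\Gamma(I))\in\bigcap\vec U(\kappa)$ — indeed one checks for each $i<o^{\vec U}(\kappa)$ that $j_{\kappa,i}``\kappa^+\in j_{\kappa,i}(\Gamma(A))$ using that $j_{\kappa,i}(\Gamma)$ fixes $j_{\kappa,i}``\kappa^+$ pointwise (because $\Gamma$ moves only ordinals in an interval $(\kappa,\gamma)\subseteq\kappa^+$, and the relevant reflection makes $j_{\kappa,i}(\Gamma)\restriction j_{\kappa,i}``\kappa^+$ match $\Gamma$). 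So the whole content is the second point.

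For the second point, fix $i<\kappa^{++}$ with $I\subseteq t_{\kappa,i}$; I want to produce $i'$ with $\Gamma(I)\subseteq t_{\kappa,i'}$. The natural move is to define $D:\mathcal A_\kappa(\kappa^+)\to V$ by $D(y):=\{t_{\kappa,i}(\Gamma^{-1}(y))\}^{\downarrow}$, the set of conditions in $\mathbb C_{\kappa,y}$ extending $t_{\kappa,i}(\Gamma^{-1}[y])$ — this is a dense subset of $\mathbb C_{\kappa,y}$ for each $y\in\mathcal A_\kappa(\kappa^+)$, since $\Gamma^{-1}[y]$ is again in $\mathcal A_\kappa(\kappa^+)$ (as $\Gamma$ is a bijection of $\kappa^+$ fixing $\kappa$ and everything below, so $\kappa_{\Gamma^{-1}[y]}=\kappa_y$ and $\otp(\Gamma^{-1}[y])=\otp(y)$, and $\mathbb C_{\kappa,y}=\col((\kappa_y)^{++},{<}\kappa)=\mathbb C_{\kappa,\Gamma^{-1}[y]}$). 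By clause~(3) of Definition~\ref{def_guru}, there is $i'<\kappa^{++}$ with $t_{\kappa,i'}(y)\in D(y)$, i.e. $t_{\kappa,i'}(y)\leq t_{\kappa,i}(\Gamma^{-1}[y])$, for all $y\in\mathcal A_\kappa(\kappa^+)$. This does not immediately give $\Gamma(I)\subseteq t_{\kappa,i'}$ on the nose, but it gives it modulo passing to a smaller measure-one domain, which is all that membership in $\mathcal G(\kappa)$ requires: for $y\in\Gamma(A)=\dom(\Gamma(I))$ we have $\Gamma(I)(y)=I(\Gamma^{-1}[y])=t_{\kappa,i}(\Gamma^{-1}[y])\geq t_{\kappa,i'}(y)$, so in fact $\Gamma(I)$ pointwise refines $t_{\kappa,i'}\restriction\Gamma(A)$. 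One then notes $\mathcal G(\kappa)$ is closed under shrinking the domain and under pointwise extending, or — cleaner — one observes that by definition an element of $\mathcal G(\kappa)$ only needs $I\subseteq t_{\kappa,i}$ as \emph{functions}, i.e. $I(y)=t_{\kappa,i}(y)$ for $y\in\dom(I)$; if the paper intends the weaker "$I(y)$ extends $t_{\kappa,i}(y)$" reading then we are done directly, and otherwise one first reverses the roles by finding $i''$ with $t_{\kappa,i''}(y)\le t_{\kappa,i'}(\Gamma[y])$ for all $y$ and argues symmetrically. I would state the argument so that it matches whichever reading Definition~\ref{def_guru} and the "Implicit gurus" definition fix.

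The main obstacle I anticipate is the bookkeeping around "$I\subseteq t_{\kappa,i}$": because a guru is only forced to be \emph{decreasing} modulo the club filter (clause~(2)), applying clause~(3) to the shifted dense sets yields a $t_{\kappa,i'}$ that dominates the shifted $t_{\kappa,i}$ only on $\mathcal A_\kappa(\kappa^+)$, and matching this against the precise definition of the implicit-guru collection $\mathcal G(\kappa)$ requires being careful that membership is preserved under the mild modifications above. The genuinely substantive step — that the pushed-forward domain is still measure-one — is already packaged by the cited reflection argument, so the remaining work is essentially a verification. The secondary obstacle is making sure $\Gamma$ respects the structural constraints: $\Gamma[y]\in\mathcal A_\kappa(\kappa^+)$ whenever $y\in\mathcal A_\kappa(\kappa^+)$, and $\Gamma$ does not disturb the $\col$-parameters (true because $\Gamma$ fixes $[0,\kappa]$ pointwise, so $\kappa_{\Gamma[y]}=\kappa_y$ and the collapse posets are literally identical), which is routine but should be stated.
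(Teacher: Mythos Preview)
Your approach has a genuine gap. The set $D(y)=\{c\in\mathbb C_{\kappa,y}\mid c\le t_{\kappa,i}(\Gamma^{-1}[y])\}$ is \emph{not} dense in $\mathbb C_{\kappa,y}=\col((\kappa_y)^{++},{<}\kappa)$: it is the cone below a single condition, and in a L\'evy collapse any nontrivial condition has incompatible neighbours. Hence clause~(3) of Definition~\ref{def_guru} does not apply, and you never obtain the index $i'$ you want. (Your fallback ``reverse the roles'' suffers from the same defect.) A secondary inaccuracy: you assert that $j_{\kappa,i}(\Gamma)$ fixes $j_{\kappa,i}``\kappa^+$ \emph{pointwise}, but $j_{\kappa,i}(\Gamma)(j_{\kappa,i}(\xi))=j_{\kappa,i}(\Gamma(\xi))$, which equals $j_{\kappa,i}(\xi)$ only when $\Gamma(\xi)=\xi$. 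What is true---and sufficient for $\Gamma(A)\in\bigcap\vec U(\kappa)$---is that $j_{\kappa,i}(\Gamma)$ maps $j_{\kappa,i}``\kappa^+$ onto itself as a set.

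The paper's argument avoids all of this by strengthening that last observation one step further: since $\Gamma$ is a bijection of $\kappa^+$, the set $C:=\{y\in\mathcal P_\kappa(\kappa^+)\mid \Gamma[y]=y\}$ is itself in $\bigcap\vec U(\kappa)$ (because $j_{\kappa,i}(\Gamma)[j_{\kappa,i}``\kappa^+]=j_{\kappa,i}``\kappa^+$). For $y\in\dom(I)\cap C$ one has $\Gamma(I)(y)=\Gamma(I)(\Gamma(y))=I(y)=t_{\kappa,i}(y)$, so $\Gamma(I)$ agrees with $t_{\kappa,i}$---the \emph{same} index $i$---on a set in $\bigcap\vec U(\kappa)$. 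No new guru index needs to be manufactured at all, and clause~(3) is never invoked. The residual issue you correctly flag (that agreement only holds on a measure-one subset of $\dom(\Gamma(I))$, not on the full domain) is present in both arguments and is harmless for the applications.
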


\begin{proof}
Let $A:=\dom(I)$ and note that $\Gamma(A) \in \bigcap \vec{U}(\kappa)$.
Fix $B\subseteq A$ with $B \in \bigcap \vec{U}(\kappa)$, and fix $\gamma$ such that $I=t_{\kappa,\gamma} \restriction B$.
Since $\{y \in \mathcal{P}_\kappa(\kappa^+) \mid \Gamma(y)=y\} \in \bigcap \vec{U}(\kappa)$,
and for each $y \in B$ with $\Gamma(y)=y$, $\Gamma(I)(y)=\Gamma(I)(\Gamma(y))=I(y)$,
it is the case that $\Gamma(I)$ agrees with $t_{\kappa,\gamma}$ on a measure-one set.
Therefore $\Gamma(I) \in \mathcal{G}(\kappa)$.
\end{proof}

\begin{lemma}
For every $A \in \bigcap \mathbf{U}(x)$, $\Gamma(A) \in \bigcap \mathbf{U}{(\Gamma(x))}$.
\end{lemma}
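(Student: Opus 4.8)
The plan is to reduce the claim to elementarity of the supercompactness embeddings, exactly in the spirit of the proof of Lemma~\ref{preservecollapse}.

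First I would note that $\Gamma$ fixes every ordinal $\le\kappa$, so $\kappa\cap\Gamma(x)=\kappa\cap x$ and hence $\kappa_{\Gamma(x)}=\kappa_x$; write $\alpha:=\kappa_x=\kappa_{\Gamma(x)}$. Two things follow: the sequences $\vec{\mathbf U}_x$ and $\vec{\mathbf U}_{\Gamma(x)}$ have the same length $o^{\vec U}(\alpha)$, and for each $i<o^{\vec U}(\alpha)$ both $\mathbf U_{x,i}$ and $\mathbf U_{\Gamma(x),i}$ are defined through the \emph{same} embedding $j:=j_{\alpha,i}$. Since $\Gamma$ is injective, $\Gamma(A)=\{\Gamma[a]\mid a\in A\}$ is a subset of $\mathcal P_\alpha(\Gamma(x))$, so it is legitimate to ask whether $\Gamma(A)\in\mathbf U_{\Gamma(x),i}$, which by definition means $j``\Gamma(x)\in j(\Gamma(A))$.

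Next I would establish two identities. By elementarity applied to the operation $a\mapsto\Gamma[a]$, one gets $j(\Gamma(A))=\{j(\Gamma)[b]\mid b\in j(A)\}$. And since $j(\Gamma(\xi))=j(\Gamma)(j(\xi))$ for every $\xi<\kappa^+$ --- just apply $j$ to the relation $\langle\xi,\Gamma(\xi)\rangle\in\Gamma$ and use that $j(\Gamma)$ is a function --- the maps $j\circ\Gamma$ and $j(\Gamma)\circ j$ agree on $\kappa^+$; evaluating both on $x$ gives $j``\Gamma(x)=j(\Gamma)[\,j``x\,]$. Now the hypothesis $A\in\mathbf U_{x,i}$ says precisely that $b:=j``x$ lies in $j(A)$, so putting the pieces together, $j``\Gamma(x)=j(\Gamma)[\,j``x\,]\in j(\Gamma(A))$, i.e.\ $\Gamma(A)\in\mathbf U_{\Gamma(x),i}$. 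As $i<o^{\vec U}(\alpha)$ is arbitrary, $\Gamma(A)\in\bigcap\vec{\mathbf U}(\Gamma(x))$.

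I do not expect a genuine obstacle here; the only points that need care are the bookkeeping around the support of $\Gamma$ --- it must be checked that $\kappa_x$ (hence the embedding $j$) is unaffected by $\Gamma$ --- and the observation that, although $\Gamma$ need not be order-preserving, the argument uses nothing beyond its being a bijection of $\kappa^+$ that is the identity on $[0,\kappa]$.
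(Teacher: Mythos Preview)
Your proof is correct and follows essentially the same route as the paper's: both arguments reduce to the two elementarity identities $j``\Gamma(x)=j(\Gamma)[\,j``x\,]$ and $j(\Gamma(A))=\{j(\Gamma)[b]\mid b\in j(A)\}$, and then conclude from $j``x\in j(A)$. Your version is in fact a bit more careful in making explicit that $\kappa_{\Gamma(x)}=\kappa_x$ (so the same embedding $j_{\alpha,i}$ governs both $\mathbf U_{x,i}$ and $\mathbf U_{\Gamma(x),i}$), a point the paper uses tacitly.
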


\begin{proof}
Let $A \in \bigcap\mathbf{U}(x)$.
Then $$\{\pi_x^{-1}[a]\mid a\in A\} \in \bigcap \vec{U}(\kappa_x).$$
Fix $i<o^{\vec{U}}(\kappa_x)$, and let $j=j_{U_{\kappa_x,i}}$.
Then, $j``x \in j(A)$.
Since $j(\Gamma)(j``x)=j``(\Gamma(x))$ and $j(\Gamma)(j(A))=j(\Gamma(A))$, we have $j``(\Gamma(x)) \in j(\Gamma(A))$.
Hence, $\Gamma(A) \in \bigcap \mathbf{U}{(\Gamma(x))}$.
\end{proof}

\begin{lemma}\label{premutation preserveing implicitness}
For every $I \in \mathbf{G}(x)$, $\Gamma(I) \in \mathbf{G}({\Gamma(x)})$.
\end{lemma}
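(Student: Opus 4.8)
The plan is to reduce the statement to the observation already underlying Lemma~\ref{preservecollapse}, now applied one level below. Throughout, recall that $\mathbf G(x)$ is meaningful only for $x\in\mathcal A_\kappa(\kappa^+)$ (this is where $\pi_x$ induces the isomorphism $\mathcal P_{\kappa_x}((\kappa_x)^+)\cong\mathcal P_{\kappa_x}(x)$ entering its definition), so I would assume $x$ is such and write $\alpha:=\kappa_x$. Since $\Gamma$ is the identity on $[0,\kappa]$, we have $\Gamma(x)\cap\kappa=x\cap\kappa$, hence $\kappa_{\Gamma(x)}=\alpha$ and $\Gamma(x)\in\mathcal A_\kappa(\kappa^+)$ with $\otp(\Gamma(x))=\otp(x)=\alpha^+$. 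Fix $I\in\mathbf G(x)$, put $A:=\dom(I)$, and fix an index $i<\alpha^{++}$ witnessing $I\in\mathbf G(x)$, i.e. $I(y)=t_{\alpha,i}(\pi_x^{-1}[y])$ for every $y\in A$.

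First, by the preceding lemma, $\dom(\Gamma(I))=\Gamma(A)\in\bigcap\vec{\mathbf U}(\Gamma(x))$, so it remains to produce an index $i'<\alpha^{++}$ and a set in $\bigcap\vec{\mathbf U}(\Gamma(x))$ on which $\Gamma(I)$ agrees with $t_{\Gamma(x),i'}$; in fact I expect $i'=i$ to work. The point is that, once transported through the collapsing maps, $\Gamma$ becomes a bijection of $\alpha^+$ that the guru sequence $\vec t_\alpha$ cannot detect. Concretely, set $\Phi:=\pi_{\Gamma(x)}^{-1}\circ(\Gamma\restriction x)\circ\pi_x$, a bijection of $\alpha^+$. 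Unwinding this definition gives the identity $\pi_x^{-1}\circ(\Gamma^{-1}\restriction\Gamma(x))=\Phi^{-1}\circ\pi_{\Gamma(x)}^{-1}$, so for every $z\in\Gamma(A)$, writing $z=\Gamma[y]$ with $y\in A$, one obtains
$$\Gamma(I)(z)=I(y)=t_{\alpha,i}\bigl(\pi_x^{-1}[\Gamma^{-1}[z]]\bigr)=t_{\alpha,i}\bigl(\Phi^{-1}[\pi_{\Gamma(x)}^{-1}[z]]\bigr).$$

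The key claim I would then prove is that $t_{\alpha,i}\circ\Phi^{-1}$ agrees with $t_{\alpha,i}$ on a set in $\bigcap\vec U(\alpha)$. This is exactly the argument of Lemma~\ref{preservecollapse} at level $\alpha$: for any bijection $\Phi$ of $\alpha^+$ and any $k<o^{\vec U}(\alpha)$, elementarity yields $j_{\alpha,k}(\Phi)[j_{\alpha,k}``\alpha^+]=j_{\alpha,k}``(\Phi``\alpha^+)=j_{\alpha,k}``\alpha^+$, so that $B':=\{w\in\mathcal P_\alpha(\alpha^+)\mid\Phi[w]=w\}\in\bigcap\vec U(\alpha)$, and for $w\in B'$ we trivially have $\Phi^{-1}[w]=w$, hence $t_{\alpha,i}(\Phi^{-1}[w])=t_{\alpha,i}(w)$. (Equivalently: $t_{\alpha,i}\in\mathcal G(\alpha)$ trivially, and the proof of Lemma~\ref{preservecollapse} applies verbatim to $\mathcal G(\alpha)$ and to $\Phi$, with the index unchanged.)

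To conclude, let $B^*:=\Gamma(A)\cap\{z\mid\pi_{\Gamma(x)}^{-1}[z]\in B'\}$. The set $\{z\mid\pi_{\Gamma(x)}^{-1}[z]\in B'\}$ is precisely the $\pi_{\Gamma(x)}$-translation of $B'\in\bigcap\vec U(\alpha)$, hence lies in $\bigcap\vec{\mathbf U}(\Gamma(x))$, and therefore so does $B^*$. For every $z\in B^*$, combining the displayed computation with the claim gives
$$\Gamma(I)(z)=t_{\alpha,i}\bigl(\Phi^{-1}[\pi_{\Gamma(x)}^{-1}[z]]\bigr)=t_{\alpha,i}\bigl(\pi_{\Gamma(x)}^{-1}[z]\bigr)=t_{\Gamma(x),i}(z),$$
so $\Gamma(I)\restriction B^*\subseteq t_{\Gamma(x),i}$, witnessing $\Gamma(I)\in\mathbf G(\Gamma(x))$. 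The only genuinely delicate step will be the bookkeeping that identifies $\Phi$ and verifies $\pi_x^{-1}\circ\Gamma^{-1}=\Phi^{-1}\circ\pi_{\Gamma(x)}^{-1}$ on $\Gamma(x)$ (together with checking that $\Phi$ is indeed a bijection of $\alpha^+$); the measure-theoretic content is the same as in Lemma~\ref{preservecollapse}, merely relativized to level $\alpha$ and to the guru function $t_{\alpha,i}$ itself.
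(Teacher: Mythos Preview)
Your proof is correct and follows essentially the same approach as the paper: both arguments hinge on the fact that the composite $\Phi:=\pi_{\Gamma(x)}^{-1}\circ\Gamma\circ\pi_x$ fixes a $\bigcap\vec U(\alpha)$-measure-one set of $w\in\mathcal P_\alpha(\alpha^+)$, and on this set $\Gamma(I)$ agrees with $t_{\Gamma(x),i}$ for the same index $i$. The paper simply writes out the fixed-point set $\{y\mid \pi_{\Gamma(x)}^{-1}[\Gamma[\pi_x[y]]]=y\}$ directly rather than naming $\Phi$, and is somewhat terser about the bookkeeping you spell out.
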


\begin{proof}
Let $A:=\dom(I)$, hence $I=t_{x,i} \restriction B$, for some $B \subseteq A$ such that $B \in \bigcap \vec{\mathbf{U}}(x)$.
Since $\{ y\in \mathcal{P}_{\kappa_x}((\kappa_x)^+)\mid \pi^{-1}_{\Gamma(x)}\left[\Gamma[\pi_x[y]]\right]=y\} \in \bigcap \vec{U}(\kappa_x)$
and $\Gamma(B)\s \dom (\Gamma(I))$ we get that $\tilde{B}:=\{ y\in \pi_x^{-1}[B]\mid \pi^{-1}_{_{\Gamma(x)}}\left[\Gamma[\pi_x[y]]\right]=y\} \in \bigcap \vec{U}(\kappa_x)$.
Let $D:=\{\pi_{\Gamma(x)} [z]\mid z\in \tilde{B}\}$ which is an element of $\bigcap \mathbf{\vec{U}}(\Gamma(x))$.
Hence $B\s \Gamma(\tilde B)\s \Gamma(A)$ and we get that
$\Gamma(I)\restriction D= t_{\Gamma(x),i}\restriction \Gamma[\tilde B]\in \mathbf{G}(\Gamma(x))$.
\end{proof}

Fix some $\Gamma'\in \Aut_{\kappa_x}(x)$.
Note that $\Gamma:=\Gamma'\cup\id_{\kappa^+ \setminus x}$ belongs to $\Aut_\kappa(\kappa^+)$.
Likewise, $\Gamma'$ can be lifted to an automorphism of $\Aut(\mathbb{Q}_{y,x})$ or of $\Aut(\mathbb{Q}^x)$ in a similar manner to how $\Gamma$ is lifted.

Recall that if $G$ is $\mathbb{R}$-generic, then we can derive the projected Prikry sequence $K_0$ and the collection of collapses $\mathcal{C}$ from the generic $G$.
Let $\dot{K}_0$ and $\dot{\mathcal{C}}$ be their canonical names.\footnote{We can take canonical names for $K_0$ and $\mathcal{C}$ similarly to taking a canonical name to a Prikry sequence for vanilla Prikry Forcing.}
Then each $\Gamma \in \Aut_\kappa(\kappa^+)$ can be lifted to an automorphism of $\mathbb{R}$, and since $\Gamma$ fixes all ordinals below $\kappa$, $\Gamma(\dot{K}_0)=\dot{K}_0$ and $\Gamma(\dot{\mathcal{C}})=\dot{\mathcal{C}}$.

\begin{proposition}\label{hom}
Let $p,q \in \mathbb{R}$. Suppose that $\ell(p)=\ell(q)=n$, for every $i<n$, $\kappa_{x_i^p}=\kappa_{x_i^q}$, and for every $j \in n \cup \{-1\}$, $c_j^p \parallel c_j^q$.
Then there is $\Gamma \in \Aut_\kappa(\kappa^+)$ such that $\Gamma(p) \parallel q$.
\end{proposition}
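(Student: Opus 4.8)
The plan is to manufacture an automorphism $\Gamma\in\Aut_\kappa(\kappa^+)$ with $\Gamma[x_i^p]=x_i^q$ for every $i<n$; granting this, $\Gamma(p)$ and $q$ have the same stem and implicit gurus living in the same directed families $\mathbf G(x_i^q)$ and $\mathcal G(\kappa)$, and a common $\leq^*$-lower bound is then read off essentially as in the proof of Proposition~\ref{radincc}. First I would record the rigidity forced on the working parts. Since $x_i^p,x_i^q\in\mathcal R_\kappa(\kappa^+)$, the sets $x_i^p\cap\kappa$ and $x_i^q\cap\kappa$ are the von Neumann ordinals $\kappa_{x_i^p}$ and $\kappa_{x_i^q}$, which coincide by hypothesis; moreover $\otp(x_i^p)=(\kappa_{x_i^p})^+=\otp(x_i^q)$, so $|x_i^p|=|x_i^q|=(\kappa_{x_i^p})^+$. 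One also arranges $\kappa\in x_i^p\iff\kappa\in x_i^q$ (immediate, as the measures in play are fine), so that $x_i^p$ and $x_i^q$ agree on $[0,\kappa]$. Because the stems are $\ssim$-increasing, $x_0^p\subseteq\cdots\subseteq x_{n-1}^p$ and likewise for $q$.

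Next, fix $\gamma\in(\kappa,\kappa^+)$ strictly above $\sup(x_{n-1}^p)$ and $\sup(x_{n-1}^q)$, and partition $(\kappa,\gamma]$ into the $n+1$ cells $P_0:=x_0^p\cap(\kappa,\gamma]$, $P_i:=(x_i^p\setminus x_{i-1}^p)\cap(\kappa,\gamma]$ for $1\le i<n$, and $P_n:=(\kappa,\gamma]\setminus x_{n-1}^p$; define $Q_0,\dots,Q_n$ analogously from $q$. A routine cardinal calculation gives $|P_i|=|Q_i|$ for all $i\le n$: for $i<n$ both sides have cardinality $(\kappa_{x_i^p})^+$ (using $|x_{i-1}^p|=(\kappa_{x_{i-1}^p})^+<(\kappa_{x_i^p})^+$ when $i\ge1$, plus the order-type bookkeeping when $i=0$), and for $i=n$ both arise from $(\kappa,\gamma]$ by deleting a set of size $<|\gamma|$. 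Choosing bijections $\sigma_i\colon P_i\to Q_i$ and setting $\Gamma:=\id_{[0,\kappa]}\cup\bigcup_{i\le n}\sigma_i\cup\id_{(\gamma,\kappa^+)}$ produces a bijection of $\kappa^+$ lying in $\Aut_\kappa(\kappa^+)$ (with witness $\gamma$); since $x_i^p=(x_i^p\cap[0,\kappa])\cup\bigcup_{j\le i}P_j$ and similarly for $q$, we obtain $\Gamma[x_i^p]=x_i^q$ for each $i<n$.

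Finally I would assemble the common extension. Applying the lift of $\Gamma$ to $\mathbb R$, the condition $\Gamma(p)$ has stem $\langle x_0^q,\dots,x_{n-1}^q\rangle$, collapse part $\langle c_{-1}^p,\dots,c_{n-1}^p\rangle$ (automorphisms fix collapse conditions), implicit gurus $\Gamma(I_i^p)\in\mathbf G(x_i^q)$ by Lemma~\ref{premutation preserveing implicitness}, and top part with $\Gamma(I_n^p)\in\mathcal G(\kappa)$ by Lemma~\ref{preservecollapse}. Thus $\Gamma(p)$ and $q$ have identical stems; for each $i$, both $\Gamma(I_i^p)$ and $I_i^q$ belong to the directed family $\mathbf G(x_i^q)$ (resp.\ $\mathcal G(\kappa)$), so, exactly as in the proof of Proposition~\ref{radincc}, they admit a common $\leq^*$-lower bound $I_i^r$; intersecting domains and then picking, for each $j\in n\cup\{-1\}$, a common extension $c_j^r$ of $c_j^p$ and $c_j^q$ (possible since $c_j^p\parallel c_j^q$, and the ambient Lévy collapse only grows as the guru domains shrink) yields $r$ with $r\leq^*\Gamma(p)$ and $r\leq^* q$. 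Hence $\Gamma(p)\parallel q$.

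I expect the main obstacle to lie in this last step's bookkeeping: verifying that, after conjugating by $\Gamma$ and passing to the intersection of the two guru domains, $c_j^p$ and $c_j^q$ genuinely sit inside one and the same Lévy collapse and that their common extension remains a legal side condition of $r$, together with the (routine but fiddly) cardinality computations for the cells $P_i$ and $Q_i$.
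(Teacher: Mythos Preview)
Your proof is correct and follows essentially the same approach as the paper: construct $\Gamma\in\Aut_\kappa(\kappa^+)$ sending each $x_i^p$ to $x_i^q$, then use the guru machinery (Lemmas~\ref{preservecollapse} and~\ref{premutation preserveing implicitness}) together with the compatibility of the collapse parts to find a common $\leq^*$-extension, exactly as in Proposition~\ref{radincc}. The only cosmetic difference is that you build $\Gamma$ by partitioning $(\kappa,\gamma]$ into cells $P_i,Q_i$ of matching cardinality, whereas the paper builds it by successively extending $\id_\kappa$ to bijections $\Gamma_i\colon\kappa\cup x_i^p\to\kappa\cup x_i^q$; your justification that $\kappa\in x_i^p\iff\kappa\in x_i^q$ via fineness is a bit loose (fineness gives this only on a measure-one set, not for arbitrary working parts), but the paper glosses over the same point.
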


\begin{proof}
List $\stem(p)$ and $\stem(q)$ $\ssim$-increasingly as $\{x_0,\ldots, x_{n-1}\}$ and $\{y_0, \ldots, y_{n-1}\}$, respectively.
We build $\Gamma$ block by block.
We begin with $\Gamma_{-1}=\id_\kappa$.
Since $\otp(x_0)=\kappa_{x_0}^+=\kappa_{y_0}^+=\otp(y_0)$, we extend $\Gamma_{-1}$ to $\Gamma_0$ which is a bijection from $\kappa \cup x_0$ to $\kappa \cup y_0$.
Note that $\im(\Gamma_0 \restriction x_0) \subseteq y_1$, and $\kappa_{x_1}=\kappa_{y_1}$, so we extend $\Gamma_0$ to $\Gamma_1$ which is a bijection from $\kappa \cup x_1$ to $\kappa \cup y_1$.
Continue this process, we obtain partial functions $\Gamma_0 \subseteq \Gamma_1 \subseteq \dots \subseteq \Gamma_{n-1}$ such that for each $i$, $\Gamma_i$ is a bijection from $\kappa \cup x_i$ to $\kappa \cup y_i$. In particular,
$\Gamma_{n-1}$ is a bijection from $\kappa \cup x_{n-1}$ to $\kappa \cup y_{n-1}$.
Since $\kappa_{n-1}<\kappa<\kappa^+$, $\dom(\Gamma_{n-1}), \im(\Gamma_{n-1})$ is bounded in $\kappa^+$. Extend $\Gamma_{n-1}$ to $\Gamma \in \Aut_\kappa(\kappa^+)$.
Thus, $\Gamma(x_i)=y_i$ for all $i$.
We now show that $\Gamma(p) \parallel q$.
For each $i \in n\cup\{-1\}$, let $c_i^*:=c_i^p \cup c_i^q$.

For each $i<n$, by Lemma~\ref{premutation preserveing implicitness}, $\Gamma(I_i^p)\in \mathbf{G}(\Gamma(x_i))=\mathbf{G}(y_i)$ hence there is $B\in \bigcap \mathbf{U}(y_i)$ and some $\iota < \kappa_{y_i}^{++}$ such that
$\Gamma(I^p_i)=t_{y_i,\iota}\restriction B$.
By $I_i^q\in\mathbf{G}(y_i)$ there is some $\iota' < \kappa_{y_{i}}^{++}$ such that $\Gamma(I^p_i)=t_{y_{i},\iota'}\restriction (\dom I_i^q)$.
By the properties of the guru $\vec{t}_{y_{i}}$ there is a club $C$ such that for $\iota^*_i= \sup \{\iota, \iota'\} +1$, we have for $z \in C$, $t_{y_i,\iota^*_i}(z) \leq \Gamma(I_i^p)(z),I_i^q(z)$. Let $I_i^* = t_{y_i,\iota^*_i}\restriction (B\cap C\cap \dom (I_i^q))$
This implies that
$$\langle c_{-1}^*,\langle y_0,,I_0^* \rangle,c_0^* ,\ldots, \langle \kappa^{+}, I^* \rangle \rangle \leq \Gamma(p),q,$$ as required.
\end{proof}

Similar proofs show that
\begin{proposition}\label{homyx}
Suppose $p,q \in \mathbb{Q}_{y,x}$ with $\ell(p)=\ell(q)=n$. Assume
\begin{itemize}
\item $\stem(p)=\{z_0, \ldots, z_{k-1},y, \kappa_{k+1}, \ldots, \kappa_{m-1},x,z_{m+1},\ldots, z_{n-1}\}$,
\item $\stem(q)=\{z'_0,\ldots, z'_{k-1},y,\lambda_{k+1},\ldots, \lambda_{m-1},x,z'_{m+1},\ldots, z'_{n-1}\}$,
\end{itemize}
are such that for all $i<k$ or $i>m,$ $\kappa_{z_i}=\kappa_{z'_i}$, for all $i \in (k,m)$, $\kappa_i=\lambda_i$, and for all $i<n$, $c_i^p \parallel c_i^q$. Then, there is some $\Gamma \in \Aut_{\kappa_x}(\kappa_x^+)$ such that $\Gamma(p) \parallel q$. \qed
\end{proposition}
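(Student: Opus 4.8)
The plan is to repeat the argument of Proposition~\ref{hom} almost word for word; the only new ingredient is that the witnessing automorphism must be produced inside $\Aut_{\kappa_x}(x)$ (equivalently, inside the relativized $\Aut_{\kappa_x}(\kappa_x^+)$ of the inner Radin forcing $\mathbb{R}_{\vec U\restriction(\kappa_x+1),\vec{\mathbf t}\restriction(\kappa_x+1)}$) rather than inside $\Aut_\kappa(\kappa^+)$. First I would record what $p$ and $q$ already share. By the definition of $\mathbb{Q}_{y,x}$, both carry $y$ as their $k$-th working part and $x$ as their $m$-th; outside the interval $(k,m)$ the working parts are ordinals, which agree between $p$ and $q$ by hypothesis; between positions $k$ and $m$ the working parts of $p$ form an $\ssim$-chain $y\ssim z^p_{k+1}\ssim\cdots\ssim z^p_{m-1}\ssim x$ of subsets of $x$, and likewise for $q$, with $\kappa_{z^p_i}=\kappa_{z^q_i}$ for $k<i<m$; and the collapse parts of $p$ and $q$ are pairwise compatible.

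Next I would isolate the region on which a nontrivial permutation is needed. Put $S:=y\cup\bigcup_{k<i<m}(z^p_i\cup z^q_i)$; this is a subset of $x$ of size $<\kappa_x=\cf(\otp(x))$, so $\pi_x^{-1}[S]$ is bounded in $\otp(x)=(\kappa_x)^+$ and hence $S$ is bounded below $\sup(x)$. Fix $\gamma\in(\sup(y),\sup(x))$ above $\sup(S)$. I would then build $\Gamma$ block by block exactly as in Proposition~\ref{hom}: begin with the identity on $\kappa_x$, keep it the identity on $y$, then — using $\otp(z^p_{k+1})=(\kappa_{z^p_{k+1}})^+=\otp(z^q_{k+1})$ and the $\ssim$-nesting — extend to a bijection carrying $z^p_{k+1}$ onto $z^q_{k+1}$ with support inside $(\kappa_x,\gamma)$, and continue up through $z^p_{m-1}\mapsto z^q_{m-1}$; finally extend by the identity above $\gamma$. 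This produces $\Gamma\in\Aut_{\kappa_x}(x)$ fixing $y$ and $x$ and carrying the $i$-th working part of $p$ to that of $q$ for every $i$, and (as noted in the text) $\Gamma$ lifts to an automorphism of $\mathbb{Q}_{y,x}$.

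It then remains to check $\Gamma(p)\parallel q$. By the relativized analogues of Lemmas~\ref{preservecollapse}--\ref{premutation preserveing implicitness}, $\Gamma$ fixes every L\'evy condition and sends each implicit guru appearing in $p$ to an implicit guru over the corresponding working part of $q$; hence $\Gamma(p)$ is a legitimate condition of $\mathbb{Q}_{y,x}$ whose stem is literally that of $q$. For each relevant index $i$, let $c^*_i$ be a common lower bound of $c^p_i$ and $c^q_i$ (available by compatibility), and, exactly as in Proposition~\ref{hom}, use the defining property of the gurus to obtain on a single measure-one set an implicit guru $I^*_i$ below both $\Gamma(I^p_i)$ and $I^q_i$. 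The condition assembled from the $c^*_i$'s and the $I^*_i$'s is a common lower bound of $\Gamma(p)$ and $q$.

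I expect the only genuine work to be bookkeeping: carrying out the block-by-block definition of $\Gamma$ while simultaneously keeping $\Gamma\restriction y$ the identity, keeping the support inside $(\kappa_x,\gamma)$, and matching the order types of the successive increments of the two $\ssim$-chains — all of which, as in Proposition~\ref{hom}, rest on the $\ssim$-structure of the stems together with $\otp(z)=(\kappa_z)^+$ for the relevant sets $z$. A minor additional point to verify is that the portions of the middle working parts lying below $\kappa_x$ already coincide for $p$ and $q$, so that $\Gamma$, being the identity there, need not move them; this holds because, on a measure-one set, such a portion is just the ordinal $\kappa_z$, so the hypothesis $\kappa_{z^p_i}=\kappa_{z^q_i}$ takes care of it.
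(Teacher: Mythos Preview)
Your proposal is correct and takes essentially the same approach as the paper. The paper gives no proof beyond the remark ``Similar proofs show that'' preceding the proposition, so your block-by-block construction of $\Gamma$ inside $\Aut_{\kappa_x}(x)$, adapting Proposition~\ref{hom} while keeping $\Gamma$ the identity on $y$ and outside a bounded piece of $x$, is precisely what is intended; your observation that the $\mathbb P$-like coordinates outside $(k,m)$ are untouched by such a $\Gamma$ and that the intersections of the middle working parts with $\kappa_x$ already coincide is exactly the bookkeeping the paper leaves implicit.
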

\begin{proposition}\label{homx}
Suppose $p,q \in \mathbb{Q}^x$ with $\ell(p)=\ell(q)=n$. Assume
$$\stem(p)=\{ \kappa_0, \ldots, \kappa_{m-1},x,z_{m+1},\ldots, z_{n-1}\},$$
$$\stem(q)=\{\lambda_0,\ldots, \lambda_{m-1},x,z'_{m+1},\ldots, z'_{n-1}\},$$
are such that for all $i<m$, $\kappa_{z_i}=\kappa_{z'_i}$, for each $i \in [m+1,n-1]$, $\kappa_{i'}=\lambda_{i'}$, and for $i \in [0,n-1]$, $c_i^p \parallel c_i^q$. Then, there is some $\Gamma \in \Aut_{\kappa}(\kappa^+)$ such that $\Gamma(p) \parallel q$. \qed
\end{proposition}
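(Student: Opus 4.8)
The plan is to follow the template of Proposition~\ref{hom} (and, for the amalgamation step, Proposition~\ref{pradincc}), the one new feature being that the automorphism must leave the $\mathbb{P}$-style lower segment of the two conditions — everything strictly below $x$, together with the working part $x$ itself — completely fixed, while carrying the $\mathbb{R}$-style tail of $\stem(p)$ onto that of $\stem(q)$ in lockstep. First I would record a preliminary observation: since every implicit guru appearing in a condition lives over a set in some $\bigcap\vec{\mathbf U}(z)$ and these measures concentrate on $\mathcal R_\kappa(\kappa^+)$ (via Clause~(3b) of Lemma~\ref{coherentsequence}), a downward induction on the stem shows that every stem element $z$ with $o^{\vec U}(\kappa_z)>0$ satisfies $z\cap\kappa=\kappa_z$, a genuine regular cardinal. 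Hence, writing $\stem(p)=\langle\kappa_0,\dots,\kappa_{m-1},x,z_{m+1},\dots,z_{n-1}\rangle$ and $\stem(q)=\langle\kappa_0,\dots,\kappa_{m-1},x,z'_{m+1},\dots,z'_{n-1}\rangle$, the hypothesis $\kappa_{z_i}=\kappa_{z'_i}$ gives $z_i\cap\kappa=z'_i\cap\kappa$ for every $m<i<n$.

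Next I would build $\Gamma$ block by block exactly as in Proposition~\ref{hom}, but starting from $\Gamma_m:=\id_{\kappa\cup x}$ instead of $\id_\kappa$. At the $i$-th step, using that $z_{i-1}\ssim z_i$ forces $(\kappa_{z_{i-1}})^{++}<\kappa_{z_i}$ and $|z_i\setminus z_{i-1}|=(\kappa_{z_i})^+$ (and that $x\ssim z_{m+1}$ forces $(\kappa_x)^{++}<\kappa_{z_{m+1}}$, so $|x|<\kappa_{z_{m+1}}$), there is enough room to extend $\Gamma_{i-1}$ to a bijection $\Gamma_i\colon\kappa\cup x\cup z_i\to\kappa\cup x\cup z'_i$ fixing $\kappa\cup x\cup z_{i-1}$ pointwise; the top working part $\kappa^+$ is respected automatically. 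As $\dom(\Gamma_{n-1})$ and $\im(\Gamma_{n-1})$ are bounded in $\kappa^+$, I then extend $\Gamma_{n-1}$ to some $\Gamma\in\Aut_\kappa(\kappa^+)$, which by construction satisfies $\Gamma\restriction(\kappa\cup x)=\id$ and $\Gamma[z_i]=z'_i$ for all $m<i<n$.

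Then I would verify that $\Gamma(p)$ is again a $\mathbb{Q}^x$-condition sharing with $q$ every stem element and every collapse condition: the $\mathbb{P}$-style working parts and their implicit gurus (which live over $\mathcal P_{\kappa_i}((\kappa_i)^+)$ with $\kappa_i<\kappa$, hence are fixed pointwise by $\Gamma$), as well as all the collapse conditions (fixed by fiat, so $c_i^{\Gamma(p)}=c_i^p$), are unchanged; the working part $\langle x,I_m^p\rangle$ is unchanged since $\Gamma[y]=y$ for every $y\in\mathcal P_{\kappa_x}(x)$, so $\Gamma(I_m^p)=I_m^p\in\mathbf G(x)$; for $m<i<n$, Lemma~\ref{premutation preserveing implicitness} gives $\Gamma(I_i^p)\in\mathbf G(z'_i)$; and Lemma~\ref{preservecollapse} gives $\Gamma(I_n^p)\in\mathcal G(\kappa)$. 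Thus $\Gamma(p)$ and $q$ now carry implicit gurus over exactly the same $\mathbf G(\cdot)$'s and $\mathcal G(\kappa)$. Finally I would amalgamate them as in Propositions~\ref{hom} and \ref{pradincc}: put $c_i^*:=c_i^p\cup c_i^q$ for each relevant $i$ (legitimate since $c_i^p\parallel c_i^q$), and for each working part use that both implicit gurus are contained in members of the relevant guru sequence to invoke Clause~(2) of Definition~\ref{def_guru}, obtaining a club $C_i$ and an index $\iota_i^*$ for which a single member $t_{\cdot,\iota_i^*}$ refines both; intersecting with $\dom I_i^{\Gamma(p)}\cap\dom I_i^q\cap C_i$ produces the $i$-th working part of a common extension $r^*\le\Gamma(p),q$. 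Hence $\Gamma(p)\parallel q$.

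The main obstacle is the block-by-block construction of $\Gamma$: one must simultaneously pin $\kappa\cup x$ and move the $\mathbb{R}$-style tail of $\stem(p)$ onto that of $\stem(q)$, and making this work requires both the $\ssim$-gaps (so that each new block has the correct cardinality over its predecessor, and so that $x$ sits well below the tail) and the preliminary observation that the stem elements lie in $\mathcal R_\kappa(\kappa^+)$ (so that $z_i\cap\kappa$ is literally an initial segment of $\kappa$, automatically matched with $z'_i\cap\kappa$ by $\kappa_{z_i}=\kappa_{z'_i}$, whence $\Gamma[z_i]=z'_i$ rather than merely $|z_i|=|z'_i|$). Everything downstream — transporting gurus via Lemmas~\ref{preservecollapse} and \ref{premutation preserveing implicitness} and amalgamating via the guru property — is identical to the arguments already spelled out for $\mathbb R$ and $\mathbb P$, and the proof of Proposition~\ref{homyx} runs along the very same lines with the fixed block now occurring in the middle.
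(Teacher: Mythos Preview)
Your proposal is correct and follows exactly the approach the paper intends: the paper gives no proof for this proposition, merely stating ``Similar proofs show that'' after Proposition~\ref{hom}, and your write-up is a faithful elaboration of that template adapted to $\mathbb{Q}^x$ --- fixing $\kappa\cup x$ in the base step, building $\Gamma$ block by block over the $\mathbb{R}$-style tail, and amalgamating via the guru property (using the $\mathbb{P}$-style argument of Proposition~\ref{pradincc} for the lower coordinates). One small remark: your ``preliminary observation'' that stem elements satisfy $z\cap\kappa=\kappa_z$ is not literally forced by the definition of conditions (nothing in the definition of $\mathbb{R}$ or $\mathbb{Q}^x$ requires working parts to lie in $\mathcal R_\kappa(\kappa^+)$), but the paper makes the same implicit assumption in its proof of Proposition~\ref{hom} when it asserts $\otp(x_0)=(\kappa_{x_0})^+$, so your level of rigor matches the paper's.
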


\section{The final model}\label{finalmodel}
We are now turning to prove our main result. We continue with our setup from Section~\ref{weakhomogeneity}. Let $G$ be $\mathbb{R}$-generic and $H=\Pi[G]$. Then $H$ is $\mathbb{P}$-generic.
Let $K_0=K_0^G$ and $\mathcal{C}=\mathcal{C}_G$ be as in Section~\ref{cardinalstructureradin}.
From the projection $\Pi$ from Proposition~\ref{projectionrp},
it is easy to see that $K_0,\mathcal{C} \in V[H]$, and they can be derived from the working parts and the collapse parts of the conditions in $H$.

\begin{definition}
Let $W:=V[K_0,\mathcal{C}]$ be the smallest $\zfc$ extension of $V$ containing $K_0$ and $\mathcal{C}$, so $W \subseteq V[H] \subseteq V[G]$.
\end{definition}
An element $a \in W$ has an $\mathbb{R}$-name $\dot{a}$ which is invariant under automorphisms of $\mathbb{R}$ which fix elements in $K_0$ and $\mathcal{C}$. In particular, for $\Gamma \in \Aut_\kappa(\kappa^+)$, $\Gamma(\dot{a})=\dot{a}$. Our main goal in this section is to derive several indecomposable ultrafilters in $W$.

To analyze further, recall that we defined
\begin{itemize}
\item $\mathbf X:=\{x \mid \exists p \in G\,(x \in \stem(p))\}$;
\item $K_0:=\{\kappa_x \mid x \in \mathbf X\}$;
\item $K_1:=\{((\kappa_x)^+)^V \mid x \in \mathbf X\}$;
\item $K_2:=\{((\kappa_x)^{++})^V \mid x \in \mathbf X\}$;
\item $\theta:=\otp(\mathbf X,\ssim)$.
\end{itemize}

Note that $\mathbf X$ is a cofinal chain in $(\mathcal{P}_\kappa (\kappa^{+}),\ssim)$. Hence $\bigcup \mathbf X=\kappa^+$.
Furthermore, $\{\sup(x) \mid x\in \mathbf X\}$ is a club in $\kappa^+$.
As for the cardinal structure of $W$, by Theorem~\ref{cardinalsinpradin} all cardinals below $\kappa$ outside of $\{\omega,\omega_1\}\cup K_0\cup K_1 \cup K_2$ are collapsed.
In addition, by Theorem~\ref{cardinalsinpradin}, all singular cardinals below $\kappa$ are from $\acc (K_0)$.
Moreover, an analog of Proposition~\ref{prop46} holds in $W$, that is, $\gch$ holds below $\kappa$.

\subsection{Ultrafilters at successors of singulars}
Fix a singular cardinal $\lambda<\kappa$ in $W$. We recall a few more facts:
\begin{itemize}
\item $\lambda$ was a measurable cardinal in $V$, $\square^B_\lambda$ held, and $\cf^W(\lambda)=o^{\vec{U}}(\lambda)^V$.
\item $(\lambda^+)^V$ is preserved in $V[H]$, but collapsed in $V[G]$. In particular, $(\lambda^+)^V$ is preserved in $W$.
So, by Proposition~\ref{552}, $\square_{\lambda,\cf(\lambda)}$ holds.
\item Let $x^\lambda \in \mathbf X$ be the unique $x$ such that $\kappa_{x}=\lambda$ so that $\otp(x^\lambda)=(\lambda^+)^V$.
\item $\mathbf X ^\lambda:=\{x \in \mathbf X \mid x \ssim x^\lambda\}$ is a cofinal chain in $(\mathcal{P}_{\kappa(x^\lambda)}(x^\lambda),\ssim)$, so that $\bigcup \mathbf X^{\lambda} =x^\lambda$.
\item Let $\alpha_\lambda:= \otp(\mathbf X ^\lambda,\ssim)$, and recall that $\alpha_\lambda=\otp\{\kappa_y\mid y\in \mathbf X^\lambda\}$.
\item Let $\mathbf X_\lambda := \{\pi_{x^\lambda}^{-1}[x] \mid x \in \mathbf X^\lambda\}$ and notice that it is a chain in $(\mathcal{P}_\lambda(\lambda^+)^V,\ssim)$ with $\bigcup \mathbf X_\lambda =(\lambda^+)^W$ and $\otp (\mathbf X_\lambda,{\ssim}) =\alpha_\lambda$.

\item Let $\theta_\lambda:=\cf^W(\lambda)=\cf^W(\alpha_\lambda)$ and let $\langle\nu^\lambda_\tau\mid \tau<\theta_\lambda\rangle\in W$ be some cofinal sequence in $\alpha_\lambda$.
As $\cf(\otp (\mathbf X_\lambda,{\ssim})) =\cf(\lambda)$, we may let
$\langle x^\lambda_\tau \mid \tau<\theta_\lambda\rangle$ be the $\ssim$-increasing enumeration of a cofinal chain in $\mathbf X_\lambda$ induced from $\langle\nu^\lambda_\tau\mid \tau<\theta_\lambda\rangle$.
\item Let $G^{x^\lambda}$ be the $\mathbb{Q}_{0,x^\lambda}$-generic which is generated by $\Pi_0^{0,x}[G]$,
where the projection map is from Proposition~\ref{projxy}.
Note that $W \subseteq V[G^{x^\lambda}]$, and that $\mathbf X_\lambda$ and $\langle x^\lambda_\tau \mid \tau<\theta_\lambda\rangle$ are both in $V[G^{x^\lambda}]$.

\end{itemize}

\begin{definition}
In $V[G^{x^\lambda}]$, define a filter $F_\lambda$ over $\mathcal{P}_\lambda(\lambda^+)$ via:
$$ F_\lambda :=\{A\s \mathcal{P}_\lambda (\lambda^+)\mid \{\tau<\theta_\lambda \mid x^\lambda_\tau\in A\} \text{ is co-bounded in }\theta_\lambda\}.$$

\end{definition}

\begin{lemma}\label{intersectfilter}
$F_\lambda \cap W\in W$.
\end{lemma}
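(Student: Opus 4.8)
The statement is that $F_\lambda \cap W$ is a member of $W$, where $F_\lambda$ is a filter defined in the intermediate model $V[G^{x^\lambda}]$ using the $\ssim$-increasing enumeration $\langle x_\tau \mid \tau<\theta_\lambda\rangle$ of the cofinal chain $\mathbf X_\lambda$ in $(\mathcal P_\lambda(\lambda^+)^V,\ssim)$. The key observation is that $F_\lambda$ is the co-bounded (tail) filter generated by the chain $\mathbf X_\lambda$: a set $A$ belongs to $F_\lambda$ iff all but boundedly many of the $x_\tau$ lie in $A$, equivalently iff $A$ contains a tail $\{x_\tau \mid \tau \ge \bar\tau\}$ of the chain. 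Thus $F_\lambda$ is definable \emph{in $W$} from the single parameter $\mathbf X_\lambda$ (as a linearly ordered set of order type $\theta_\lambda=\cf(\lambda)^W$), \emph{provided} $\mathbf X_\lambda\in W$. So the plan is: first argue $\mathbf X_\lambda\in W$, and then observe that $F_\lambda\cap W$ is, inside $W$, exactly the collection of those $A\in\mathcal P(\mathcal P_\lambda(\lambda^+))^W$ that contain a $\ssim$-tail of $\mathbf X_\lambda$ --- a definable-in-$W$ object, hence an element of $W$ by separation.

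\textbf{First step: $\mathbf X_\lambda \in W$.} Recall $W=V[K_0,\mathcal C]$ and $\mathbf X_\lambda$ was chosen as a cofinal subchain of $\{\pi_{x^\lambda}^{-1}[x]\mid x\in\mathbf X^\lambda\}$ inside $V[G^{x^\lambda}]$. The cofinal chain $\mathbf X^\lambda=\{x\in\mathbf X\mid x\ssim x^\lambda\}$ is determined by $\mathbf X$, which is itself recoverable from the working parts appearing in $G$. The point to check is that the relevant working-part information --- equivalently, the sequence $\langle x_\tau\mid\tau<\theta_\lambda\rangle$ together with the ambient set $\mathbf X^\lambda$ --- already lies in $W$. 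This should follow because $\mathbf X^\lambda$ (and hence a fixed cofinal subchain of the prescribed regular order type) is invariant under the automorphisms $\Gamma\in\Aut_{\kappa}(\kappa^+)$ used to build $W$: such $\Gamma$ fix every ordinal $\le\kappa$, hence fix $\kappa_x=\kappa_y$ for $y\ssim x^\lambda$, and one verifies the canonical name for $\mathbf X_\lambda$ (or for a definable choice of it) is $\Gamma$-invariant, so $\mathbf X_\lambda\in W$ by the homogeneity machinery of Section~\ref{weakhomogeneity}. Alternatively, and more directly, $\mathbf X_\lambda$ can be read off from $K_0$ and the genericity structure already isolated in $W$; either route gives $\mathbf X_\lambda\in W$.

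\textbf{Second step: closing off.} Once $\mathbf X_\lambda\in W$, work inside $W$. The relation $\ssim$ on $\mathcal P_\lambda(\lambda^+)^V$ is absolute (it is defined by a first-order formula in ordinals and cardinalities computed in $V$, and $\lambda,\lambda^+$ are the same between $W$ and $V[G^{x^\lambda}]$ up to the fixed identifications). So inside $W$ we may form
$$F':=\{A\in\mathcal P(\mathcal P_\lambda(\lambda^+)^V)^W \mid \exists \bar\tau<\theta_\lambda\ \forall\tau\in[\bar\tau,\theta_\lambda)\ (x_\tau\in A)\}.$$
This is a legitimate set of $W$ by the separation schema. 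It remains to check $F'=F_\lambda\cap W$: if $A\in W$ and $A\in F_\lambda$, then $\{\tau\mid x_\tau\in A\}$ is co-bounded in $\theta_\lambda$, i.e.\ contains a tail, so $A\in F'$; conversely if $A\in F'$ then trivially $A\in F_\lambda$ and $A\in W$. Hence $F_\lambda\cap W=F'\in W$, as desired.

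\textbf{Main obstacle.} The routine parts (the absoluteness of $\ssim$, the equivalence of co-boundedness with containing a tail, and the final separation argument) are straightforward. The one point that requires genuine care is the first step: establishing that the chain $\mathbf X_\lambda$ --- which was \emph{defined} in the larger model $V[G^{x^\lambda}]$ --- already resides in $W$. This is where one must invoke the weak-homogeneity apparatus: pin down a canonical (automorphism-invariant) name for $\mathbf X_\lambda$, using that every $\Gamma\in\Aut_\kappa(\kappa^+)$ fixes $K_0$ and $\mathcal C$ pointwise and fixes all ordinals $\le\kappa$, so that the induced action on the working parts $x\ssim x^\lambda$ leaves invariant a suitably chosen cofinal subchain of the prescribed regular length. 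Once that invariance is in hand, $\mathbf X_\lambda\in W$ follows, and the rest of the argument is immediate.
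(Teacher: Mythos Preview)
Your argument has a genuine gap at the crucial first step: the claim that $\mathbf X_\lambda\in W$ is false, and this is precisely the obstacle the paper's proof is designed to circumvent.

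Recall that $W=V[K_0,\mathcal C]$ knows only the cardinals $\kappa_x$ for $x\in\mathbf X$ and the interleaved collapses, not the actual working parts $x\in\mathcal P_\kappa(\kappa^+)$ themselves. The elements of $\mathbf X_\lambda$ are of the form $\pi_{x^\lambda}^{-1}[x]$ for $x\in\mathbf X^\lambda$; these depend on the specific sets $x^\lambda$ and $x$, which are subsets of $\kappa^+$. An automorphism $\Gamma\in\Aut_\kappa(\kappa^+)$ does fix every ordinal $\le\kappa$ (so it fixes each $\kappa_x$), but it does \emph{not} fix the sets $x$ themselves, and since $\Gamma$ is not order-preserving on $\kappa^+$ one does not have $\pi_{\Gamma(x^\lambda)}^{-1}[\Gamma(x)]=\pi_{x^\lambda}^{-1}[x]$. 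Thus the canonical name for $\mathbf X_\lambda$ is \emph{not} $\Gamma$-invariant, and $\mathbf X_\lambda\notin W$. Your ``alternative'' route---reading $\mathbf X_\lambda$ off from $K_0$---fails for the same reason: $K_0$ records only the ordinals $\kappa_x$, and the passage from $\mathbb R$ to $\mathbb P$ (and then to $W$) is exactly the step that discards the sets $x$.

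The paper's proof proceeds differently. It defines a set $F^*\in W$ \emph{indirectly}, as the collection of those $A\in W$ admitting a $\mathbb Q_{0,x^\lambda}$-name $\dot A$ and a condition $p$ whose working-part and collapse data are compatible with $K_0$ and $\mathcal C$, such that $p\Vdash\text{``}\dot A\in\dot F_\lambda\text{''}$. Well-definedness of $F^*$ (independence of the witnessing $p$) is then established via the weak homogeneity of $\mathbb Q_{0,x^\lambda}$ (Proposition~\ref{homyx}): two such conditions can be aligned by an automorphism in $\Aut_\lambda(\lambda^+)$ that fixes $\dot A$ and $\dot F_\lambda$. The point is that homogeneity is used to show agreement of \emph{forcing statements about membership in $F_\lambda$}, not to place $\mathbf X_\lambda$ itself into $W$.
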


\begin{proof}
We aim to find $F^* \in W$ such that $F^*=F_\lambda \cap W$. Define $F^* \in W$ as the collection of all $A$ such that:
\begin{enumerate}
\item $A$ admits a $\mathbb{Q}:=\mathbb{Q}_{0,x^\lambda}$-name $\dot{A}$ that is forced to be in $\dot{W}$, and
\item there is $p\in \mathbb{Q}$ such that:
\begin{enumerate}
\item $\lambda =\kappa_n^p$ for some $n<\ell(p)$;
\item for every $i<n$, $x_i^p \in \mathbf X_\lambda$, and let $\tau_i<\theta_\lambda$ be such that $\kappa_{x_i^p}=\kappa_{\nu^\lambda_{\tau_i}}$;
\item $c_{-1} \in C_{-1}$, and for every $i \geq 0$, $c_i \in C_{\tau_i}$;\footnote{Recall the definition of $C_{\tau_i}$ in the beginning of Section~\ref{cardinalstructureradin}, where we note that $C_{\tau_i} \in \mathcal{C}$, so it is in $W$.}
\item for all $i<\ell(p)$ and $\tau \in (\tau_i,\tau_{i+1})$, there is $x \in \dom (I_{i+1}^p)$ with $\kappa_x=\kappa_{\nu^\lambda_\tau}$ and $I_{i+1}^p(x) \in C_{\tau}$;
\item for every $\tau>\tau_{\ell(p)-1}$, there is $x \in \dom (I_{\ell (p)}^p)$ with $\kappa_x=\kappa_{\nu^\lambda_\tau}$ and $I_{\ell (p)}^p(x) \in C_{\tau}$;
\item $p \Vdash_{\mathbb{Q}} ``\dot{A} \in \dot{F}_\lambda"$.
\end{enumerate}
\end{enumerate} We first check that $F^*$ is well-defined. Suppose $A \in W$ with a $\mathbb{Q}$-name $\dot{A}$ which is invariant under automorphisms fixing the elements of $K_0,\mathcal{C}$, and there are $p_0,p_1 \in \mathbb{Q}$ satisfying (2)(a)--(e)
as above, and $p_0 \Vdash ``\dot{A} \in \dot{F}_\lambda"$, but $p_1 \Vdash ``\dot{A} \notin \dot{F}_\lambda"$.
By extending $p_0$ and $p_1$ if necessary using those $x$'s in those requirements, we may assume that $\ell(p_0)=\ell(p_1)$.

By Lemma~\ref{homyx}, there is a $\Gamma \in \Aut_\lambda(\lambda^+)$ such that $\Gamma(p_0) \parallel p_1$. Since $\Gamma$ fixes $\dot{K}_0$ and $\dot{\mathcal{C}}$, we have that $\Gamma(\dot{A})=\dot{A}$.
Furthermore, since $\Gamma$ fixes the tail below $\lambda^+$, $\Gamma$ does not change the definition of $\dot{F}_\lambda$, and hence, $\Gamma(p_0) \Vdash \dot{A} \in \dot{F}_\lambda$.
Now, if $p_2 \leq \Gamma(p_0),p_1$, so $p_2$ forces the opposite statements, which is a contradiction.

We now show that $F^*=F_\lambda \cap W$. If $A \in F^*$, then there is $p \in \mathbb{Q}$ satisfying the requirements (a)--(e),
so $p \Vdash_{\mathbb{Q}} ``\dot{A} \in \dot{F}_\lambda"$. Find $p' \in G$ which decides $``\dot{A} \in \dot{F}_\lambda"$, but since $p'$ also satisfies (a)--(e), there is $\Gamma \in \Aut_\lambda(\lambda^+)$ with $\Gamma(p) \parallel p'$, hence, $p'$ must force that $``\dot{A} \in \dot{F}_\lambda"$.
Thus, $A \in F_\lambda \cap W$. The proof that $F_\lambda \cap W \subseteq F^*$ is simpler since we can pick $p$ directly from $G$ deciding $``\dot{A} \in \dot{F}_\lambda"$ and so $p$ satisfies (a)--(e).
\end{proof}

By the definition of $F_\lambda$, we can see that $F_\lambda \cap W$ generates a filter on $\mathcal{P}_\lambda(\lambda^+)$.
We now move on to define the following objects in $W$:
\begin{itemize}
\item $F_\lambda':=\{\{\sup(x) \mid x \in A\} \mid A \in F_\lambda \cap W\}$;
\item $\mathcal{U}_\lambda$ is an ultrafilter on $\mathcal{P}_\lambda(\lambda^+)$ that extends $F_\lambda \cap W$, and
\item $\mathcal{W}_\lambda$ is an ultrafilter on $\lambda^+$ extending $F_\lambda'$.
\end{itemize}
Note that $\mathcal{W}_\lambda$ is a uniform ultrafilter over $\lambda^+$.

\begin{lemma}\label{indecomposableukappa1}
In $W$, for every $\rho \in (\cf(\lambda),\lambda)$ which is regular in $V[G^{x^\lambda}]$, $\mathcal{W}_\lambda$ is $\rho$-indecomposable.
\end{lemma}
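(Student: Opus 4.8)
The plan is to verify $\rho$-indecomposability straight from the definition. Fix $B\in\mathcal W_\lambda$ and $f\colon B\to\rho$ in $W$, and extend $f$ to a total function $\tilde f\colon\lambda^+\to\rho$ by setting $\tilde f\equiv 0$ on $\lambda^+\setminus B$. It will suffice to produce a $T\in[\rho]^{<\rho}$ with $\tilde f^{-1}[T]\in\mathcal W_\lambda$, since then $f^{-1}[T]=\tilde f^{-1}[T]\cap B\in\mathcal W_\lambda$, as required.

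First I would pass to the model $V[G^{x^\lambda}]$. Recall that $W\subseteq V[G^{x^\lambda}]$, so $\tilde f\in V[G^{x^\lambda}]$, and that $\mathbf X_\lambda\in V[G^{x^\lambda}]$, so its $\ssim$-increasing enumeration $\langle x_\tau\mid\tau<\theta_\lambda\rangle$ is available there, where $\theta_\lambda=\cf(\lambda)^W$. Each $x_\tau$ belongs to $\mathcal P_\lambda(\lambda^+)^V$, so $\sup(x_\tau)<(\lambda^+)^V=\lambda^+$ and the value $\tilde f(\sup(x_\tau))$ makes sense. Since $\theta_\lambda<\rho$ and $\rho$ is by hypothesis regular in $V[G^{x^\lambda}]$, the ordinal $\bar\gamma:=\sup_{\tau<\theta_\lambda}\tilde f(\sup(x_\tau))$ is strictly below $\rho$.

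Next I would return to $W$ and set $B^*:=\{x\in\mathcal P_\lambda(\lambda^+)\mid\tilde f(\sup(x))\le\bar\gamma\}$. Since $B^*$ is definable over $W$ from the parameters $\tilde f$, $\bar\gamma$, and $\mathcal P_\lambda(\lambda^+)$, all of which lie in $W$, we get $B^*\in W$. By the choice of $\bar\gamma$, $x_\tau\in B^*$ for every $\tau<\theta_\lambda$, so $\{\tau<\theta_\lambda\mid x_\tau\in B^*\}=\theta_\lambda$ is co-bounded in $\theta_\lambda$; hence $B^*\in F_\lambda$, and therefore $B^*\in F_\lambda\cap W$ by Lemma~\ref{intersectfilter}. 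It follows that $Y:=\{\sup(x)\mid x\in B^*\}$ is a member of $F_\lambda'$, hence of $\mathcal W_\lambda$. Setting $T:=\{\xi<\rho\mid\xi\le\bar\gamma\}$, we have $|T|<\rho$ (as $\bar\gamma<\rho$ and $\rho$ is a cardinal) and $Y\subseteq\tilde f^{-1}[T]$, so $\tilde f^{-1}[T]\in\mathcal W_\lambda$, completing the argument.

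The only step that is not routine bookkeeping will be the claim that $\bar\gamma<\rho$. This is precisely where the hypothesis that $\rho$ be regular in $V[G^{x^\lambda}]$ --- rather than merely in $W$ --- is used, together with the fact that $\mathbf X_\lambda$, and thus its length-$\theta_\lambda$ enumeration, already lives in $V[G^{x^\lambda}]$ while $\theta_\lambda<\rho$. Everything else amounts to tracking which objects land in $W$ (the one nontrivial input being the already-recorded fact that $(\lambda^+)^V$ is preserved in $W$) and to invoking the defining inclusion $F_\lambda'\subseteq\mathcal W_\lambda$.
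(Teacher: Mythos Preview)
Your proof is correct and follows essentially the same approach as the paper's: both arguments pass to $V[G^{x^\lambda}]$, use the regularity of $\rho$ there together with $\theta_\lambda<\rho$ to bound the values $\tilde f(\sup(x_\tau))$ (equivalently, the indices of the partition pieces containing $\sup(x_\tau)$) below some $\bar\gamma<\rho$, and then observe that the resulting large set lies in $F_\lambda\cap W$ and hence pushes forward into $\mathcal W_\lambda$. One cosmetic remark: your appeal to Lemma~\ref{intersectfilter} for ``$B^*\in F_\lambda\cap W$'' is unnecessary---you have already shown $B^*\in W$ and $B^*\in F_\lambda$ separately, and that lemma concerns the definability of $F_\lambda\cap W$ in $W$, which is background needed for $\mathcal W_\lambda$ to exist rather than for this particular step.
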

\begin{proof}
Let $\langle A_i \mid i< \rho \rangle$ be a partition of $\lambda^+$, where $\rho \in (\cf(\lambda),\lambda)$ is regular in $V[G^{x^\lambda}]$.
In $V[G^{x^\lambda}]$, for $j<\cf(\lambda)$, let $\eta(j)$ be the unique $\eta$ such that $\sup(x_j) \in A_\eta$.
Since $\cf(\lambda)<\rho$ and $\rho$ is regular, there is an $\alpha<\rho$ such that for all $j<\rho$, $\eta(j)<\alpha$. Hence, $\bigcup_{i<\alpha}A_i \in F_\lambda \cap W$, so $\bigcup_{i<\alpha}A_i \in \mathcal{W}_\lambda$.
\end{proof}

\begin{corollary}\label{C89}
In $W$, for every $\rho \in (\cf(\lambda),\lambda)$ that is regular in $W$, $\mathcal{W}_\lambda$ is $\rho$-indecomposable.
\end{corollary}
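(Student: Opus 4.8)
My plan is to bootstrap Corollary~\ref{C89} from Lemma~\ref{indecomposableukappa1}. Since $W\subseteq V[G^{x^\lambda}]$, every cardinal that is regular in $V[G^{x^\lambda}]$ is already regular in $W$, so the only cardinals not yet treated are those $\rho\in(\cf(\lambda),\lambda)$ that are regular in $W$ but \emph{not} regular in $V[G^{x^\lambda}]$. I would first pin these down: by the analysis of the cardinal structure of $W$ carried out in Section~\ref{finalmodel} (together with Theorem~\ref{cardinalsinpradin} and Proposition~\ref{cardcollapse2}), the cardinals of $W$ in this interval that fail to remain regular in $V[G^{x^\lambda}]$ are exactly the ordinals of the form $\rho=(\kappa_{x_\tau})^{+V}$ where $x_\tau\in\mathbf X$ is a $\ssim$-limit of the chain $\mathbf X$ (equivalently, $\kappa_{x_\tau}\in\acc(K_0)$ and $\kappa_{x_\tau}<\lambda$): such a $(\kappa_{x_\tau})^{+V}$ is preserved in $V[H]$ by Theorem~\ref{cardinalsinpradin}, hence stays a (successor, so regular) cardinal in $W$, whereas it is collapsed in $V[G^{x^\lambda}]$ as in Proposition~\ref{cardcollapse2}; every other cardinal of $W$ in $(\cf(\lambda),\lambda)$ remains regular in $V[G^{x^\lambda}]$ and is disposed of by Lemma~\ref{indecomposableukappa1}.

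So fix $\rho=(\kappa_{x_\tau})^{+V}$ as above. Since $\rho<\lambda$ and $\lambda$ is singular, also $(\kappa_{x_\tau})^{++V}<\lambda=\kappa_{x^\lambda}$, hence $x_\tau\ssim x^\lambda$. I would run the proof of Lemma~\ref{indecomposableukappa1} again, but with $V[G^{x^\lambda}]$ replaced by $M:=V[g]$, where $g$ is the $\mathbb Q^{x_\tau}$-generic generated by $\Pi_0^{x_\tau}[G]$ (Proposition~\ref{projx}). Three things need to be checked. First, $W\subseteq M$: from $g$ one reads off $K_0$ (the $\mathbb P$-part of $\mathbb Q^{x_\tau}$ below $\kappa_{x_\tau}$ yields $K_0\cap\kappa_{x_\tau}$ as a set of ordinals, its $\mathbb R$-part above yields the rest) and likewise $\mathcal C$, so $W=V[K_0,\mathcal C]\subseteq M$. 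Second, $\rho=(\kappa_{x_\tau})^+$ is preserved, hence regular, in $M$, by the proposition asserting that $(\kappa_x)^+$ is preserved in $V^{\mathbb Q^x}$. Third, $M$ contains a tail $D'$ of $\{\sup(x)\mid x\in\mathbf X_\lambda\}$ with $|D'|^M=\theta_\lambda$: because $x_\tau\ssim x^\lambda$, the portion of the Radin club $\mathbf X^\lambda$ lying $\ssim$-above $x_\tau$ is cofinal in $\mathbf X^\lambda$ and belongs to $M$, so --- provided $\mathbf X_\lambda$ is fixed in advance as the cofinal subchain of $\{\pi_{x^\lambda}^{-1}[x]\mid x\in\mathbf X^\lambda\}$ induced by some strictly increasing cofinal sequence of length $\theta_\lambda$ chosen already in $V$ --- every tail of $\{\sup(x)\mid x\in\mathbf X_\lambda\}$ refers only to the part of the club that $M$ retains, and hence lies in $M$.

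Granting this, I would finish exactly as in Lemma~\ref{indecomposableukappa1}, but inside $M$: given a partition $\langle A_i\mid i<\rho\rangle$ of $\lambda^+$ in $W\subseteq M$, the set $T:=\{i<\rho\mid A_i\cap D'\neq\emptyset\}$ has $|T|^M\le\theta_\lambda<\rho=\cf^M(\rho)$, so $T\subseteq\alpha^*$ for some $\alpha^*<\rho$; then $\bigcup_{i<\alpha^*}A_i$ lies in $W$ and contains the tail $D'$ of $\{\sup(x)\mid x\in\mathbf X_\lambda\}$, so by the definitions of $F_\lambda$ and $F_\lambda'$ it belongs to $F_\lambda'\subseteq\mathcal W_\lambda$, which witnesses $\rho$-indecomposability. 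The hard part --- the only step beyond routine bookkeeping --- is precisely this matter of locating a tail of $\{\sup(x)\mid x\in\mathbf X_\lambda\}$ inside $M=V[g]$: unlike $V[G^{x^\lambda}]$, the model $M$ does not see the Radin club below $x_\tau$, so the choice of $\mathbf X_\lambda$ back in Section~\ref{finalmodel} must be coordinated so that its tails are already pinned down by the part of the club above $x_\tau$ (and by data available in $V$).
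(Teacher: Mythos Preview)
Your approach is correct in spirit and reaches the same conclusion, but it differs from the paper's route in two ways worth noting.

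First, the paper does \emph{not} case-split. It treats every regular $\rho\in(\cf(\lambda),\lambda)$ uniformly: pick any $y\in\mathbf X$ with $\rho<\kappa_y$ and $\kappa_y<\lambda$, set $x=x^\lambda$, and work in $V[G^{y,x}]$ for the intermediate forcing $\mathbb Q_{y,x}$ (Prikry below $y$, Radin between $y$ and $x$, Prikry above $x$). Since the part of $\mathbb Q_{y,x}$ below $\kappa_y$ is $\mathbb P$-like, $\rho$ stays regular there; since the Radin part lives between $y$ and $x$, a tail $\mathbf X_{\lambda,\eta}:=\{z\in\mathbf X_\lambda\mid y\ssim z\}$ of the cofinal chain is available. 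One then defines a filter $F_{\lambda,\eta}$ in $V[G^{y,x}]$ using that tail and checks $W\cap F_\lambda=W\cap F_{\lambda,\eta}$, after which the argument of Lemma~\ref{indecomposableukappa1} goes through verbatim. By contrast, you first invoke Lemma~\ref{indecomposableukappa1} and then, for the residual case $\rho=(\kappa_{x_\tau})^{+V}$ with $\kappa_{x_\tau}\in\acc(K_0)$, switch to the intermediate forcing $\mathbb Q^{x_\tau}$ (Prikry below $x_\tau$, Radin everywhere above). Your identification of the residual case is correct, and the preservation of $(\kappa_{x_\tau})^+$ in $V^{\mathbb Q^{x_\tau}}$ is exactly the proposition you cite, so the argument goes through. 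The paper's choice buys uniformity; yours buys a slightly shorter leftover case at the price of the case analysis.

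Second, your handling of the tail of $\mathbf X_\lambda$ has a small wrinkle. The phrase ``induced by some strictly increasing cofinal sequence of length $\theta_\lambda$ chosen already in $V$'' cannot be taken literally: $\theta_\lambda=\cf(\lambda)^W$ depends on the generic, so no such sequence exists in $V$. What actually works --- and what the paper does via Claim~\ref{prop96} --- is to define, \emph{inside} the intermediate model $M$, a filter using whatever tail of the Radin chain $M$ can see (for you, the portion of $\mathbf X^\lambda$ lying $\ssim$-above $x_\tau$), and then observe that this filter agrees with $F_\lambda$ on $W$. With that adjustment your proof is complete; as written, the parenthetical about choosing the sequence in $V$ should be replaced by this ``define a compatible filter in $M$'' step.
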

\begin{proof}
Let $\rho \in (\cf(\lambda),\lambda)$. Let $y,x\in \mathbf{X}$ be such that $y \ssim x$, $\rho<\kappa_y$, and $\kappa_x=\lambda$.
Let $\eta:= \kappa_y$.
Let $G^{y,x}$ be the filter generated by $\Pi_0^{y,x}[G]$.
Then, $G^{y,x}$ is generic for $\mathbb{Q}^{y,x}$. Note that $\rho$ is preserved in $V[G^{y,x}]$. Define $\mathbf{X}_{\lambda,\eta}:=\{z \in \mathbf{X}_\lambda \mid y \ssim z\}$. Note that $\mathbf{X}_{\lambda,\eta}=\{\pi_x^{-1}[z]\mid z\in X^{\lambda,\eta}\}$, so $\mathbf{X}_{\lambda,\eta}$ is in $V[G^{y,x}]$.
In $V[G^{y,x}]$, define a filter $F_{\lambda,\eta}$ over $\mathcal{P}_\lambda(\lambda^+)$ by letting $A \in F_{\lambda,\eta}$ iff
there is an $\alpha<\theta_\lambda$ such that $\{x^\lambda_\tau \in \mathbf{X}_{\lambda,\eta}\mid \alpha<\tau<\theta_\lambda\}\s A$.

\begin{claim}\label{prop96}
$V[G^{y,x}] \cap F'_{\lambda}=F_{\lambda,\eta}$.\qed
\end{claim}

As a consequence, we have $W\cap F_{\lambda} = W\cap F_{\lambda,\eta}$. Therefore $\mathcal{W}_{\lambda}$ expands $F_{\lambda,\eta}$.
One can show that $\mathcal{W}_{\lambda}$ is $\rho$-indecomposable in a similar manner to Lemma~\ref{indecomposableukappa1} (by working in $V[G^{y,x}]$).
\end{proof}

\begin{corollary} Let $\rho\in (\cf(\lambda), \lambda)$ be some cardinal in $W$.
Then $\mathcal{W}_\lambda$ is $\rho$-indecomposable in $W$.
\end{corollary}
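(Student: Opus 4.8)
The statement upgrades Corollary~\ref{C89} from regular $\rho$ to arbitrary cardinals $\rho$, and the plan is to reduce to the regular case already in hand at the cost of one covering argument. Fix a singular cardinal $\rho$ of $W$ with $\cf(\lambda)<\rho<\lambda$ and a function $f\colon\lambda^+\to\rho$ in $W$; we must produce $T\in([\rho]^{<\rho})^W$ with $f^{-1}[T]\in\mathcal W_\lambda$. Work in $V[G^{x^\lambda}]$, where $\langle x_\tau\mid\tau<\theta_\lambda\rangle$ enumerates $\mathbf X_\lambda$, and set $R:=\{f(\sup(x_\tau))\mid\tau<\theta_\lambda\}$, a subset of $\rho$ of size at most $\theta_\lambda<\rho$. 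The crucial reduction is that it suffices to find $B\in W$ with $R\s B\s\rho$ and $|B|^W<\rho$: granting this, $A:=\{x\in\mathcal P_\lambda(\lambda^+)\mid f(\sup(x))\in B\}$ lies in $W$ and contains every $x_\tau$, hence $A\in F_\lambda\cap W$, so $\{\sup(x)\mid x\in A\}\in F'_\lambda\s\mathcal W_\lambda$; since this set is contained in $f^{-1}[B]$, we obtain $f^{-1}[B]\in\mathcal W_\lambda$ with $|f[f^{-1}[B]]|\le|B|<\rho$, so $T:=B$ works. (In particular this re-proves Corollary~\ref{C89}: when $\rho$ is regular, $|R|\le\theta_\lambda<\rho$ forces $R$ to be bounded.)

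To produce $B$, first dispose of the easy case: if $R$ is bounded in $\rho$, take $B:=\sup(R)+1$, which has size $<\rho$ since $\rho$ is a cardinal of $W$. So assume $R$ is unbounded in $\rho$; then $\cf^W(\rho)=\cf^{V[G]}(\rho)\le\theta_\lambda$, and by the analogue of Theorem~\ref{cardinalsinpradin} for $W$ the cardinal $\rho$ lies in $\acc(K_0)$, i.e.\ it is a former Radin point below $\lambda$. The claim is that every subset of $\rho$ of size $\le\theta_\lambda$ lying in $V[G^{x^\lambda}]$ is covered by a subset of $\rho$ of size $<\rho$ lying in $W$. Indeed, factoring $\mathbb Q_{0,x^\lambda}$ as a product at a point $\ge\rho$, the upper factor is $\rho^{++}$-closed under its direct-extension order and has the Prikry property, hence adds no new subsets of $\rho^+$; so $R$ already belongs to the extension by the lower factor, which below $\rho$ is the Radin forcing with interleaved collapses at $\rho$. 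The standard covering lemma for Radin-type forcing then yields that every $<\rho$-sized set of ordinals there is covered by a $<\rho$-sized set lying in the submodel generated by the induced Prikry--Radin sequence together with the interleaved collapses; and that submodel is contained in $W$, because $K_0$ and $\mathcal C$ belong to $W$. This gives the required $B$, and hence the corollary. One can equivalently arrive at this split by running the usual reduction for singular cardinals first: fix $\langle\rho_j\mid j<\cf^W(\rho)\rangle\in W$ increasing and cofinal in $\rho$ with each $\rho_j$ a regular cardinal of $W$ above $\cf(\lambda)$; if $f^{-1}[\rho_j]\in\mathcal W_\lambda$ for some $j$, apply the $\rho_j$-indecomposability of $\mathcal W_\lambda$ (Corollary~\ref{C89}) to $f\restriction f^{-1}[\rho_j]$ and finish, and otherwise one is exactly in the case where $R$ is unbounded.

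The main obstacle is this covering step. One must check carefully that $R$ — built from $f\in W$ and the generic enumeration of $\mathbf X_\lambda$ in $V[G^{x^\lambda}]$ — genuinely reflects down past the closed upper factor of $\mathbb Q_{0,x^\lambda}$ into the Radin forcing at $\rho$, and then invoke the covering lemma for that forcing over the submodel carved out by $K_0$ and $\mathcal C$; the subtlety is that the covering must land in a $W$-set of size strictly below $\rho$ rather than merely below $\lambda$, which is why one descends to the localized forcing at $\rho$ rather than using the chain condition of the full forcing. Everything else is a routine unwinding of the definitions of $F_\lambda$, $F'_\lambda$, and $\mathcal W_\lambda$ and of the intermediate-model machinery from Sections~\ref{cardinalstructureradin}--\ref{weakhomogeneity}.
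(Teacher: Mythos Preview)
Your reduction in the first paragraph is correct: it suffices to find $B\in W$ with $R\subseteq B\subseteq\rho$ and $|B|^W<\rho$, and the bounded-$R$ case is immediate. The gap is in the unbounded case, where you invoke a ``standard covering lemma for Radin-type forcing'' to cover $R$ inside $W$. No such lemma is available off the shelf here, and the obstacle you yourself flag is real: after factoring $\mathbb{Q}_{0,x^\lambda}$ at a point $z$ with $\kappa_z\geq\rho$, the lower factor is full Radin below $\kappa_z$, and the quotient of \emph{that} over the submodel generated by $K_0,\mathcal{C}$ is not obviously closed (it adds the $\ssim$-chain of $z$'s at every level, collapsing many successors that survive in $W$). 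So there is no reason a priori that a $\theta_\lambda$-sized subset of $\rho$ in this extension admits a $W$-cover of size $<\rho$.

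The paper avoids covering entirely, and the trick is to change the intermediate forcing. Instead of working with $\mathbb{Q}_{0,x^\lambda}$, pick $y\in\mathbf{X}$ with $\rho<\kappa_y<\lambda$ and pass to $\mathbb{Q}_{y,x}$. Below $y$ this forcing is already Prikry-type and is therefore absorbed by $W$; the only new information over $W$ sits above $y$, and that part has the Prikry property together with $\leq^*$-closure $\kappa_y^{++}$. Consequently the quotient $\mathbb{Q}_{y,x}/W$ adds no new bounded subsets of $\kappa_y$. Now the function $\tau\mapsto f(\sup(x_\tau))$ is (codeable as) a subset of $\theta_\lambda\times\rho\subseteq\kappa_y\times\kappa_y$, hence it already lies in $W$; its image $a$ has size $\leq\theta_\lambda<\rho$ in $W$, and $\bigcup_{i\in a}A_i\in F'_\lambda\subseteq\mathcal{W}_\lambda$. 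So the set $R$ is not merely covered in $W$ --- it is \emph{in} $W$ --- and no covering lemma is needed. The moral: your choice of $\mathbb{Q}_{0,x^\lambda}$ forces you into a covering problem that disappears once you switch to $\mathbb{Q}_{y,x}$ with $\kappa_y>\rho$.
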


\begin{proof} By Corollary~\ref{C89} we can assume that $\rho$ is singular in $W$ and let $\eta:= \cf^{W}(\rho)$.
Let $\langle A_i\mid i<\rho\rangle$ be a partition in $W$ of $\lambda^{+}$ into $\rho$ parts.

Let $x,y\in \mathbf{X}$ such that $\rho<\kappa_y<\kappa_x=\lambda$. Then in $W[G^{y,x}]$ let $f:\eta\rightarrow\rho$ denote the function $f(i):=j\iff \sup(x_i)\in A_j$. But $\mathbb{Q}^{y,x}/W$ is $\leq^*$-$\kappa_y^{+}$-closed and since $\rho<\kappa_y$ there are no new bounded subsets of $\rho$, hence $f\in W$.
Therefore in $W$, let $a=f``\eta$ and then $\bigcap_{i\in a}A_i\in \mathcal{W}_\lambda$.
\end{proof}
\begin{lemma}\label{smallquotient}
For every $\eta \in (\cf(\lambda),\lambda)$ that is regular in $V[G]$, $|\Ult(\eta,\mathcal{U}_\lambda)|=\eta$.
\end{lemma}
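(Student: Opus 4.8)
The plan is to sandwich $|\Ult(\eta,\mathcal U_\lambda)|$ between $\eta$ and $\eta$. The lower bound is immediate: for $\alpha<\beta<\eta$ the constant functions $x\mapsto\alpha$ and $x\mapsto\beta$ are never $\mathcal U_\lambda$-equivalent (their agreement set is $\emptyset\notin\mathcal U_\lambda$), so $\Ult(\eta,\mathcal U_\lambda)$ already has at least $\eta$ elements. Everything else goes toward the reverse inequality, and the key point is that $\mathcal U_\lambda$ extends the co-bounded filter generated by the $\ssim$-chain $\mathbf X_\lambda$, whose order type $\theta_\lambda=\cf(\lambda)$ is a regular cardinal strictly below the regular cardinal $\eta$.

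To make the counting legitimate I would first fix some $y\in\mathbf X$ with $y\ssim x^\lambda$ and $\kappa_y>(\eta^+)^W$, and pass to $V_1:=V[G^{y,x^\lambda}]$, the $\mathbb Q_{y,x^\lambda}$-generic extension, in which the tail chain $\mathbf X_{\lambda,\eta}$ and the filter $F_{\lambda,\eta}$ of Corollary~\ref{C89} live; recall that $W\cap F_\lambda=W\cap F_{\lambda,\eta}$, so that $\mathcal U_\lambda\supseteq F_{\lambda,\eta}\cap W$. As in the proof of the corollary preceding this lemma, the passage from $W$ to $V_1$ adds no new bounded subsets of $\kappa_y$; in particular $W$ and $V_1$ agree on cardinals and on power-sets below $\kappa_y$, so $\gch$ below $\kappa$ in $W$ transfers to $\gch$ below $\kappa_y$ in $V_1$, and $(\eta^+)^W$ remains a cardinal in $V_1$.

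Now work in $V_1$. Given $f\colon\mathcal P_\lambda(\lambda^+)\to\eta$ with $f\in W$, enumerate $\mathbf X_{\lambda,\eta}$ increasingly as $\langle x_\tau\mid\tau<\theta_\lambda\rangle$; the sequence $\langle f(x_\tau)\mid\tau<\theta_\lambda\rangle$ lies in $V_1$ and has length $\theta_\lambda<\eta$, so by regularity of $\eta$ in $V_1$ it is bounded, say by $\gamma_f<\eta$. Then $\mathbf X_{\lambda,\eta}\subseteq f^{-1}[\gamma_f]\in W$, whence $f^{-1}[\gamma_f]\in F_{\lambda,\eta}\cap W\subseteq\mathcal U_\lambda$; and if $g\in W$ is another such function whose associated sequence eventually agrees with that of $f$, then $\{x\mid f(x)=g(x)\}\in W$ contains a tail of $\mathbf X_{\lambda,\eta}$, hence lies in $F_{\lambda,\eta}\cap W\subseteq\mathcal U_\lambda$, so $[f]_{\mathcal U_\lambda}=[g]_{\mathcal U_\lambda}$. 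Thus $[f]_{\mathcal U_\lambda}$ depends only on the eventual-agreement class of the bounded sequence $\langle f(x_\tau)\mid\tau<\theta_\lambda\rangle\in\bigcup_{\gamma<\eta}{}^{\theta_\lambda}\gamma$, so
\[
|\Ult(\eta,\mathcal U_\lambda)|^{V_1}\ \le\ \Bigl|\bigcup_{\gamma<\eta}{}^{\theta_\lambda}\gamma\Bigr|^{V_1}\ \le\ \Bigl(\sum_{\gamma<\eta}\gamma^{\theta_\lambda}\Bigr)^{V_1}\ \le\ \eta,
\]
the last step using $\gch$ below $\kappa_y$ in $V_1$, so that each $\gamma^{\theta_\lambda}\le\eta$ for $\gamma<\eta$. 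Finally, since $\Ult(\eta,\mathcal U_\lambda)$ is a set of $W$ and $(\eta^+)^W$ is still a cardinal in $V_1$, a surjection of $\eta$ onto $\Ult(\eta,\mathcal U_\lambda)$ in $V_1$ forces $|\Ult(\eta,\mathcal U_\lambda)|^W\le\eta$ as well; with the lower bound, $|\Ult(\eta,\mathcal U_\lambda)|=\eta$.

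The step I expect to take the most care is the model bookkeeping in the second paragraph: checking that $\mathcal U_\lambda$ genuinely expands $F_{\lambda,\eta}\cap W$ (so the reduction to a $\theta_\lambda$-sequence is valid) and that $V_1$ shares with $W$ its bounded subsets of $\kappa_y$ and hence its $\gch$ there. Both are of the same flavour as, and should follow from, the homogeneity and $\leq^*$-closure facts for $\mathbb Q_{y,x}$ recorded in Section~\ref{intermediateforcing} and already used in Corollary~\ref{C89} and the corollary following it, so no genuinely new forcing-theoretic input is needed.
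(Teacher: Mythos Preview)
Your argument is correct and shares the paper's central idea: each $f:\mathcal P_\lambda(\lambda^+)\to\eta$ in $W$ is determined up to $\mathcal U_\lambda$-equivalence by the sequence $\langle f(x_\tau)\mid\tau<\theta_\lambda\rangle$ along the generic chain, and one then counts such sequences using $\gch$ and $\theta_\lambda<\eta$. The execution differs, however. The paper exploits the hypothesis directly: since $\eta$ is already assumed regular in $V[G]$, one simply works in $V[G]$, defines $g_f(\gamma):=f(x_\gamma)$ there, observes that equality of $g_f$ and $g_{f'}$ on a tail forces $[f]_{\mathcal U_\lambda}=[f']_{\mathcal U_\lambda}$, and counts $\eta^{\cf(\lambda)}=\eta$ in $V[G]$; the transfer back to $W$ is then asserted in one line. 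Your detour through $V_1=V[G^{y,x^\lambda}]$ with $\kappa_y>(\eta^+)^W$, together with the check that $(\eta^+)^W$ survives in $V_1$, is not needed here --- it is precisely the maneuver the paper reserves for the \emph{next} lemma, where $\eta$ is only assumed regular in $W$. That said, your extra bookkeeping is not wasted: the paper's bare sentence ``Then this is also true in $W$'' is doing real work (one must know that a bound of size $\eta$ in the outer model yields one in $W$), and your explicit preservation argument via the $\leq^*$-closure of the quotient over $W$ is exactly what justifies that step.
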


\begin{proof}
Work in $W$. Let $f: \mathcal{P}_\lambda(\lambda^+) \to \eta$. Define in $V[G]$ a function $g_f:\cf(\lambda) \to \eta$ by $g_f(\gamma)=f(x_\gamma)$. Note that for $f,f'$, if $g_f$ and $g_{f'}$ are equal on their tails, then $[f]_{\mathcal{U}_\lambda}=[{f'}]_{\mathcal{U}_\lambda}$. In $V[G]$, define $F$ as follows.
For each $g:\cf(\lambda) \to \eta$, if $g=g_f$ for some $f$, let $F(g)=[f]_{\mathcal{U}_\lambda}$, otherwise, $F(g)=0$. We claim that $\im(F) \supseteq \Ult(\eta,\mathcal{U}_\lambda)$. For each $[f]_{\mathcal{U}_\lambda}$, we show that $F(g_f)=[f]_{\mathcal{U}_\lambda}$. The point is that if $g_f=g_{f'}$, then $[f']_{\mathcal{U}_\lambda}=[f]_{\mathcal{U}_\lambda}$. Hence, in $V[G]$, $|\Ult(\eta,\mathcal{U}_\lambda)| \leq \eta^{\cf(\lambda)}=\eta$. Then this is also true in $W$.
\end{proof}

A similar proof yields the following:

\begin{lemma}
For every $\eta \in (\cf(\lambda),\lambda)$ that is regular in $W$, $|\Ult(\eta,\mathcal{U}_\lambda)|=\eta$. \qed
\end{lemma}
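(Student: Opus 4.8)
The plan is to mimic the proof of Lemma~\ref{smallquotient} word for word, replacing $V[G]$ by a suitable intermediate extension $V[G^{y,x}]$ in which $\eta$ is still regular. First I would set up the intermediate model exactly as in the proof of Corollary~\ref{C89}: since $\lambda$ is singular in $W$ it lies in $\acc(K_0)$, so $x^\lambda$ is defined and $K_0\cap\lambda$ is cofinal in $\lambda$; hence one can fix $y\ssim x$ in $\mathbf X$ with $\eta<\kappa_y$ and $\kappa_x=\lambda$ (take $x:=x^\lambda$ and $y$ any $\ssim$-predecessor of $x^\lambda$ in the chain $\mathbf X$ with $\kappa_y>\eta$). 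Let $G^{y,x}$ be the $\mathbb{Q}_{y,x}$-generic generated by $\Pi_0^{y,x}[G]$ (Proposition~\ref{projxy}), so $W\subseteq V[G^{y,x}]\subseteq V[G]$. From the proof of Corollary~\ref{C89} I may then use the following three facts about $V[G^{y,x}]$: the tail $\mathbf X_{\lambda,\eta}:=\{z\in\mathbf X_\lambda\mid y\ssim z\}$ belongs to it; the filter $F_{\lambda,\eta}$ generated by this tail satisfies $F_{\lambda,\eta}\cap W\subseteq\mathcal U_\lambda$; and $\eta$ remains regular there.

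Next I would run the counting of Lemma~\ref{smallquotient} in this model. Working in $W$, given $f:\mathcal P_\lambda(\lambda^+)\to\eta$, let $\langle x_\gamma\mid\gamma<\theta_\lambda\rangle$ be the $\ssim$-increasing enumeration of $\mathbf X_{\lambda,\eta}$ (an element of $V[G^{y,x}]$, with $\theta_\lambda=\cf(\lambda)$), and set $g_f(\gamma):=f(x_\gamma)$, a function from $\theta_\lambda$ to $\eta$ lying in $V[G^{y,x}]$. If $g_f$ and $g_{f'}$ agree on a co-bounded subset of $\theta_\lambda$, then the set $\{z\mid f(z)=f'(z)\}\in W$ contains a tail of $\mathbf X_{\lambda,\eta}$, hence lies in $F_{\lambda,\eta}\cap W\subseteq\mathcal U_\lambda$, so $[f]_{\mathcal U_\lambda}=[f']_{\mathcal U_\lambda}$. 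Therefore the assignment $[f]_{\mathcal U_\lambda}\mapsto(g_f\bmod\text{co-bounded})$ is a well-defined injection on the classes realized by $f\in W$, and I get $|\Ult(\eta,\mathcal U_\lambda)|\le(\eta^{\theta_\lambda})^{V[G^{y,x}]}$. Since $\gch$ holds below $\kappa$ in $V[G^{y,x}]$ (this model is squeezed between $V^{\mathbb P}$ and $V^{\mathbb R}$, for which it is Theorem~\ref{cardinalsinpradin}(6) and Proposition~\ref{prop46}), $\eta$ is regular there, and $\theta_\lambda<\eta$, the Hausdorff formula gives $(\eta^{\theta_\lambda})^{V[G^{y,x}]}=\eta$ --- exactly the computation already used in Lemma~\ref{smallquotient} --- and this bound descends to $W$. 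The reverse inequality is trivial since the $\eta$ many constant maps represent $\eta$ distinct classes, so $|\Ult(\eta,\mathcal U_\lambda)|=\eta$ in $W$.

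I expect the only real obstacle to be the justification that the intermediate model $V[G^{y,x}]$ has the expected cardinal behaviour below $\kappa_y$ --- namely that $\eta$ stays a regular cardinal and that $\gch$ persists below $\kappa$ --- which is not literally proved in the excerpt but is exactly the kind of fact used (for $\rho$-indecomposability) in Corollary~\ref{C89} and follows from the factorization and closure properties of $\mathbb{Q}_{y,x}$ developed in Sections~\ref{intermediateforcing} and~\ref{weakhomogeneity}. Everything else is a transcription of Lemma~\ref{smallquotient}.
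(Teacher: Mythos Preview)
Your proposal is correct and is precisely the ``similar proof'' the paper alludes to: you transplant the counting argument of Lemma~\ref{smallquotient} from $V[G]$ to the intermediate extension $V[G^{y,x}]$ supplied by Corollary~\ref{C89}, using that $\eta$ stays regular there and that $F_{\lambda,\eta}\cap W=F_\lambda\cap W\subseteq\mathcal U_\lambda$. The only cosmetic point is that the tail $\mathbf X_{\lambda,\eta}$ has order type $\theta_\lambda$ because $\theta_\lambda$ is regular, which you implicitly use when you enumerate it by $\gamma<\theta_\lambda$; otherwise nothing is missing.
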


By tweaking the proof in Lemma~\ref{smallquotient}
by redefining $g_f$ to $g_f(\gamma)=f(\sup x_\gamma)$, we yield the following.
\begin{lemma}
If $\lambda \leq \kappa$ is singular and $\eta \in (\cf(\lambda),\lambda)$ is regular (in $W$), then $|\Ult(\eta,\mathcal{W}_\lambda)|=\eta$. \qed
\end{lemma}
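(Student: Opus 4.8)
The plan is to mimic the proof of Lemma~\ref{smallquotient} almost verbatim, with the single adjustment already flagged in the statement preceding the lemma: where the earlier proof worked with the ultrafilter $\mathcal U_\lambda$ on $\mathcal P_\lambda(\lambda^+)$ and the representatives $x_\gamma\in\mathbf X_\lambda$, we now work with the uniform ultrafilter $\mathcal W_\lambda$ on $\lambda^+$ and replace each $x_\gamma$ by the ordinal $\sup(x_\gamma)$. The point is that $\{\sup(x_\gamma)\mid\gamma<\theta_\lambda\}$ is (by construction in Section~\ref{finalmodel}) a club in $(\lambda^+)^V$ of order type $\theta_\lambda=\cf(\lambda)^W$, and $F_\lambda'$ — hence $\mathcal W_\lambda$ — concentrates on cobounded subsets of this club.

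First I would work in $W$ and fix $\eta\in(\cf(\lambda),\lambda)$ regular in $W$, together with an arbitrary $f\colon\lambda^+\to\eta$. Passing to $V[G]$ (or, to be safe about which cofinality is being used, to $V[G^{y,x}]$ for suitable $y\ssim x$ with $\kappa_x=\lambda$ and $\rho,\eta<\kappa_y$, exactly as in Corollary~\ref{C89}; but the cofinality analysis already noted in Section~\ref{finalmodel} gives $\cf(\lambda)^{V[G]}=\cf(\lambda)^W$, so $V[G]$ suffices), I would define $g_f\colon\theta_\lambda\to\eta$ by $g_f(\gamma):=f(\sup(x_\gamma))$. The key observation, as before, is that if $g_f$ and $g_{f'}$ agree on a tail of $\theta_\lambda$, then $\{\sup(x_\gamma)\mid f(\sup(x_\gamma))=f'(\sup(x_\gamma))\}$ is cobounded in the club $\{\sup(x_\gamma)\mid\gamma<\theta_\lambda\}$, hence belongs to $F_\lambda'\subseteq\mathcal W_\lambda$, so $[f]_{\mathcal W_\lambda}=[f']_{\mathcal W_\lambda}$.

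Then, working in $V[G]$, I would define $F$ on the set ${}^{\theta_\lambda}\eta$ of functions $g\colon\theta_\lambda\to\eta$ by $F(g):=[f]_{\mathcal W_\lambda}$ if $g=g_f$ for some $f$ (well-defined by the previous paragraph up to agreement on a tail; if there are several such $f$ they give the same class), and $F(g):=0$ otherwise. Since $F(g_f)=[f]_{\mathcal W_\lambda}$ for every $f$, the map $F$ is onto $\Ult(\eta,\mathcal W_\lambda)$, so $|\Ult(\eta,\mathcal W_\lambda)|\le|{}^{\theta_\lambda}\eta|=\eta^{\cf(\lambda)}=\eta$, using $\gch$ in $V[G]$ (equivalently $\gch$ below $\kappa$ in $W$, and $\eta$ regular with $\cf(\lambda)<\eta$). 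This inequality is absolute between $V[G]$ and $W$ since $\Ult(\eta,\mathcal W_\lambda)$ is computed in $W$ and any surjection exhibiting a small cardinality in $V[G]$ restricts the $W$-cardinality; and $|\Ult(\eta,\mathcal W_\lambda)|\ge\eta$ is trivial because $\mathcal W_\lambda$ is uniform on $\lambda^+>\eta$, so the identity-mod-constants maps are already $\eta$-many. Hence $|\Ult(\eta,\mathcal W_\lambda)|=\eta$.

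I do not expect a serious obstacle here: the only subtlety — and the thing I would state carefully rather than wave at — is the bookkeeping about which forcing extension one computes $g_f$ in, and the verification that $\{\sup(x_\gamma)\mid\gamma<\theta_\lambda\}$ really is the club witnessing $F_\lambda'$, so that "agreement on a tail of $\theta_\lambda$" translates into membership in $\mathcal W_\lambda$. Both of these are recorded in Section~\ref{finalmodel} (the bullet points listing that $\{\sup(\pi_{x^\lambda}(x))\mid x\in\mathbf X^\lambda\}$ is a club in $(\lambda^+)^V$, and the definition of $F_\lambda'$), so the argument is genuinely a routine tweak of Lemma~\ref{smallquotient}, exactly as the text asserts.
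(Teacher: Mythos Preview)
Your proposal is correct and follows exactly the approach the paper indicates: the paper's own proof is the one-line remark preceding the lemma, namely to tweak Lemma~\ref{smallquotient} by redefining $g_f(\gamma):=f(\sup x_\gamma)$, which is precisely what you do. Your parenthetical about passing to $V[G^{y,x}]$ rather than $V[G]$ (so that $\eta$ remains regular in the ambient model and the transfer to $W$ goes through) is the right safety measure, mirroring the passage from Lemma~\ref{smallquotient} to the lemma that immediately follows it.
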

\subsection{Summing up}
We now conclude the main theorem.
\begin{theorem}\label{maintheorem1}
In $W$, for every singular cardinal $\lambda<\kappa$, all of the following hold:
\begin{enumerate}[label=\textup{(\arabic*)}]
\item $\gch$;
\item $\mathcal{W}_\lambda$ is an ultrafilter on $\lambda^+$ that is $\rho$-indecomposable for any cardinal $\rho \in (\cf(\lambda),\lambda)$.
Furthermore, for any regular $\eta \in (\cf(\lambda),\lambda)$, $|\Ult(\eta,\mathcal{W}_\lambda)|=\eta$;
\item $\mathcal{U}_\lambda$ is an ultrafilter on $\mathcal{P}_\lambda(\lambda^+)$
such that for every regular cardinal $\eta \in (\cf(\lambda),\lambda)$,
$\mathcal U_\lambda$ is $\eta$-indecomposable and $|\Ult(\eta,\mathcal{U}_\lambda)|=\eta$.
\item $\square_{\lambda,\cf(\lambda)}$ holds.
\end{enumerate}
In particular, since we start with $o^{\vec{U}}(\kappa)=\kappa^{++}$ in $V$, we get that $W':=V_\kappa^{W}$
is a $\zfc$ model such that for every singular cardinal $\lambda$, (1)--(4) holds. \qed
\end{theorem}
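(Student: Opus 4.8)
The plan is to read off Theorem~\ref{maintheorem1} from the lemmas accumulated in this section, adding only one short new argument and a routine absoluteness check. Item~(1) restates the fact, noted above, that $\gch$ holds in $W$ below $\kappa$ (and, by a standard nice-names computation from the $\kappa^{++}$-chain condition of $\mathbb R$ and the $\gch$ of $V$, throughout $W$); in particular $2^\lambda=\lambda^+$ for every singular $\lambda\le\kappa$, all of which lie below $\kappa$. Item~(2) is also already in hand: $\mathcal W_\lambda$ was observed to be a uniform ultrafilter on $\lambda^+$; its $\rho$-indecomposability for every cardinal $\rho\in(\cf(\lambda),\lambda)$ is Corollary~\ref{C89} for regular $\rho$ together with the corollary following it, which reduces a singular $\rho$ to $\cf(\rho)$ using the $\le^*$-closure of the relevant intermediate quotient forcing; and $|\Ult(\eta,\mathcal W_\lambda)|=\eta$ for regular $\eta$ in that interval is the last lemma stated before this subsection. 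So (2) needs no further work.

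For item~(3), that $\mathcal U_\lambda$ is an ultrafilter on $\mathcal P_\lambda(\lambda^+)$ is immediate from its definition as an extension of $F_\lambda\cap W$, and $|\Ult(\eta,\mathcal U_\lambda)|=\eta$ for regular $\eta\in(\cf(\lambda),\lambda)$ is Lemma~\ref{smallquotient} with its stated variant. The one piece not yet recorded is the $\eta$-indecomposability of $\mathcal U_\lambda$, and the plan is to prove it by copying the proof of Lemma~\ref{indecomposableukappa1} (passing, when $\eta$ is only known to be regular in $W$, to an intermediate extension $V[G^{y,x}]$ exactly as in the proof of Corollary~\ref{C89}): given $f\colon\mathcal P_\lambda(\lambda^+)\to\eta$ in $W$, work in the model where the $\ssim$-increasing enumeration $\langle x_\tau\mid\tau<\theta_\lambda\rangle$ of $\mathbf X_\lambda$ lives, put $\alpha:=\sup_{\tau<\theta_\lambda}f(x_\tau)$, note that $\alpha<\eta$ since $\theta_\lambda=\cf(\lambda)<\eta=\cf(\eta)$, and observe that $f^{-1}[\alpha]\supseteq\mathbf X_\lambda$, so $f^{-1}[\alpha]\in F_\lambda\cap W\subseteq\mathcal U_\lambda$ while $|\alpha|<\eta$; this is the required $T\in[\eta]^{<\eta}$.

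Finally, for the ``in particular'' clause: since $\gch$ holds below $\kappa$ in $W$ and $\kappa$ is strongly inaccessible in $V[G]\supseteq W$ — hence in $W$, strong inaccessibility being downward absolute to inner models — the set $W':=V_\kappa^W$ is a model of $\zfc$; every singular cardinal of $W'$ is a singular cardinal $\lambda<\kappa$ of $W$, and the witnesses to (1)--(3) for that $\lambda$ (the set $\mathcal P_\lambda(\lambda^+)$, the ultrafilters $\mathcal W_\lambda,\mathcal U_\lambda$, the functions into cardinals below $\lambda$, and any bijection realizing $|\Ult(\eta,\cdot)|=\eta$) are all of hereditary cardinality $<\kappa$, hence belong to $V_\kappa^W$, and the corresponding assertions are absolute between $W$ and $V_\kappa^W$. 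I do not anticipate a serious obstacle here: the only content beyond bookkeeping is the short indecomposability argument for $\mathcal U_\lambda$ displayed above and the routine (but worth writing out) verification that each clause of (1)--(3) survives the passage to $V_\kappa^W$.
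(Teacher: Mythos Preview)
Your proposal is correct and matches the paper's approach: the theorem is stated with a bare \qed, meaning it is intended purely as a summary of the preceding lemmas, and you have correctly identified which lemma covers which clause, including the one genuinely unrecorded item (the $\eta$-indecomposability of $\mathcal U_\lambda$), for which your short argument is the obvious intended one. One trivial fix: in that argument take $\alpha:=\bigl(\sup_{\tau<\theta_\lambda}f(x_\tau)\bigr)+1$ rather than the supremum itself, so that $f^{-1}[\alpha]\supseteq\mathbf X_\lambda$ holds outright.
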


Additional features of the universe $W'$ are given by
Lemma~\ref{l25}, Corollaries \ref{atthesingulars}, \ref{cor211} and \ref{c215},
and Facts \ref{csequencenumber} and \ref{prop226}.
Optimality aspects of our result are highlighted by Facts \ref{limitation1} and \ref{limitation2} and Proposition~\ref{529}.
We also point out that forcing extensions of $W'$ give rise to plenty of anti-Silver-type theorems \cite{silver}.
Indeed, given a singular cardinal $\lambda$,
it is possible to use a $\lambda^+$-distributive forcing to add any of the objects of Section~\ref{conventions} seen to refute $(\cf(\lambda),\lambda)$-indecomposable ultrafilters at $\lambda^+$.
This way, $\lambda$ would be the first singular cardinal to carry such an object.

\section{Concluding remarks}\label{concluding}
A variation of our final model in which $\gch$ fails and moreover $\sch$ fails everywhere may be obtained, as follows.
First, in Section~\ref{mastersequenceandcoherentsequence}, instead of starting with a model $V$ of $\gch$, we start with a model in which $\kappa$ is a $\kappa^{++}$-supercompact and $2^\kappa=2^{\kappa^+}=\kappa^{++}$ (this can be achieved as demonstrated by Silver, forcing with Easton-support iteration using $\Add(\alpha,\alpha^{++})$ at every inaccessible $\alpha\le\kappa$, and a trivial forcing at all other $\alpha\le\kappa$).
Consequently, in Lemma~\ref{coherentsequence}, for every $\alpha \in \dom(o^{\vec{U}})$, $2^\alpha=2^{\alpha^+}=\alpha^{++}$.
Consequently, Proposition~\ref{prop46} would assert that, in $V[G]$, for every $\tau\in\acc(\theta)$, $2^{\kappa_\tau}=(\kappa_\tau)^+$,
and for every $\tau\in\nacc(\theta)$, $2^{\kappa_\tau}=(\kappa_\tau)^{++}$.
Consequently, the last clause of Theorem~\ref{cardinalsinpradin}
would assert that, in $V[H]$, for every $\alpha\in \bar{K}_0$, $2^\alpha=\alpha^{++}$.
Consequently, the model $W$ of Section~\ref{finalmodel} satisfies that $2^\mu=\mu^{++}$ for every $\mu\in K_0$,
and $2^\mu=\mu^{+}$ for every infinite cardinal $\mu\in\kappa\setminus K_0$. In particular, in the final model $W'$, every singular cardinal $\lambda$
is a strong limit satisfying $2^\lambda=\lambda^{++}$.

\section*{Acknowledgments}
The first author was partially supported by the Israel Science Foundation (grant agreement 1216/18).
The second author is the recipient of the Maria Pogonowska-Proner award for 2024 and is partially supported by the Israel Science Foundation (grant agreement 1967/21).
The third author is partially supported by the Israel Science Foundation (grant agreement 203/22)
and by the European Research Council (grant agreement ERC-2018-StG 802756).

The main result of this paper was presented by the second author at the \emph{120 Years of Choice} conference in Leeds July 2024,
and by the third author at the \emph{Set Theory} session of the Annual conference of the IMU, July 2025.
We would like to thank the organizers for this opportunity,
and the participants (especially Gitik on the first occasion) for a stimulating feedback.

\end{document}